\documentclass[reqno,11pt]{amsart}
\usepackage{amssymb}
\usepackage{amsmath}
\usepackage{amsthm}
\usepackage{pgf}
\usepackage{color}
\usepackage{mathrsfs}
\usepackage{graphicx}   
\usepackage{hyperref}
\usepackage{graphicx}
\usepackage{tikz}
\usepackage{pgf}
\usepackage{enumerate}
\usepackage{todonotes}
\usepackage{bbm}
\usepackage{mathtools}
\usepackage{ulem}
\usepackage{esint}
\usepackage{caption}

\usetikzlibrary{positioning}
\usetikzlibrary{decorations.pathreplacing,decorations.markings}
\tikzset{->-/.style={decoration={
			markings,
			mark=at position #1 with {\arrow{>}}},postaction={decorate}}}
\usepackage{enumitem} 
\usepackage{comment}

\usepackage{tikz}
\usetikzlibrary{decorations.pathmorphing,arrows.meta}

\usetikzlibrary{positioning}
\usetikzlibrary{decorations.pathreplacing,decorations.markings}
\usetikzlibrary{hobby}
\tikzset{->-/.style={decoration={
			markings,
			mark=at position #1 with {\arrow{>}}},postaction={decorate}}}

\usepackage{pgfplots}
\usetikzlibrary{intersections, pgfplots.fillbetween}
\pgfdeclarelayer{bg}
\pgfsetlayers{bg,main}
\pgfplotsset{compat=newest}

\usetikzlibrary{decorations.pathreplacing}
\tikzset{%
	show curve controls/.style={
		postaction={
			decoration={
				show path construction,
				curveto code={
					\draw [blue] 
					(\tikzinputsegmentfirst) -- (\tikzinputsegmentsupporta)
					(\tikzinputsegmentlast) -- (\tikzinputsegmentsupportb);
					\fill [red, opacity=0.5] 
					(\tikzinputsegmentsupporta) circle [radius=.5ex]
					(\tikzinputsegmentsupportb) circle [radius=.5ex];
				}
			},
			decorate
}}}

\tikzset{use path/.code=\tikz@addmode{\pgfsyssoftpath@setcurrentpath#1}}

\newtheorem{theorem}{Theorem}[section]
\newtheorem{lemma}[theorem]{Lemma}
\newtheorem{proposition}[theorem]{Proposition}
\newtheorem{corollary}[theorem]{Corollary}
\newtheorem{remark}[theorem]{Remark}

\newtheorem{definition}[theorem]{Definition}

\newcommand{\R}{\mathbb{R}}

\newcommand{\eps}{\varepsilon}

\def\eps{\varepsilon}

\renewcommand{\d}{{\mathrm d}}

\newcommand{\argmin}{\mathop{\mathrm{Arg\,min}}\limits}   
\DeclareMathOperator*{\esssup}{ess\,sup}

\def\to{\rightarrow}

\numberwithin{equation}{section}

\makeatletter
\newcommand*\bigcdot{\mathpalette\bigcdot@{.5}}
\newcommand*\bigcdot@[2]{\mathbin{\vcenter{\hbox{\scalebox{#2}{$\m@th#1\bullet$}}}}}
\makeatother

\begin{document}
	\title[Dynamic viscoelastic membranes]
	{Dynamic viscoelastic membranes}
	
\author{Martin Kru\v{z}\'{i}k}
\address[Martin Kru\v{z}\'{i}k]{Czech Academy of Sciences, Institute of Information Theory and Automation, Pod vod\'{a}renskou v\v{e}\v{z}\'i 4,
	CZ-182 00 Praha 8, Czech Republic,\\
   Faculty of Civil Engineering, Czech Technical University, Th\'{a}kurova 7, 166 29 Prague 6, Czech Republic }
\email{kruzik@utia.cas.cz}
\urladdr{http://staff.utia.cas.cz/kruzik/}

\author{Riccardo Voso}
\address[Riccardo Voso]{Czech Academy of Sciences, Institute of Information Theory and Automation, Pod vod\'{a}renskou v\v{e}\v{z}\'i 4,
	CZ-182 00 Praha 8, Czech Republic,\\
    Faculty of Electrical Engineering, Czech Technical University, Technick\'a 2, 160 00 Prague 6, Czech Republic}
\email{voso@utia.cas.cz}
	
	\subjclass[2020]{74K15, 74D10, 74A30, 74H20, 49J45, 47J30, 28A33, 35Q74}
	
	\keywords{Dimension reduction, membranes, nonlinear viscoelasticity, Kelvin-Voigt rheology, nonsimple materials, hyperbolic minimizing movements, $\Gamma$-convergence, Young measures, measure-valued solutions}

\begin{abstract}
	We rigorously derive a dynamic membrane theory for nonlinear viscoelastic solids with Kelvin-Voigt rheology from the corresponding three-dimensional bulk theory, in the limit of vanishing thickness. The bulk model accounts for inertia, a nonsimple elastic energy depending on the second deformation gradient, and a frame-indifferent dissipation. Orientation preservation is ensured by the energy, while self-interpenetration of the limiting membrane is prevented by a uniform averaged invertibility condition. Our analysis relies on a hyperbolic minimizing movements scheme with two time-scales, which recasts the evolution into a hierarchy of incremental, sequential, and time-delayed problems. For fixed thickness, we prove the existence of solutions to the scheme and their convergence to a weak solution of the continuous-time problem as the time-scales vanish. We then identify the membrane limit at each level of the hierarchy, proving the $\Gamma$-convergence of the incremental functionals and the convergence of solutions to the sequential and time-delayed problems. Since the constant in the Korn inequality is not uniform in the thickness, these limits rely on estimates for the time derivative of the Cauchy-Green tensor rather than for the velocity gradient. Finally, as the outer time-scale vanishes, the membrane time-delayed solutions converge to a measure-valued solution of the membrane continuous-time problem.
\end{abstract}

	\maketitle

	\section{Introduction}
	\label{introduction}	
	We are interested in the rigorous derivation, by dimension reduction, of a dynamic membrane theory for viscoelastic materials with Kelvin-Voigt rheology from the corresponding bulk theory. The bulk dynamics is given by
	\begin{align}
		&\rho \partial_{tt}u + D \mathcal{E}_{\eps} (u) + D_2 \mathcal{R}_{\eps} (u,\partial_t u) = f \quad \text{in}\; (0,T)\times\Omega_{\eps},\label{eq1}\\
		&u(0) = u_{0} \quad \text{and} \quad \partial_t u(0)=w_{0} \quad \text{in} \; \Omega_{\eps}, \label{eq11}
	\end{align}
	complemented by clamped boundary conditions on the lateral boundary of $\Omega_\eps$, which are included in the set of admissible deformations (see Section~\ref{Model}). Here, for every $\eps >0$, $\Omega_{\eps} = \omega \times (-\eps/2,\eps/2)$ is the reference configuration of the material, with $\omega \subset \mathbb{R}^2$ a bounded Lipschitz domain, $u: (0,T) \times \Omega_{\eps} \rightarrow \mathbb{R}^3$ is the deformation of the body, $\rho>0$ is the mass density of the material, $\mathcal{E}_{\eps}$ is the stored elastic energy, $\mathcal{R}_{\eps}$ is the pseudopotential of dissipation, and $f$ is the density of the external load. As customary, $\partial_t$ and $\partial_{tt}$ denote the first and the second time derivatives, $D$ denotes the G\^{a}teaux derivative, and $D_2$ the G\^{a}teaux derivative with respect to the second argument.

	The elastic energy $\mathcal{E}_{\eps}$ depends on the first and the second deformation gradients. The second-gradient term models the so-called nonsimple material behavior, while an additional term, which blows up as the determinant of the deformation gradient approaches zero, ensures the orientation preservation of admissible deformations and an infinite resistance of the material to infinite compression. The dissipative mechanism is captured by the pseudopotential $\mathcal{R}_{\eps}$, which accounts for the internal friction of the material by coupling the deformation gradient with the strain rate, as expected in the large-strain regime. Combined with $\mathcal{E}_\eps$, it results in a Kelvin-Voigt rheological model which allows for both the static frame-indifference of the energy and the dynamic frame-indifference of the dissipation \cite{Antman}.
	
	The global injectivity of the deformations is not enforced by imposing the so-called Ciarlet-Ne\v{c}as condition \cite{CiarletNecas}, as its membrane counterpart would fail to prevent self-interpenetration (for the inertial evolution with (self-)contact under this condition, see \cite{CGK,CGK2}). Instead, we require the deformations to satisfy a {\it uniform averaged invertibility condition} (see \cite{Davoli}). This condition allows the vertical fibers of the material to intersect, but requires their averages to remain at a distance bounded from below by a multiple of the distance between the corresponding points of the cross-section. In the membrane limit, it delivers a lower bound of the same type for the limit deformation, which prevents self-interpenetration of the membrane.
	
	The dynamics described in \eqref{eq1}--\eqref{eq11} arises from the balance of all forces acting on the body, accounting for the external load, the inertial reaction, and the constitutive elastic and viscous stresses.
	When the applied loads vary slowly over time, the inertial term becomes negligible and the equation of the system may be studied via a (quasi)static evolution approximation. In this case, a number of results have been established by means of variational techniques (see \cite{Ambrosio,DeGiorgi,martin,MielkeRoub} for an overview).
	The dimension-reduction approach, initially introduced for plate models \cite{CiarletDesuynder,CiarletKesavan}, is widely applied in modern variational mechanics. Following the foundational works by \textsc{Le Dret \& Raoult} \cite{LeDretRaoult} and \textsc{Bhattacharya \& James} \cite{Bhattacharya}, this methodology has been employed to derive lower-dimensional theories from full-dimensional elasticity (see also \cite{Acerbi,CasarinoPercivale,FrieseckeJamesMuller} among others), where characterizing the relation between the bulk theory and the corresponding membrane theory remains central \cite{FrieseckeJamesMuller2,LecumberryMuller,Pantz}. In the viscoelastic setting, plate and beam theories were derived in the quasistatic regime in \cite{FriedrichKruzik,FriedrichMachill}. By means of $\Gamma$-convergence arguments \cite{Braides,dalMaso}, the minimizers of the bulk energy are typically proved to converge to the minimizers of the reduced-dimensional model in the limit of vanishing thickness.		
		
	Dynamical problems, where inertial effects are not neglected, are far less understood. The presence of the inertial term introduces significant challenges, as variational techniques established for (quasi)static elasticity fail to apply directly to the dynamical framework, requiring additional arguments tailored for the time-dependent setting. For instance, the time-dependent von K\'arm\'an plate equation was derived from three-dimensional nonlinear elastodynamics in \cite{AbelsMoraMuller1}, and large time existence for thin vibrating plates was proved in \cite{AbelsMoraMuller2}.
	Evolutionary $\Gamma$-convergence (see \cite{MielkeRoub} for an overview) and $\Gamma$-convergence of gradient flows (see \cite{Ortner,Serfaty,Stefanelli} among others) are two examples of dynamical variational approaches. The former was introduced for rate-independent systems by {\sc Mielke, \sc Roub\'i\v{c}ek, \& \sc Stefanelli} \cite{MielkeRoubStefanelli}, the latter by {\sc Sandier \& Serfaty} \cite{Serfaty}. For gradient flows, a natural time discretization is given by the parabolic minimizing movements of {\sc De Giorgi} \cite{DeGiorgi}, in which each time step is a minimization problem.
	
	In this paper, we take advantage of the hyperbolic minimizing movements approach introduced by {\sc Bene\v{s}ov\'a, \sc Kampschulte, \& Schwarzacher} \cite{BenesovaJEMS,BenesovaNARWA} to characterize a notion of variational convergence for hyperbolic equations, allowing for the rigorous derivation of the corresponding membrane model. Two time-scales $h$ and $\tau$ are introduced, and the problem of finding solutions to \eqref{eq1}--\eqref{eq11} is recast into a hierarchy of auxiliary problems. For every $\eps>0$ and on each time interval of length $h$, one first considers a family of minimization problems on an inner time grid of step $\tau$ (the {\it incremental problem}). The natural candidate limit as $\tau\to0$ is a weak solution on that interval of an equation in which the inertial term is replaced by a difference quotient, over the outer time step $h$, involving the velocity on the previous interval (the {\it sequential problem}). Concatenating such solutions over the whole time interval leads to a single equation on $[0,T]$ with a time delay $h$ in the inertial term (the {\it time-delayed problem}), whose formal limit as $h\to0$ is \eqref{eq1}--\eqref{eq11}.
	
	Our main results can be summarized as follows (see also Figure~\ref{fig:limits}).

\medskip
\noindent{\bf Main Theorem} (informal). {\it Assume {\rm (A0)--(A3)} of Subsection~\ref{Assumptions}, and let $T \leq T_0$, where $T_0>0$ is independent of $\eps$, $h$, and $\tau$. Let $y_\eps$ denote the deformation rescaled to the domain $\Omega = \omega \times (-1/2,1/2)$ of unit thickness. Then, up to subsequences, the following statements hold.
	\begin{enumerate}
		\item[{\rm i)}] For fixed $\eps$, the two-time-scale scheme admits solutions, which converge, as $\tau\to0$ and then $h\to0$, to a weak solution of \eqref{eq1}--\eqref{eq11} {\rm (Theorem \ref{Theorem1})}.
		\item[{\rm ii)}] As $\eps\to0$, the solutions of the bulk incremental, sequential, and time-delayed problems constructed in {\rm i)} converge to solutions of the corresponding membrane problems. These are formulated in terms of a pair $(y,b)$, where $y:\omega\to\R^3$ is the membrane deformation and $b$ is the limit of $\eps^{-1}\partial_3 y_\eps$, and $\nabla_\eps y_\eps \to (\nabla' y\vert b)$ uniformly in space and time for the sequential and time-delayed problems {\rm (Theorems \ref{Theorem2}, \ref{Theorem3}, and \ref{Theorem5}, and Corollary \ref{CorConcat})}.
		\item[{\rm iii)}] As $h\to0$, the membrane time-delayed solutions converge to a measure-valued solution $(y,b,\nu)$ of the membrane dynamics
		\begin{align*}
			&\rho\, \partial_{tt}y + D_1 E(y,b) + D_2 R(y,\partial_t y,b,\partial_t b) = f_0 \quad \text{in}\; (0,T)\times\omega,\\
			&D_2 E(y,b) + D_4 R(y,\partial_t y,b,\partial_t b) = 0 \quad \text{in}\; (0,T)\times\omega,
		\end{align*}
		where $E$ and $R$ are the membrane energy and dissipation of Subsection~\ref{Membrane}, $f_0$ is the limit load of Assumption {\rm (A3)}, and the second-order stress is described by the Young measure $\nu$ generated by the second gradients {\rm (Theorem \ref{Theorem6})}. Moreover, $\lvert y(t,x') - y(t,z') \rvert \geq c_0 \lvert x'-z' \rvert$ for every $t \in [0,T]$ and all $x',z' \in \omega$, so that the limit membrane does not interpenetrate.
\end{enumerate}}
\medskip
	
	For fixed $\eps$, we prove that the incremental problem admits minimizers, that their time interpolants converge as $\tau\to0$ to a solution of the sequential problem, that the concatenation of such solutions solves the time-delayed problem, and that, up to subsequences, the solutions of the latter converge as $h\to0$ to a solution of \eqref{eq1}--\eqref{eq11} (Theorem \ref{Theorem1}). Existence of solutions to \eqref{eq1}--\eqref{eq11} by means of the same scheme was established in \cite[Theorem 3.2, p.~4653]{BenesovaJEMS} for a regularized version of the model, in which the energy and the dissipation are augmented by higher-order gradient terms of the deformation and of its velocity, respectively. Although these terms vanish in the continuous-time limit, they are not compatible with our dimension-reduction analysis. Indeed, constructing recovery sequences for the incremental functionals step by step in time would require the previous incremental solution to converge strongly in the higher-order norm introduced by the regularization, and such a convergence does not seem to be available. For this reason, Theorem \ref{Theorem1} is proved here directly for the unregularized scheme, on a time interval $[0,T_0]$ on which the averaged invertibility constraint remains inactive, with $T_0$ independent of $\eps$, $h$, and $\tau$ (see Subsection \ref{MainRes}). We then perform the limit $\eps\to0$ in the incremental problem, by $\Gamma$-convergence of the corresponding functionals (Theorem \ref{Theorem2}), and in the sequential and time-delayed problems (Theorems \ref{Theorem3} and \ref{Theorem5}). At each incremental step, $\tau$ is fixed, and the discrete velocity is a difference quotient of two deformations whose energy is bounded independently of $\eps$. Hence, provided that the data converge, the $\Gamma$-convergence relies only on these energy bounds. In the sequential and time-delayed problems, instead, compactness in time is needed, while the dissipation provides estimates independent of $\eps$ only on the time derivative of the Cauchy-Green tensor, and not on the velocity gradient (see Remark~\ref{RemKorn}). From the second outer time step on, the data of the incremental problem are given by the sequential solution on the previous interval, and their convergence relies on the same compactness. At the membrane level, where the same hierarchy of problems is defined, the solution of the membrane time-delayed problem obtained in this limit coincides with the concatenation of the solutions of the membrane sequential problem (Corollary~\ref{CorConcat}). The two-time-scale scheme could in principle be analyzed directly at the membrane level. We do not follow this route, since, from the second outer time step on, the membrane incremental problem takes its initial data from the limit of the bulk solutions on the previous interval, and a membrane scheme analyzed on its own would not be related to the bulk solutions (see Remark~\ref{RemMain}.$(i)$). Performing the limit $\eps\to0$ directly on \eqref{eq1}--\eqref{eq11}, as well as beyond the time $T_0$, remains open. Figure~\ref{fig:limits} summarizes the limits performed in the paper.
	
	\begin{figure}
		\centering
		\begin{tikzpicture}[
			obj/.style={minimum width=2.2cm},
			lbl/.style={font=\scriptsize},
			conv/.style={thick,-{Stealth[length=2mm]}},
			interp/.style={thick,densely dotted,-{Stealth[length=2mm]}},
			concat/.style={thick,-{Stealth[length=2mm]},decorate,decoration={snake,amplitude=0.5mm,segment length=2.5mm,post length=2mm}}
			]
			\node[obj] (B1) at (0,0)     {$y_\eps^{l,k}$};
			\node[obj] (B2) at (3.6,0)   {$y_\eps^{l}$};
			\node[obj] (B3) at (7.2,0)   {$y_\eps^{(h)}$};
			\node[obj] (B4) at (10.8,0)  {$y_\eps$};
			\node[obj] (M1) at (0,-3)    {$(y^{l,k},b^{l,k})$};
			\node[obj] (M2) at (3.6,-3)  {$(y^{l},b^{l})$};
			\node[obj] (M3) at (7.2,-3)  {$(y^{(h)},b^{(h)})$};
			\node[obj] (M4) at (10.8,-3) {$(y,b,\nu)$};
			
			\draw[interp] (B1) -- (B2) node[midway,above,lbl]{$\tau\to0$} node[midway,below,lbl]{Thm.~\ref{Theorem1}.(ii)};
			\draw[concat] (B2) -- (B3) node[midway,above=1mm,lbl]{Thm.~\ref{Theorem1}.(iii)};
			\draw[conv]   (B3) -- (B4) node[midway,above,lbl]{$h\to0$} node[midway,below,lbl]{Thm.~\ref{Theorem1}.(iv)};
			
			\draw[concat] (M2) -- (M3) node[midway,above=1mm,lbl]{Cor.~\ref{CorConcat}};
			\draw[conv]   (M3) -- (M4) node[midway,above,lbl]{$h\to0$} node[midway,below,lbl]{Thm.~\ref{Theorem6}};
			
			\draw[conv] (B1) -- (M1) node[midway,left,lbl,align=right]{$\eps\to0$\\Thm.~\ref{Theorem2}};
			\draw[conv] (B2) -- (M2) node[midway,left,lbl,align=right]{$\eps\to0$\\Thm.~\ref{Theorem3}};
			\draw[conv] (B3) -- (M3) node[midway,left,lbl,align=right]{$\eps\to0$\\Thm.~\ref{Theorem5}};
		\end{tikzpicture}
		\caption{Overview of the limits performed in the paper. The top row refers to the bulk problem and the bottom row to the membrane problem. Here $y_\eps$ denotes the deformation $u$ rescaled to the domain $\Omega=\omega\times(-1/2,1/2)$ of unit thickness (see Subsection~\ref{Configuration}), while the membrane is described by the pair $(y,b)$, where $y$ is the membrane deformation and $b$ is the limit of the rescaled transverse derivative $\eps^{-1}\partial_3 y_\eps$. The superscripts $(l,k)$, $l$, and $(h)$ refer to the incremental, sequential, and time-delayed problems, respectively, and the last column to the continuous-time problem, which at the membrane level is solved in the measure-valued sense, $\nu$ being the Young measure generated by the second gradients. Solid arrows denote the convergence of solutions, whereas the dotted arrow denotes the convergence of the time interpolants of the incremental minimizers. Wavy arrows indicate the concatenation of the sequential solutions into the global interpolant, which is not a limit. At the membrane level, no limit as $\tau\to0$ is performed, since the solutions of the sequential problem are obtained as limits of the bulk ones (see Remark~\ref{RemMain}.$(i)$).}
		\label{fig:limits}
	\end{figure}

	The paper is organized as follows. Section \ref{Model} introduces the notation, the thin, scaled, and membrane configurations of the model together with the underlying assumptions, and the two-time-scale approximation scheme. Section \ref{sec:pre} introduces the variational framework, collects some auxiliary results, defines the notions of solutions, and states our main results (Theorems \ref{Theorem1}, \ref{Theorem2}, \ref{Theorem3}, \ref{Theorem5}, and \ref{Theorem6}, together with Corollary \ref{CorConcat}), which are proved in the remaining sections. Section \ref{sec:4} establishes existence of solutions to the bulk two-time-scale scheme (Theorem \ref{Theorem1}), and Section \ref{sec:5} proves its $\Gamma$-convergence to the membrane scheme (Theorem \ref{Theorem2}). Section \ref{S6} identifies the membrane limit of the sequential problem (Theorem \ref{Theorem3}). Section \ref{ProofT5} identifies the membrane limit of the time-delayed problem (Theorem \ref{Theorem5}) and its relation with the membrane sequential problem (Corollary \ref{CorConcat}). Finally, Section \ref{ProofT6} proves the convergence, as $h\to0$, of the solutions of the membrane time-delayed problem to a measure-valued solution of the membrane continuous-time problem (Theorem \ref{Theorem6}).
    
	\section{Model}
	\label{Model}
	In this section, we fix the notation (Subsection \ref{Notation}), introduce the thin, scaled, and membrane configurations of the model (Subsections \ref{setup}, \ref{Configuration}, and \ref{Membrane}), and present the two-time-scale approximation scheme (Subsection \ref{twoTSA}).
	
	\subsection{Notation}
		\label{Notation}
		Points of $\mathbb{R}^3$ are written as $x=(x',x_3)$, with $x'=(x_1,x_2)$. Throughout the paper, $\omega\subset\mathbb{R}^2$ is a bounded Lipschitz domain and, for every $\eps>0$, the reference configuration of the material is $\Omega_\eps = \omega\times(-\eps/2,\eps/2)$, whose lateral boundary is $P_\eps = \partial\omega\times(-\eps/2,\eps/2)$. We set $\Omega := \Omega_1$ and $P := P_1$ (see Figure~\ref{figure}), and we denote by ${\rm id}_\eps:\bar{\Omega}\to\mathbb{R}^3$ and ${\rm id}_0:\bar{\omega}\to\mathbb{R}^3$ the maps ${\rm id}_\eps(x) := (x',\eps x_3)$ and ${\rm id}_0(x') := (x',0)$.
		
		We denote by $\mathbb{M}^{m\times n}$ the space of real $m\times n$ matrices and by $\mathbb{M}^{m\times n\times k}$ the space of real third-order tensors, endowed with the Frobenius norm $\lvert\cdot\rvert$, namely, the square root of the sum of the squares of all entries. The scalar product is denoted by $\cdot$ for vectors and by $:$ for matrices and third-order tensors. We adopt the column notation, namely, $(v_1\vert v_2\vert v_3)$ is the matrix whose $i$-th column is $v_i\in\mathbb{R}^3$. For $F\in\mathbb{M}^{3\times3}$, $F^T$ is the transpose and ${\rm cof}\,F$ the cofactor matrix of $F$. Finally, ${\rm Id}\in\mathbb{M}^{3\times3}$ denotes the identity matrix, and we note that $\nabla'{\rm id}_0 = (e_1\vert e_2)\in\mathbb{M}^{3\times2}$, with $e_1,e_2$ the first two vectors of the canonical basis of $\mathbb{R}^3$.
		
		For a map $v:\Omega\to\mathbb{R}^3$, we write $\nabla v = (\nabla' v\vert\partial_3 v)$, where $\nabla' v = (\partial_1 v\vert\partial_2 v)\in\mathbb{M}^{3\times2}$ is the planar gradient. The second gradient $\nabla^2 v$ is the third-order tensor whose $(j,k)$-th entry is the vector $\partial_{jk} v\in\mathbb{R}^3$, for $j,k=1,2,3$, and we write it in block form as
		\begin{equation*}
			\nabla^2 v=
			\begin{pmatrix}
				(\nabla')^2 v & \nabla' \partial_3 v\\
				(\nabla' \partial_3 v)^T & \partial_{33} v
			\end{pmatrix},
		\end{equation*}
		where $(\nabla')^2 v\in\mathbb{M}^{3\times2\times2}$ collects the planar second derivatives and $\nabla'\partial_3 v = (\partial_{13}v\vert\partial_{23}v)$. For every $\eps>0$, the scaled gradient and the scaled second gradient are defined as
		\begin{equation*}
			\nabla_{\eps} v = (\nabla' v \vert \eps^{-1} \partial_3 v), \qquad 
			\nabla_{\eps}^2 v=
			\begin{pmatrix}
				(\nabla')^2 v & \eps^{-1}\nabla' \partial_3 v\\
				\eps^{-1}(\nabla' \partial_3 v)^T & \eps^{-2}\partial_{33} v
			\end{pmatrix},
		\end{equation*}
		so that $\lvert \nabla_\eps^2 v \rvert^2 = \lvert (\nabla')^2 v \rvert^2 + 2 \lvert \eps^{-1}\nabla'\partial_3 v \rvert^2 + \lvert \eps^{-2}\partial_{33} v \rvert^2$.
		
		We denote by $L^p$ and $W^{k,p}$ the standard Lebesgue and Sobolev spaces on any of the domains $\Omega_\eps$, $\Omega$, and $\omega$, we set $H^k := W^{k,2}$, and we omit the target space $\mathbb{R}^3$ when no confusion arises. The subscript $0$, as in $W_0^{k,p}$, indicates vanishing trace on the lateral boundary, namely, on $P_\eps$, $P$, or $\partial\omega$, and $H^{-1}$ is the dual of $H_0^1$. We write $\lVert\cdot\rVert$ and $(\cdot,\cdot)$ for the norm and the scalar product in $L^2$, the domain being clear from the context. For a Banach space $X$, we denote by $X'$ its dual and by $\langle\cdot,\cdot\rangle_X$ the duality pairing between $X'$ and $X$. Given a further Banach space $Y$, we write $X\hookrightarrow Y$ and $X\overset{c}{\hookrightarrow}Y$ for the continuous and the compact embedding of $X$ into $Y$, respectively. For $T>0$, the spaces $L^p(0,T;X)$, $H^1(0,T;X)$, and $C^0([0,T];X)$ are the usual Bochner spaces, and $C_w([0,T];X)$ is the space of weakly continuous maps from $[0,T]$ to $X$. The subscript $c$ indicates compact support, as in $C_c^\infty(\omega)$, $C_c^0(\mathbb{R}^m)$, or $C_c^1([0,T))$, the latter consisting of $C^1$ functions on $[0,T)$ which vanish in a neighborhood of $T$.
		
		Functions independent of $x_3$ are identified with functions defined on $\omega$. Accordingly, when a sequence of functions defined on $\Omega$ converges to a function independent of $x_3$, we state the convergence in the corresponding space over $\omega$ or $\bar{\omega}$, meaning convergence in the space over $\Omega$ or $\bar{\Omega}$ with the limit extended constantly in $x_3$. For such functions, the restriction to $P$ stands for the trace on $\partial\omega$. Finally, $c$ denotes a generic positive constant, which may change from line to line.

	\subsection{Thin configuration}
	\label{setup}
	The elastic energy $\mathcal{E}_\eps$ and the dissipation $\mathcal{R}_\eps$ are given by
	\begin{align*}
		&\mathcal{E}_\eps: W^{2,q}(\Omega_\eps;\R^3) \to \mathbb{R} \cup \{\infty\},\\[3mm]
		&\mathcal{E}_\eps(u) = \mathcal{K}_\eps(u) + \mathcal{J}_\eps(u) + \mathcal{T}_\eps(u),\\[3mm]
		&\qquad\qquad\qquad \mathcal{K}_\eps(u) = \displaystyle\int_{\Omega_\eps} \frac{1}{8} \lvert \nabla u^T \nabla u  - {\rm Id} \rvert_{\mathcal{C}}^2 \; \d x,\\[2mm]
		&\qquad\qquad\qquad \mathcal{J}_\eps(u) = \begin{cases}
			\displaystyle\int_{\Omega_\eps} \frac{1}{(\det \nabla u )^a}  \; \d x &\text{if} \; u \in \mathcal{A}_\eps,\\[5mm]
			\infty &\text{else},
		\end{cases}\\[2mm]
		&\qquad\qquad\qquad \mathcal{T}_\eps(u) = \displaystyle\int_{\Omega_\eps} \frac{1}{q} \lvert \nabla^2 u \rvert^q \; \d x,	\\[3mm]
		&\mathcal{R}_\eps: W^{2,q}(\Omega_\eps;\R^3) \times H^1(\Omega_\eps;\R^3) \to \mathbb{R},\\[3mm]
		&\mathcal{R}_\eps(u,v) = \int_{\Omega_\eps} \lvert \nabla v^T \nabla u  + \nabla u ^T \nabla v \rvert^2 \; \d x,
	\end{align*}
	where the set of admissible deformations $\mathcal{A}_\eps$ is defined, for all $\eps >0$ and a given constant $c_0>0$, as
	\begin{align*}
		\mathcal{A}_{\eps} = \biggl\{ &u \in W^{2,q}(\Omega_\eps;\mathbb{R}^3) \; : \; \det \nabla u (x) > 0 \text{ for a.e. }x \in \Omega_{\eps}, \\ &\; u(x) = x \; \text{ and } \; \nabla u(x) = {\rm Id} \; \text{ for } x \in P_\eps, \nonumber\\
		& \text{ and for all } x',z'\in\omega: \,  \biggl\lvert	\fint_{-\eps/2}^{\eps/2} u(x',x_3) \; \d x_3 - \fint_{-\eps/2}^{\eps/2} u(z',x_3) \; \d x_3	 \biggr\rvert \geq c_0 \lvert x' - z' \rvert \biggr\}.
	\end{align*}
	The first term $\mathcal{K}_\eps$ in the definition of $\mathcal{E}_{\eps}$ is the St.~Venant-Kirchhoff energy \cite{martin}, where $\lvert \nabla u^T \nabla u - {\rm Id} \rvert_{\mathcal{C}}^2 = \mathcal{C} (\nabla u^T \nabla u - {\rm Id}) : (\nabla u^T \nabla u - {\rm Id})$ for a positive definite tensor $\mathcal{C}$ of elastic constants. The exponents $a$ and $q$ in $\mathcal{J}_\eps$ and $\mathcal{T}_\eps$ ensure the strict positivity of the determinant of deformations with finite energy \cite{healeyKromer} (see Assumption (A0)). In the definition of $\mathcal{A}_\eps$, the Ciarlet-Ne\v{c}as condition is replaced by the uniform averaged invertibility condition \cite{Davoli}, and the constant $c_0$ is a datum of the problem (see Assumption (A0)).
	
	\subsubsection{Assumptions}
		\label{Assumptions}
		We assume that the data of the problem satisfy
		$$
		\textbf{(A0)} \; T,q,a,c_0 \in \mathbb{R}, \; \text{s.t. } T>0, \; q>3, \; a > (3q)/(q-3), \; c_0 > 0,
		$$
		and, throughout the paper, we fix $\alpha \in (0,1-3/q)$, so that $W^{2,q}$ and $W^{1,q}$ embed compactly into $C^{1,\alpha}$ and $C^{0,\alpha}$, respectively. Moreover, we assume that the initial deformation is admissible and that its average elastic energy is finite independently of $\eps$, and that, as $\eps\to0$, the initial velocity and the external load become independent of the thickness variable, in average and uniformly, respectively. More precisely, there exist constants $c_1>c_0$ and $c_2>0$, $v_0 \in L^2(\omega;\R^3)$, and $f_0 \in C^0([0,T]\times\bar{\omega};\R^3)$ such that
		\begin{align*}
			&\textbf{(A1)} \; u_0 \in \mathcal{A}_\eps \; \text{s.t. } \mathcal{E}_\eps(u_0) \leq c_2 \text{ meas}(\Omega_{\eps}) \; \text{and} \\
			&\qquad \biggl\lvert \fint_{-\eps/2}^{\eps/2} u_0(x',x_3) \; \d x_3 - \fint_{-\eps/2}^{\eps/2} u_0(z',x_3) \; \d x_3 \biggr\rvert \geq c_1 \lvert x' - z' \rvert \; \text{for all } x',z' \in \omega,
		\end{align*}
		$$
		\textbf{(A2)} \; w_0 \in L^2(\Omega_{\eps};\R^3) \; \text{s.t. } \frac{1}{\text{meas}(\Omega_\eps)} \int_{\Omega_\eps} \lvert w_0(x) - v_0(x') \rvert^2 \; \d x \to 0 \; \text{as } \eps \to 0,
		$$
		$$
		\textbf{(A3)} \; f \in C^0([0,T] \times \bar{\Omega}_{\eps};\R^3) \; \text{s.t. } \sup_{(t,x) \in [0,T]\times\bar{\Omega}_\eps} \lvert f(t,x) - f_0(t,x') \rvert \to 0 \; \text{as } \eps \to 0.
		$$
		Observe that the bound in Assumption (A1) reflects the extensive property of the stored elastic energy and that the lack of such a condition would allow for a blowup of the energy in the membrane limit. The second condition in (A1) requires the initial deformation to satisfy the averaged invertibility condition with a constant strictly larger than the one in $\mathcal{A}_\eps$. Assumption (A2) implies, in particular, that the average kinetic energy of the initial configuration is finite independently of $\eps$ (see Subsection \ref{SubIn}).

	\subsection{Scaled configuration}
	\label{Configuration}
	
	\begin{figure}
		\centering
		\begin{tikzpicture}
			\draw[thick] (1,-0.2) to[closed, curve through =
			{ (0.9,-0.8) (2,-1.3) (3,-1.2) (4,-1) (5,-0.2) (4,1) (3,1.3) (2,1.4) (1,1.3) (0,0.8) (-0.3,0.2) }] (0.2,0);
			
			\fill[white] (-0.32,0.27) -- (-0.32,3) -- (5.01,3) -- (5.01,0);
			
			\draw[thick, fill=white] (1,0) to[closed, curve through =
			{ (0.9,-0.6) (2,-1.1) (3,-1) (4,-0.8) (5,0) (4,1.2) (3,1.5) (2,1.6) (1,1.5) (0,1) (-0.3,0.4) }] (0.2,0.2);

			\draw[thick] (5.01,-0.2) -- (5.01,0.25);
			\draw[thick] (-0.32,0.27) -- (-0.33,0.6);
			\draw[thick] (0.9,-0.8) -- (0.9,-0.52);
			
			
			\draw[name path=A, thick] (7,-5.2) to[closed, curve through =
			{ (6.9,-5.8) (8,-6.3) (9,-6.2) (10,-6) (11,-5.2) (10,-4) (9,-3.7) (8,-3.6) (7,-3.7) (6,-4.2) (5.7,-4.8) }] (6.2,-5);
			
			\fill[white] (5.68,-4.73) -- (5.68,-2) -- (11.01,-2) -- (11.01,-5);
			
			\draw[name path=B, thick, fill=white] (7,-4.2) to[closed, curve through =
			{ (6.9,-4.8) (8,-5.3) (9,-5.2) (10,-5) (11,-4.2) (10,-3) (9,-2.7) (8,-2.6) (7,-2.7) (6,-3.2) (5.7,-3.8) }] (6.2,-4);
			
			\fill[white] (6.9,-5.8) -- (6.9,-5) -- (7.1,-5.2);
			
			\draw[thick] (11.01,-5) -- (11.01,-4.00);
			\draw[thick] (5.68,-4.73) -- (5.67,-3.72);
			\draw[thick] (6.9,-5.8) -- (6.9,-4.8);
			
			
			\draw [stealth-stealth](-0.62,0.27) -- (-0.62,0.6) node[midway,left] () {$\eps$};
			\draw [stealth-stealth](11.31,-5) -- (11.31,-4.00) node[midway,right] () {$1$};
			\draw [->,thick] (5.5,0) to [bend left=30]  node [midway,right]  () {$\theta_{\eps}^{-1}$} (7.3,-2.1);
			
			
			\draw (2.5,0.5) node[anchor=north east] {$\omega$};
			\draw (8.5,-4) node[anchor=north east] {$\omega$};
			
			\draw (0.4,1.9)  node[anchor=north east] {$\Omega_{\eps}$};
			\draw (11.01,-2.6)  node[anchor=north east] {$\Omega$};
			
		\end{tikzpicture}
		\caption{Change of variables: thin and scaled domains}
		\label{figure}
	\end{figure}
	
	To perform the dimension reduction analysis, we map the thin domain $\Omega_{\eps}$ onto the unit-thickness cylinder $\Omega$ (see Figure~\ref{figure}). Let $\theta_{\eps}:\Omega \rightarrow \Omega_{\eps}$ be given by $\theta_{\eps}(x) = (x',\eps x_3)$, and introduce the rescaled deformation $y_{\eps} = u \circ \theta_{\eps}$ as well as the rescaled data $f_{\eps} = f \circ \theta_{\eps}$, $y_{0\eps}=u_0 \circ \theta_\eps$, and $v_{0\eps} = w_0 \circ \theta_{\eps}$. We also define the scaled elastic energy and the scaled dissipation as the energy and the dissipation per unit thickness, namely, $E_\eps(y) := \eps^{-1}\mathcal{E}_\eps(y\circ\theta_\eps^{-1})$ and $R_\eps(y,v) := \eps^{-1}\mathcal{R}_\eps(y\circ\theta_\eps^{-1},v\circ\theta_\eps^{-1})$. Since $(\nabla u)\circ\theta_\eps = \nabla_\eps y_\eps$ and $(\nabla^2 u)\circ\theta_\eps = \nabla_\eps^2 y_\eps$, a change of variables gives
		\begin{align}
			& E_\eps: W^{2,q}(\Omega;\R^3) \to \mathbb{R} \cup \{\infty\},\label{defEA}\\[3mm]
			&E_\eps(u) = K_\eps(u) + J_\eps(u) + T_\eps(u),\label{defE}\\[3mm]
			&\qquad\qquad\qquad K_\eps(u) = \displaystyle\int_{\Omega} \frac{1}{8} \lvert \nabla_\eps u^T \nabla_\eps u  - {\rm Id} \rvert_{\mathcal{C}}^2 \; \d x,	\label{defEComp1}\\[2mm]
			&\qquad\qquad\qquad J_\eps(u) = \begin{cases}
				\displaystyle\int_{\Omega} \frac{1}{(\det \nabla_\eps u )^a}  \; \d x &\text{if} \; u \in A_\eps,\\[5mm]
				\infty &\text{else},
			\end{cases}\label{defEComp}\\[2mm]
			&\qquad\qquad\qquad T_\eps(u) = \displaystyle\int_{\Omega} \frac{1}{q} \lvert \nabla_\eps^2 u \rvert^q \; \d x,	\label{defEComp2}\\[3mm]
			& R_\eps: W^{2,q}(\Omega;\R^3) \times H^1(\Omega;\R^3)  \to \mathbb{R},\\[3mm]
			&R_\eps(u,v) = \int_{\Omega} \lvert \nabla_\eps v^T \nabla_\eps u  + \nabla_\eps u ^T \nabla_\eps v \rvert^2 \; \d x,\label{defR}
		\end{align}
		where the set $A_\eps = \{ u \circ \theta_\eps \, : \, u \in \mathcal{A}_\eps \}$ of scaled admissible deformations reads as
		\begin{align*}
			A_{\eps} = \biggl\{ &u \in W^{2,q}(\Omega;\mathbb{R}^3) \; : \; \det \nabla_\eps u (x) > 0 \text{ for  a.e. } x \in \Omega, \\ &\; u = {\rm id}_\eps \; \text{ and } \; \nabla_\eps u = {\rm Id} \; \text{ on } P, \nonumber\\
			& \text{ and for all } x', z'\in\omega: \biggl\lvert	\int_{-1/2}^{1/2} u(x', x_3) \; \d x_3 - \int_{-1/2}^{1/2} u(z',x_3) \; \d x_3	 \biggr\rvert \geq c_0 \lvert x' - z' \rvert \biggr\}.
		\end{align*}
		Note that ${\rm id}_\eps = \theta_\eps$, so that the boundary conditions in $A_\eps$ are those of $\mathcal{A}_\eps$ in the scaled variables. For $u \in C^0(\bar{\Omega};\R^3)$, we set $\bar{u}(x') := \int_{-1/2}^{1/2} u(x',x_3) \, \d x_3$ and
		$$
		\Lambda(u) := \inf_{x',z' \in \omega, \; x' \neq z'} \frac{\lvert \bar{u}(x') - \bar{u}(z') \rvert}{\lvert x' - z' \rvert},
		$$
		so that the averaged invertibility condition in $A_\eps$ reads $\Lambda(u) \geq c_0$. Testing \eqref{eq1} with $\phi\circ\theta_\eps^{-1}$, where $\phi$ is a test function on $\Omega$, changing variables, and dividing by $\eps$, we obtain the scaled version of the problem \eqref{eq1}--\eqref{eq11}, namely,
		\begin{align}
			&\rho \partial_{tt}y_{\eps} + D E_{\eps} (y_{\eps}) + D_2 R_{\eps} (y_{\eps},\partial_t y_{\eps}) = f_{\eps} \quad \text{in}\; (0,T)\times\Omega,\label{eq1S}\\
			&y_{\eps}(0) = y_{0\eps} \quad \text{and} \quad \partial_t y_{\eps}(0)=v_{0\eps} \quad \text{in} \; \Omega. \label{eq11S}
	\end{align}

	\subsubsection{Scaled data}
	\label{SubIn}
	For later use in the proofs, we reformulate Assumptions (A1)--(A3) on the $\eps$-independent domain $\Omega$.
	Note that we will still refer to them simply as Assumptions (A1)--(A3), as no additional requirements are imposed. The derivation below relies on Lemma \ref{ClosAB}, which is stated and proved in Subsection \ref{Aux}, independently of the present subsection.
		
		Assumption (A1) delivers $y_{0\eps} \in A_\eps$, $\Lambda(y_{0\eps}) \geq c_1$, and $E_\eps(y_{0\eps}) = \eps^{-1}\mathcal{E}_\eps(u_0) \leq c_2 \lvert \omega \rvert$. Hence, by Lemma \ref{ClosAB} with $c = c_1$, there exist $y_0 \in W^{2,q}(\omega;\R^3)$, $b_0 \in W^{1,q}(\omega;\R^3)$, and $d_0 \in L^q(\Omega;\R^3)$ such that, up to a subsequence, as $\eps \to 0$,
		\begin{equation}
			\label{y0conv}
			y_{0\eps} \rightharpoonup y_0 \quad \text{in} \quad W^{2,q}(\omega) \qquad \text{and} \qquad y_{0\eps} \to y_0 \quad \text{in} \quad C^{1,\alpha}(\bar{\omega}),
		\end{equation}
		\begin{equation}
			\label{b0conv}
			\nabla_\eps y_{0\eps} \to (\nabla' y_0 \vert b_0) \qquad \text{in} \quad C^{0,\alpha}(\bar{\omega}),
		\end{equation}
		and
		\begin{equation}
			\label{d0conv}
			\nabla_\eps^2 y_{0\eps} \rightharpoonup \begin{pmatrix}
				(\nabla')^2 y_0 & \nabla' b_0 \\
				(\nabla' b_0)^T & d_0
			\end{pmatrix} \quad \text{in}\quad L^q(\Omega).
		\end{equation}
		In what follows, every limit as $\eps\to0$ is taken along this subsequence, or along a further subsequence of it, which we do not relabel. By the same lemma,
		\begin{align*}
			&y_0\rvert_P = {\rm id}_0\rvert_P, \quad \nabla' y_0\rvert_P= \nabla'{\rm id}_0, \quad b_0\rvert_P=(0,0,1)^T,\\
			&\lvert y_0(x') - y_0(z') \rvert \geq c_1 \lvert x' - z' \rvert \quad \text{for all } x',z' \in \omega,
		\end{align*}
		and $\det(\nabla' y_0\vert b_0) \geq \mu > 0$ in $\bar{\omega}$.
	
	By the change of variables $\theta_\eps$, Assumption (A2) results in
	$$
	\lVert v_{0\eps} - v_0 \rVert^2_{L^2(\Omega)} = \frac{\lvert \omega \rvert}{\text{meas}(\Omega_\eps)} \int_{\Omega_\eps} \lvert w_0(x) - v_0(x') \rvert^2 \; \d x \to 0 \quad \text{as } \eps \to 0,
	$$
	namely,
	\begin{equation}
		\label{v0conv}
		v_{0\eps} \to v_0 \quad \text{in} \quad L^2(\omega).
	\end{equation}
	Similarly, Assumption (A3) reads
	\begin{equation}
		\label{f0conv}
		f_{\eps} \to f_0 \quad \text{in} \quad  C^0([0,T] \times \bar{\omega};\R^3).
	\end{equation}
	In particular, for every $\eps$ sufficiently small, one has $\lVert v_{0\eps} \rVert_{L^2(\Omega)} \leq \lVert v_0 \rVert_{L^2(\omega)} + 1$ and $\lVert f_\eps \rVert_{C^0([0,T]\times\bar{\Omega})} \leq \lVert f_0 \rVert_{C^0([0,T]\times\bar{\omega})} + 1$, and we always consider such values of $\eps$.

	\subsection{Membrane configuration}
	\label{Membrane}
	The membrane is described by a pair $(y,b)$, where $y:\omega\to\mathbb{R}^3$ is the deformation of the mid-surface and $b:\omega\to\mathbb{R}^3$ plays the role of the limit of the scaled transverse derivative $\eps^{-1}\partial_3 y_\eps$ (see Lemma \ref{ScaledComp}). The membrane elastic energy $E$ and the membrane dissipation $R$ are obtained from $E_\eps$ and $R_\eps$ by replacing $\nabla_\eps u$ and $\nabla_\eps v$ with $(\nabla' u\vert\beta)$ and $(\nabla' v\vert w)$, respectively, and $\nabla_\eps^2 u$ with
		$$
		\begin{pmatrix}
			(\nabla')^2 u & \nabla' \beta\\
			(\nabla' \beta)^T & 0
		\end{pmatrix},
		$$
		namely,
		\begin{align}
			&E: W^{2,q}(\omega)\times W^{1,q}(\omega) \to \mathbb{R} \cup \{\infty\},\label{MdefEA}\\[3mm]
			&E(u,\beta) = K(u,\beta) + J(u,\beta) + T(u,\beta),\label{MdefE}\\[3mm]
			&\qquad\qquad\qquad K(u,\beta) = \displaystyle\int_{\omega} \frac{1}{8} \lvert (\nabla' u\vert \beta)^T (\nabla' u \vert \beta) - {\rm Id} \rvert_{\mathcal{C}}^2 \; \d x,	\label{MdefEComp1}\\[2mm]
			&\qquad\qquad\qquad J(u,\beta) = \begin{cases}
				\displaystyle\int_{\omega} \frac{1}{(\det (\nabla' u \vert \beta))^a}  \; \d x &\text{if} \; (u,\beta) \in B,\\[5mm]
				\infty &\text{else},
			\end{cases}\label{MdefEComp}\\[3mm]
			&\qquad\qquad\qquad T(u,\beta) = \displaystyle\int_{\omega} \frac{1}{q} \biggl( \lvert (\nabla')^2 u \rvert^2 + 2 \lvert \nabla' \beta \rvert^2 \biggr)^{q/2}\; \d x,	\label{MdefEComp2}\\[2mm]
			&R: W^{2,q}(\omega) \times H^1(\omega) \times W^{1,q}(\omega) \times L^2(\omega) \to \mathbb{R},\\[3mm]
			&R(u,v,\beta,w) = \int_{\omega} \lvert (\nabla'v \vert w)^T (\nabla' u \vert \beta) + (\nabla' u \vert \beta)^T (\nabla' v \vert w) \rvert^2 \; \d x,\label{MdefR}
		\end{align}
		where the set of admissible membrane deformations $B$ reads as
		\begin{align*}
			B = \biggl\{ &(y,b) \in W^{2,q}(\omega) \times W^{1,q}(\omega) \; : \; \det (\nabla' y(x') \vert b(x')) > 0 \; \text{for a.e. } x' \in \omega,  \nonumber\\[3mm]
			&\; y = {\rm id}_0, \; \nabla' y =  \nabla'{\rm id}_0, \; \text{ and } \; b = (0,0,1)^T \; \text{ on } P, \\ & \text{ and for all } x',z' \in \omega: \lvert	 y(x') - y(z')	 \rvert \geq c_0 \lvert x' - z' \rvert \biggr\}.
		\end{align*}
		The dynamics of the membrane, corresponding to the bulk problem \eqref{eq1S}--\eqref{eq11S}, is then given by
		\begin{align}
			&\rho \partial_{tt}y + D_1 E (y,b) + D_2 R (y,\partial_t y,b,\partial_t b) = f_0 \quad \text{in}\; (0,T)\times\omega,\label{Meq1S}\\[3mm]
			&D_2 E (y,b) + D_4 R (y,\partial_t y,b,\partial_t b) = 0 \quad \text{in}\; (0,T)\times\omega,\label{Meq2S}\\[3mm]
			&y(0) = y_0, \quad b(0) = b_0, \quad \text{and} \quad \partial_t y(0)=v_0 \quad \text{in} \; \omega. \label{Meq11S}
	\end{align}

	The initial data $y_0$, $b_0$, $v_0$ and the load $f_0$ are those obtained in Subsection \ref{SubIn}. In particular, the properties of $(y_0,b_0)$ established there guarantee $(y_0,b_0) \in B$. Moreover, by Lemma \ref{ClosAB}, $K_\eps(y_{0\eps}) \to K(y_0,b_0)$, $J_\eps(y_{0\eps}) \to J(y_0,b_0)$, and $T(y_0,b_0) \leq \liminf_{\eps\to0} T_\eps(y_{0\eps})$. Hence, $E(y_0,b_0) \leq \liminf_{\eps\to0} E_\eps(y_{0\eps}) \leq c$.

	\subsection{Two-time-scale approximation scheme}
	\label{twoTSA}
	We introduce the hyperbolic minimizing movements approach \cite{BenesovaJEMS,BenesovaNARWA}, which is a two-time-scale approximation scheme. Let $\Pi_h$ be a uniform partition of the time interval $[0,T]$ with time step $h = T/N$ ($N \in \mathbb{N}$), namely, $\Pi_h := \{	0= t_0 < t_1 < \dots < t_N = T \}$ with $t_{l} - t_{l-1} = h$ for every $l = 1, \dots, N$. For every $l=1,\dots,N$, let $\Pi_h^\tau := \{ (l-1)h = t_{l_0} < t_{l_1} < \dots < t_{l_M} = lh \}$ be the uniform partition of the time interval $[(l-1)h,lh]$ with time step $t_{l_k} -t_{l_{k-1}} = \tau >0$ for every $k=1,\dots,M=h/{\tau}$ ($M \in \mathbb{N}$). The same partitions are used for both the bulk and the membrane problems.
	
	\subsubsection{Bulk scheme}
	At first, we approximate the problem \eqref{eq1S}--\eqref{eq11S} with a time-delayed problem by replacing the inertial term in \eqref{eq1S} with a difference quotient, namely
\begin{equation}
	\label{tDelay}
	\rho \frac{\partial_t y_\eps- \partial_t y_\eps(\cdot-h)}{h} + D E_{\eps} (y_{\eps}) + D_2 R_{\eps} (y_{\eps},\partial_t y_{\eps}) = f_{\eps} \quad \text{in}\; (0,T)\times\Omega,
\end{equation}
with initial conditions given by
\begin{equation}
	\label{initialVelocity}
	y_{\eps}(0) = y_{0\eps} \quad \text{and} \quad \partial_t y_{\eps}(t-h)=v_{0\eps} \quad \text{for } t \in (0,h).
\end{equation}
	
	Then, we decompose the time-delayed problem \eqref{tDelay}--\eqref{initialVelocity} into $N$ sequential problems. For each $l = 1,\dots, N$, we consider
\begin{equation}
	\label{sequential}
	\rho \frac{\partial_t y_{\eps}^l- v_\eps^{l-1}}{h} + D E_{\eps} (y_\eps^l) + D_2 R_{\eps} (y_\eps^l,\partial_t y_\eps^l) = f_\eps \quad \text{in}\; ((l-1)h,lh)\times\Omega,
\end{equation}
satisfying
\begin{equation}
	\label{initialSequential}
	y_\eps^l((l-1)h) = y_\eps^{l-1}((l-1)h), \quad v_{\eps}^{l-1} = \partial_t y_{\eps}^{l-1}(\cdot-h),
\end{equation}
for $l=2,\dots,N$, while
\begin{equation}
	\label{initialSequentialI}
	y_\eps^1(0) = y_{0\eps}, \quad v_\eps^0 = v_{0\eps},
\end{equation}
	for the first iteration, i.e. $l=1$.
	As equation \eqref{sequential} is meant to be solved sequentially, the initial conditions play a crucial role. For each $l$, the term $\partial_t y_\eps^{l-1} (t-h)$ is the initial condition on the velocity obtained as a solution of the previous time step. Note, however, that for $l=1$ the initial conditions are those of the time-delayed problem \eqref{initialVelocity}.
	
	On each time interval $[(l-1)h,lh]$, the problem is then parabolic, and it can be studied by means of De Giorgi's parabolic minimizing movements scheme on the partition $\Pi_h^\tau$. For every $l = 1, \dots, N$, we define the following incremental minimization problem
	\begin{align}
		y_\eps^{l,k} \in  \argmin_{y \in A_{\eps}} \biggl\{ &E_{\eps}(y) + \tau R_{\eps} \biggl( y_\eps^{l,k-1} , \frac{y - y_\eps^{l,k-1}}{\tau}  \biggr) - \tau \biggl( f_\eps^{l,k-1} , \frac{y-y_\eps^{l,k-1}}{\tau} \biggr) \nonumber \\&+ \frac{\tau \rho}{2h} \biggl\lVert \frac{y - y_\eps^{l,k-1}}{\tau} - v_\eps^{l-1, k-1} 	\biggr\rVert^2 	 \biggr\}, \quad \text{for} \; k = 1,\dots,M, \label{incrP}\\[5mm]
		=: \argmin_{y \in A_{\eps}} \; &W_{\eps}^{l, k-1} (y), \quad \text{for} \; k = 1,\dots,M,\nonumber
	\end{align}
	with initial conditions
	\begin{align}
		&y_\eps^{1,0}=y_{0\eps}, \quad y_\eps^{l,0} = y_\eps^{l-1}((l-1)h) \quad \text{for}\; l = 2,\dots,N,\label{incrIC1}\\
		&v_\eps^{l-1,k-1} = \frac{1}{\tau} \int_{(k-1)\tau + (l-1)h}^{k\tau + (l-1)h} v_\eps^{l-1}(t) \, \d t \quad \text{for} \; k = 1,\dots,M\label{incrIC2},\\
		&\qquad\;\;\;\;\; = \frac{1}{\tau} \int_{(k-1)\tau + (l-2)h}^{k\tau + (l-2)h} \partial_t y_\eps^{l-1}(t) \, \d t \quad \text{for} \; k = 1,\dots,M .\nonumber
	\end{align}
	Note that, due to \eqref{initialSequentialI}, the initial velocity at the first time step equals $v_{0\eps}$ for every $k = 1,\dots,M$. The forcing $f_\eps^{l,k-1}$ in \eqref{incrP} is given as the time average of $f_\eps$ over the corresponding subinterval, namely,
	$$
	f_\eps^{l,k-1} = \frac{1}{\tau}\int_{(k-1)\tau+(l-1)h}^{k\tau+(l-1)h} f_\eps(t) \; \d t , \quad \text{for} \; k = 1,\dots,M,
	$$
	and, for $l=1$, we simply write $f_\eps^{k-1} := f_\eps^{1,k-1}$.
	
	\subsubsection{Membrane scheme}
The membrane time-delayed problem reads as
\begin{align}
	&\rho \frac{\partial_t y- \partial_t y(\cdot-h)}{h} + D_1 E (y,b) + D_2 R (y,\partial_t y,b,\partial_t b) = f_0 \quad \text{in}\; (0,T)\times\omega, \label{MtDelay}\\[3mm]
	&D_2 E (y,b) + D_4 R (y,\partial_t y,b,\partial_t b) = 0 \quad \text{in}\; (0,T)\times\omega,\label{MtDelay1}
\end{align}
with initial conditions given by
\begin{equation}
	\label{MinitialVelocity}
	y(0) = y_0, \quad b(0) = b_0, \quad \text{and} \quad \partial_t y(t-h)=v_0 \quad \text{for } t \in (0,h).
\end{equation}
	Then, the membrane sequential problem reads, for each $l = 1,\dots, N$, as
\begin{align}
	&\rho \frac{\partial_t y^l- v^{l-1}}{h} + D_1 E (y^l,b^l) + D_2 R (y^l,\partial_t y^l,b^l,\partial_t b^l) = f_0 \quad \text{in}\; ((l-1)h,lh)\times\omega,		\label{Msequential}\\[3mm]
	&D_2 E (y^l,b^l)+ D_4 R (y^l,\partial_t y^l,b^l,\partial_t b^l) = 0 \quad \text{in}\; ((l-1)h,lh)\times\omega,\label{Msequential1}
\end{align}
satisfying
\begin{align}
	&y^l((l-1)h) = y^{l-1}((l-1)h), \quad b^l((l-1)h) = b^{l-1}((l-1)h), \nonumber\\
	&v^{l-1} = \partial_t y^{l-1}(\cdot-h), \qquad \text{for } l=2,\dots,N, \label{MinitialSequential}
\end{align}
while
\begin{equation}
	\label{MinitialSequentialI}
	y^1(0) = y_0, \quad b^1(0) = b_0, \quad \text{and} \quad v^0 = v_0.
\end{equation}
	
	Finally, the incremental minimization problem for the membrane is given, for every $l = 1,\dots,N$, by
	\begin{equation}
		\label{MincrP}
		(y^{l,k},b^{l,k}) \in  \argmin_{(y,b) \in B} \; W^{l, k-1} (y,b), \quad \text{for} \; k = 1,\dots,M,
	\end{equation}
	where
	\begin{align*}
		W^{l, k-1} (y,b) =&E(y,b) + \tau R \biggl( y^{l,k-1} , \frac{y - y^{l,k-1}}{\tau} , b^{l,k-1} , \frac{b - b^{l,k-1}}{\tau} \biggr) \nonumber \\[3mm] &- \tau \biggl( f^{l,k-1} , \frac{y-y^{l,k-1}}{\tau} \biggr) + \frac{\tau \rho}{2h} \biggl\lVert \frac{y - y^{l,k-1}}{\tau} - v^{l-1, k-1} 	\biggr\rVert^2 	,
	\end{align*}
	with initial conditions
\begin{align}
	&y^{1,0}=y_0, \quad b^{1,0}=b_0, \label{MincrIC0}\\[3mm]
	&y^{l,0} = y^{l-1}((l-1)h), \quad b^{l,0} = b^{l-1}((l-1)h) \quad \text{for}\; l = 2,\dots,N,\label{MincrIC1}\\[3mm]
	&v^{l-1,k-1} = \frac{1}{\tau} \int_{(k-1)\tau + (l-1)h}^{k\tau + (l-1)h} v^{l-1}(t) \, \d t \quad \text{for} \; k = 1,\dots,M.\label{MincrIC2}
\end{align}
	Due to \eqref{MinitialSequentialI}, for $l=1$, the initial condition on the velocity is equal to $v_0$ for every $k = 1,\dots,M$. The forcing $f^{l,k-1}$ in the definition above is given as the time average of $f_0$ over the corresponding subinterval, namely,
	$$
	f^{l,k-1} = \frac{1}{\tau}\int_{(k-1)\tau+(l-1)h}^{k\tau+(l-1)h} f_0(t) \; \d t , \quad \text{for} \; k = 1,\dots,M,
	$$
	and, for $l=1$, we simply write $f^{k-1} := f^{1,k-1}$.
	
	\section{Preliminaries and main results}
\label{sec:pre}
In this section, we introduce the variational framework, collect some auxiliary results, define the notions of solutions to the problems of the two-time-scale approximation scheme, and state our main results.

	\subsection{Variational framework and duality pairings}
	\label{SVar}
  We characterize the operators appearing in equations \eqref{eq1S}, \eqref{Meq1S}, and \eqref{Meq2S}, namely, the derivatives of the functionals in \eqref{defEA}--\eqref{defR} and \eqref{MdefEA}--\eqref{MdefR}, by establishing their explicit structure and their mapping properties.
	
	Let $A_\eps \subset W^{2,q}(\Omega)$ be the set of admissible deformations. Its tangent space $V=\{ \phi \in W^{2,q}(\Omega) \cap H_0^1(\Omega) \; : \; \nabla_\eps \phi \rvert_P = 0 \}$ serves as the test space for the variational formulation. In the effective domain of $E_\eps$, namely, for $y \in A_\eps$ with $E_\eps(y) < \infty$, the expression \eqref{defDE} below defines $DE_\eps(y) \in (W^{2,q}(\Omega))'$, which is the G\^{a}teaux derivative of $E_\eps$ along $V$ if $\Lambda(y) > c_0$. Similarly, for any $y \in A_\eps$ and any $v \in H^1(\Omega)$, the derivative of $R_\eps$ with respect to its second argument $D_2 R_\eps (y,v)$ is a well-defined element in $(H^1(\Omega))'$. In view of the minimization problem, we characterize those derivatives through their action along the directions $\phi \in V$ and $\nu \in H^1(\Omega)$ as follows. For any $s \in \mathbb{R}$, we compute the derivative of the real differentiable functions $s \mapsto E_\eps (y + s \phi)$ and $s \mapsto R_\eps (y , v + s \nu)$ at $s=0$. One has
	\begin{align}
		\label{defDE}
		\langle DE_\eps(y) , \phi \rangle_{V} =& \langle D K_\eps (y) , \phi \rangle_{V} + \langle D J_\eps (y) , \phi \rangle_{V} + \langle D T_\eps (y) , \phi \rangle_{V} \nonumber \\
		=& \int_{\Omega} \biggl( \frac{1}{4} \mathcal{C} \bigl( \nabla_\eps y^T \nabla_\eps y - {\rm Id} \bigr) : \bigl( \nabla_\eps \phi^T \nabla_\eps y + \nabla_\eps y^T \nabla_\eps \phi  \bigr)  \nonumber\\& \qquad- a \frac{{\rm cof}\, \nabla_\eps y}{(\det \nabla_\eps y)^{a+1}} : \nabla_\eps \phi + \lvert \nabla_\eps^2 y \rvert^{q-2} \nabla_\eps^2 y : \nabla_\eps^2 \phi \biggr) \; \d x,
	\end{align}
	and
	\begin{align}
		\label{defDR}
		\langle D_2 R_\eps(y,v) , \nu \rangle_{H^1} =&  \int_{\Omega} 2 \bigl( \nabla_\eps v^T \nabla_\eps y + \nabla_\eps y^T \nabla_\eps v \bigr) : \bigl( \nabla_\eps \nu^T \nabla_\eps y + \nabla_\eps y^T \nabla_\eps \nu \bigr) \; \d x  .
	\end{align}
	
	Analogously, we consider the set of admissible membrane deformations $B \subset W^{2,q}(\omega) \times W^{1,q}(\omega)$, its tangent space $V_0 = V_0^1 \times W_0^{1,q}(\omega)$, where $V_0^1 := \{ \phi \in W^{2,q}(\omega)\cap H_0^1(\omega) \; : \; \nabla' \phi \rvert_P = 0 \}$, and the membrane energy and dissipation \eqref{MdefEA}--\eqref{MdefR}. For every $(y,b)$ in the effective domain of $E$, namely, $(y,b) \in B$ with $E(y,b) < \infty$, and every $(v,w) \in H^1(\omega) \times L^2(\omega)$, we compute
	\begin{align*}
		&\frac{\d}{\d s} E(y + s \phi,b + s \beta) \biggr\rvert_{s=0} \quad \text{for any } (\phi,\beta) \in V_0,\\
		&\frac{\d}{\d s} R(y,v + s \nu ,b,w + s \zeta) \biggr\rvert_{s= 0} \quad \text{for any } (\nu,\zeta) \in H^1(\omega)\times L^2(\omega),
	\end{align*}
	obtaining
	\begin{align}
		\label{MdefDE}
		&\langle DE (y,b) , (\phi,\beta) \rangle_{V_0} = \langle D_1E (y,b) , \phi \rangle_{W^{2,q}\cap H_0^1} + \langle D_2E (y,b) , \beta \rangle_{W_0^{1,q}} \nonumber\\
		&= \langle D_1 K (y,b) , \phi \rangle_{W^{2,q}\cap H_0^1} + \langle D_1 J (y,b) , \phi \rangle_{W^{2,q}\cap H_0^1} + \langle D_1 T (y,b) , \phi \rangle_{W^{2,q}\cap H_0^1} \nonumber \\
		&\quad +\langle D_2 K (y,b) , \beta \rangle_{W_0^{1,q}} + \langle D_2 J (y,b) , \beta \rangle_{W_0^{1,q}} + \langle D_2 T (y,b) , \beta \rangle_{W_0^{1,q}} \nonumber \\
		&= \int_{\omega} \biggl( \frac{1}{4} \mathcal{C} \bigl( (\nabla' y \vert b)^T (\nabla' y \vert b ) - {\rm Id} \bigr) : \bigl( (\nabla' \phi \vert \beta)^T (\nabla' y \vert b) + (\nabla' y \vert b)^T (\nabla' \phi \vert \beta)  \bigr)  \nonumber\\& \qquad- a \frac{{\rm cof}\, (\nabla' y \vert b)}{(\det (\nabla' y \vert b))^{a+1}} : (\nabla' \phi \vert \beta) \nonumber\\
		&\quad+ \biggl( \lvert (\nabla')^2 y \rvert^2 + 2 \lvert \nabla' b \rvert^2 \biggr)^{(q-2)/2} \begin{pmatrix}
			(\nabla')^2 y & \nabla' b \\
			(\nabla' b)^T & 0
		\end{pmatrix} : \begin{pmatrix}
			(\nabla')^2 \phi & \nabla' \beta \\
			(\nabla'\beta)^T & 0
		\end{pmatrix}  \biggr) \; \d x,
	\end{align}
	and
	\begin{align}
	&\langle D_2 R (y,v,b,w) , \nu \rangle_{H^1} + ( D_4 R (y,v,b,w) , \zeta ) \nonumber \\
	&\qquad\qquad=\int_{\omega} 2 \bigl( (\nabla' v \vert w)^T (\nabla' y \vert b) + (\nabla' y \vert b)^T (\nabla' v \vert w) \bigr) \nonumber \\ &\qquad\qquad\quad\qquad: \bigl( (\nabla' \nu \vert \zeta)^T (\nabla' y \vert b) + (\nabla' y \vert b)^T (\nabla' \nu \vert \zeta) \bigr) \; \d x  .\label{MdefDR}
	\end{align}
	Observe that $R$, $D_2 R$, and $D_4 R$ depend on the rates $(v,w)$ only through the symmetric matrix $(\nabla' v \vert w)^T(\nabla' y \vert b) + (\nabla' y \vert b)^T(\nabla' v \vert w)$. For a time-dependent pair $(y,b)$ and $(v,w) = (\partial_t y,\partial_t b)$, this matrix is the time derivative of the Cauchy-Green tensor $C = (\nabla' y \vert b)^T(\nabla' y \vert b)$. Accordingly, whenever $C \in H^1(0,T;L^2(\omega))$, we write $R(y,\partial_t y,b,\partial_t b)$, $D_2 R(y,\partial_t y,b,\partial_t b)$, and $D_4 R(y,\partial_t y,b,\partial_t b)$ for the expressions in \eqref{MdefR} and \eqref{MdefDR} in which that matrix is replaced by $\partial_t C$, also when $\partial_t \nabla' y$ and $\partial_t b$ are not defined separately as functions. If, in addition, $(\nabla' y \vert b) \in L^\infty((0,T)\times\omega)$, then $D_2 R(y,\partial_t y,b,\partial_t b) \in L^2(0,T;H^{-1}(\omega))$ and $D_4 R(y,\partial_t y,b,\partial_t b) \in L^2(0,T;L^2(\omega))$.
	
	For the sake of notational simplicity, we write $\langle \cdot , \cdot \rangle$, without subscript, for the action of each of the derivatives above on an admissible direction, as given by the explicit expressions \eqref{defDE}--\eqref{MdefDR}. When both arguments belong to $L^2$, the pairing reduces to the scalar product, and we write $(\cdot,\cdot)$. When a derivative is shown to belong to a larger dual space, as for $DT_\eps$ in Subsection \ref{S45}, we indicate the corresponding space as a subscript, as in $\langle \cdot , \cdot \rangle_{H_0^1}$.

\subsection{Auxiliary results}
\label{Aux}
In this subsection, we recall the definition of $\Gamma$-convergence and collect some auxiliary results which will be used throughout the paper.

\begin{definition}[$\Gamma$-convergence]
	\label{DefGamma}
	Let $X$ be a subset of a reflexive Banach space. Given functionals $F_{j},\, F : X \to \R\cup\{\infty\}$ for $j \in \mathbb{N}$, we say that the sequence $\{ F_j \}_j$ {\it $\Gamma$-converges} to $F$ with respect to the topology $\sigma$, and we write $F = \Gamma_\sigma\text{-}\lim_{j \to \infty} F_j$, if the following conditions hold.
	\begin{enumerate}
		\item[{\rm i)}] ($\Gamma$-$\liminf$ inequality) For every $y_{j} \to y$ in $\sigma$ we have
		\begin{equation*}
			F(y)\leq\liminf_{j\to \infty} F_{j}(y_{j}).
		\end{equation*}
		\item[{\rm ii)}] (Recovery sequence) For every $\hat{y} \in X$ there exists $\hat{y}_{j} \to \hat{y}$ in $\sigma$ such that
		\begin{equation*}
			\limsup_{ j\to \infty} F_{j}(\hat{y}_{j}) \leq F(\hat{y}).
		\end{equation*}
	\end{enumerate}
	See \cite{dalMaso} for an overview.
\end{definition}

\begin{lemma}[Compactness of scaled gradients]
	\label{ScaledComp}
	Let $p \in (1,\infty]$, and let $\{y_\eps\}_{\eps}$ be a family in $W^{1,p}(\Omega;\R^3)$ such that $\lVert y_\eps \rVert_{L^p(\Omega)} + \lVert \nabla_\eps y_\eps \rVert_{L^p(\Omega)} \leq c$ for every $\eps \in (0,1)$. Then, up to a subsequence (not relabeled), $y_\eps \rightharpoonup y$ in $W^{1,p}(\Omega)$ and $\nabla_\eps y_\eps \rightharpoonup (\nabla' y \vert b)$ in $L^p(\Omega)$, weakly-$*$ if $p=\infty$, where $y \in W^{1,p}(\omega;\R^3)$ is independent of $x_3$ and $b \in L^p(\Omega;\R^3)$. If, in addition, $y_\eps \in W^{2,p}(\Omega;\R^3)$ and $\lVert \nabla_\eps^2 y_\eps \rVert_{L^p(\Omega)} \leq c$ for every $\eps \in (0,1)$, then $y \in W^{2,p}(\omega;\R^3)$, $b \in W^{1,p}(\omega;\R^3)$ is independent of $x_3$, and
	$$
	\nabla_\eps^2 y_\eps \rightharpoonup \begin{pmatrix}
		(\nabla')^2 y & \nabla' b \\
		(\nabla' b)^T & d
	\end{pmatrix} \quad \text{in}\quad L^p(\Omega),
	$$
	weakly-$*$ if $p=\infty$, for some $d \in L^p(\Omega;\R^3)$.
\end{lemma}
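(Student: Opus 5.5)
The plan is to use weak compactness in $L^p(\Omega)$ for the bounded components of $\nabla_\eps y_\eps$, and to identify the limits by integrating by parts against test functions.

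\emph{First part.} Since $\eps<1$, we have $\lvert\nabla y_\eps\rvert\le\lvert\nabla_\eps y_\eps\rvert$, so $\{y_\eps\}$ is bounded in $W^{1,p}(\Omega)$. By reflexivity for $p<\infty$, or Banach--Alaoglu for $p=\infty$, we extract a subsequence with $y_\eps\rightharpoonup y$ in $W^{1,p}(\Omega)$ (weakly-$*$ if $p=\infty$). We also get $\eps^{-1}\partial_3 y_\eps\rightharpoonup b$ in $L^p(\Omega)$. Moreover, $\lVert\partial_3 y_\eps\rVert_{L^p}\le c\eps\to0$, so $\partial_3 y=0$ in the sense of distributions. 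Because $\Omega=\omega\times(-1/2,1/2)$ is a product domain with a connected interval, $y$ is independent of $x_3$: one can check this by testing against $\varphi(x')\psi(x_3)$, or by mollifying in $x'$. Hence $y\in W^{1,p}(\omega)$. Since $\nabla' y_\eps\rightharpoonup\nabla' y$, the convergence of the full scaled gradient $\nabla_\eps y_\eps\rightharpoonup(\nabla' y\vert b)$ follows.

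\emph{Second part.} We now assume in addition that $\nabla_\eps^2 y_\eps$ is bounded in $L^p$. This bounds all of $\nabla^2 y_\eps$, and the mixed derivatives $\eps^{-1}\nabla'\partial_3 y_\eps$ are bounded as well. Apply the first part to the family $z_\eps:=\eps^{-1}\partial_3 y_\eps$. It is bounded in $L^p$, and its scaled gradient $\nabla_\eps z_\eps=(\eps^{-1}\nabla'\partial_3 y_\eps\vert\eps^{-2}\partial_{33}y_\eps)$ is bounded in $L^p$ by the assumption. Hence, along a further subsequence, $z_\eps\rightharpoonup \tilde b$ in $W^{1,p}(\Omega)$ with $\tilde b\in W^{1,p}(\omega)$ independent of $x_3$. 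In addition, $\eps^{-1}\nabla'\partial_3 y_\eps\rightharpoonup\nabla'\tilde b$ and $\eps^{-2}\partial_{33}y_\eps\rightharpoonup d$ for some $d\in L^p(\Omega)$. By uniqueness of weak limits, $\tilde b=b$, so $b$ is independent of $x_3$ and lies in $W^{1,p}$.

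It remains to treat the planar block. Apply the first part to $\nabla' y_\eps$, whose full gradient is bounded. This gives $(\nabla')^2y_\eps\rightharpoonup(\nabla')^2 y$, so $y\in W^{2,p}(\omega)$; the limit is identified distributionally from $\nabla' y_\eps\rightharpoonup\nabla' y$. Assembling the blocks, and noting that the off-diagonal block is the transpose, yields the stated limit matrix.

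\emph{Main obstacle.} The only delicate point is the step "$\partial_3 y=0$ implies $y$ is a function on $\omega$ in $W^{1,p}(\omega)$". For this we use the product structure of $\Omega$: the average $\int_{-1/2}^{1/2}y\,\d x_3$ coincides with $y$ a.e., and its derivatives are the averages of the derivatives. Beyond that, the argument is routine, with all subsequences taken successively.
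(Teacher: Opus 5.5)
Your proof is correct and follows the paper's brief, citation-based argument. The independence of $y$ from $x_3$ comes from $\lVert \partial_3 y_\eps \rVert_{L^p(\Omega)} \leq c\,\eps$, and applying the first part to $z_\eps = \eps^{-1}\partial_3 y_\eps$ encodes exactly the paper's second estimate $\lVert \partial_3 z_\eps \rVert_{L^p(\Omega)} = \eps \lVert \eps^{-2}\partial_{33} y_\eps \rVert_{L^p(\Omega)} \leq c\,\eps$, with the remaining identifications (uniqueness of weak limits, product structure of $\Omega$) being the standard steps the paper leaves to its references.
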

\begin{proof}
	The statement is standard \cite{Bhattacharya,Davoli,LeDretRaoult}. The independence of $y$ from $x_3$ follows from $\lVert \partial_3 y_\eps \rVert_{L^p(\Omega)} = \eps \lVert \eps^{-1}\partial_3 y_\eps \rVert_{L^p(\Omega)} \leq c\,\eps$, and that of $b$ from $\lVert \partial_3 (\eps^{-1}\partial_3 y_\eps) \rVert_{L^p(\Omega)} = \eps \lVert \eps^{-2}\partial_{33} y_\eps \rVert_{L^p(\Omega)} \leq c\,\eps$.
\end{proof}

\begin{lemma}
		\label{TOT}
		Let $\mathcal{D}$ be a bounded, open, and Lipschitz domain in $\mathbb{R}^d$, where $d=2,3$, let $q>3$, and let $X$ be a closed subspace of $W^{2,q}(\mathcal{D}) \cap H_0^1(\mathcal{D})$.
		Let $\mathcal{H}_1$ be the set of functions $\Psi = \psi\,\phi$, where $\phi \in L^1(0,T;X)\cap H^1(0,T;H_0^1(\mathcal{D}))$ and $\psi \in C_c^1([0,T))$ is such that, for some $h_0 \in (0,T)$, $\psi > 0$ in $[0,T-h_0)$ and $\psi = 0$ in $[T-h_0,T)$. Then, $\mathcal{H}_1$ is dense in $\mathcal{H}_2:=\{ \Phi \in L^1(0,T;X)\cap H^1(0,T;H_0^1(\mathcal{D}))  \; : \; \Phi(T) = 0 \}$.
	\end{lemma}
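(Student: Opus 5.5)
Given $\Phi \in \mathcal{H}_2$, we approximate it in the norm of $L^1(0,T;X)\cap H^1(0,T;H_0^1(\mathcal{D}))$ in two steps. First we produce functions vanishing in a neighborhood of $T$. Then we show that such functions are limits of products $\psi\phi$ with $\psi$ of the prescribed form.

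\emph{Step 1 (cut-off near $T$).} Fix $\delta\in(0,T/4)$ and a nonincreasing $\chi_\delta\in C^\infty([0,T])$ with $\chi_\delta=1$ on $[0,T-2\delta]$, $\chi_\delta=0$ on $[T-\delta,T]$, and $\lvert\chi_\delta'\rvert\le 2/\delta$. Set $\Phi_\delta:=\chi_\delta\Phi$. By dominated convergence, $\Phi_\delta\to\Phi$ in $L^1(0,T;X)$ and $\chi_\delta\partial_t\Phi\to\partial_t\Phi$ in $L^2(0,T;H_0^1)$. The remaining term $\chi_\delta'\Phi$ is the delicate one. Since $\Phi(T)=0$ and $\Phi\in H^1(0,T;H_0^1)$, we have
\[
\lVert\Phi(t)\rVert_{H^1}\le\int_t^T\lVert\partial_t\Phi\rVert_{H^1}\le (T-t)^{1/2}\lVert\partial_t\Phi\rVert_{L^2(T-2\delta,T;H^1)}.
\]
Hence
\[
\lVert\chi_\delta'\Phi\rVert^2_{L^2(0,T;H^1)}\le \frac{4}{\delta^2}\int_{T-2\delta}^{T}(T-t)\,\d t\;\lVert\partial_t\Phi\rVert^2_{L^2(T-2\delta,T;H^1)}\le 8\,\lVert\partial_t\Phi\rVert^2_{L^2(T-2\delta,T;H^1)}\to0.
\]
This Hardy-type estimate is the main obstacle, and it is exactly where the condition $\Phi(T)=0$ enters.

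\emph{Step 2 (product form).} Now let $\Phi$ vanish on $[T-\delta,T]$, and choose $h_0=\delta/2$. Take $\psi\in C_c^1([0,T))$ with $\psi>0$ on $[0,T-h_0)$ and $\psi=0$ on $[T-h_0,T)$. For instance,
\[
\psi(t)=(T-h_0-t)_+^2 .
\]
On $[0,T-\delta]$ we have $\psi\ge (\delta/2)^2>0$, so $\phi:=\Phi/\psi$, extended by $0$ on $[T-\delta,T]$, is well defined. It satisfies $\phi\in L^1(0,T;X)\cap H^1(0,T;H_0^1)$, because $1/\psi$ is $C^1$ and bounded on $[0,T-\delta]$ and $\Phi$ vanishes near $T-\delta$. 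Moreover $\psi\phi=\Phi$ exactly, so $\Phi\in\mathcal{H}_1$.

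Combining Step 1 with Step 2 shows that every $\Phi\in\mathcal{H}_2$ is approximated by elements of $\mathcal{H}_1$ in the norm of $L^1(0,T;X)\cap H^1(0,T;H_0^1(\mathcal{D}))$. Note that $\mathcal{H}_1\subset\mathcal{H}_2$, since $\psi(T)=0$. This gives the density. The only nonroutine point is the $L^2$ control of $\chi_\delta'\Phi$ in Step 1.
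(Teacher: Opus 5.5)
Your proof is correct and follows the paper's argument. You cut off near $T$, treat $\chi_\delta\Phi$ and $\chi_\delta\partial_t\Phi$ by dominated convergence, and control $\chi_\delta'\Phi$ through the Hardy-type bound $\lVert\Phi(t)\rVert_{H^1}\le (T-t)^{1/2}\lVert\partial_t\Phi\rVert_{L^2(t,T;H^1)}$, which is exactly where $\Phi(T)=0$ enters.

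The only difference is the choice of cutoff. The paper takes $\theta_n(t)=\theta(n(T-t))$ with $\theta_n>0$ on $[0,T-1/n)$ and $\theta_n=0$ on $[T-1/n,T)$. Then $\theta_n\Phi$ already lies in $\mathcal{H}_1$, with $\psi=\theta_n$ and $\phi=\Phi$, so your Step 2, while valid, is not needed.
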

	\begin{proof}
		Let $\theta \in C^1(\mathbb{R})$ be nondecreasing with $\theta = 0$ in $(-\infty,1]$, $\theta > 0$ in $(1,\infty)$, and $\theta = 1$ in $[2,\infty)$, and set $\theta_n(t) := \theta(n(T-t))$ for $n > 2/T$. Then, $\theta_n \in C_c^1([0,T))$, $\theta_n > 0$ in $[0,T-1/n)$, $\theta_n = 0$ in $[T-1/n,T)$, $\theta_n = 1$ in $[0,T-2/n]$, and $\lvert \theta_n' \rvert \leq c\,n$. Hence, for every $\Psi \in \mathcal{H}_2$, one has $\theta_n \Psi \in \mathcal{H}_1$, with $h_0 = 1/n$. By dominated convergence, $\theta_n \Psi \to \Psi$ in $L^1(0,T;X)$ and $\theta_n \partial_t \Psi \to \partial_t \Psi$ in $L^2(0,T;H_0^1(\mathcal{D}))$. Finally, as $\Psi(T) = 0$, one has $\lVert \Psi(t) \rVert_{H_0^1(\mathcal{D})} \leq (T-t)^{1/2} \lVert \partial_t \Psi \rVert_{L^2(t,T;H_0^1(\mathcal{D}))}$, so that
		\begin{align*}
			\int_0^T \lvert \theta_n' \rvert^2 \lVert \Psi \rVert_{H_0^1(\mathcal{D})}^2 \; \d t &\leq c\, n^2 \int_{T-2/n}^{T-1/n} (T-t) \; \d t \; \lVert \partial_t \Psi \rVert_{L^2(T-2/n,T;H_0^1(\mathcal{D}))}^2 \\
			&\leq c\, \lVert \partial_t \Psi \rVert_{L^2(T-2/n,T;H_0^1(\mathcal{D}))}^2 \to 0.
		\end{align*}
		Therefore, $\theta_n \Psi \to \Psi$ in $L^1(0,T;X)\cap H^1(0,T;H_0^1(\mathcal{D}))$.
\end{proof}
	
	\begin{lemma}[Coercivity of $E_\eps$]
		\label{W2qnormEps}
		Let $\eps \in (0,1)$, and let $\{u_n\}_n \subset A_\eps$ be a sequence of admissible deformations for which there exists a constant $\kappa>0$ independent of $n$ and $\eps$ such that $E_\eps (u_n) \leq \kappa$. Then, there exists a constant $M_\kappa>0$, depending only on $\kappa$, $\Omega$, $P$, $q$, and $\mathcal{C}$, and nondecreasing in $\kappa$, such that
		$$
		\lVert u_n \rVert_{W^{2,q}(\Omega)} + \lVert \nabla_\eps u_n \rVert_{W^{1,q}(\Omega)} + \lVert \nabla_\eps^2 u_n \rVert_{L^q(\Omega)} \leq M_\kappa.
		$$
	\end{lemma}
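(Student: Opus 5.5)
The bound follows from the three pieces of $E_\eps(u_n)\le\kappa$, together with the boundary conditions encoded in $A_\eps$. Since all three terms of $E_\eps$ are nonnegative, we have
\[
K_\eps(u_n)\le\kappa, \qquad T_\eps(u_n)\le\kappa .
\]
The second gives $\lVert\nabla_\eps^2 u_n\rVert_{L^q(\Omega)}\le(q\kappa)^{1/q}$ directly. This controls the last term of the statement, and a fortiori $\lVert\nabla^2u_n\rVert_{L^q}$ and $\lVert\nabla(\nabla_\eps u_n)\rVert_{L^q}$. Indeed, $\varepsilon<1$, so $\lvert\nabla^2 u_n\rvert\le\lvert\nabla_\eps^2u_n\rvert$, and the entries of $\nabla(\nabla_\eps u_n)$ are among those of $\nabla_\eps^2 u_n$: we have $\partial_3(\nabla' u_n)=\varepsilon\cdot\varepsilon^{-1}\nabla'\partial_3u_n$, $\nabla'(\varepsilon^{-1}\partial_3 u_n)$ is an entry directly, and $\partial_3(\varepsilon^{-1}\partial_3u_n)=\varepsilon\cdot\varepsilon^{-2}\partial_{33}u_n$.

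Next, the lower-order norms. Since $\mathcal C$ is positive definite, $K_\eps(u_n)\le\kappa$ gives
\[
\int_\Omega\lvert\nabla_\eps u_n^T\nabla_\eps u_n-{\rm Id}\rvert^2\,\d x\le c\kappa .
\]
Using $\lvert F\rvert^4=\lvert F^TF\rvert^2\cdot(\text{const})\le c(\lvert F^TF-{\rm Id}\rvert^2+1)$, this yields $\lVert\nabla_\eps u_n\rVert_{L^4}\le c(\kappa)$. To reach $L^q$, I would use a Poincaré-type estimate for $G_n:=\nabla_\eps u_n$ on the fixed domain $\Omega$. We know $G_n={\rm Id}$ on $P$, so $G_n-{\rm Id}\in W^{1,q}$ vanishes on the lateral boundary $P$. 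Its full gradient is bounded in $L^q$ by the previous paragraph. The Poincaré inequality for functions vanishing on $P$ (which has positive measure in $\partial\Omega$) then gives
\[
\lVert G_n-{\rm Id}\rVert_{W^{1,q}(\Omega)}\le c_P\,\lVert\nabla G_n\rVert_{L^q}\le c(\kappa).
\]
With this, the $L^4$ bound from $K_\eps$ is not even needed. Applying the same argument to $u_n-{\rm id}_\eps$, which vanishes on $P$ and has gradient $\nabla u_n-\nabla{\rm id}_\eps$ with $\lvert\nabla u_n\rvert\le\lvert\nabla_\eps u_n\rvert$, bounds $\lVert u_n\rVert_{W^{1,q}}$. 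Together with $\lVert\nabla^2u_n\rVert_{L^q}$ this gives $\lVert u_n\rVert_{W^{2,q}}\le M_\kappa$. Here $\lVert{\rm id}_\eps\rVert_{W^{1,q}}$ is bounded uniformly for $\varepsilon<1$.

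All constants depend only on $\Omega$, $P$ (through the Poincaré constants), $q$, $\mathcal C$ (through its coercivity constant), and $\kappa$, entering through $(q\kappa)^{1/q}$ and monotone combinations of it. Hence $M_\kappa$ can be chosen nondecreasing in $\kappa$, and independent of $n$ and $\varepsilon$.

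The only delicate point is ensuring that no constant depends on $\varepsilon$. This is handled by working entirely on the fixed domain $\Omega$. There we apply Poincaré to the scaled gradient $\nabla_\eps u_n$, whose boundary value ${\rm Id}$ is $\varepsilon$-independent, rather than applying Korn's inequality on $\Omega_\varepsilon$, where the constant would degenerate. The other ingredient is the pointwise domination of the unscaled quantities by the scaled ones for $\varepsilon<1$.
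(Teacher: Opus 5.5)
Your argument is correct, but the key step, the $L^q$ bound on $\nabla_\eps u_n$, follows a different route from the paper's. You apply the Poincar\'e inequality (Theorem~\ref{PoincareGen} with $\Gamma_D = P$) to $\nabla_\eps u_n - {\rm Id}$, which vanishes on $P$ by the clamped condition $\nabla_\eps u = {\rm Id}$ in $A_\eps$, so only the bound on $T_\eps$ is used. The paper instead uses $K_\eps$ and the positive definiteness of $\mathcal{C}$ to get $\lVert \nabla_\eps u_n \rVert_{L^4(\Omega)} \leq c$. It upgrades this to $L^q$ via Gagliardo--Nirenberg (Theorem~\ref{GN}) when $q>4$ and via H\"older when $3<q\leq 4$, and applies Poincar\'e only once, to $u_n$ with $u_n = {\rm id}_\eps$ on $P$.

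Your route is shorter and avoids interpolation and the case distinction in $q$. It yields an explicit constant that is visibly nondecreasing in $\kappa$, so the paper's $\inf_{\kappa'\geq\kappa}$ device is unnecessary, and it makes $M_\kappa$ independent of $\mathcal{C}$. The paper's route never uses the gradient boundary condition, so it would still work if the clamp were weakened to the Dirichlet condition $u = {\rm id}_\eps$ on $P$ alone. The price is reliance on the quartic growth of the St.~Venant--Kirchhoff term.

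One small slip, harmless since you discard that step: $\lvert F\rvert^4 = \mathrm{const}\cdot\lvert F^TF\rvert^2$ is not an identity. Only $\lvert F\rvert^4 = (\mathrm{tr}\,F^TF)^2 \leq 3\lvert F^TF\rvert^2$ holds.
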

	\begin{proof}
		Throughout the proof, $c$ denotes a constant depending only on $\kappa$, $\Omega$, $P$, $q$, and $\mathcal{C}$. As $E_\eps(u_n) \leq \kappa$, the definitions of $T_\eps$ and $K_\eps$ and the positive definiteness of $\mathcal{C}$ give
		$$
		\lVert \nabla_\eps^2 u_n \rVert_{L^q(\Omega)} + \lVert \nabla_\eps u_n \rVert_{L^4(\Omega)} \leq c,
		$$
		which implies, as $\eps < 1$,
		$$
		\lVert \nabla^2 u_n \rVert_{L^q(\Omega)} + \lVert \nabla(\nabla_\eps u_n) \rVert_{L^q(\Omega)} + \lVert \nabla u_n \rVert_{L^4(\Omega)} \leq c,
		$$
		where $q >3$ by assumption. If $q > 4$, Theorem \ref{GN} gives $\lVert \nabla_\eps u_n \rVert_{L^q(\Omega)} + \lVert \nabla u_n \rVert_{L^q(\Omega)} \leq c$, whereas, if $3 < q \leq 4$, this bound follows from H\"older's inequality. Hence, $\lVert \nabla_\eps u_n \rVert_{W^{1,q}(\Omega)} + \lVert \nabla u_n \rVert_{W^{1,q}(\Omega)} \leq c$. Moreover, since $u_n\rvert_P = {\rm id}_\eps\rvert_P$ for every $n$, Theorem \ref{PoincareGen} gives $\lVert u_n \rVert_{L^q(\Omega)} \leq c$, and the claim follows with some constant $M_\kappa$. Finally, as the assumption $E_\eps(u_n) \leq \kappa$ also holds with any $\kappa' \geq \kappa$ in place of $\kappa$, the constant $M_\kappa$ can be replaced by $\inf_{\kappa' \geq \kappa} M_{\kappa'}$, which is nondecreasing in $\kappa$.
	\end{proof}

	\begin{lemma}[Positivity of the determinant everywhere]
		\label{PositiveDet}
		Let $\eps \in (0,1)$, and let $\{ u_n\}_n \subset A_\eps$ be a sequence of admissible deformations for which there exists a constant $\kappa >0$ independent of $\eps$ and $n$ such that $E_\eps(u_n) \leq \kappa$. Then, there exists a constant $\mu = \mu(\kappa, \Omega,q,a,\mathcal{C}) >0$ such that
		\begin{equation*}
			\det \nabla_\eps u_n (x) > \mu , \qquad \forall x \in \bar{\Omega},
		\end{equation*}
		where $\mu$ is independent of the deformation $u_n$ and $\eps$.
		
		In particular, it holds for the initial deformation $y_{0\eps}$, namely,
		$$
		\det \nabla_\eps y_{0\eps} (x) > \mu , \qquad \forall x \in \bar{\Omega}.
		$$
	\end{lemma}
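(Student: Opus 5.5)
This is the Healey--Krömer argument, carried out in the scaled variables with constants tracked so that they do not depend on $\eps$. Set $\delta_n := \det \nabla_\eps u_n$. Since $u_n \in A_\eps$ and $E_\eps(u_n) \le \kappa$, we have $\delta_n > 0$ a.e. and
$$
\int_\Omega \delta_n^{-a}\,\d x = J_\eps(u_n) \le \kappa .
$$
By Lemma \ref{W2qnormEps}, $\lVert \nabla_\eps u_n \rVert_{W^{1,q}(\Omega)} \le M_\kappa$, and this bound is uniform in $n$ and $\eps$. The determinant is a cubic polynomial in the entries of $\nabla_\eps u_n$. Combined with the embedding $W^{1,q}(\Omega)\hookrightarrow C^{0,\alpha}(\bar\Omega)$ for a fixed $\alpha \in (0,1-3/q]$, this gives $\lVert \delta_n \rVert_{C^{0,\alpha}(\bar\Omega)} \le L$. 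The constant $L$ depends only on $M_\kappa$, $\Omega$ and $q$, hence only on $\kappa,\Omega,q,\mathcal{C}$. The embedding constant is that of the fixed domain $\Omega$, so it does not involve $\eps$. In particular $\delta_n$ is continuous on $\bar\Omega$.

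Next, suppose $\delta_n(x_0) = s$ for some $x_0\in\bar\Omega$ with $0 \le s$ small. By Hölder continuity, $\delta_n(x) \le s + L\lvert x-x_0\rvert^\alpha$ for all $x$. Choose $r := (s/L)^{1/\alpha}$, so that $\delta_n \le 2s$ on $B_r(x_0)\cap\Omega$. Because $\Omega$ is Lipschitz, it satisfies an interior cone condition, so $\lvert B_r(x_0)\cap\Omega\rvert \ge c_\Omega r^3$ for $r \le r_0$. This yields
$$
\kappa \ge \int_{B_r(x_0)\cap\Omega} \delta_n^{-a}\,\d x \ge c_\Omega\, r^3 (2s)^{-a} = c\, s^{3/\alpha - a}.
$$
When $s=0$, the same computation with arbitrarily small $r$ gives $\int \delta_n^{-a} \ge c\int_{B_r} \lvert x-x_0\rvert^{-a\alpha}\,\d x$. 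Since $a\alpha\ge 3$, this integral is infinite, which is a contradiction. So $\delta_n>0$ everywhere on $\bar\Omega$.

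The main point is the exponent condition. We need $a > 3/\alpha$ for some admissible $\alpha$, and letting $\alpha \uparrow 1-3/q$ this amounts to exactly $a > 3q/(q-3)$, which is Assumption (A0). With such an $\alpha$ fixed, $3/\alpha - a < 0$. The inequality $\kappa \ge c\, s^{3/\alpha-a}$ then forces $s \ge (c/\kappa)^{1/(a-3/\alpha)}$ whenever $s$ is below the threshold coming from $r\le r_0$. Taking $\mu$ strictly smaller than the minimum of this bound and that threshold gives $\delta_n > \mu$ on $\bar\Omega$. Every constant involved depends only on $\kappa,\Omega,q,a,\mathcal{C}$, so $\mu$ is independent of $n$ and $\eps$.

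The claim for $y_{0\eps}$ follows by applying the result to the constant sequence $u_n = y_{0\eps}$ with $\kappa = c_2\lvert\omega\rvert$, which is admissible by (A1) in the form derived in Subsection \ref{SubIn}.
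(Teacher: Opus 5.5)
Your proof is correct and follows the paper's route. The paper gets the $\eps$-uniform H\"older bound on $\det\nabla_\eps u_n$ from Lemma \ref{W2qnormEps} and then simply invokes the Healey--Kr\"omer lemma (Theorem \ref{thHealeyKromer}) with $\gamma = 1-3/q$ and $s = a$; you instead reprove that lemma inline with the ball-measure argument, which works here because (A0) gives the strict inequality $a > 3/\alpha$.
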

	\begin{proof}
		The proof is a straightforward application of Theorem \ref{thHealeyKromer} to those sequences along which the energy $E_\eps$ is finite independently of $\eps$ and $n$. Observe that Lemma \ref{W2qnormEps} guarantees that $\det \nabla_\eps u_n $ is uniformly bounded in $C^{0,1-3/q}(\bar{\Omega})$, independently of $\eps$ and $n$. Also, the initial deformation $y_{0\eps}$ satisfies $E_\eps(y_{0\eps}) \leq c$ uniformly. 
	\end{proof}
	
	\begin{lemma}[Stability of the averaged invertibility condition]
		\label{InvStab}
		The following statements hold.
		\begin{enumerate}
			\item[{\rm i)}] There exists a constant $C_\omega>0$, depending only on $\omega$, such that $\lvert \Lambda(u) - \Lambda(w) \rvert \leq C_\omega \lVert u - w \rVert_{W^{1,\infty}(\Omega)}$ for every $u,w \in W^{1,\infty}(\Omega;\R^3)$.
			\item[{\rm ii)}] Let $\gamma_0 := (q-3)/(7q-6)$. For every $M>0$, there exists a constant $C_M>0$, depending only on $M$, $\Omega$, and $q$, and nondecreasing in $M$, such that
			$$
			\lvert \Lambda(u) - \Lambda(w) \rvert \leq C_M \lVert u - w \rVert^{2\gamma_0}
			$$
			for every $u,w \in W^{2,q}(\Omega;\R^3)$ with $\lVert u \rVert_{W^{2,q}(\Omega)} + \lVert w \rVert_{W^{2,q}(\Omega)} \leq M$. In particular, if $I \subset \R$ is an interval and $y \in H^1(I;L^2(\Omega))$ satisfies $\lVert y(t) \rVert_{W^{2,q}(\Omega)} \leq M/2$ for every $t \in I$ and $\int_I \lVert \partial_t y \rVert^2 \, \d t \leq M$, then $\lvert \Lambda(y(t)) - \Lambda(y(s)) \rvert \leq C_M M^{\gamma_0} \lvert t-s \rvert^{\gamma_0}$ for all $t,s \in I$.
		\end{enumerate}
	\end{lemma}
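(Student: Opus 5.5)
For part i), I will work directly with the averages $\bar u$ and $\bar w$. Jensen's inequality gives $\lVert \bar u - \bar w\rVert_{W^{1,\infty}(\omega)} \leq \lVert u-w\rVert_{W^{1,\infty}(\Omega)}$, so it suffices to bound the Lipschitz quotient of $g := \bar u - \bar w$ on $\omega$. The reverse triangle inequality gives, for $x'\neq z'$,
$$
\frac{\lvert \bar u(x')-\bar u(z')\rvert}{\lvert x'-z'\rvert} \geq \frac{\lvert \bar w(x')-\bar w(z')\rvert}{\lvert x'-z'\rvert} - \frac{\lvert g(x')-g(z')\rvert}{\lvert x'-z'\rvert}.
$$
Taking the infimum and exchanging the roles of $u$ and $w$ reduces the claim to the estimate $\lvert g(x')-g(z')\rvert \leq C_\omega \lVert g\rVert_{W^{1,\infty}(\omega)}\lvert x'-z'\rvert$. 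Since $\omega$ is only Lipschitz and may be nonconvex, I will use the standard fact that a bounded Lipschitz domain is uniformly quasiconvex. Such a domain admits a constant $C$ such that any two points are joined by a rectifiable path of length at most $C\lvert x'-z'\rvert$. Integrating $\nabla' g$ along this path gives the estimate, with $C_\omega$ equal to the quasiconvexity constant.

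For part ii), I will interpolate between the $L^2$ distance and the uniform $W^{2,q}$ bound to control $\lVert u-w\rVert_{W^{1,\infty}}$, and then apply part i). Set $v=u-w$, so that $\lVert v\rVert_{W^{2,q}}\leq M$. I will apply a Gagliardo--Nirenberg inequality of the form
$$
\lVert \nabla v\rVert_{L^\infty(\Omega)} + \lVert v\rVert_{L^\infty(\Omega)} \leq c\,\lVert v\rVert_{W^{2,q}}^{1-\theta}\lVert v\rVert^{\theta}
$$
on the Lipschitz domain $\Omega\subset\R^3$. This is presumably the Theorem \ref{GN} cited earlier; if needed, I will apply it after a bounded extension to $\R^3$. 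Scaling gives $\theta = (1-3/q)/(1+3/2\cdot 2/ ... )$. More concretely, matching $W^{1,\infty}$ between $L^2$ (order $-3/2$) and $W^{2,q}$ (order $2-3/q$) gives $\theta=(1-3/q)/(7/2-3/q)=2(q-3)/(7q-6)=2\gamma_0$, which confirms the exponent in the statement. This yields $\lvert\Lambda(u)-\Lambda(w)\rvert\leq C_\omega c M^{1-2\gamma_0}\lVert v\rVert^{2\gamma_0}$. Taking $C_M := C_\omega c\, M^{1-2\gamma_0}$, the constant is nondecreasing in $M$ and depends only on $M$, $\Omega$, and $q$.

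The time statement then follows directly. The assumption gives $\lVert y(t)\rVert_{W^{2,q}}+\lVert y(s)\rVert_{W^{2,q}}\leq M$, and Cauchy--Schwarz yields
$$
\lVert y(t)-y(s)\rVert\leq \lvert t-s\rvert^{1/2}\Bigl(\int_I\lVert\partial_t y\rVert^2\,\d t\Bigr)^{1/2}\leq M^{1/2}\lvert t-s\rvert^{1/2}.
$$
Raising this to the power $2\gamma_0$ gives $C_M M^{\gamma_0}\lvert t-s\rvert^{\gamma_0}$. I expect the main obstacle to be justifying the endpoint Gagliardo--Nirenberg inequality into $L^\infty$ for the gradient on a bounded Lipschitz domain, with the exact exponent $2\gamma_0$. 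If the paper's Theorem \ref{GN} does not cover this case, I will pass through the Morrey embedding $W^{2,q}\hookrightarrow C^{1,1-3/q}$. I will then use the elementary interpolation between Hölder seminorms and the $L^2$ norm: on a small ball around the maximum point of $\lvert\nabla v\rvert$ the gradient stays comparable to its maximum, which forces a lower bound on $\lVert v\rVert$. Optimizing the radius reproduces the same exponent.
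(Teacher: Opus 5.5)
Your proposal is correct and follows essentially the paper's proof. For (i) you use the quasiconvexity of $\omega$ and the reverse triangle inequality; for (ii) you use the Gagliardo--Nirenberg interpolation of $W^{1,\infty}$ between $L^2$ and $W^{2,q}$ with the exponent $2\gamma_0$, which you correctly recover by scaling, followed by $\lVert y(t)-y(s)\rVert \leq M^{1/2}\lvert t-s\rvert^{1/2}$.

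The only differences are minor. Your explicit constant $C_\omega c\,M^{1-2\gamma_0}$ is visibly nondecreasing in $M$ since $2\gamma_0<1$, whereas the paper absorbs the lower-order terms via $\lVert v\rVert \leq CM$ and then replaces $C_M$ by $\inf_{M'\geq M} C_{M'}$. Your Morrey-based fallback is unnecessary, since Theorem \ref{GN} with $p=\infty$, $j=1$, $m=2$, $r=q$ and $L^2$ in place of $L^q$ covers exactly this case.
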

	\begin{proof}
		$(i)$ Let $g := \bar{u} - \bar{w}$, so that $\lVert \nabla' g \rVert_{L^\infty(\omega)} \leq \lVert u - w \rVert_{W^{1,\infty}(\Omega)}$. Since $\omega$ is a bounded Lipschitz domain, it is quasiconvex, namely, the inner distance in $\omega$ is bounded by $C_\omega$ times the Euclidean distance, where $C_\omega \geq 1$ depends only on $\omega$ \cite{ErnGuermond}. Hence, $\lvert g(x') - g(z') \rvert \leq C_\omega \lVert \nabla' g \rVert_{L^\infty(\omega)} \lvert x' - z' \rvert$ for all $x',z' \in \omega$. Therefore, for all $x' \neq z'$,
		$$
		\frac{\lvert \bar{u}(x') - \bar{u}(z') \rvert}{\lvert x' - z' \rvert} \geq \frac{\lvert \bar{w}(x') - \bar{w}(z') \rvert}{\lvert x' - z' \rvert} - C_\omega \lVert u - w \rVert_{W^{1,\infty}(\Omega)} \geq \Lambda(w) - C_\omega \lVert u - w \rVert_{W^{1,\infty}(\Omega)}.
		$$
		Taking the infimum over $x' \neq z'$, we obtain $\Lambda(u) \geq \Lambda(w) - C_\omega \lVert u - w \rVert_{W^{1,\infty}(\Omega)}$, and exchanging the roles of $u$ and $w$ delivers the claim.
		
		$(ii)$ Let $v := u - w$. We apply Theorem \ref{GN} with $d=3$, $m=2$, $r=q$, $p=\infty$, and $L^2$ in place of $L^q$, for $j=1$ and $j=0$, with exponents $\lambda_1 = 5q/(7q-6)$ and $\lambda_0 = 3q/(7q-6)$, respectively. Since $q>3$, one has $1/2 \leq \lambda_1 < 1$ and $\lambda_0 < \lambda_1$. As $\lVert v \rVert \leq C \lVert v \rVert_{W^{2,q}(\Omega)} \leq C M$, the lower-order terms can be absorbed, and we get $\lVert v \rVert_{W^{1,\infty}(\Omega)} \leq C_M \lVert v \rVert^{1-\lambda_1} = C_M \lVert v \rVert^{2\gamma_0}$. The first claim follows then from $(i)$. The second one follows from $\lVert y(t) - y(s) \rVert \leq \lvert t-s \rvert^{1/2} \bigl(\int_I \lVert \partial_t y \rVert^2 \, \d t\bigr)^{1/2} \leq M^{1/2} \lvert t-s \rvert^{1/2}$. Finally, as the assumption $\lVert u \rVert_{W^{2,q}(\Omega)} + \lVert w \rVert_{W^{2,q}(\Omega)} \leq M$ also holds with any $M' \geq M$ in place of $M$, the constant $C_M$ can be replaced by $\inf_{M' \geq M} C_{M'}$, which is nondecreasing in $M$.
	\end{proof}
	
	\begin{lemma}[Coercivity of $R_\eps$]
		\label{H1normEps}
		Let $\eps \in (0,1)$ be fixed and let $\{u_n\}_n \subset A_\eps$ be a sequence of admissible deformations for which there exists a constant $\kappa>0$ independent of $n$ such that $E_\eps (u_n) \leq \kappa$. Then, there exists a constant $C_K>0$, possibly depending on $\kappa$, $\eps$, $\Omega$, $P$, $q$, $a$, and $\mathcal{C}$, but independent of $n$, such that, for every $v \in H_0^1(\Omega)$,
		$$
		R_\eps(u_n,v) \geq C_K \int_\Omega \Bigl( \lvert v(x) \rvert^2 + \lvert \nabla_\eps v(x) \rvert^2 \Bigr) \; \d x.
		$$
	\end{lemma}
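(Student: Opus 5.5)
The plan is to reduce the claim to a generalized Korn inequality with variable coefficients, applied in the unscaled thin domain and made uniform in $n$ by compactness. Since $\eps$ is fixed, $\nabla_\eps$ and $\nabla$ are equivalent up to constants depending on $\eps$. It therefore suffices to prove
$$
\int_\Omega \lvert G_n^T \nabla v + \nabla v^T G_n \rvert^2 \,\d x \;\geq\; c \,\lVert v \rVert_{H^1(\Omega)}^2
\quad \text{for every } v\in H_0^1(\Omega),
$$
where $G_n := \nabla_\eps u_n\, \mathrm{diag}(1,1,\eps^{-1})^{-1}$ or an equivalent rewriting. Concretely, we write $\nabla_\eps v = \nabla v\, D_\eps$ with $D_\eps=\mathrm{diag}(1,1,\eps^{-1})$. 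Then $\nabla_\eps v^T\nabla_\eps u_n + \nabla_\eps u_n^T \nabla_\eps v = D_\eps\bigl(\nabla v^T F_n + F_n^T \nabla v\bigr)D_\eps$ with $F_n=\nabla u_n$. Since $D_\eps$ is invertible with $\lvert D_\eps^{-1}\rvert \le c$, it is enough to bound $\lVert \nabla v^T F_n + F_n^T\nabla v\rVert^2$ from below by $c\lVert v\rVert_{H^1}^2$.

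The key tool is Neff's (or Pompe's) Korn inequality: if $F\in C^0(\bar\Omega;\mathbb{M}^{3\times3})$ with $\det F\ge\mu>0$, then $\lVert F^T\nabla v+\nabla v^TF\rVert\ge C(F)\lVert v\rVert_{H^1}$ for $v\in H^1_0$ on a Dirichlet part of positive measure. Here the Dirichlet part is $P$. The hypotheses are supplied as follows.
\begin{enumerate}
\item[1.] By Lemma \ref{W2qnormEps}, $\lVert u_n\rVert_{W^{2,q}}\le M_\kappa$. Since $q>3$, this gives $F_n$ bounded in $C^{0,\alpha}(\bar\Omega)$.
\item[2.] By Lemma \ref{PositiveDet}, $\det\nabla_\eps u_n>\mu$, and hence $\det F_n=\eps\det\nabla_\eps u_n>\eps\mu$ on $\bar\Omega$.
\end{enumerate}
Pompe's inequality then gives a constant $C(F_n)$ for each fixed $n$.

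The main obstacle is uniformity in $n$. I would argue by contradiction. Suppose there are $v_n\in H_0^1$ with $\lVert v_n\rVert_{H^1}=1$ and $\lVert F_n^T\nabla v_n+\nabla v_n^TF_n\rVert\to0$. By Arzel\`a--Ascoli, after passing to a subsequence, $F_n\to F$ uniformly, and $\det F\ge\eps\mu$ in the limit. Also $v_n\rightharpoonup v$ in $H^1$ and $v_n\to v$ in $L^2$. By uniform convergence,
$$
\lVert F^T\nabla v_n+\nabla v_n^TF\rVert \le \lVert F_n^T\nabla v_n+\nabla v_n^TF_n\rVert + 2\lVert F_n-F\rVert_\infty \to 0 .
$$
Pompe's inequality for the fixed matrix field $F$ yields $\lVert v_n - v_m\rVert_{H^1}\le C\lVert F^T\nabla(v_n-v_m)+\dots\rVert$, so $(v_n)$ is Cauchy in $H^1$. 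Hence $v_n\to v$ strongly, with $\lVert v\rVert_{H^1}=1$ and $F^T\nabla v+\nabla v^TF=0$. Pompe's inequality applied to $v$ then forces $v=0$, which is a contradiction.

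The estimate then transfers back through $D_\eps$. The resulting constant depends on $\kappa,\eps,\Omega,P,q,a,\mathcal{C}$ through $M_\kappa$ and $\mu$, as stated. The delicate points are the validity of the variable-coefficient Korn inequality for merely H\"older continuous $F$ (Pompe requires continuity and $\det F>0$, which we have) and the use of lateral boundary conditions only on $P$. The latter is harmless, since $P$ has positive surface measure.
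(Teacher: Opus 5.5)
Your proposal is correct and follows essentially the paper's argument: the uniform H\"older bound and the uniform lower bound on the determinant from Lemmas \ref{W2qnormEps} and \ref{PositiveDet} feed a variable-coefficient Korn inequality whose constant is uniform over a $C^0$-compact family of coefficient fields, which is exactly the content of Theorem \ref{KornMR} (Pompe's inequality plus compactness). The differences are only cosmetic: you stay on $\Omega$ via the factorization $\nabla_\eps v=\nabla v\,\mathrm{diag}(1,1,\eps^{-1})$ instead of pulling back to $\Omega_\eps$, and you reprove the uniformity in $n$ by contradiction instead of citing \cite{MielkeRoub2} (Pompe's inequality for the limit field $F$ already gives $\lVert v_n\rVert_{H^1}\to0$ directly, so your Cauchy-sequence step is unnecessary).
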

	\begin{proof}
		By Lemmas \ref{W2qnormEps} and \ref{PositiveDet}, the scaled gradients $\nabla_\eps u_n$ are bounded in $C^{0,\alpha}(\bar{\Omega})$ and satisfy $\det \nabla_\eps u_n \geq \mu$ in $\bar{\Omega}$, uniformly in $n$. Consider the maps $w_n = u_n \circ \theta_\eps^{-1}$ defined on the thin domain $\Omega_\eps$. Since $(\nabla w_n) \circ \theta_\eps = \nabla_\eps u_n$, the gradients $\nabla w_n$ belong to the set $\mathcal{F}_K$ in \eqref{FK}, with $D=\Omega_\eps$, $\lambda = \alpha$, and some $K$ depending on $\kappa$ and $\eps$. Given $v \in H_0^1(\Omega)$, we apply Theorem \ref{KornMR} on $\Omega_\eps$, with $\Gamma_D = P_\eps$, to $v \circ \theta_\eps^{-1}$. Changing variables and dividing by $\eps$ deliver the claim.
	\end{proof}
	
	\begin{remark}
			\label{RemKorn}
			The constant $C_K$ in Lemma \ref{H1normEps} degenerates as $\eps \to 0$. Indeed, let $u = {\rm id}_\eps$, so that $\nabla_\eps u = {\rm Id}$ and $E_\eps(u) = \lvert \Omega \rvert$, and, for a nonzero $\varphi \in C_c^\infty(\omega)$, consider
			$$
			v(x) = \bigl( -\eps x_3 \partial_1 \varphi(x'), -\eps x_3 \partial_2 \varphi(x'), \varphi(x') \bigr)^T.
			$$
			The off-diagonal blocks of $\nabla_\eps v$ are $-\nabla'\varphi$ and $(\nabla'\varphi)^T$, which cancel in $\nabla_\eps v + (\nabla_\eps v)^T$, so that the latter reduces to its upper-left block $-2\eps x_3 (\nabla')^2 \varphi$. Hence,
			$$
			R_\eps(u,v) = \frac{\eps^2}{3} \lVert (\nabla')^2 \varphi \rVert^2_{L^2(\omega)}, \qquad \text{whereas} \qquad \int_\Omega \lvert \nabla_\eps v \rvert^2 \; \d x \geq 2 \lVert \nabla' \varphi \rVert^2_{L^2(\omega)},
			$$
			and Lemma \ref{H1normEps} can hold only with $C_K \leq \eps^2 \lVert (\nabla')^2 \varphi \rVert^2_{L^2(\omega)} / \bigl( 6 \lVert \nabla' \varphi \rVert^2_{L^2(\omega)} \bigr)$. At the membrane level, the coercivity is lost. Indeed, as $\eps \to 0$, $(u,\eps^{-1}\partial_3 u)$ converges to $({\rm id}_0,(0,0,1)^T) \in B$ and $v$ converges to $(0,0,\varphi)^T$ in $H^1(\Omega)$, whereas $\eps^{-1}\partial_3 v = w := (-\partial_1\varphi, -\partial_2\varphi, 0)^T$ for every $\eps$. Thus, $\nabla_\eps v$ converges uniformly to the matrix
			$$
			\bigl( \nabla'(0,0,\varphi)^T \big\vert w \bigr) = \begin{pmatrix}
				0 & 0 & -\partial_1 \varphi \\
				0 & 0 & -\partial_2\varphi \\
				\partial_1\varphi & \partial_2\varphi & 0
			\end{pmatrix},
			$$
			and the membrane dissipation vanishes, although $w \neq 0$, namely,
			$$
			R\bigl({\rm id}_0,(0,0,\varphi)^T,(0,0,1)^T,w\bigr) = 0.
			$$
			For this reason, in the limits as $\eps\to0$, the bound on the dissipation is used only through the time derivative of the Cauchy-Green tensor $(\nabla_\eps y)^T \nabla_\eps y$ (see Lemma \ref{TimeComp} below).
	\end{remark}

	Next, we state the analogue of Lemmas \ref{W2qnormEps} and \ref{PositiveDet} for the membrane model. No analogue of Lemma \ref{H1normEps} holds for the membrane dissipation $R$ (see Remark \ref{RemKorn}).

\begin{lemma}[Coercivity of $E$ and positivity of the determinant]
	\label{MW2qnormEps}\label{MPositiveDet}
	Let $\{(u_n,\beta_n)\}_n \subset B$ be a sequence of membrane admissible deformations for which there exists a constant $\kappa>0$ independent of $n$ such that $E (u_n,\beta_n) \leq \kappa$. Then, there exist constants $M_\kappa>0$, depending only on $\kappa$, $\omega$, $P$, $q$, and $\mathcal{C}$, and $\mu = \mu(\kappa,\omega,q,a,\mathcal{C}) > 0$ such that
	$$
	\lVert u_n \rVert_{W^{2,q}(\omega)} + \lVert \beta_n \rVert_{W^{1,q}(\omega)} \leq M_\kappa \qquad \text{and} \qquad \det (\nabla' u_n\vert \beta_n) (x') > \mu \quad \forall x' \in \bar{\omega}.
	$$
	In particular, $\det (\nabla' y_0 \vert b_0) > \mu$ in $\bar{\omega}$.
\end{lemma}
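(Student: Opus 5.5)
The plan mirrors the proofs of Lemmas \ref{W2qnormEps} and \ref{PositiveDet}, now on the two-dimensional domain $\omega$ with the pair $(u_n,\beta_n)$ in place of $u_n$. Throughout, $c$ denotes a constant depending only on $\kappa$, $\omega$, $P$, $q$, and $\mathcal{C}$.

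\emph{Coercivity.} From $E(u_n,\beta_n)\le\kappa$, the definition of $T$ gives $\lVert (\nabla')^2 u_n\rVert_{L^q(\omega)}+\lVert\nabla'\beta_n\rVert_{L^q(\omega)}\le c$. Positive definiteness of $\mathcal{C}$ applied to $K$ gives $\lVert(\nabla' u_n\vert\beta_n)\rVert_{L^4(\omega)}\le c$. The boundary conditions in $B$ are $u_n={\rm id}_0$, $\nabla' u_n=\nabla'{\rm id}_0$, and $\beta_n=(0,0,1)^T$ on $\partial\omega$. Hence Theorem \ref{PoincareGen}, applied to $\nabla' u_n-\nabla'{\rm id}_0$, to $\beta_n-(0,0,1)^T$, and to $u_n-{\rm id}_0$, yields $L^q$ bounds on $\nabla'u_n$, $\beta_n$, and $u_n$ from the $L^q$ bounds on their gradients. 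Since $\omega\subset\R^2$ and $q>3$, Sobolev embedding even gives $L^\infty$ bounds. (Alternatively, Theorem \ref{GN} with the $L^4$ bound can be used exactly as in Lemma \ref{W2qnormEps}.) This gives $\lVert u_n\rVert_{W^{2,q}}+\lVert\beta_n\rVert_{W^{1,q}}\le M_\kappa$. Replacing $M_\kappa$ by $\inf_{\kappa'\ge\kappa}M_{\kappa'}$ makes it nondecreasing in $\kappa$.

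\emph{Positivity of the determinant.} By the bounds just obtained and the embedding $W^{1,q}(\omega)\hookrightarrow C^{0,1-2/q}(\bar\omega)$, the matrices $(\nabla'u_n\vert\beta_n)$ are bounded in $C^{0,\alpha}(\bar\omega)$ uniformly in $n$. Their determinants are therefore uniformly bounded and uniformly Hölder continuous with exponent $\alpha<1-3/q<1-2/q$. At the same time, $\int_\omega(\det(\nabla'u_n\vert\beta_n))^{-a}\,\d x'\le\kappa$, with $a>3q/(q-3)$. Theorem \ref{thHealeyKromer}, applied on the two-dimensional domain $\omega$, then yields a uniform positive lower bound $\mu$.

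The step requiring care is checking that the dimension-dependent condition on $a$ in Theorem \ref{thHealeyKromer} holds in dimension $2$. If the theorem is stated for $d=3$, I would redo its elementary argument directly. Suppose $g:=\det(\nabla'u_n\vert\beta_n)$ is $\alpha$-Hölder with constant $L$, and $g(x_0')=\delta$ is small. Then $g\le\delta+Lr^\alpha$ on $B_r(x_0')\cap\omega$, whose measure is at least $c\,r^2$ because $\omega$ is Lipschitz. Choosing $r=\delta^{1/\alpha}$ gives $\kappa\ge c\,\delta^{2/\alpha-a}$. This forces $\delta\ge\mu>0$ provided $a>2/\alpha$, and that holds since $a>3/(1-3/q)>2/\alpha$ for $\alpha$ chosen close to $1-3/q$ (any $\alpha<1-2/q$ is allowed here).

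The final claim follows because $E(y_0,b_0)\le c$, as shown in Subsection \ref{Membrane}, and $(y_0,b_0)\in B$. We can then apply the result with the constant sequence $(u_n,\beta_n)=(y_0,b_0)$, possibly enlarging $\kappa$, and $\mu$ is nonincreasing in $\kappa$.
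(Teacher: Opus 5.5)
Your proof is correct and follows the paper's, which transfers the proofs of Lemmas \ref{W2qnormEps} and \ref{PositiveDet} to $\omega$ with $(\nabla' u_n\vert\beta_n)$ and $\bigl((\nabla')^2u_n,\nabla'\beta_n\bigr)$ in place of $\nabla_\eps u_n$ and $\nabla_\eps^2 u_n$, and uses $E(y_0,b_0)\le c$ from Subsection \ref{Membrane} for the final claim. Using the Poincar\'e inequality for $\nabla'u_n$ and $\beta_n$ instead of the $L^4$ bound is a harmless simplification, and your hand-made exponent check is the one the paper itself makes in the proof of Theorem \ref{Gamma0h}: with $\gamma=1-2/q$ one has $a>3q/(q-3)>2q/(q-2)=2/\gamma$, so Theorem \ref{thHealeyKromer} applies with $d=2$.
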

\begin{proof}
	The proofs of Lemmas \ref{W2qnormEps} and \ref{PositiveDet} apply with $\omega$, $(\nabla' u_n \vert \beta_n)$, and $\bigl((\nabla')^2 u_n, \nabla' \beta_n\bigr)$ in place of $\Omega$, $\nabla_\eps u_n$, and $\nabla_\eps^2 u_n$, respectively, and with $E(y_0,b_0) \leq c$ by Subsection \ref{Membrane}.
\end{proof}

		\begin{lemma}[Limits of admissible deformations]
		\label{ClosA}
		The following statements hold.
		\begin{enumerate}
			\item[{\rm i)}] Let $\eps \in (0,1)$ be fixed, and let $\{u_n\}_n \subset A_\eps$ and $u \in C^0(\bar{\Omega};\R^3)$ be such that $u_n \to u$ in $C^0(\bar{\Omega})$ and $E_\eps(u_n) \leq \kappa$ for every $n$, with $\kappa > 0$ independent of $n$. Then, $u \in A_\eps$,
			$$
			u_n \rightharpoonup u \quad \text{in} \quad W^{2,q}(\Omega) \qquad \text{and} \qquad u_n \to u \quad \text{in} \quad C^{1,\alpha}(\bar{\Omega}),
			$$
			and $\det \nabla_\eps u \geq \mu$ in $\bar{\Omega}$, with $\mu$ as in Lemma \ref{PositiveDet}. Moreover, $\Lambda(u_n) \to \Lambda(u)$, $K_\eps(u_n) \to K_\eps(u)$, $J_\eps(u_n) \to J_\eps(u)$, and $T_\eps(u) \leq \liminf_{n\to\infty} T_\eps(u_n)$.
			\item[{\rm ii)}] Let $\{(u_n,\beta_n)\}_n \subset B$ and $u,\beta \in C^0(\bar{\omega};\R^3)$ be such that $u_n \to u$ and $\beta_n \to \beta$ in $C^0(\bar{\omega})$ and $E(u_n,\beta_n) \leq \kappa$ for every $n$, with $\kappa > 0$ independent of $n$. Then, $(u,\beta) \in B$,
			\begin{alignat*}{6}
				&u_n &&\rightharpoonup u \quad &&\text{in} \quad W^{2,q}(\omega), \qquad &&u_n &&\to u \quad &&\text{in} \quad C^{1,\alpha}(\bar{\omega}),\\
				&\beta_n &&\rightharpoonup \beta \quad &&\text{in} \quad W^{1,q}(\omega), \qquad &&\beta_n &&\to \beta \quad &&\text{in} \quad C^{0,\alpha}(\bar{\omega}),
			\end{alignat*}
			and $\det (\nabla' u \vert \beta) \geq \mu$ in $\bar{\omega}$, with $\mu$ as in Lemma \ref{MPositiveDet}. Moreover, $K(u_n,\beta_n) \to K(u,\beta)$, $J(u_n,\beta_n) \to J(u,\beta)$, and $T(u,\beta) \leq \liminf_{n\to\infty} T(u_n,\beta_n)$.
		\end{enumerate}
	\end{lemma}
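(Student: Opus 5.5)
For part $(i)$, with $\eps$ fixed, I would start from Lemma \ref{W2qnormEps}. It bounds $\{u_n\}_n$ uniformly in $W^{2,q}(\Omega)$, with the bound depending only on $\kappa$. By reflexivity and the compact embedding $W^{2,q}\overset{c}{\hookrightarrow}C^{1,\alpha}$, every subsequence has a further subsequence converging weakly in $W^{2,q}(\Omega)$ and strongly in $C^{1,\alpha}(\bar\Omega)$. Since $u_n\to u$ in $C^0$, the limit is always $u$, so the whole sequence converges in both senses.

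Membership $u\in A_\eps$ and the remaining claims then follow one by one.
\begin{itemize}
\item The boundary conditions $u={\rm id}_\eps$ and $\nabla_\eps u={\rm Id}$ on $P$ pass to the limit by uniform $C^1$ convergence.
\item Lemma \ref{PositiveDet} gives $\det\nabla_\eps u_n>\mu$ on $\bar\Omega$ uniformly in $n$, hence $\det\nabla_\eps u\ge\mu$ pointwise.
\item Lemma \ref{InvStab}$(i)$ gives $\Lambda(u_n)\to\Lambda(u)$, so $\Lambda(u)\ge c_0$.
\end{itemize}
Hence $u\in A_\eps$. Uniform convergence of $\nabla_\eps u_n$, together with the lower bound $\mu$ on the determinants, gives uniform convergence of the integrands of $K_\eps$ and $J_\eps$. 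Indeed, $t\mapsto t^{-a}$ is Lipschitz on $[\mu,\infty)$ and $\nabla_\eps u_n$ is uniformly bounded, so $K_\eps(u_n)\to K_\eps(u)$ and $J_\eps(u_n)\to J_\eps(u)$. Finally, $\nabla_\eps^2u_n\rightharpoonup\nabla_\eps^2u$ in $L^q$, because $\eps$ is fixed and the linear map $v\mapsto\nabla_\eps^2 v$ is continuous. Weak lower semicontinuity of the $L^q$ norm then gives $T_\eps(u)\le\liminf T_\eps(u_n)$.

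For part $(ii)$ the argument is the same, with Lemma \ref{MW2qnormEps} in place of Lemmas \ref{W2qnormEps} and \ref{PositiveDet}. That lemma bounds $u_n$ in $W^{2,q}(\omega)$ and $\beta_n$ in $W^{1,q}(\omega)$, and gives the uniform determinant bound $\det(\nabla'u_n\vert\beta_n)>\mu$. Compactness of $W^{2,q}(\omega)\hookrightarrow C^{1,\alpha}$ and $W^{1,q}(\omega)\hookrightarrow C^{0,\alpha}$ in dimension two, together with the $C^0$ limits, identifies the limits and yields convergence of the full sequences.

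The membrane invertibility condition $\lvert u_n(x')-u_n(z')\rvert\ge c_0\lvert x'-z'\rvert$ is pointwise and passes directly to the limit under uniform convergence. The boundary conditions on $\partial\omega$ pass by uniform convergence of $u_n$, $\nabla'u_n$ and $\beta_n$. The convergence of $K$ and $J$ follows from uniform convergence of $(\nabla'u_n\vert\beta_n)$ and the determinant bound, as in $(i)$.

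For $T$, I would note that $\bigl((\nabla')^2u_n,\nabla'\beta_n\bigr)\rightharpoonup\bigl((\nabla')^2u,\nabla'\beta\bigr)$ in $L^q$. The integrand $\frac1q(\lvert A\rvert^2+2\lvert G\rvert^2)^{q/2}$ is convex and continuous, since it is a power $q\ge1$ of a norm. Hence $T$ is weakly lower semicontinuous, which gives $T(u,\beta)\le\liminf T(u_n,\beta_n)$.

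The only genuinely delicate point is upgrading subsequential convergence to convergence of the whole sequence. This is handled by the uniqueness of the limit forced by the $C^0$ convergence hypothesis. Everything else is routine.
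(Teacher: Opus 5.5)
Your proposal is correct and follows the paper's proof essentially step by step. It uses the uniform bounds from Lemma \ref{W2qnormEps} (resp.\ Lemma \ref{MW2qnormEps}) and the subsequence-of-subsequence argument with the limit fixed by $C^0$ convergence. It then passes to the limit in the boundary conditions and in the determinant bound, handles the invertibility condition via Lemma \ref{InvStab}$(i)$ (resp.\ pointwise under uniform convergence), and concludes with convergence of $K$ and $J$ and weak lower semicontinuity of the convex second-gradient term.
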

	\begin{proof}
		$(i)$ By Lemma \ref{W2qnormEps}, $\{u_n\}_n$ is bounded in $W^{2,q}(\Omega)$. Hence, every subsequence has a further subsequence which converges weakly in $W^{2,q}(\Omega)$ and, by the compact embedding, strongly in $C^{1,\alpha}(\bar{\Omega})$. Its limit is $u$ by the uniform convergence, so that the whole sequence converges. Passing to the limit on $P$ gives $u = {\rm id}_\eps$ and $\nabla_\eps u = {\rm Id}$ on $P$. Lemma \ref{PositiveDet} gives $\det \nabla_\eps u_n > \mu$ in $\bar{\Omega}$, and consequently $\det \nabla_\eps u \geq \mu$ in $\bar{\Omega}$. By Lemma \ref{InvStab}.$(i)$, $\Lambda(u_n) \to \Lambda(u)$, so that $\Lambda(u) \geq c_0$. Thus, $u \in A_\eps$. Finally, the uniform convergence of $\nabla_\eps u_n$ and the lower bound on the determinants give $K_\eps(u_n) \to K_\eps(u)$ and $J_\eps(u_n) \to J_\eps(u)$, whereas $T_\eps(u) \leq \liminf_{n\to\infty} T_\eps(u_n)$ by the weak lower semicontinuity of the norm.
		
		$(ii)$ The proof is the same, with Lemmas \ref{MW2qnormEps} and \ref{MPositiveDet} in place of Lemmas \ref{W2qnormEps} and \ref{PositiveDet}. The condition $\lvert u(x') - u(z') \rvert \geq c_0 \lvert x' - z' \rvert$ passes to the limit by the uniform convergence, and $T$ is weakly lower semicontinuous on $W^{2,q}(\omega) \times W^{1,q}(\omega)$, being convex and continuous.
	\end{proof}
	
		\begin{lemma}[Limits of admissible deformations as $\eps \to 0$]
		\label{ClosAB}
		For every $\eps \in (0,1)$, let $u_\eps \in A_\eps$ be such that $E_\eps(u_\eps) \leq \kappa$, with $\kappa > 0$ independent of $\eps$. Then, there exist $(y,b) \in B$ and $d \in L^q(\Omega;\R^3)$ such that, up to a subsequence (not relabeled),
		\begin{alignat*}{6}
			&u_\eps &&\rightharpoonup y \quad &&\text{in} \quad W^{2,q}(\omega), \qquad &&u_\eps &&\to y \quad &&\text{in} \quad C^{1,\alpha}(\bar{\omega}),\\
			&\nabla_\eps u_\eps &&\rightharpoonup (\nabla' y \vert b) \quad &&\text{in} \quad W^{1,q}(\omega), \qquad &&\nabla_\eps u_\eps &&\to (\nabla' y \vert b) \quad &&\text{in} \quad C^{0,\alpha}(\bar{\omega}),
		\end{alignat*}
		and
		$$
		\nabla_\eps^2 u_\eps \rightharpoonup \begin{pmatrix}
			(\nabla')^2 y & \nabla' b\\
			(\nabla' b)^T & d
		\end{pmatrix} \quad \text{in} \quad L^q(\Omega).
		$$
		Moreover, $\det(\nabla' y \vert b) \geq \mu$ in $\bar{\omega}$, with $\mu$ as in Lemma \ref{PositiveDet}, $K_\eps(u_\eps) \to K(y,b)$, $J_\eps(u_\eps) \to J(y,b)$, and $T(y,b) \leq \liminf_{\eps\to0} T_\eps(u_\eps)$. Finally, if $\Lambda(u_\eps) \geq c$ for every $\eps$, for some $c \geq c_0$, then $\lvert y(x') - y(z') \rvert \geq c \lvert x' - z' \rvert$ for all $x',z' \in \omega$.
	\end{lemma}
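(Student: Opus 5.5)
The plan is to combine the uniform coercivity of Lemma \ref{W2qnormEps} with the compactness of scaled gradients in Lemma \ref{ScaledComp}, upgraded to strong convergence by compact embeddings. Since $E_\eps(u_\eps)\le\kappa$ with $\kappa$ independent of $\eps$, Lemma \ref{W2qnormEps} yields
$$
\lVert u_\eps\rVert_{W^{2,q}(\Omega)}+\lVert\nabla_\eps u_\eps\rVert_{W^{1,q}(\Omega)}+\lVert\nabla_\eps^2 u_\eps\rVert_{L^q(\Omega)}\le M_\kappa .
$$
Lemma \ref{ScaledComp} with $p=q$ then provides $y\in W^{2,q}(\omega)$, $b\in W^{1,q}(\omega)$, both independent of $x_3$, and $d\in L^q(\Omega)$, together with the weak convergences $u_\eps\rightharpoonup y$ in $W^{2,q}$, $\nabla_\eps u_\eps\rightharpoonup(\nabla'y\vert b)$ and the stated weak limit of $\nabla^2_\eps u_\eps$ in $L^q(\Omega)$.

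Weak convergence of $\nabla_\eps u_\eps$ in $W^{1,q}(\Omega)$ follows from the bound on $\lVert\nabla_\eps u_\eps\rVert_{W^{1,q}}$, since the limit is already identified in $L^q$. The compact embeddings $W^{2,q}\overset{c}{\hookrightarrow}C^{1,\alpha}(\bar\Omega)$ and $W^{1,q}\overset{c}{\hookrightarrow}C^{0,\alpha}(\bar\Omega)$, valid because $\alpha<1-3/q$, upgrade these to strong convergence in $C^{1,\alpha}(\bar\omega)$ and $C^{0,\alpha}(\bar\omega)$, understood in the sense of the notational convention for $x_3$-independent limits.

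Next come the boundary conditions and the determinant. Uniform convergence on $\bar\Omega$ passes the traces $u_\eps=\mathrm{id}_\eps$ and $\nabla_\eps u_\eps=\mathrm{Id}$ on $P$ to the limit. Since $\mathrm{id}_\eps\to\mathrm{id}_0$ uniformly, this gives $y=\mathrm{id}_0$, $\nabla'y=\nabla'\mathrm{id}_0$ and $b=e_3$ on $\partial\omega$. Lemma \ref{PositiveDet} gives $\det\nabla_\eps u_\eps>\mu$ on $\bar\Omega$ with $\mu$ independent of $\eps$. Uniform convergence of $\nabla_\eps u_\eps$ and continuity of $\det$ then yield $\det(\nabla'y\vert b)\ge\mu$.

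For the injectivity-type lower bound, I use that $\bar u_\eps\to y$ uniformly on $\bar\omega$, because the averages over $x_3$ of uniformly convergent functions converge uniformly. Passing to the limit in $\lvert\bar u_\eps(x')-\bar u_\eps(z')\rvert\ge c\lvert x'-z'\rvert$ pointwise, for $c=c_0$ in general and $c\ge c_0$ under the extra assumption, gives the final claim, and in particular $(y,b)\in B$.

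For the energies, $K_\eps(u_\eps)\to K(y,b)$ follows from the uniform convergence of $\nabla_\eps u_\eps$ to $(\nabla'y\vert b)$ together with the fact that the integrand does not depend on $x_3$ in the limit. Similarly, $J_\eps(u_\eps)\to J(y,b)$ holds because $(\det)^{-a}$ is uniformly continuous on matrices with determinant $\ge\mu$ and bounded norm. For $T$, I discard the nonnegative $\lvert\eps^{-2}\partial_{33}u_\eps\rvert^2$ entry, so that
$$
T_\eps(u_\eps)\ge\int_\Omega\frac1q\bigl(\lvert(\nabla')^2u_\eps\rvert^2+2\lvert\eps^{-1}\nabla'\partial_3u_\eps\rvert^2\bigr)^{q/2}\,\d x .
$$
The integrand on the right is a convex continuous functional of the pair $\bigl((\nabla')^2u_\eps,\eps^{-1}\nabla'\partial_3u_\eps\bigr)$, which converges weakly in $L^q(\Omega)$ to $\bigl((\nabla')^2y,\nabla'b\bigr)$. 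Weak lower semicontinuity therefore gives $T(y,b)\le\liminf T_\eps(u_\eps)$, using that the limit is $x_3$-independent and $\lvert\Omega\rvert$ equals $\lvert\omega\rvert$ times unit thickness.

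I expect the main delicate point to be this $T$ step, specifically correctly identifying the weak limit of the mixed block as $\nabla'b$ and noting that the $d$-entry is simply dropped. The remaining steps are routine consequences of the uniform bounds.
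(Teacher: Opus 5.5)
Your proposal is correct and follows essentially the same route as the paper's proof. Both use the uniform bounds of Lemma \ref{W2qnormEps}, Lemma \ref{ScaledComp} together with the compact embeddings, and uniform convergence to pass the boundary conditions and the averaged invertibility condition to the limit. Both take the determinant bound from Lemma \ref{PositiveDet}, and both obtain the lower semicontinuity of $T$ by dropping the $\eps^{-2}\partial_{33}$ entry and using the convexity of $(G,F)\mapsto(\lvert G\rvert^2+2\lvert F\rvert^2)^{q/2}$.
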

	\begin{proof}
		By Lemma \ref{W2qnormEps}, the families $\{u_\eps\}_\eps$, $\{\nabla_\eps u_\eps\}_\eps$, and $\{\nabla_\eps^2 u_\eps\}_\eps$ are bounded in $W^{2,q}(\Omega)$, $W^{1,q}(\Omega)$, and $L^q(\Omega)$, respectively. Lemma \ref{ScaledComp} and the compact embeddings of $W^{2,q}(\Omega)$ in $C^{1,\alpha}(\bar{\Omega})$ and of $W^{1,q}(\Omega)$ in $C^{0,\alpha}(\bar{\Omega})$ give the convergences. Passing to the limit on $P$ in the boundary conditions in $A_\eps$ gives $y = {\rm id}_0$, $\nabla' y = \nabla' {\rm id}_0$, and $b = (0,0,1)^T$ on $P$. Lemma \ref{PositiveDet} gives $\det \nabla_\eps u_\eps > \mu$ in $\bar{\Omega}$, and consequently $\det(\nabla' y \vert b) \geq \mu$ in $\bar{\omega}$. Since $\bar{u}_\eps \to y$ uniformly in $\bar{\omega}$, the condition $\Lambda(u_\eps) \geq c$ gives $\lvert y(x') - y(z') \rvert \geq c \lvert x' - z' \rvert$, and, with $c = c_0$, we obtain $(y,b) \in B$. The uniform convergence of $\nabla_\eps u_\eps$ and the lower bound on the determinants give $K_\eps(u_\eps) \to K(y,b)$ and $J_\eps(u_\eps) \to J(y,b)$. Finally, as $\lvert \nabla_\eps^2 u_\eps \rvert^2 \geq \lvert (\nabla')^2 u_\eps \rvert^2 + 2 \lvert \eps^{-1} \nabla' \partial_3 u_\eps \rvert^2$, the convexity of $(G,F) \mapsto (\lvert G \rvert^2 + 2 \lvert F \rvert^2)^{q/2}$ gives $T(y,b) \leq \liminf_{\eps\to0} T_\eps(u_\eps)$.
	\end{proof}
	
	We close this subsection with a compactness result for time-dependent families of scaled deformations. It replaces, in the limits as $\eps\to0$, a Korn inequality independent of $\eps$, which is not available by Remark \ref{RemKorn}.
	
	\begin{lemma}[Compactness of time-dependent scaled deformations]
		\label{TimeComp}
		Let $I = [t_0,t_1]$ and, for every $\eps \in (0,1)$, let $y_\eps \in C_w(I;W^{2,q}(\Omega)) \cap H^1(I;H^1(\Omega))$ be such that $y_\eps(t) \in A_\eps$ for every $t \in I$ and
		\begin{equation}
			\label{TCbound}
			\sup_{t\in I} E_\eps(y_\eps(t)) + \int_I \Bigl( R_\eps(y_\eps,\partial_t y_\eps) + \lVert \partial_t y_\eps \rVert^2 \Bigr) \; \d t \leq c,
		\end{equation}
		with $c$ independent of $\eps$. Set $C_\eps := (\nabla_\eps y_\eps)^T \nabla_\eps y_\eps$. Then, there exist $y \in C_w(I;W^{2,q}(\omega)) \cap H^1(I;L^2(\omega))$, $b \in C_w(I;W^{1,q}(\omega))$, and $d \in L^\infty(I;L^q(\Omega))$ such that $(y(t),b(t)) \in B$ for every $t \in I$, $\det(\nabla' y \vert b) \geq \mu$ in $I \times \bar{\omega}$ with $\mu$ as in Lemma \ref{PositiveDet}, the Cauchy-Green tensor $C := (\nabla' y\vert b)^T(\nabla' y\vert b)$ belongs to $H^1(I;L^2(\omega))$, and, up to a subsequence (not relabeled), the following convergences hold
		\begin{alignat}{4}
			&y_\eps &&\overset{*}{\rightharpoonup} y \quad &&\text{in} \quad L^\infty(I;W^{2,q}(\omega)),\label{conv311a}\\
			&\nabla_\eps y_\eps &&\overset{*}{\rightharpoonup} (\nabla' y \vert b) \quad &&\text{in} \quad L^\infty(I;W^{1,q}(\omega)),\label{conv311b}\\
			&\nabla_\eps^2 y_\eps &&\overset{*}{\rightharpoonup} \begin{pmatrix}
				(\nabla')^2 y & \nabla' b\\
				(\nabla' b)^T & d
			\end{pmatrix} \quad &&\text{in} \quad L^\infty(I;L^q(\Omega)),\label{conv311c}\\
			&\partial_t y_\eps &&\rightharpoonup \partial_t y \quad &&\text{in} \quad L^2(I;L^2(\omega)),\label{conv311d}\\
			&\partial_t C_\eps &&\rightharpoonup \partial_t C \quad &&\text{in} \quad L^2(I;L^2(\omega)),\label{conv311e}\\
			&y_\eps &&\to y \quad &&\text{in} \quad C^0(I;C^{1,\alpha}(\bar{\omega})),\label{conv311f}\\
			&\nabla_\eps y_\eps &&\to (\nabla' y \vert b) \quad &&\text{in} \quad C^0(I;C^{0,\alpha}(\bar{\omega})).\label{conv311g}
		\end{alignat}
	\end{lemma}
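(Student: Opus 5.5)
The proof rests on three ingredients: the uniform energy bound in \eqref{TCbound}, the coercivity Lemma \ref{W2qnormEps}, and an equicontinuity-in-time argument. Together they replace the unavailable uniform Korn inequality of Remark \ref{RemKorn}.

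\textbf{Uniform bounds and weak-$*$ limits.} By \eqref{TCbound} and Lemma \ref{W2qnormEps} with $\kappa = c$, the families $y_\eps$, $\nabla_\eps y_\eps$ and $\nabla_\eps^2 y_\eps$ are bounded, uniformly in $t\in I$ and $\eps$, in $W^{2,q}(\Omega)$, $W^{1,q}(\Omega)$ and $L^q(\Omega)$, respectively. Lemma \ref{PositiveDet} gives $\det\nabla_\eps y_\eps \ge \mu$ in $I\times\bar\Omega$. Banach--Alaoglu (the spaces are reflexive duals) then yields, up to a subsequence, weak-$*$ limits in $L^\infty(I;\cdot)$. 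Identifying the limit structure in \eqref{conv311a}--\eqref{conv311c} is done as in Lemma \ref{ScaledComp}: $\partial_3 y_\eps$ and $\eps^{-1}\partial_{33}y_\eps$ are $O(\eps)$ in $L^\infty(I;L^q)$, so the limits $y$ and $b$ are independent of $x_3$. Since $\partial_t y_\eps$ is bounded in $L^2(I;L^2)$, we also get \eqref{conv311d}, with limit equal to $\partial_t y$ by testing with smooth functions of time.

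\textbf{Strong convergence in time.} For \eqref{conv311f}, we combine $\lVert y_\eps(t)-y_\eps(s)\rVert \le c|t-s|^{1/2}$ with the uniform $W^{2,q}$ bound. The Gagliardo--Nirenberg interpolation used in Lemma \ref{InvStab}.$(ii)$, applied in a version controlling the $C^{1,\alpha}$ norm with $\alpha<1-3/q$ (possible at the cost of a smaller exponent), gives $\lVert y_\eps(t)-y_\eps(s)\rVert_{C^{1,\alpha}} \le C|t-s|^{\gamma}$ for some $\gamma>0$. With pointwise relative compactness in $C^{1,\alpha}(\bar\Omega)$ from the compact embedding, Arzel\`a--Ascoli in $C^0(I;C^{1,\alpha}(\bar\Omega))$ gives \eqref{conv311f}.

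The main obstacle is \eqref{conv311g}. The reason is that $\eps^{-1}\partial_3 y_\eps$ has no time-derivative bound: $\eps^{-1}\partial_3\partial_t y_\eps$ is not controlled. I would argue as follows.
\begin{itemize}
\item For each fixed $t$, $\nabla_\eps y_\eps(t)$ is bounded in $W^{1,q}$, hence relatively compact in $C^{0,\alpha}$. So equicontinuity in time in some weak norm suffices, since interpolation with the uniform $W^{1,q}$ bound then upgrades it to $C^{0,\alpha}$.
\item The equicontinuity comes from the dissipation. By \eqref{TCbound}, $\partial_t C_\eps$ is bounded in $L^2(I;L^2)$, so $C_\eps$ is equicontinuous in time with values in $L^2$, and by interpolation in $C^0$.
\item We now recover $F_\eps := \nabla_\eps y_\eps$ from $C_\eps$ and its first two columns $\nabla' y_\eps$. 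The latter are already equicontinuous by the previous step. The third column $g_\eps = \eps^{-1}\partial_3 y_\eps$ is determined by the scalar products $g_\eps\cdot\partial_i y_\eps$ ($i=1,2$), the norm $|g_\eps|^2$, and the orientation $\det F_\eps>0$. Indeed, $\partial_1 y_\eps$ and $\partial_2 y_\eps$ are linearly independent, with a uniform lower bound coming from $\det F_\eps\ge\mu$ and $|F_\eps|\le c$. So $g_\eps$ is a locally Lipschitz function of $(\nabla' y_\eps, C_\eps)$ on the compact set of admissible values: its component orthogonal to $\partial_1 y_\eps\times\partial_2 y_\eps$ is fixed by the scalar products, and the normal component is the positive square root, bounded below by $\det F_\eps/|\partial_1 y_\eps\times\partial_2 y_\eps|\ge c\mu$.
\item Hence $F_\eps$ is equicontinuous in $C^0(I;C^0)$, and Arzel\`a--Ascoli together with interpolation gives \eqref{conv311g}.
\end{itemize}

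\textbf{Consequences.} The uniform convergence lets us pass to the limit pointwise in $t$ with Lemma \ref{ClosAB}, giving $(y(t),b(t))\in B$ and $\det(\nabla' y\vert b)\ge\mu$. It also gives weak continuity, $y\in C_w(I;W^{2,q})$ and $b\in C_w(I;W^{1,q})$: uniform boundedness combined with uniform convergence identifies the weak limits at every $t$. Finally, $C_\eps\to C$ uniformly and $\partial_t C_\eps$ is bounded in $L^2$. The weak limit of $\partial_t C_\eps$ coincides with the distributional derivative of $C$, so $C\in H^1(I;L^2(\omega))$ and \eqref{conv311e} holds. Here $\partial_t C_\eps$ depends on $x_3$ but converges weakly to a function independent of $x_3$, since $C$ is.
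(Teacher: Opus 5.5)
Your proposal is correct and follows essentially the paper's proof: uniform bounds from Lemmas \ref{W2qnormEps} and \ref{PositiveDet}, then strong compactness in time of $y_\eps$ and $C_\eps$, then the key step of recovering $\eps^{-1}\partial_3 y_\eps$ from $\nabla' y_\eps$ and $C_\eps$ via the orientation condition $\det\nabla_\eps y_\eps\geq\mu$. The differences are only technical: you get compactness by explicit interpolation plus Arzel\`a--Ascoli where the paper cites Simon's compactness theorem (Theorem \ref{cor4Simon}), and you push equicontinuity through the Lipschitz reconstruction map where the paper passes to the limit in the explicit formula $b_\eps = a_\eps G_\eps^{-1} g_\eps + (\det C_\eps)^{1/2}\, n_\eps/\lvert n_\eps\rvert^2$.
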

	\begin{proof}
		We split the proof into five steps. After collecting the bounds independent of $\eps$ (Step 1) and identifying the corresponding weak limits (Step 2), we prove, up to a subsequence, the strong convergence of $y_\eps$ and of the Cauchy-Green tensors $C_\eps$ by means of Theorem \ref{cor4Simon} (Step 3). The same argument does not apply to the scaled gradients $\nabla_\eps y_\eps$, since no bound on the time derivative of $\eps^{-1}\partial_3 y_\eps$ independent of $\eps$ is available. The strong convergence of $\nabla_\eps y_\eps$ follows instead by expressing $\eps^{-1}\partial_3 y_\eps$ in terms of $\nabla' y_\eps$ and $C_\eps$ (Step 4). Finally, we establish the regularity and the admissibility of the limits (Step 5).
		
			$\bullet$ {\it Step 1: Bounds.}

		By \eqref{TCbound} and Lemmas \ref{W2qnormEps} and \ref{PositiveDet}, we get
		$$
			\lVert y_\eps \rVert_{L^{\infty}(I;W^{2,q}(\Omega))} + \lVert \nabla_\eps y_\eps \rVert_{L^{\infty}(I;W^{1,q}(\Omega))} + \lVert \nabla_\eps^2 y_\eps \rVert_{L^{\infty}(I;L^q(\Omega))} + \lVert \partial_t y_\eps \rVert_{L^2(I;L^2(\Omega))} \leq c,
		$$
		and
		$$
		\det \nabla_\eps y_\eps(t,x) \geq \mu , \qquad \forall  (t,x) \in I \times \bar{\Omega}.
		$$
		By using again \eqref{TCbound} and Lemma \ref{W2qnormEps}, in view of the continuous embedding $W^{1,q}(\Omega) \hookrightarrow C^{0,1-3/q}(\bar{\Omega})$, we obtain
		$$
		\lvert \nabla_\eps y_\eps (t,x) \rvert \leq M , \qquad \forall (t,x) \in I \times \bar{\Omega},
		$$
		for some $M>0$ independent of $\eps$, and, by H\"older's inequality,
		$$
		\lVert C_\eps \rVert_{L^\infty(I;W^{1,q}(\Omega))}\leq c.
		$$
	 	Moreover, as $y_\eps \in H^1(I;H^1(\Omega))$, one has
		$$
		\partial_t C_\eps = (\nabla_\eps \partial_t y_\eps)^T \nabla_\eps y_\eps + (\nabla_\eps y_\eps)^T \nabla_\eps \partial_t y_\eps,
		$$
		and thus $\lVert \partial_t C_\eps \rVert^2 = R_\eps(y_\eps,\partial_t y_\eps)$. Therefore, by \eqref{TCbound}, one has 
		$$
		\lVert \partial_t C_\eps \rVert_{L^2(I;L^2(\Omega))} \leq c.
		$$
		We stress that no bound on $\partial_t \nabla_\eps y_\eps$ independent of $\eps$ is available (see Remark \ref{RemKorn}).

		$\bullet$ {\it Step 2: Weak limits.}
		
		The bounds of Step 1 give the convergences \eqref{conv311a}--\eqref{conv311d}, where the structure of the limits is identified as in the proof of Lemma \ref{ScaledComp}, as well as $C_\eps \overset{*}{\rightharpoonup} \tilde{C}$ in $L^\infty(I;W^{1,q}(\Omega))$ and $\partial_t C_\eps \rightharpoonup \partial_t \tilde{C}$ in $L^2(I;L^2(\Omega))$, for some $\tilde{C}$.
		
		$\bullet$ {\it Step 3: Strong convergences of $y_\eps$ and $C_\eps$.}
		
		Since $W^{2,q}(\Omega) \overset{c}{\hookrightarrow} C^{1,\alpha}(\bar{\Omega}) \hookrightarrow L^2(\Omega)$ and $W^{1,q}(\Omega) \overset{c}{\hookrightarrow} C^{0,\alpha}(\bar{\Omega}) \hookrightarrow L^2(\Omega)$, the bounds of Step 1 and Theorem \ref{cor4Simon} guarantee that, up to a subsequence, $y_\eps \to y$ in $C^0(I;C^{1,\alpha}(\bar{\Omega}))$ and $C_\eps \to \tilde{C}$ in $C^0(I;C^{0,\alpha}(\bar{\Omega}))$.

		$\bullet$ {\it Step 4: Strong convergence of $\nabla_\eps y_\eps$.}
	
		We write $\nabla_\eps y_\eps = (a_\eps \vert b_\eps)$, with $a_\eps = \nabla' y_\eps = (\partial_1 y_\eps \vert \partial_2 y_\eps) \in \mathbb{M}^{3\times2}$ and $b_\eps = \eps^{-1}\partial_3 y_\eps$. By Step 3, $a_\eps \to \nabla' y$ in $C^0(I\times\bar{\Omega})$. To show the convergence of $b_\eps$, we express $b_\eps$ in terms of $a_\eps$ and $C_\eps$. In block form, the Cauchy-Green tensor reads
		$$
		C_\eps = \begin{pmatrix}
			a_\eps^T a_\eps & a_\eps^T b_\eps \\
			b_\eps^T a_\eps & \lvert b_\eps \rvert^2
		\end{pmatrix}
		= \begin{pmatrix}
			G_\eps & g_\eps \\
			g_\eps^T & \lvert b_\eps \rvert^2
		\end{pmatrix},
		$$
		where
		$$
		G_\eps = \begin{pmatrix}
			\lvert \partial_1 y_\eps \rvert^2 & \partial_1 y_\eps \cdot \partial_2 y_\eps \\
			\partial_2 y_\eps \cdot \partial_1 y_\eps & \lvert \partial_2 y_\eps \rvert^2
		\end{pmatrix}
		\qquad \text{and} \qquad
		g_\eps = \begin{pmatrix}
			\partial_1 y_\eps \cdot b_\eps \\
			\partial_2 y_\eps \cdot b_\eps
		\end{pmatrix}.
		$$
		Let $n_\eps := \partial_1 y_\eps \times \partial_2 y_\eps$. As $n_\eps \cdot b_\eps = \det \nabla_\eps y_\eps \geq \mu$ and $\lvert b_\eps \rvert \leq M$, one has $\lvert n_\eps \rvert \geq \mu/M$, so that $\{\partial_1 y_\eps, \partial_2 y_\eps, n_\eps\}$ is a basis of $\R^3$ and there exist $\lambda_\eps = (\lambda_{\eps,1},\lambda_{\eps,2})^T$ and $\sigma_\eps$ such that
		$$
		b_\eps = \lambda_{\eps,1} \, \partial_1 y_\eps + \lambda_{\eps,2} \, \partial_2 y_\eps + \sigma_\eps n_\eps = a_\eps \lambda_\eps + \sigma_\eps n_\eps .
		$$
		To compute $\lambda_\eps$, we multiply this identity by $a_\eps^T$. Since $a_\eps^T b_\eps = g_\eps$, $a_\eps^T a_\eps = G_\eps$, and $a_\eps^T n_\eps = 0$, we obtain $g_\eps = G_\eps \lambda_\eps$. As
		$$
		\det G_\eps = \lvert \partial_1 y_\eps \rvert^2 \lvert \partial_2 y_\eps \rvert^2 - (\partial_1 y_\eps \cdot \partial_2 y_\eps)^2 = \lvert n_\eps \rvert^2 > 0,
		$$
		the matrix $G_\eps$ is invertible and $\lambda_\eps = G_\eps^{-1} g_\eps$. To compute $\sigma_\eps$, we take the scalar product of the same identity with $n_\eps$. As $n_\eps \cdot \partial_1 y_\eps = n_\eps \cdot \partial_2 y_\eps = 0$, we obtain $\sigma_\eps \lvert n_\eps \rvert^2 = n_\eps \cdot b_\eps = \det \nabla_\eps y_\eps$. Moreover, $\det C_\eps = (\det \nabla_\eps y_\eps)^2$ and, as $\det \nabla_\eps y_\eps > 0$, one has $\det \nabla_\eps y_\eps = (\det C_\eps)^{1/2}$. Therefore,
		$$
		b_\eps = a_\eps G_\eps^{-1} g_\eps + \frac{(\det C_\eps)^{1/2}}{\lvert n_\eps \rvert^2} \, n_\eps .
		$$
		
		We now pass to the limit in this identity. As Step 3 implies the convergence of $a_\eps$ and $C_\eps$ in $C^0(I \times \bar{\Omega})$, we get that $G_\eps$, $g_\eps$, $(\det C_\eps)^{1/2}$, and $n_\eps$ converge in $C^0(I \times \bar{\Omega})$ as well. Moreover, as $\det G_\eps = \lvert n_\eps \rvert^2 \geq \mu^2/M^2$ uniformly in $\eps$, $G_\eps^{-1} = (\det G_\eps)^{-1} \, {\rm cof}\, G_\eps$ and $\lvert n_\eps \rvert^{-2}$ converge in $C^0(I \times \bar{\Omega})$ as well. Therefore, $b_\eps$ converges in $C^0(I\times\bar{\Omega})$ and its limit coincides with the limit $b$ of Step 2, so that $\nabla_\eps y_\eps \to (\nabla' y \vert b)$ in $C^0(I\times\bar{\omega})$. In particular, $\tilde{C} = C$, so that $C \in H^1(I;L^2(\omega))$ and $\partial_t C_\eps \rightharpoonup \partial_t C$, and $\det(\nabla' y \vert b) \geq \mu$ in $I \times \bar{\omega}$. Finally, since $\nabla_\eps y_\eps$ is bounded in $L^\infty(I;W^{1,q}(\Omega))$, and thus in $L^\infty(I;C^{0,1-3/q}(\bar{\Omega}))$, the interpolation inequality $[w]_{C^{0,\alpha}} \leq 2 \lVert w \rVert_{C^0}^{1-\alpha q/(q-3)} [w]_{C^{0,1-3/q}}^{\alpha q/(q-3)}$ results in $\nabla_\eps y_\eps \to (\nabla' y \vert b)$ in $C^0(I;C^{0,\alpha}(\bar{\omega}))$.
		
		$\bullet$ {\it Step 5: Regularity and admissibility.}
		
		By \cite[Lemma II.5.9, p.~95]{boyer}, the bounds of Step 1 and the continuity of $y$ and $b$ with values in $C^{1,\alpha}(\bar{\omega})$ and $C^{0,\alpha}(\bar{\omega})$, respectively, give $y \in C_w(I;W^{2,q}(\omega))$ and $b \in C_w(I;W^{1,q}(\omega))$. Finally, for every $t \in I$, Lemma \ref{ClosAB} applies to $\{y_\eps(t)\}_\eps$, whose limit is $(y(t),b(t))$ by Steps 3 and 4, and gives $(y(t),b(t)) \in B$.
	\end{proof}
	
	\begin{remark}
		\label{RemTimeComp}
		The proof of Lemma \ref{TimeComp} applies to families of membrane deformations. Let $I=[t_0,t_1]$ and let $\{(y_n,b_n)\}_n$ be such that $y_n \in C_w(I;W^{2,q}(\omega)) \cap H^1(I;L^2(\omega))$, $b_n \in C_w(I;W^{1,q}(\omega))$, $(y_n(t),b_n(t)) \in B$ for every $t \in I$, $C_n := (\nabla' y_n \vert b_n)^T (\nabla' y_n \vert b_n) \in H^1(I;L^2(\omega))$, and
		$$
		\sup_{t\in I} E(y_n(t),b_n(t)) + \int_I \Bigl( \lVert \partial_t C_n \rVert^2 + \lVert \partial_t y_n \rVert^2 \Bigr) \; \d t \leq c,
		$$
		with $c$ independent of $n$. Then, there exist $y \in C_w(I;W^{2,q}(\omega)) \cap H^1(I;L^2(\omega))$ and $b \in C_w(I;W^{1,q}(\omega))$ such that $(y(t),b(t)) \in B$ for every $t \in I$, $\det(\nabla' y \vert b) \geq \mu$ in $I \times \bar{\omega}$ with $\mu$ as in Lemma \ref{MPositiveDet}, $C := (\nabla' y \vert b)^T(\nabla' y \vert b) \in H^1(I;L^2(\omega))$, and, up to a subsequence (not relabeled),
		\begin{alignat*}{6}
			&y_n &&\overset{*}{\rightharpoonup} y \quad &&\text{in} \quad L^\infty(I;W^{2,q}(\omega)), \qquad &&b_n &&\overset{*}{\rightharpoonup} b \quad &&\text{in} \quad L^\infty(I;W^{1,q}(\omega)),\\
			&\partial_t y_n &&\rightharpoonup \partial_t y \quad &&\text{in} \quad L^2(I;L^2(\omega)), \qquad &&\partial_t C_n &&\rightharpoonup \partial_t C \quad &&\text{in} \quad L^2(I;L^2(\omega)),\\
			&y_n &&\to y \quad &&\text{in} \quad C^0(I;C^{1,\alpha}(\bar{\omega})), \qquad &&b_n &&\to b \quad &&\text{in} \quad C^0(I;C^{0,\alpha}(\bar{\omega})).
		\end{alignat*}
		Indeed, Lemma \ref{MW2qnormEps} replaces Lemmas \ref{W2qnormEps} and \ref{PositiveDet} in Step 1, the arguments of Steps 3 and 4 apply with $(a_\eps \vert b_\eps)$ replaced by $(\nabla' y_n \vert b_n)$, and Lemma \ref{ClosA}.$(ii)$ replaces Lemma \ref{ClosAB} in Step 5.
	\end{remark}

	\subsection{Notions of solutions}
\label{NotSol}
We now define the notions of solutions to the problems introduced in Subsections \ref{Configuration}, \ref{Membrane}, and \ref{twoTSA}, following the hierarchy of the two-time-scale approximation scheme, from the incremental problem to the continuous-time problem.

\begin{definition}[Solutions to the incremental problem]
	\label{defSolIncr}
	We say that the families $\{y^{l,k} \}_{l=1, \; k = 0 }^{N,M}$ and $\{y^l\}_{l=1}^{N-1}$ are a solution of the incremental problem \eqref{incrP}--\eqref{incrIC2} if, for all $l$ and $k$, $y^{l,k} \in A_\eps$, $y^l\in  C^0([(l-1)h,lh];W^{2,q}(\Omega)) \cap H^1((l-1)h,lh;H^1(\Omega))$, $y^l(t) \in A_\eps$ for every $t \in [(l-1)h,lh]$, $\partial_t y^l \in L^2((l-1)h,lh;H_0^1(\Omega))$, and \eqref{incrP}--\eqref{incrIC2} hold.
\end{definition}

\begin{definition}[Solutions to the sequential problem]
	\label{defSolSeq}
	For $l=1,\dots,N$, let $y^l \in C^0([(l-1)h,lh];W^{2,q}(\Omega)) \cap H^1((l-1)h,lh;H^1(\Omega))$ be such that $y^l(t) \in A_\eps$ for every $t \in [(l-1)h,lh]$ and $\partial_t y^l \in L^2((l-1)h,lh;H_0^1(\Omega))$.
	We say that $\{ y^l \}_{l=1}^N$ is a weak solution of the sequential problem \eqref{sequential}--\eqref{initialSequentialI} if
	\begin{align*}
		&y^1(0) = y_{0\eps}, \quad v^0 = v_{0\eps},\\
		&y^l((l-1)h) = y^{l-1}((l-1)h),\quad v^{l-1}(\cdot)=\partial_t y^{l-1}(\cdot -h), \quad  \text{for every}\quad l = 2,\dots, N,
	\end{align*}
	and
	   \begin{align}
		\label{seqPweakEq}
		\int_{(l-1)h}^{lh} \biggl( \langle D E_\eps (y^l) , \Psi \rangle_{H_0^1} &+ \langle D_2 R_\eps (y^l,\partial_t y^l) , \Psi \rangle \nonumber\\
		&- (f_\eps, \Psi) + \frac{\rho}{h} (\partial_t y^l - v^{l-1} , \Psi) \biggr) \; \d t = 0,
	\end{align}
	for every $\Psi \in L^2((l-1)h,lh;H_0^1(\Omega))$ and all $l=1,\dots,N$.
\end{definition}

\begin{definition}[Solutions to the time-delayed problem]
	\label{defSoltimedelayedT}
	Let $y \in C^0([0,T];W^{2,q}(\Omega))\cap H^1(0,T;H^1(\Omega))$ be such that $y(t) \in A_\eps$ for every $t \in [0,T]$ and $\partial_t y \in L^2(0,T;H_0^1(\Omega))$.
	We term $y$ a weak solution to the problem \eqref{tDelay}--\eqref{initialVelocity} if $y(0) = y_{0\eps}$, $\partial_t y (t-h) = v_{0\eps}$ for almost every $t \in (0,h)$, and
	\begin{equation}
		\label{timeDweakEw}
		\int_0^T \biggl( \langle D E_\eps (y) , \Psi \rangle_{H_0^1} + \langle D_2 R_\eps (y,\partial_t y) , \Psi \rangle - (f_\eps, \Psi) + \frac{\rho}{h} (\partial_t y - \partial_t y (\cdot - h) , \Psi) \biggr) \; \d t = 0,
	\end{equation}
	for every $\Psi \in L^2(0,T;H_0^{1}(\Omega))$.
\end{definition}

\begin{definition}[Solutions to the continuous-time problem]
	\label{defSolT}	
	Let $y$ be such that $y \in C_w([0,T];W^{2,q}(\Omega)) \cap H^1(0,T;H^1(\Omega))$, $y \in C^0([0,T];C^{1,\alpha}(\bar{\Omega}))$, $y(t) \in A_\eps$ for every $t \in [0,T]$, and $\partial_t y \in L^2(0,T;H_0^1(\Omega))$.
	We call $y$ a weak solution to the problem \eqref{eq1S}--\eqref{eq11S} if $y(0) = y_{0\eps}$ and
	\begin{equation*}
		\int_0^T \biggl( \langle D E_\eps (y) , \Psi \rangle + \langle D_2 R_\eps (y,\partial_t y) , \Psi \rangle - (f_\eps, \Psi) - \rho (\partial_t y , \partial_t \Psi) \biggr) \; \d t - \rho (v_{0\eps},\Psi(0)) = 0,
	\end{equation*}
		for every $\Psi \in L^1(0,T;V) \cap H^1(0,T;H_0^1(\Omega))$ such that $\Psi(T) = 0$, with $V$ as in Subsection \ref{SVar}.
\end{definition}

\begin{definition}[Solutions to the membrane incremental problem]
	\label{MdefSolIncr}
	We say that the families $\{(y^{l,k},b^{l,k}) \}_{l=1, \; k = 0 }^{N,M}$ and $\{(y^l,b^l)\}_{l=1}^{N-1}$ are a solution of the incremental problem \eqref{MincrP}--\eqref{MincrIC2} if, for all $l$ and $k$, $(y^{l,k},b^{l,k}) \in B$, $y^l\in C_w([(l-1)h,lh];W^{2,q}(\omega)) \cap H^1((l-1)h,lh;L^2(\omega))$, $b^l \in C_w([(l-1)h,lh];W^{1,q}(\omega))$, $(y^l(t),b^l(t)) \in B$ for every $t \in [(l-1)h,lh]$, $(\nabla' y^l \vert b^l)^T(\nabla' y^l \vert b^l) \in H^1((l-1)h,lh;L^2(\omega))$, and \eqref{MincrP}--\eqref{MincrIC2} hold.
\end{definition}

\begin{definition}[Solutions to the membrane sequential problem]
	\label{MdefSolSeq}
	For $l=1,\dots,N$, let $y^l \in C_w([(l-1)h,lh];W^{2,q}(\omega)) \cap H^1((l-1)h,lh;L^2(\omega))$ and $b^l \in C_w([(l-1)h,lh];W^{1,q}(\omega))$ be such that $(y^l(t),b^l(t)) \in B$ for every $t \in [(l-1)h,lh]$ and $(\nabla' y^l \vert b^l)^T(\nabla' y^l \vert b^l) \in H^1((l-1)h,lh;L^2(\omega))$.
	We say that $\{ (y^l,b^l) \}_{l=1}^N$ is a weak solution of the sequential problem \eqref{Msequential}--\eqref{MinitialSequentialI} if
	\begin{align*}
		&y^1(0)=y_0, \quad b^1(0) = b_0, \quad v^0 = v_0, \\
		&y^l((l-1)h) = y^{l-1}((l-1)h), \quad b^l((l-1)h) = b^{l-1}((l-1)h),\quad  \text{for every}\quad l = 2,\dots, N, \\
		&v^{l-1}(\cdot) = \partial_t y^{l-1}(\cdot -h)\quad  \text{for every}\quad l = 2,\dots, N,
	\end{align*}
	and
	\begin{align}
		\label{MseqPweakEq}
		\int_{(l-1)h}^{lh} \biggl( \langle D_1 E (y^l,b^l) , \Psi_1 \rangle_{H_0^1} &+ \langle D_2 R (y^l,\partial_t y^l,b^l,\partial_t b^l) , \Psi_1 \rangle \nonumber\\&- (f_0, \Psi_1) + \frac{\rho}{h} (\partial_t y^l - v^{l-1} , \Psi_1) \biggr) \; \d t = 0,
	\end{align}
	for every $\Psi_1 \in L^2((l-1)h,lh;H_0^1(\omega))$ and all $l=1,\dots,N$, and
	\begin{align}
		\label{MseqPweakEq2}
		\int_{(l-1)h}^{lh} \biggl( ( D_2 E (y^l,b^l) , \Psi_2 ) + ( D_4 R (y^l,\partial_t y^l,b^l,\partial_t b^l) , \Psi_2 ) \biggr) \; \d t = 0,
	\end{align}
	for every $\Psi_2 \in L^2((l-1)h,lh;L^2(\omega))$ and all $l=1,\dots,N$.
\end{definition}	

\begin{definition}[Solutions to the membrane time-delayed problem]
	\label{MdefSoltimedelayedT}
	Let $b \in C_w([0,T];W^{1,q}(\omega))$ and $y \in C_w([0,T];W^{2,q}(\omega))\cap H^1(0,T;L^2(\omega))$ be such that $(y(t),b(t)) \in B$ for every $t \in [0,T]$ and $(\nabla' y \vert b)^T(\nabla' y \vert b) \in H^1(0,T;L^2(\omega))$.
	We term $(y,b)$ a weak solution to the problem \eqref{MtDelay}--\eqref{MinitialVelocity} if $y(0) = y_0$, $b(0)= b_0$, $\partial_t y (t-h) = v_0$ for almost every $t \in (0,h)$, and
	\begin{align}
		\label{MtimeDweakEw}
		\int_0^T \biggl( \langle D_1 E (y,b) , \Psi \rangle_{H_0^1} +& \langle D_2 R (y,\partial_t y,b,\partial_t b) , \Psi \rangle \nonumber\\-& (f_0, \Psi) + \frac{\rho}{h} (\partial_t y - \partial_t y (\cdot - h) , \Psi) \biggr) \; \d t = 0,
	\end{align}
	for every $\Psi \in L^2(0,T;H_0^{1}(\omega))$, and
	\begin{equation}
		\label{MtimeDweakEw2}
		\int_0^T \biggl( ( D_2 E (y,b) , \Psi ) + ( D_4 R (y,\partial_t y,b,\partial_t b) , \Psi ) \biggr) \; \d t = 0,
	\end{equation}
	for every $\Psi \in L^2(0,T;L^2(\omega))$. 
\end{definition}

The last notion of solution is formulated in terms of Young measures, for which we follow the notation of \cite{GVK}. Let $\mathcal{G}_0 := \mathbb{M}^{3\times2\times2}\times\mathbb{M}^{3\times2}$ and $\omega_T := (0,T)\times\omega$. Identifying the space $\mathcal{M}(\mathcal{G}_0)$ of signed Radon measures on $\mathcal{G}_0$ with the dual of the space $C_0(\mathcal{G}_0)$ of continuous functions vanishing at infinity, we denote by $L^\infty_{w*}(\omega_T;\mathcal{M}(\mathcal{G}_0)) = L^1(\omega_T;C_0(\mathcal{G}_0))^*$ the space of weakly-$*$ measurable and essentially bounded maps $\omega_T \ni (t,x') \mapsto \nu_{t,x'} \in \mathcal{M}(\mathcal{G}_0)$, namely, such that
	$$
	(t,x') \mapsto \langle \nu_{t,x'} , g \rangle := \int_{\mathcal{G}_0} g(\lambda) \; \d\nu_{t,x'}(\lambda)
	$$
	is measurable for every $g \in C_0(\mathcal{G}_0)$ and $\lVert \nu_{t,x'} \rVert_{\mathcal{M}(\mathcal{G}_0)}$ is essentially bounded. A Young measure is an element $\nu$ of this space such that $\nu_{t,x'}$ is a probability measure for a.e. $(t,x') \in \omega_T$. By \cite[Theorem 4.1]{Rindler} (see also \cite{Ball}), every bounded sequence $\{\Theta_n\}_n$ in $L^q(\omega_T;\mathcal{G}_0)$ has a subsequence (not relabeled) which generates a Young measure $\nu$, namely,
	$$
	g(\Theta_n) \rightharpoonup \langle \nu_{t,x'} , g \rangle \quad \text{in} \quad L^1(\omega_T;\mathbb{R}^m)
	$$
	for every continuous $g : \mathcal{G}_0 \to \mathbb{R}^m$ such that $\{g(\Theta_n)\}_n$ is equiintegrable. In particular, $\Theta_n \rightharpoonup \langle \nu_{t,x'} , \lambda \rangle$ in $L^q(\omega_T;\mathcal{G}_0)$ \cite[Lemma 4.12]{Rindler}, namely, the barycenter of $\nu$ is the weak limit of $\{\Theta_n\}_n$. Without equiintegrability, lower semicontinuity still holds \cite[Proposition 4.7]{Rindler}, namely, for every continuous $f : \mathcal{G}_0 \to [0,\infty)$ and every nonnegative $\varphi \in L^\infty(\omega_T)$,
	\begin{equation}
		\label{YMlsc}
		\int_0^T \int_\omega \varphi \, \langle \nu_{t,x'} , f \rangle \; \d x' \, \d t \leq \liminf_{n\to\infty} \int_0^T \int_\omega \varphi \, f(\Theta_n) \; \d x' \, \d t.
	\end{equation}
	If $\{\Theta_n\}_n$ is bounded in $L^\infty(0,T;L^q(\omega;\mathcal{G}_0))$, then \eqref{YMlsc} with $f(\lambda) = \lvert \lambda \rvert^q$ and $\varphi$ depending only on $t$ gives
	$$
	\esssup_{t\in(0,T)} \int_\omega \langle \nu_{t,x'} , \lvert \lambda \rvert^q \rangle \; \d x' \leq \sup_n \lVert \Theta_n \rVert_{L^\infty(0,T;L^q(\omega))}^q,
	$$
	and, as $L^{q'}(\omega_T)$ is dense in $L^1(0,T;L^{q'}(\omega))$, the convergence of $\{\Theta_n\}_n$ to the barycenter holds weakly-$*$ in $L^\infty(0,T;L^q(\omega;\mathcal{G}_0))$.
	
	\begin{definition}[Measure-valued solutions to the membrane continuous-time problem]
		\label{MdefSolMV}
		Let $\kappa_0 : \mathcal{G}_0 \to \mathbb{R}$ be given by $\kappa_0(G,F) := \frac{1}{q}\bigl(\lvert G \rvert^2 + 2 \lvert F \rvert^2\bigr)^{q/2}$, so that $T(u,\beta) = \int_\omega \kappa_0\bigl((\nabla')^2 u,\nabla'\beta\bigr) \; \d x'$ and
		$$
		D_1\kappa_0(G,F) = \bigl(\lvert G \rvert^2 + 2 \lvert F \rvert^2\bigr)^{(q-2)/2} G, \qquad D_2\kappa_0(G,F) = 2\bigl(\lvert G \rvert^2 + 2 \lvert F \rvert^2\bigr)^{(q-2)/2} F.
		$$
		Given $\mathfrak{E}_0 \geq E(y_0,b_0)$, a measure-valued solution to the problem \eqref{Meq1S}--\eqref{Meq11S} with initial energy $\mathfrak{E}_0$ consists of a pair $(y,b)$, with $b \in C_w([0,T];W^{1,q}(\omega))$ and $y \in C_w([0,T];W^{2,q}(\omega))\cap H^1(0,T;L^2(\omega))$, and of a Young measure $\nu$ such that $(y(t),b(t)) \in B$ for every $t \in [0,T]$, $(\nabla' y \vert b)^T(\nabla' y \vert b) \in H^1(0,T;L^2(\omega))$, $y(0)=y_0$, $b(0)=b_0$,
		\begin{gather*}
			\esssup_{t\in(0,T)} \int_\omega \langle \nu_{t,x'} , \lvert \lambda \rvert^q \rangle \; \d x' < \infty,\\
			\langle \nu_{t,x'} , \lambda \rangle = \bigl((\nabla')^2 y(t,x'),\nabla' b(t,x')\bigr) \quad \text{for a.e. } (t,x') \in \omega_T,
		\end{gather*}
		and
		\begin{align}
			\label{MmvEq1}
			\int_0^T \biggl( &\langle D_1 K (y,b) , \Psi \rangle + \langle D_1 J (y,b) , \Psi \rangle + \int_\omega \langle \nu_{t,x'} , D_1\kappa_0 \rangle : (\nabla')^2 \Psi \; \d x' \nonumber\\
			&+ \langle D_2 R (y,\partial_t y,b,\partial_t b) , \Psi \rangle - (f_0, \Psi) - \rho (\partial_t y , \partial_t \Psi) \biggr) \; \d t - \rho (v_0,\Psi(0)) = 0,
		\end{align}
		for every $\Psi \in L^1(0,T;V_0^1) \cap H^1(0,T;H_0^1(\omega))$ such that $\Psi(T) = 0$, with $V_0^1$ as in Subsection \ref{SVar},
		\begin{align}
			\label{MmvEq2}
			\int_0^T \biggl( ( D_2 K (y,b) , \Psi ) + ( D_2 J (y,b) , \Psi ) &+ \int_\omega \langle \nu_{t,x'} , D_2\kappa_0 \rangle : \nabla' \Psi \; \d x' \nonumber\\
			&+ ( D_4 R (y,\partial_t y,b,\partial_t b) , \Psi ) \biggr) \; \d t = 0,
		\end{align}
		for every $\Psi \in L^1(0,T;W_0^{1,q}(\omega)) \cap L^2(0,T;L^2(\omega))$, and, for a.e. $t \in (0,T)$,
		\begin{align}
			\label{MmvEI}
			K(y(t),b(t)) + J(y(t),b(t)) &+ \int_\omega \langle \nu_{t,x'} , \kappa_0 \rangle \; \d x' + \frac{\rho}{2} \lVert \partial_t y(t) \rVert^2 + 2 \int_0^t R(y,\partial_t y,b,\partial_t b) \; \d s \nonumber\\
			&\leq \mathfrak{E}_0 + \frac{\rho}{2} \lVert v_0 \rVert^2 + \int_0^t (f_0,\partial_t y) \; \d s.
		\end{align}
\end{definition}

	\subsection{Main results}
	\label{MainRes}
	We are now in a position to state our main results. Throughout, the final time $T$ satisfies $T \leq T_0$, where $T_0 > 0$ depends only on $\rho$, $q$, $\Omega$, $\mathcal{C}$, $c_1 - c_0$, $c_2$, $v_0$, and $f_0$, and is determined in Subsections \ref{S42} and \ref{S411}.
	
	The restriction $T \leq T_0$ guarantees that the incremental minimizers never reach the averaged invertibility constraint in $A_\eps$, so that they satisfy the corresponding Euler-Lagrange equations. The time $T_0$ is not the first time of contact of a given solution, but a lower bound for the first time at which the constraint may become active, uniform in $\eps$, $h$, and $\tau$. In \cite{BenesovaJEMS}, for a fixed problem, the existence of solutions is established up to the first time of contact, and hence on the whole time interval if no contact occurs. For the treatment of contact by means of measure-valued contact forces, we refer to \cite{CGK,CGK2}.

	\begin{theorem}[Convergence of the bulk two-time-scale approximation scheme]
		\label{Theorem1}
		Assume {\rm (A0)--(A3)}. Then:
		\begin{enumerate}
			\item[{\rm i)}] There exist $\{y_\eps^{l,k} \}_{l=1, \; k = 0 }^{N,M}$ and $\{y_\eps^l\}_{l=1}^{N-1}$ solving the minimization problem \eqref{incrP}--\eqref{incrIC2} in the sense of Definition \ref{defSolIncr}.
			\item[{\rm ii)}] The sequence $\{ y_\eps^l\}_{l=1}^N$ is a weak solution of the sequential problem \eqref{sequential}--\eqref{initialSequentialI} in the sense of Definition \ref{defSolSeq}.
			\item[{\rm iii)}] The global interpolant $y_\eps^{(h)}$, given by $y_\eps^{(h)}(t) := y_\eps^l(t)$ for $t \in [(l-1)h,lh]$ and $l=1,\dots,N$, is a weak solution of the time-delayed problem \eqref{tDelay}--\eqref{initialVelocity} in the sense of Definition \ref{defSoltimedelayedT}.
			\item[{\rm iv)}] For any fixed $\eps >0$, up to a subsequence (not relabeled) as $h \to 0$, $y_{\eps}^{(h)} \to y_\eps$ in $C^0([0,T];C^{1,\alpha}(\bar{\Omega}))$, where $y_\eps$ solves the problem \eqref{eq1S}--\eqref{eq11S} in the sense of Definition \ref{defSolT}.
		\end{enumerate}	
		
	\end{theorem}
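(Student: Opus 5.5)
The proof follows the hyperbolic minimizing movements strategy of \cite{BenesovaJEMS}, but works directly with the unregularized scheme and keeps all bounds independent of $\eps$, $h$, and $\tau$ on $[0,T_0]$.

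\textbf{Incremental minimizers and a priori bounds.} For (i), we proceed by induction on $l$ and $k$ and apply the direct method to $W_\eps^{l,k-1}$ on $A_\eps$. The competitor $y=y_\eps^{l,k-1}$ shows that the infimum is finite. Along a minimizing sequence, the inertial term controls $\lVert y\rVert$ and $E_\eps$ is bounded, so Lemma~\ref{W2qnormEps} gives a bound in $W^{2,q}$. Compactness in $C^{1,\alpha}$ and Lemma~\ref{ClosA}.(i) give admissibility of the limit and continuity of $K_\eps$ and $J_\eps$. The remaining terms are weakly lower semicontinuous: $T_\eps$ is convex, and $R_\eps(y^{l,k-1},\cdot)$ is quadratic and convex in $y$. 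Comparing $W_\eps^{l,k-1}(y_\eps^{l,k})\le W_\eps^{l,k-1}(y_\eps^{l,k-1})$ and summing over $k$ yields the discrete energy inequality
\[
E_\eps(y_\eps^{l,k})+\tau\sum_{j\le k}R_\eps\Bigl(y_\eps^{l,j-1},\tfrac{y_\eps^{l,j}-y_\eps^{l,j-1}}{\tau}\Bigr)+\frac{\rho}{2h}\sum_{j\le k}\tau\Bigl\lVert \tfrac{y_\eps^{l,j}-y_\eps^{l,j-1}}{\tau}\Bigr\rVert^2\le E_\eps(y_\eps^{l,0})+\frac{\rho}{2h}\sum_{j\le k}\tau\lVert v_\eps^{l-1,j-1}\rVert^2+\text{(load)}.
\]
Summing over $l$, the previous velocities telescope, and Gronwall's inequality gives bounds depending only on $c_2$, $v_0$, $f_0$, and $T$. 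The averaged invertibility constraint is then controlled with Lemma~\ref{InvStab}.(ii): the $L^2$-oscillation of the interpolants is bounded by $C\,t^{1/2}$ uniformly, so $\Lambda(y_\eps^{l,k})\ge c_1-C_M M^{\gamma_0}t^{\gamma_0}>c_0$ for $t\le T_0$. Hence the constraint is inactive and the minimizers satisfy the discrete Euler--Lagrange equation in all directions of $V$. For the interior variation to be admissible, we use $\det\nabla_\eps y\ge\mu$ from Lemma~\ref{PositiveDet}. The sequential interpolants $y_\eps^l$ required by Definition~\ref{defSolIncr} are those produced in (ii), constructed inductively.

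\textbf{Limit $\tau\to0$ and concatenation.} For (ii) we introduce piecewise affine and piecewise constant interpolants on $[(l-1)h,lh]$. For fixed $\eps$, Lemma~\ref{H1normEps} converts the dissipation bound into a bound of $\partial_t y$ in $L^2(H_0^1)$. Together with the $W^{2,q}$ bound and an Aubin--Lions argument (Theorem~\ref{cor4Simon}), this gives convergence in $C^0(C^{1,\alpha})$. The lower-order terms of the Euler--Lagrange equation then pass to the limit by uniform convergence, and $D_2R_\eps$ by weak convergence of $\nabla_\eps\partial_t y$ against strongly converging coefficients.

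\textbf{Main obstacle.} The hard step is the term $|\nabla_\eps^2 y|^{q-2}\nabla_\eps^2 y$, which requires strong $W^{2,q}$ convergence, and the regularization used in \cite{BenesovaJEMS} is absent here. I would first try a Minty/monotonicity argument. Test the discrete equation with $y_\tau-y$, use convexity of $T_\eps$ together with the discrete energy inequality to obtain $\limsup\int T_\eps(y_\tau)\le\int T_\eps(y)$ in time, and conclude by uniform convexity of the $q$-norm. A supporting difficulty is the $DJ_\eps$ term, whose cofactor factor is controlled by the uniform bound $\det\ge\mu$. For $l\ge2$, the data $v_\eps^{l-1,k-1}$ converge strongly in $L^2$ to $\partial_t y^{l-1}(\cdot-h)$ because they are averages of a fixed $L^2$ function. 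The limit test space is first $L^2(V)$ and then, by density, $L^2(H_0^1)$, using $DT_\eps\in H^{-1}$ from Subsection~\ref{S45}.

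\textbf{Time-delayed problem and $h\to0$.} Part (iii) follows immediately by summing \eqref{seqPweakEq} over $l$. For (iv), the $h$-uniform energy estimate bounds $y^{(h)}_\eps$ in $L^\infty(W^{2,q})\cap H^1(H^1)$. We test \eqref{timeDweakEw} with $\Psi\in\mathcal H_1$ of Lemma~\ref{TOT} and perform a discrete integration by parts in the delay term, $\frac1h(\partial_t y-\partial_t y(\cdot-h),\Psi)\to-(\partial_t y,\partial_t\Psi)$, which produces the initial term. The $q$-Laplacian term is handled again by the same monotonicity argument. Density from Lemma~\ref{TOT} then extends the identity to all admissible test functions.
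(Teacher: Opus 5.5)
There is a genuine gap in your $h$-independent energy bound, and everything else rests on it. Comparing $W_\eps^{l,k-1}(y_\eps^{l,k})\le W_\eps^{l,k-1}(y_\eps^{l,k-1})$ puts $\frac{\tau\rho}{2h}\lVert z-v\rVert^2$ on the left, where $z=(y_\eps^{l,k}-y_\eps^{l,k-1})/\tau$ and $v=v_\eps^{l-1,k-1}$. It does not put $\frac{\tau\rho}{2h}\lVert z\rVert^2$ there. Your displayed inequality would need $2(z,v)\le\lVert v\rVert^2$, which already fails for $z=v$. Splitting $\lVert z-v\rVert^2$ costs a fixed factor: the coefficient of $\lVert v\rVert^2$ on the right is at least $4$ times that of $\lVert z\rVert^2$ on the left. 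In \eqref{firstEstimate} it is $\rho/(8h)$ against $\rho/h$. So the kinetic term on the $l$-th outer interval is controlled only by a fixed multiple of the one on the $(l-1)$-th, and the velocities do not telescope. Iterating over $N=T/h$ intervals gives constants of order $4^{T/h}$, and Gronwall cannot absorb a multiplicative loss of order one per outer step. Consequently, your bound $\Lambda(y_\eps^{l,k})\ge c_1-C_MM^{\gamma_0}t^{\gamma_0}$ does not follow with $M$ and $T_0$ independent of $h$, and that bound is what keeps the constraint inactive and the Euler--Lagrange equations valid on every interval. The compactness you need in (iv) does not follow either.

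What is missing is the exact energy inequality \eqref{L36}. The paper obtains it only after $\tau\to0$, by testing the sequential equation with $\partial_t y_\eps^l\chi_{[(l-1)h,t]}$. Then $\frac{\rho}{h}(\partial_t y_\eps^l-v_\eps^{l-1},\partial_t y_\eps^l)\ge\frac{\rho}{2h}(\lVert\partial_t y_\eps^l\rVert^2-\lVert v_\eps^{l-1}\rVert^2)$, and the kinetic terms telescope exactly, giving \eqref{est0T1}--\eqref{est0T5}. This needs two things your plan does not use.
First, $DT_\eps(y_\eps^l)\in L^2(H^{-1}(\Omega))$ must serve not only for density but to make $\partial_t y_\eps^l$ an admissible test function.
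Second, you need the chain rule \eqref{ChainRule} for $T_\eps\circ y_\eps^l$, proved by convexity and Lebesgue points. It also yields the $C^0([(l-1)h,lh];W^{2,q}(\Omega))$ regularity required in Definition \ref{defSolSeq}.

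The induction must then be arranged as in Proposition \ref{NoContact}. The exact bound on $[0,(l-1)h]$ controls $\Lambda(y_\eps^{l,0})$ and $\frac{\rho}{h}\int_{(l-1)h}^{lh}\lVert v_\eps^{l-1}\rVert^2\,\d t$, and the lossy discrete estimate is used only inside the current interval. If you want to stay discrete, you could instead test the Euler--Lagrange equation with $y_\eps^{l,k}-y_\eps^{l,k-1}$. That route uses convexity of $T_\eps$, semiconvexity of $K_\eps+J_\eps$ where $\det\nabla_\eps y\ge\mu$, and Korn at fixed $\eps$ to absorb the $O(\tau^2)$ error. The bare minimality comparison is not enough.

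A second omission is in (iv). In the monotonicity step, the inertial term is tested with the $h$-dependent function $\psi(y_\eps^{(h)}-{\rm id}_\eps)$ and becomes quadratic in the velocity. Meanwhile $h^{-1}(\partial_t y_\eps^{(h)}-\partial_t y_\eps^{(h)}(\cdot-h))$ is bounded only in $L^2(0,T;V')$. You need strong convergence of the difference quotients $\mathcal{Q}_\eps^{(h)}$ in $C^0([0,T];H^{-1}(\Omega))$, obtained by Aubin--Lions through the equation. This is paired against weak $L^2(0,T;H_0^1(\Omega))$ convergence of $\partial_t y_\eps^{(h)}$ from Korn at fixed $\eps$, as in \eqref{quadterm}; "the same monotonicity argument" does not supply this.

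By contrast, your uniform-convexity argument at the $\tau$-level is sound. It tests \eqref{eq0h1} with $y_\eps^{1,(\tau)}-y_\eps^1\in L^2(0,h;V)$, and every other term vanishes because this function tends to zero in $L^\infty(0,h;C^{1,\alpha}(\bar{\Omega}))$. It is a valid alternative to the Minty argument of Subsection \ref{S46b}.
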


	\begin{theorem}[Membrane limit of the incremental problem]
		\label{Theorem2}
		Assume {\rm (A0)--(A3)}. Let the families $\{y_\eps^{l,k} \}_{l=1, \; k = 0 }^{N,M}$ and $\{y_\eps^l\}_{l=1}^{N-1}$ be a solution of the incremental problem \eqref{incrP}--\eqref{incrIC2} in the sense of Definition \ref{defSolIncr}, and let $\{y_\eps^l \}_{l=1}^N$ be the weak solution of the sequential problem \eqref{sequential}--\eqref{initialSequentialI} provided by Theorem \ref{Theorem1}.$(ii)$. Then, there exist $\{(y^{l,k},b^{l,k})\}_{l=1,\;k=0}^{N,M} \subset W^{2,q}(\omega) \times W^{1,q}(\omega)$, $\{d^{l,k}\}_{l=1,\;k=1}^{N,M} \subset L^q(\Omega)$, and $\{(y^l,b^l)\}_{l=1}^{N}$ such that:
		\begin{enumerate}
			\item[{\rm i)}] For every $l=1,\dots,N$ and every $k=1,\dots,M$, the following convergences hold (up to non-relabeled subsequences)
			\begin{alignat*}{4}
				&y_\eps^{l,k} &&\rightharpoonup y^{l,k} \quad &&\text{in} \quad W^{2,q}(\omega),\\
				&\nabla_\eps y_\eps^{l,k} &&\rightharpoonup (\nabla' y^{l,k} \vert b^{l,k}) \quad &&\text{in} \quad W^{1,q}(\omega),\\
				&\nabla_\eps^2 y_\eps^{l,k} &&\rightharpoonup \begin{pmatrix}
					(\nabla')^2 y^{l,k} & \nabla' b^{l,k}\\
					(\nabla'b^{l,k})^T & d^{l,k}
				\end{pmatrix}\quad &&\text{in} \quad L^q(\Omega).
			\end{alignat*}
			 \item[{\rm ii)}] For every $l=1,\dots,N$, the following convergence holds (up to non-relabeled subsequences)
			$$
			\nabla_\eps y_\eps^l \to (\nabla' y^l \vert b^l) \quad \text{in} \quad C^0([(l-1)h,lh];C^{0,\alpha}(\bar{\omega})).
			$$
			\item[{\rm iii)}] The families $\{(y^{l,k},b^{l,k}) \}_{l=1, \; k = 0 }^{N,M}$ and $\{(y^l,b^l)\}_{l=1}^{N-1}$ are a solution of the incremental problem \eqref{MincrP}--\eqref{MincrIC2} in the sense of Definition \ref{MdefSolIncr}.	
		\end{enumerate}

	\end{theorem}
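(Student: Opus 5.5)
The proof will proceed by induction on the outer index $l$, and within each $l$ by induction on the inner index $k$. At each step we combine compactness from Lemma \ref{ClosAB} with a $\Gamma$-convergence argument for the incremental functionals $W_\eps^{l,k-1}$ towards $W^{l,k-1}$.

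The first ingredient is a set of uniform energy bounds. From the construction in Theorem \ref{Theorem1} (the discrete energy inequality behind it, valid on $[0,T_0]$), we expect $E_\eps(y_\eps^{l,k})\le c$ and $\sup_t E_\eps(y^l_\eps(t))+\int R_\eps(y^l_\eps,\partial_t y^l_\eps)+\|\partial_t y^l_\eps\|^2\le c$, with $c$ independent of $\eps$. We also expect $\Lambda(y_\eps^{l,k})\ge c_0+\delta$ for some $\delta>0$ uniform in $\eps$.

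Given these bounds, Lemma \ref{ClosAB} yields the convergences in (i) for each fixed $(l,k)$. Since there are finitely many indices, a diagonal subsequence works for all of them. Lemma \ref{TimeComp} applied to $y^l_\eps$ on $[(l-1)h,lh]$ yields (ii) and the regularity required in Definition \ref{MdefSolIncr}.

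The initial data must be consistent with the membrane scheme.
\begin{itemize}
\item For $l=1$, they are given by \eqref{y0conv}--\eqref{v0conv}.
\item For $l\ge2$, the uniform convergence in Lemma \ref{TimeComp} gives $y_\eps^{l,0}=y_\eps^{l-1}((l-1)h)\to y^{l-1}((l-1)h)$ and $\nabla_\eps y_\eps^{l,0}\to(\nabla' y^{l-1}\vert b^{l-1})((l-1)h)$. These identify $(y^{l,0},b^{l,0})$ as in \eqref{MincrIC1}.
\item The weak convergence $\partial_t y^{l-1}_\eps\rightharpoonup\partial_t y^{l-1}$ in $L^2$ gives $v_\eps^{l-1,k-1}\rightharpoonup v^{l-1,k-1}$ weakly in $L^2$, which matches \eqref{MincrIC2}.
\item The forces $f^{l,k-1}_\eps\to f^{l,k-1}$ converge uniformly by \eqref{f0conv}.
\end{itemize}
Note that $v$ converges only weakly. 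This is harmless, because $\|(y-y^{l,k-1})/\tau-v\|^2$ expands into $\|(y-y^{l,k-1})/\tau\|^2-2((y-y^{l,k-1})/\tau,v)+\|v\|^2$. The last term is independent of $y$ and can be dropped from the minimization. The cross term converges because $y_\eps\to y$ strongly in $C^0$.

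The core step is to show that $(y^{l,k},b^{l,k})$ minimizes $W^{l,k-1}$ over $B$, assuming by induction that $y_\eps^{l,k-1}\to y^{l,k-1}$ in $C^{1,\alpha}$ and $\nabla_\eps y_\eps^{l,k-1}\to(\nabla'y^{l,k-1}\vert b^{l,k-1})$ in $C^{0,\alpha}$.

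\emph{Liminf inequality.} Along $u_\eps\to(y,b)$ with bounded energy, Lemma \ref{ClosAB} gives convergence of $K_\eps$ and $J_\eps$ and lower semicontinuity of $T_\eps$. The dissipation term converges, since $\tau$ is fixed and $\nabla_\eps u_\eps$, $\nabla_\eps y_\eps^{l,k-1}$ converge uniformly. The same uniform convergence handles the inertial term, with the weak convergence of $v$ treated as explained above, and the force term.

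\emph{Recovery sequence.} This is the main obstacle. For $(\hat y,\hat b)\in B$ with $W^{l,k-1}(\hat y,\hat b)<\infty$, we will use the standard nonsimple membrane ansatz
\[
\hat y_\eps(x)=\hat y(x')+\eps x_3\hat b(x')+\tfrac{\eps^2x_3^2}{2}\hat d_\eps(x').
\]
Here $\hat d_\eps$ is chosen as a smooth approximation so that $\nabla_\eps^2\hat y_\eps$ tends to the block matrix with zero lower-right entry. (Alternatively we simply take $\hat d_\eps=0$, after first approximating $\hat b$ in $W^{2,q}$ by density if $\eps\nabla'\partial_3$ terms need control.) Then $\nabla_\eps\hat y_\eps\to(\nabla'\hat y\vert\hat b)$ uniformly and $T_\eps(\hat y_\eps)\to T(\hat y,\hat b)$.

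The boundary conditions hold on $P$ because $\hat b=e_3$ and $\nabla'\hat b=0$ there. The delicate point is that $\nabla'\hat b$ need not vanish on $\partial\omega$, so a boundary-layer or density correction may be required. The averaged constraint holds for $\hat y_\eps$ because $\bar{\hat y}_\eps=\hat y+O(\eps^2)$ in $W^{1,\infty}$, but only up to an error of order $\eps^2$. To absorb this error we will either use that the competitors can be taken with $\Lambda>c_0$ by a density argument, or apply Lemma \ref{InvStab}(i) after a slight dilation toward a configuration with strict inequality.

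\emph{Conclusion of the step.} With both inequalities, the minimality of $y_\eps^{l,k}$ passes to the limit, and the limiting functionals also converge. This yields convergence of $T_\eps(y^{l,k}_\eps)$ and hence the strong convergences needed as induction hypothesis for step $k+1$. Finally, the sequential interpolants $y^l$ in Definition \ref{MdefSolIncr} are the limits from Lemma \ref{TimeComp}, completing (iii).
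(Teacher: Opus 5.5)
Your architecture (induction on $l$ and $k$, compactness from Lemma \ref{ClosAB}, a $\liminf$/recovery pair for $W_\eps^{l,k-1}$, and Lemma \ref{TimeComp} for $(y^l,b^l)$ and the next outer data) matches the paper's. Your treatment of the velocity data is valid: you use only weak convergence and discard the $y$-independent term $\frac{\tau\rho}{2h}\lVert v_\eps^{l-1,k-1}\rVert^2$. It is, however, unnecessary. With $\tau$ fixed, $v_\eps^{l-1,k-1}$ is a difference quotient of $y_\eps^{l-1}$ and so converges uniformly. Likewise, convergence of $T_\eps(y_\eps^{l,k})$ is not needed, because the strong convergences used in the induction already come from Lemma \ref{ClosAB}.

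The genuine gap is the recovery sequence, which you correctly flag as the main obstacle but do not close.

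\emph{The quadratic corrector.} The term $\frac{\eps^2x_3^2}{2}\hat d_\eps$ is pointless, since with $\hat d_\eps=0$ one already has $\eps^{-2}\partial_{33}\hat y_\eps=0$, which is what $T$ requires. It is also harmful. It destroys the exact identity $\int_{-1/2}^{1/2}\hat y_\eps\,\d x_3=\hat y$, and the resulting $O(\eps^2)$ defect in the averaged invertibility condition cannot be absorbed for competitors with $\Lambda(\hat y)=c_0$. Such competitors must be admitted, because minimality is over all of $B$.

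\emph{Your remedies.} Neither is justified. Approximating every finite-energy element of $B$ in energy by pairs with $\Lambda>c_0$ is a nontrivial density claim for a nonconvex constraint. A dilation violates the clamped condition $\hat y={\rm id}_0$ on $\partial\omega$.

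\emph{Other unresolved points.} In general $\hat y+\eps x_3\hat b\notin W^{2,q}(\Omega)$, since $(\nabla')^2\hat b$ need not exist. Also $\nabla'\hat b$ need not vanish on $\partial\omega$, so $\nabla_\eps\hat y_\eps={\rm Id}$ on $P$ can fail. You also never verify the determinant constraint.

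\emph{The fix.} Your parenthetical alternative is the right one once made precise, as in the paper. Since $\hat b-(0,0,1)^T\in W_0^{1,q}(\omega)$, take $w_\lambda\in C_c^\infty(\omega)$ with $w_\lambda\to\hat b-(0,0,1)^T$ in $W^{1,q}(\omega)$, and set $\hat y_{\eps\lambda}:=\hat y+\eps x_3\bigl((0,0,1)^T+w_\lambda\bigr)$. Then:
\begin{itemize}
\item $\hat y_{\eps\lambda}\in W^{2,q}(\Omega)$;
\item the clamped conditions hold exactly, because $w_\lambda$ and $\nabla'w_\lambda$ vanish near $\partial\omega$;
\item the fiber average is exactly $\hat y$, because $\int_{-1/2}^{1/2}x_3\,\d x_3=0$, so $\Lambda(\hat y_{\eps\lambda})\ge c_0$ follows directly from $(\hat y,\hat b)\in B$.
\end{itemize}
For the determinant, finiteness of $J(\hat y,\hat b)$ gives $\det(\nabla'\hat y\vert\hat b)\ge\bar\mu>0$ on $\bar\omega$ by the two-dimensional Healey--Kr\"omer bound (cf.\ Lemma \ref{MPositiveDet}). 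Hence $\det\nabla_\eps\hat y_{\eps\lambda}\ge\bar\mu/4$ for small $\lambda$ and $\eps$, which gives admissibility and convergence of $J_\eps$. Finally, $\nabla_\eps\hat y_{\eps\lambda}$ tends to $\bigl(\nabla'\hat y\,\vert\,(0,0,1)^T+w_\lambda\bigr)$ rather than $(\nabla'\hat y\vert\hat b)$, and $\eps x_3(\nabla')^2w_\lambda$ must vanish. So you need a diagonal choice $\lambda(\eps)\to0$, via Attouch's lemma.
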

	
	\begin{theorem}[Membrane limit of the sequential problem]
		\label{Theorem3}
		Assume {\rm (A0)--(A3)}. Let $\{ y_\eps^l\}_{l=1}^N$ be the weak solution of the sequential problem \eqref{sequential}--\eqref{initialSequentialI} provided by Theorem \ref{Theorem1}.$(ii)$. Then, up to a subsequence (not relabeled) as $\eps \to 0$, $\nabla_\eps y_\eps^l \to (\nabla' y^l \vert b^l) $ in $C^0([(l-1)h,lh];C^{0,\alpha}(\bar{\omega}))$ for all $l=1,\dots,N$, where $\{(y^l,b^l)\}_{l=1}^N$ is a weak solution of the membrane sequential problem \eqref{Msequential}--\eqref{MinitialSequentialI} in the sense of Definition \ref{MdefSolSeq}.
	\end{theorem}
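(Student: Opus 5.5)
We argue by induction on $l=1,\dots,N$, passing to the limit $\eps\to0$ in the weak formulation \eqref{seqPweakEq} on each interval $I_l=[(l-1)h,lh]$.

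\emph{Uniform bounds.} For fixed $h$, the construction in Theorem \ref{Theorem1} (Subsections \ref{S42} and \ref{S411}) comes with an energy–dissipation estimate on $I_l$. Its constants depend only on the data, which are uniform in $\eps$ by (A1)–(A3) and Subsection \ref{SubIn}. It controls
$$\sup_{t\in I_l}E_\eps(y_\eps^l(t))+\int_{I_l}\bigl(R_\eps(y_\eps^l,\partial_t y_\eps^l)+\lVert\partial_t y_\eps^l\rVert^2\bigr)\,\d t$$
uniformly in $\eps$. This is exactly \eqref{TCbound}, so Lemma \ref{TimeComp} applies on $I_l$. It yields limits $(y^l,b^l)$ with $(y^l(t),b^l(t))\in B$, the Cauchy–Green tensor $C^l\in H^1(I_l;L^2(\omega))$, and a weak limit $d^l$. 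It also gives the convergences \eqref{conv311a}–\eqref{conv311g}, in particular $\nabla_\eps y_\eps^l\to(\nabla'y^l\vert b^l)$ in $C^0(I_l;C^{0,\alpha}(\bar\omega))$, as claimed.

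\emph{Initial data.} Uniform convergence at $t=(l-1)h$ gives $y^l((l-1)h)=y^{l-1}((l-1)h)$ and $b^l((l-1)h)=b^{l-1}((l-1)h)$; for $l=1$ we use \eqref{y0conv}–\eqref{b0conv}. For the velocity, $v_\eps^{l-1}=\partial_t y_\eps^{l-1}(\cdot-h)\rightharpoonup\partial_t y^{l-1}(\cdot-h)$ in $L^2(I_l;L^2)$ by \eqref{conv311d} at the previous step; for $l=1$ we use \eqref{v0conv}. Weak convergence suffices, since this term enters the equation linearly.

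\emph{Passing to the limit.} We take test functions of the form
$$\Psi(t,x)=\Psi_1(t,x')+\eps x_3\Psi_2(t,x'),$$
with $\Psi_1,\Psi_2$ smooth and compactly supported in $\omega$, then use density. Then
$$\nabla_\eps\Psi\to(\nabla'\Psi_1\vert\Psi_2),\qquad \nabla_\eps^2\Psi\to\begin{pmatrix}(\nabla')^2\Psi_1&\nabla'\Psi_2\\ (\nabla'\Psi_2)^T&0\end{pmatrix}$$
uniformly. The terms of the equation behave as follows.
\begin{itemize}
\item $DK_\eps$, $DJ_\eps$, the load (via \eqref{f0conv}) and the inertial term pass to the limit by the uniform convergence of $\nabla_\eps y_\eps^l$ and the bound $\det\ge\mu$.
\item The dissipation term equals $2\int\partial_tC_\eps:\bigl(\nabla_\eps\Psi^T\nabla_\eps y_\eps+\nabla_\eps y_\eps^T\nabla_\eps\Psi\bigr)$, i.e.\ weak times strong, so it converges by \eqref{conv311e}. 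This recovers both $D_2R$ and $D_4R$.
\end{itemize}
Separating the $\Psi_1$ and $\Psi_2$ parts yields \eqref{MseqPweakEq} and \eqref{MseqPweakEq2}.

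\emph{Main obstacle: the nonlinear term $DT_\eps$.} The weak convergence \eqref{conv311c} does not identify the limit of $\lvert\nabla_\eps^2y_\eps\rvert^{q-2}\nabla_\eps^2y_\eps$, and the entry $d^l$ must drop out. I would proceed as follows.

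First, show strong convergence of $\nabla_\eps^2y_\eps^l$ toward the matrix with $d$ replaced by $0$, via a Minty/monotonicity argument. Testing the equation with $\Psi=y_\eps^l-\hat y_\eps$ gives a limsup inequality, where $\hat y_\eps:=y^l+\eps x_3b^l$ is the recovery-type competitor, suitably regularized in time and cut off near $P$. This uses the $\Gamma$-convergence structure of Theorem \ref{Theorem2}. It also uses that the dissipation term has a sign in the limit by weak lower semicontinuity of $\int\lvert\partial_tC\rvert^2$. Strict monotonicity of $\xi\mapsto\lvert\xi\rvert^{q-2}\xi$ then forces
$$\eps^{-2}\partial_{33}y_\eps^l\to0\quad\text{and}\quad\nabla_\eps^2y_\eps^l\to\begin{pmatrix}(\nabla')^2y^l&\nabla'b^l\\ (\nabla'b^l)^T&0\end{pmatrix}$$
strongly in $L^q$.

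This hard step is avoided entirely, at the level of sequential solutions, if the minimality of the incremental problem (Theorem \ref{Theorem2}.(iii)) can be transferred. Then $T_\eps(y_\eps^l(t))\to T(y^l(t),b^l(t))$ for a.e.\ $t$, which, combined with uniform convexity, again gives strong convergence. I would attempt that route first.

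Finally, the identification $\lvert\nabla_\eps^2y_\eps\rvert^{q-2}\nabla_\eps^2y_\eps\to D\kappa_0$ follows, and this closes the induction.
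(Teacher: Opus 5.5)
Your monotonicity argument is correct in substance, but you handle the nonlinear term differently from the paper.

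\emph{The paper's route.} It probes all three blocks of the scaled Hessian with test functions $\phi_1+\eps x_3\phi_2^\eps+\eps^2\Phi_3^\eps$, $\partial_{33}\Phi_3^\eps=\phi_3^\eps$, and passes to the limit with unidentified weak-$*$ limits $\pi_1^l,\pi_2^l,\pi_3^l$ of the stress. The $\phi_3$-equation gives $\pi_3^l=0$. Minty's trick then identifies $(\pi_1^l,2\pi_2^l,\pi_3^l)=D\kappa(\Theta^l)$. For this it tests the bulk equation with $y_\eps^l-{\rm id}_\eps$, which needs no regularization since $\nabla_\eps^2{\rm id}_\eps=0$. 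It computes the limit of $\int\langle DT_\eps(y_\eps^l),y_\eps^l-{\rm id}_\eps\rangle\,\d t$ by comparison with the limit equations tested with $(y^l-{\rm id}_0,\,b^l-(0,0,1)^T)$. Finally, $d^l=0$ follows from $\pi_3^l=0$.

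\emph{Your route.} You test with $y_\eps^l-\hat y_\eps$, $\hat y_\eps=y^l+\eps x_3b^l_\lambda$, and use uniform monotonicity, $(\lvert A\rvert^{q-2}A-\lvert B\rvert^{q-2}B):(A-B)\geq 2^{2-q}\lvert A-B\rvert^q$ for $q\geq2$. Mere strict monotonicity would not give the $L^q$ bound directly. The argument closes as follows.
\begin{enumerate}
\item[(a)] Since $\nabla_\eps(y_\eps^l-\hat y_\eps)\to(0\vert b^l-b^l_\lambda)$ and $y_\eps^l-\hat y_\eps\to0$ strongly, all lower-order terms converge by weak--strong pairing to quantities bounded by $c\lVert b^l-b^l_\lambda\rVert_{L^q(I_l;W^{1,q}(\omega))}$.
\item[(b)] $\lvert\nabla_\eps^2\hat y_\eps\rvert^{q-2}\nabla_\eps^2\hat y_\eps$ converges strongly in $L^{q'}$ to a tensor with vanishing $(3,3)$ block, so $d^l$ drops out.
\item[(c)] Letting $\eps\to0$ and then $\lambda\to0$ gives strong $L^q$ convergence of $\nabla_\eps^2y_\eps^l$ to the reduced Hessian with zero $(3,3)$ block.
\end{enumerate}

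\emph{Comparison.} Your route yields more: strong convergence of the scaled second gradients, hence $T_\eps(y^l_\eps)\to T(y^l,b^l)$ in $L^1$ in time. It also needs only two-component test functions and never writes limit equations with unknown $\pi$'s. The paper's route avoids building a time-measurable spatial regularization $b^l_\lambda$ with $\nabla'b^l_\lambda=0$ near $\partial\omega$, and avoids the extra limit $\lambda\to0$.

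\emph{Corrections.}
\begin{enumerate}
\item[(i)] The reasons you attach to this step are not the operative ones. No $\Gamma$-convergence input enters. The regularization of $b^l$ must be in space, so that $\eps x_3b^l_\lambda\in W^{2,q}(\Omega)$, not in time. The dissipation needs no sign: tested with $y_\eps^l-\hat y_\eps$ it equals $\int 4\,\nabla_\eps y^l_\eps\,\partial_tC^l_\eps:\nabla_\eps(y^l_\eps-\hat y_\eps)$, which converges by \eqref{conv311e}. Lower semicontinuity of $\int\lvert\partial_tC\rvert^2$ plays no role.
\item[(ii)] The alternative you would try first is not available. Each $y^l_\eps$ is obtained by $\tau\to0$ at fixed $\eps$, with compactness from Lemma \ref{H1normEps}, whose constant degenerates as $\eps\to0$ (Remark \ref{RemKorn}). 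So information on the incremental minimizers at fixed $\tau$ (Theorem \ref{Theorem2}) cannot be transferred to $T_\eps(y^l_\eps(t))$ uniformly in $\eps$. Moreover, no $\tau\to0$ limit is taken at the membrane level in the paper, so nothing identifies such a limit with $(y^l,b^l)$ (Remark \ref{RemMain}.$(i)$).
\item[(iii)] ``Use density'' has to be carried out as in Subsection \ref{S65}. Via Lemma \ref{LemC1}, the equations themselves show that $D_1T(y^l,b^l)\in L^2(I_l;H^{-1}(\omega))$ and $D_2T(y^l,b^l)\in L^2(I_l;L^2(\omega))$. This is what Definition \ref{MdefSolSeq} needs in order to admit test functions in $L^2(I_l;H_0^1(\omega))$ and $L^2(I_l;L^2(\omega))$.
\end{enumerate}
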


	\begin{theorem}[Membrane limit of the time-delayed problem]
		\label{Theorem5}
		Assume {\rm (A0)--(A3)}. Let $\{ y_\eps^{(h)}\}_h$ be the family of weak solutions of the time-delayed problem \eqref{tDelay}--\eqref{initialVelocity} provided by Theorem \ref{Theorem1}.$(iii)$. Then, up to a subsequence (not relabeled) as $\eps \to 0$, $\nabla_\eps y_\eps^{(h)} \to  (\nabla' y^{(h)} \vert b^{(h)}) $ in $C^0([0,T];C^{0,\alpha}(\bar{\omega}))$, where, for every $h$, $(y^{(h)},b^{(h)})$ is a weak solution of the membrane time-delayed problem \eqref{MtDelay}--\eqref{MinitialVelocity} in the sense of Definition \ref{MdefSoltimedelayedT}.
	\end{theorem}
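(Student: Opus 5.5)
The natural route is to reduce Theorem \ref{Theorem5} to Theorem \ref{Theorem3}. For fixed $h$, the bulk time-delayed solution $y_\eps^{(h)}$ is, by Theorem \ref{Theorem1}.$(iii)$, the concatenation of the sequential solutions $y_\eps^l$ on the finitely many intervals $[(l-1)h,lh]$, $l=1,\dots,N$. Theorem \ref{Theorem3} yields a single subsequence in $\eps$ along which $\nabla_\eps y_\eps^l \to (\nabla' y^l\vert b^l)$ in $C^0([(l-1)h,lh];C^{0,\alpha}(\bar\omega))$ for every $l$. Since $N$ is finite, no diagonal argument is needed for fixed $h$. I would define $(y^{(h)},b^{(h)})$ on $[(l-1)h,lh]$ as $(y^l,b^l)$.

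The matching conditions $y^l((l-1)h)=y^{l-1}((l-1)h)$ and $b^l((l-1)h)=b^{l-1}((l-1)h)$ in Definition \ref{MdefSolSeq} make the concatenation continuous at the nodes. Uniform convergence on each closed interval then gives uniform convergence on $[0,T]$, namely $\nabla_\eps y_\eps^{(h)} \to (\nabla' y^{(h)}\vert b^{(h)})$ in $C^0([0,T];C^{0,\alpha}(\bar\omega))$.

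Next I would check the regularity required in Definition \ref{MdefSoltimedelayedT}. For $C_w([0,T];W^{2,q})$ and $C_w([0,T];W^{1,q})$, weak continuity on each piece together with continuity at the nodes suffices. For $H^1(0,T;L^2)$, piecewise $H^1$ plus continuity (in $C^{1,\alpha}$, hence in $L^2$) at the junctions gives a global $H^1$ function. The same gluing argument applies to the Cauchy-Green tensor $C=(\nabla' y\vert b)^T(\nabla' y\vert b)$, which is continuous in time with values in $C^{0,\alpha}$. Admissibility $(y(t),b(t))\in B$ holds pointwise from the sequential definition.

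For the equations, I would take $\Psi\in L^2(0,T;H_0^1(\omega))$, restrict it to each interval, and sum \eqref{MseqPweakEq} over $l$. The delay term matches because $v^{l-1}(t)=\partial_t y^{l-1}(t-h)=\partial_t y^{(h)}(t-h)$ for $t\in((l-1)h,lh)$ when $l\ge2$, while $v^0=v_0$ gives $\partial_t y^{(h)}(t-h)=v_0$ on $(0,h)$. Summing \eqref{MseqPweakEq2} in the same way gives \eqref{MtimeDweakEw2}, and the initial data come from \eqref{MinitialSequentialI}.

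The main obstacle I expect is in the statement's phrase ``for every $h$'' combined with a single subsequence. If this means along a common subsequence for all $h=T/N$, I would take a diagonal subsequence over the countably many $N\in\mathbb N$, applying Theorem \ref{Theorem3} successively. A second subtle point is that the limits $y^l$ must be the same objects used as data for $l+1$. This consistency is already built into Theorem \ref{Theorem3}, which proves convergence for all $l$ simultaneously, so I would simply invoke it.
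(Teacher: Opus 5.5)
Your proposal is correct, but it follows a different route from the paper's proof of Theorem~\ref{Theorem5}.

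The paper argues directly on $[0,T]$. It applies Lemma~\ref{TimeComp} to $y_\eps^{(h)}$ using the global estimates \eqref{est0T4}--\eqref{est0T5}. It then passes to the limit in \eqref{m1}, tested with the functions $\phi^\eps$ of Subsection~\ref{S63}. The stresses $\pi_1^{(h)},\pi_2^{(h)},\pi_3^{(h)}$ are identified by the monotonicity argument of Subsection~\ref{S64}, and the test space is extended via Lemma~\ref{LemC1}. Only afterwards, in Corollary~\ref{CorConcat}, does it identify this limit with the concatenation of the sequential limits of Theorem~\ref{Theorem3}.

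You reverse the order. You take all sequential limits from Theorem~\ref{Theorem3} along one subsequence, which is possible because $N$ is finite. You then glue them and check Definition~\ref{MdefSoltimedelayedT} directly, which is exactly the observation in Remark~\ref{RemMain}.$(iii)$. Your regularity checks for the glued pair are sound: piecewise $H^1$ plus continuity at the nodes, piecewise weak continuity, and the same for the Cauchy--Green tensor.

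What each route buys:
\begin{itemize}
\item Yours is shorter. It avoids repeating the limit passage and the Minty argument, and it gives Corollary~\ref{CorConcat} for free.
\item The paper's does not depend on the inductive structure over $l$ or on Theorem~\ref{Theorem3}. It also yields directly on $[0,T]$ the global convergences \eqref{i4}--\eqref{i13}, in particular $\partial_t y_\eps^{(h)} \rightharpoonup \partial_t y^{(h)}$ and $\partial_t C_\eps^{(h)} \rightharpoonup \partial_t C^{(h)}$ in $L^2(0,T;L^2(\omega))$.
\end{itemize}
Section~\ref{ProofT6} needs exactly these convergences to derive \eqref{EIh} and \eqref{unifh}. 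In your approach they come from gluing the interval-wise convergences in the proof of Theorem~\ref{Theorem3} (Lemma~\ref{TimeComp} on each $[(l-1)h,lh]$), not from its statement. If your proof is to feed into Theorem~\ref{Theorem6}, record them explicitly.

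Your diagonal extraction over $N$, to get one subsequence for all $h$, is harmless. The paper's proof simply treats each fixed $h$ separately.
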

	
	\begin{corollary}[Consistency of the membrane time-delayed and sequential problems]
		\label{CorConcat}
		Assume {\rm (A0)--(A3)}. Let $\{y_\eps^l\}_{l=1}^N$ be the weak solution of the sequential problem \eqref{sequential}--\eqref{initialSequentialI} provided by Theorem \ref{Theorem1}.$(ii)$, and let $y_\eps^{(h)}$ be its global interpolant. Then, the convergences in Theorems \ref{Theorem3} and \ref{Theorem5} hold along a common subsequence, along which $(y^{(h)}(t),b^{(h)}(t)) = (y^l(t),b^l(t))$ for every $t \in [(l-1)h,lh]$ and $l=1,\dots,N$. Namely, the weak solution of the membrane time-delayed problem provided by Theorem \ref{Theorem5} is the concatenation of the weak solution of the membrane sequential problem provided by Theorem \ref{Theorem3}.
	\end{corollary}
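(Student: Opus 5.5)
The plan is to extract the common subsequence by a diagonal argument and then identify the two limits pointwise in time, using only that $y_\eps^{(h)}$ restricted to $[(l-1)h,lh]$ is exactly $y_\eps^l$. Fix $h$, so that $N=T/h$ is finite.

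\emph{Extraction of a common subsequence.} Start from the subsequence given by Theorem \ref{Theorem3}. Along it, $\nabla_\eps y_\eps^l \to (\nabla' y^l\vert b^l)$ in $C^0([(l-1)h,lh];C^{0,\alpha}(\bar\omega))$ for all $l=1,\dots,N$ simultaneously. The proof of Theorem \ref{Theorem5} starts from an arbitrary sequence $\eps\to0$: it uses the uniform bounds of type \eqref{TCbound} on $[0,T]$ and Lemma \ref{TimeComp}, and then passes to a further subsequence. We apply it to the sequence already selected, so that the further subsequence preserves the convergences of Theorem \ref{Theorem3}. We also note that Lemma \ref{TimeComp} gives \eqref{conv311f}, so that $y_\eps^{(h)}\to y^{(h)}$ in $C^0([0,T];C^{1,\alpha}(\bar\omega))$. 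The same lemma applied on each interval $[(l-1)h,lh]$, as in the proof of Theorem \ref{Theorem3}, gives $y_\eps^l\to y^l$ in $C^0([(l-1)h,lh];C^{1,\alpha}(\bar\omega))$.

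\emph{Identification.} For $t\in[(l-1)h,lh]$ we have $y_\eps^{(h)}(t)=y_\eps^l(t)$ and $\nabla_\eps y_\eps^{(h)}(t)=\nabla_\eps y_\eps^l(t)$ by the definition in Theorem \ref{Theorem1}.(iii). At the endpoints $t=(l-1)h$ the two pieces agree by \eqref{initialSequential}, so the definition is consistent. Uniform limits are unique, so $y^{(h)}(t)=y^l(t)$ and $(\nabla' y^{(h)}(t)\vert b^{(h)}(t))=(\nabla' y^l(t)\vert b^l(t))$, hence $b^{(h)}(t)=b^l(t)$, for every $t$ in that interval. This holds for every $l$, which is the claim. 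One subtlety: if Theorems \ref{Theorem3} and \ref{Theorem5} are read as producing \emph{some} limits, then those limits are by construction the uniform limits along the chosen subsequence. Identifying them therefore requires no uniqueness of membrane solutions, only uniqueness of limits of one fixed sequence.

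\emph{Conclusion and main obstacle.} Consequently, the concatenation of $\{(y^l,b^l)\}_l$ is the weak solution of the membrane time-delayed problem from Theorem \ref{Theorem5}. As a consistency check, one can verify directly that the concatenation solves \eqref{MtimeDweakEw}--\eqref{MtimeDweakEw2}: split $\int_0^T$ into the intervals, use \eqref{MseqPweakEq}--\eqref{MseqPweakEq2}, and note that $v^{l-1}=\partial_t y^{l-1}(\cdot-h)=\partial_t y^{(h)}(\cdot-h)$ on $((l-1)h,lh)$ and $v^0=v_0$. The initial conditions and the membership $C\in H^1(0,T;L^2(\omega))$ follow from the matching at the nodes, since $C$ is continuous across the nodes. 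The only real obstacle is the bookkeeping: making sure that the subsequence extraction in Theorem \ref{Theorem5} can be performed inside the one from Theorem \ref{Theorem3}, rather than along an independent sequence. This holds because both results rely on compactness (Lemma \ref{TimeComp}) applied to the same family of bulk solutions.
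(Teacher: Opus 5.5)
Your proposal is correct and follows essentially the same route as the paper. You apply Theorem~\ref{Theorem3} first, run the compactness argument of Theorem~\ref{Theorem5} along the resulting subsequence, and identify the limits on each $[(l-1)h,lh]$ by uniqueness of uniform limits, since $y_\eps^{(h)}=y_\eps^l$ and $\nabla_\eps y_\eps^{(h)}=\nabla_\eps y_\eps^l$ there. Your extra direct check that the concatenation solves \eqref{MtimeDweakEw}--\eqref{MtimeDweakEw2} is not needed for the corollary, but it is consistent with the observation in Remark~\ref{RemMain}.$(iii)$.
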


	\begin{theorem}[Measure-valued limit of the membrane time-delayed problem]
		\label{Theorem6}
		Assume {\rm (A0)--(A3)}. Let $\{(y^{(h)},b^{(h)})\}_h$ be the family of weak solutions of the membrane time-delayed problem \eqref{MtDelay}--\eqref{MinitialVelocity} provided by Theorem \ref{Theorem5}, and set $\mathfrak{E}_0 := \limsup_{\eps\to0} E_\eps(y_{0\eps})$. Then, up to a subsequence (not relabeled) as $h \to 0$, $y^{(h)} \to y$ in $C^0([0,T];C^{1,\alpha}(\bar{\omega}))$ and $b^{(h)} \to b$ in $C^0([0,T];C^{0,\alpha}(\bar{\omega}))$, and $\bigl((\nabla')^2 y^{(h)},\nabla' b^{(h)}\bigr)$ generates a Young measure $\nu$ such that $(y,b,\nu)$ is a measure-valued solution of the problem \eqref{Meq1S}--\eqref{Meq11S} with initial energy $\mathfrak{E}_0$ in the sense of Definition \ref{MdefSolMV}.
	\end{theorem}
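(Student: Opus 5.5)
The plan has three parts: uniform bounds, compactness as $h\to0$, and passage to the limit in the weak formulation of Definition~\ref{MdefSoltimedelayedT}.

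\textbf{Uniform bounds.} The starting point is an energy inequality for $(y^{(h)},b^{(h)})$, uniform in $h$. I would obtain it from the bulk level rather than by testing the membrane equation, since the membrane solution is not regular enough to be used as its own test function. For the bulk incremental minimizers, comparing $W_\eps^{l,k-1}(y_\eps^{l,k})\le W_\eps^{l,k-1}(y_\eps^{l,k-1})$ and using the convexity inequality
\[
\tfrac12|a-c|^2-\tfrac12|b-c|^2\ge \tfrac12|a|^2-\tfrac12|b|^2-\dots
\]
gives the standard discrete estimate
\[
E_\eps(y^{(h)}_\eps(t))+\frac{\rho}{2h}\int_{t-h}^t\|\partial_t y_\eps^{(h)}\|^2+\int_0^t R_\eps \le E_\eps(y_{0\eps})+\frac\rho2\|v_{0\eps}\|^2+\int_0^t(f_\eps,\partial_t y^{(h)}_\eps),
\]
presumably already established in the proof of Theorem~\ref{Theorem1}. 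I would pass this to the limit $\eps\to0$ along the subsequence of Theorem~\ref{Theorem5}, using Lemma~\ref{TimeComp}, the lower semicontinuity of $T$ from Lemma~\ref{ClosAB}, and weak lower semicontinuity of $\int\|\partial_t C\|^2$. This yields the same inequality for $(y^{(h)},b^{(h)})$ with $E$, $R$, $f_0$, and with $\mathfrak E_0$ in place of $E_\eps(y_{0\eps})$. A Gronwall argument on the forcing term then gives $\sup_t E(y^{(h)},b^{(h)})+\int_0^T(\|\partial_t C^{(h)}\|^2+\|\partial_t y^{(h)}\|^2)\,\d t\le c$, uniformly in $h$.

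\textbf{Compactness.} Remark~\ref{RemTimeComp} yields the limit $(y,b)$ with all the stated convergences, with $(y(t),b(t))\in B$, and with $C\in H^1$. In particular $\det(\nabla'y\vert b)\ge\mu$ by Lemma~\ref{MPositiveDet}, and $\Lambda$-type lower bounds are preserved. The second gradients $\Theta_h=((\nabla')^2y^{(h)},\nabla'b^{(h)})$ are bounded in $L^\infty(0,T;L^q)$, so along a subsequence they generate a Young measure $\nu$. Its barycenter is $((\nabla')^2y,\nabla'b)$ and its $q$-moment is essentially bounded, by the discussion preceding Definition~\ref{MdefSolMV}. The bound $\frac\rho{2h}\int_{t-h}^t\|\partial_t y^{(h)}\|^2\le c$ gives $\partial_t y^{(h)}\in L^\infty(0,T;L^2)$ in an averaged sense. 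With the weak-$*$ convergence of $\partial_t y^{(h)}$ this will handle the inertial term.

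\textbf{Limit passage.} The lower-order terms converge because $(\nabla'y^{(h)}\vert b^{(h)})$ converges uniformly with determinant bounded below. Hence $D_iK$ and $D_iJ$ converge strongly. Also $D_2R$ and $D_4R$ are linear in $\partial_tC^{(h)}$ with uniformly convergent coefficients, so they converge weakly. For the higher-order term, $D_1\kappa_0(\Theta_h)$ and $D_2\kappa_0(\Theta_h)$ are bounded in $L^{q'}$, hence equiintegrable, and so they converge weakly to $\langle\nu,D_i\kappa_0\rangle$. For the inertial term, given a test function $\Psi$ with $\Psi(T)=0$ (dense by Lemma~\ref{TOT}, with $\Psi=0$ near $T$), I would use the discrete summation by parts
\[
\int_0^T\frac{\partial_ty^{(h)}-\partial_ty^{(h)}(\cdot-h)}{h}\cdot\Psi=-\int_0^{T}\partial_ty^{(h)}\cdot\frac{\Psi(\cdot+h)-\Psi}{h}-\frac1h\int_{0}^{h}v_0\cdot\Psi .
\]
This converges to $-\int\partial_ty\cdot\partial_t\Psi-(v_0,\Psi(0))$, which gives \eqref{MmvEq1}. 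Equation \eqref{MmvEq2} follows in the same way from \eqref{MtimeDweakEw2}, since its test space is larger only by density.

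\textbf{Energy inequality (main obstacle).} The hardest step is \eqref{MmvEI}. Here I apply \eqref{YMlsc} with $f=\kappa_0$ and $\varphi$ a nonnegative function of $t$, and use lower semicontinuity of $\int_0^tR$ via the weak convergence of $\partial_tC^{(h)}$. For the kinetic term, I need $\frac1h\int_{t-h}^t\|\partial_ty^{(h)}\|^2$ to dominate $\|\partial_ty(t)\|^2$ for a.e. $t$. I would get this from convexity: the average $\frac1h\int_{t-h}^t\partial_ty^{(h)}$ converges weakly to $\partial_ty$ in $L^2$ of space-time. Integrating against nonnegative $\varphi(t)$ and applying Fatou/Lebesgue points yields the a.e. inequality. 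The forcing term $\int_0^t(f_0,\partial_ty^{(h)})$ converges by weak convergence. Finally, consistency of the bulk-to-membrane energy limit requires the choice $\mathfrak E_0=\limsup E_\eps(y_{0\eps})$.
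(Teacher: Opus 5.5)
Your proposal is correct and follows essentially the paper's route. The $h$-uniform energy inequality is inherited from the bulk inequality \eqref{EIeps} by lower semicontinuity as $\eps\to0$, compactness comes from Remark~\ref{RemTimeComp}, the equiintegrable stresses $D\kappa_0(\Theta^{(h)})$ converge to $\langle\nu_{t,x'},D\kappa_0\rangle$, and the inertial term is handled by discrete summation by parts together with Lemma~\ref{TOT}. Your Gronwall step and your Jensen/weak-convergence argument for $\frac1h\int_{t-h}^t\partial_t y^{(h)}$ are harmless substitutes for the paper's direct use of \eqref{est0T4}--\eqref{est0T5} and its Fubini argument with $\varphi_h$.

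One caution concerns the bulk inequality. You need the sharp one, \eqref{EIeps}, with $2\int_0^t R_\eps$ on the left, which the paper obtains by testing the sequential equation with $\partial_t y_\eps^l$ and using the chain rule. The incremental comparison $W_\eps^{l,k-1}(y_\eps^{l,k})\le W_\eps^{l,k-1}(y_\eps^{l,k-1})$ that you sketch only yields the lossy \eqref{firstEstimate}: its dissipation coefficient is $1$, and its kinetic constants do not telescope over the outer steps. It would therefore not reproduce \eqref{MmvEI} as stated.
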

	
	\begin{remark}
		\label{RemMain}
			We collect some remarks on the results stated above.
		\begin{enumerate}
			\item[{\rm i)}] In Theorem \ref{Theorem3}, the membrane sequential solutions are obtained as limits of the bulk ones, and not by letting $\tau\to0$ in the membrane incremental problem. Indeed, for $l\geq2$, the initial data of the membrane incremental problem in Theorem \ref{Theorem2} are given by the membrane sequential solution on the previous interval obtained in Theorem \ref{Theorem3} (see Subsection \ref{S53}). In the absence of a uniqueness result, membrane sequential solutions obtained by letting $\tau\to0$ at the membrane level might differ from these, and would not be related to the bulk ones.
			\item[{\rm ii)}] In Theorems \ref{Theorem2}, \ref{Theorem3}, and \ref{Theorem5}, the bulk sequential and time-delayed solutions are those provided by Theorem \ref{Theorem1}. Indeed, the limit as $\eps\to0$ relies on the estimates of Subsection \ref{S410}, which are independent of $\eps$ and are proved for these solutions only.
			\item[{\rm iii)}] Corollary \ref{CorConcat} does not rely on any uniqueness result for the membrane problems. On the one hand, by Definitions \ref{MdefSolSeq} and \ref{MdefSoltimedelayedT}, the concatenation of any weak solution of the membrane sequential problem \eqref{Msequential}--\eqref{MinitialSequentialI} is a weak solution of the membrane time-delayed problem \eqref{MtDelay}--\eqref{MinitialVelocity}, and, conversely, the restrictions of any weak solution of the latter to the intervals $[(l-1)h,lh]$ form a weak solution of the former. On the other hand, both limits are taken on the same bulk family, since $y_\eps^{(h)}$ coincides with $y_\eps^l$ on $[(l-1)h,lh]$. The lack of uniqueness matters only when comparing membrane solutions which are not generated by the same bulk family, as for the limit $\tau\to0$ at the membrane level discussed in item $(i)$.
			\item[{\rm iv)}] In the measure-valued solutions of Definition \ref{MdefSolMV}, only the second-order stress is described through the Young measure $\nu$, whereas the first-order terms, the dissipation, and the inertia are evaluated at $(y,b)$. The Young measure accounts for possible oscillations of the second gradients, which cannot be excluded, since the membrane dissipation controls only the time derivative of the Cauchy-Green tensor and thus provides no spatial compactness of the velocities (see Remark \ref{RemKorn}). Young measure solutions of evolution problems obtained by time discretization go back to \cite{KinderlehrerPedregal}, and were constructed in \cite{Demoulini96,Demoulini97,Demoulini00} for forward-backward parabolic equations, systems of mixed type, and viscoelasticity, respectively, and in \cite{DST01} for polyconvex elastodynamics (see also \cite{CarstensenRieger,Rieger} for nonconvex elastodynamics). In \cite{Demoulini97,Demoulini00,DST01}, the solutions are defined through the equations, the barycenter condition, and the initial data. Following \cite{DST}, where the solutions of \cite{DST01} are shown to be dissipative measure-valued solutions, Definition \ref{MdefSolMV} also includes the energy inequality \eqref{MmvEI}. In Theorem \ref{Theorem6}, the initial energy satisfies $\mathfrak{E}_0 \geq E(y_0,b_0)$ by Subsection \ref{Membrane}, with equality if $E_\eps(y_{0\eps}) \to E(y_0,b_0)$. If $\nu_{t,x'}$ is the Dirac mass at $\Theta(t,x') := \bigl((\nabla')^2 y(t,x'),\nabla' b(t,x')\bigr)$ for a.e. $(t,x')$, then the stress terms in \eqref{MmvEq1} and \eqref{MmvEq2} reduce to $\langle D_1T(y,b),\Psi\rangle$ and $\langle D_2T(y,b),\Psi\rangle$, respectively, so that $(y,b)$ satisfies \eqref{Meq1S}--\eqref{Meq11S} in the weak sense. This is the case if $\int_0^T T(y^{(h)},b^{(h)}) \, \d t \to \int_0^T T(y,b) \, \d t$ along the subsequence of Theorem \ref{Theorem6}. Indeed, Jensen's inequality and \eqref{YMlsc} give
				$$
				\int_0^T T(y,b) \; \d t \leq \int_0^T \int_\omega \langle \nu_{t,x'} , \kappa_0 \rangle \; \d x' \, \d t \leq \liminf_{h \to 0} \int_0^T T(y^{(h)},b^{(h)}) \; \d t,
				$$
				so that $\langle \nu_{t,x'} , \kappa_0 \rangle = \kappa_0(\Theta(t,x'))$ for a.e. $(t,x')$, and the strict convexity of $\kappa_0$ forces $\nu_{t,x'} = \delta_{\Theta(t,x')}$. Similarly, if $\partial_t y^{(h)} \to \partial_t y$ in $L^2(0,T;L^2(\omega))$, the monotonicity argument of Subsection \ref{S412} applies at the membrane level and gives $\Theta^{(h)} \to \Theta$ in $L^q((0,T')\times\omega)$ for every $T' \in (0,T)$, so that again $\nu_{t,x'} = \delta_{\Theta(t,x')}$.
		\end{enumerate}
	\end{remark}

	\section{Proof of Theorem \ref{Theorem1}}
	\label{sec:4}
	This section is devoted to the proof of Theorem \ref{Theorem1}, which proceeds by iteration and induction, following the four items $(i)$--$(iv)$ in its statement. We start by proving the existence of a sequence $\{ y_\eps^{1,k} \}_{k=0}^M \in (A_\eps)^{M+1}$ which solves the minimization problem \eqref{incrP}--\eqref{incrIC2} for the time interval $[0,h]$, corresponding to the first iteration, i.e. $l = 1$ (item $(i)$). Notice that in the first time step the initial conditions read as $y_\eps^{1,0}=y_{0\eps}$ and $v_\eps^{0,k-1}=v_{0\eps}$ for every $k=1,\dots,M$. The sequence $\{y_\eps^{1,k} \}$ is then used to construct the interpolants $y_\eps^{1,(\tau)}$, $\underline{y_\eps}^{1,(\tau)}$, and $\tilde{y}_\eps^{1,(\tau)}$, whose limit, namely $y_\eps^1$, solves the sequential problem \eqref{sequential}--\eqref{initialSequentialI} on the time interval $[0,h]$. The function $y_\eps^1$ is then used to construct the initial data for the next interval $[h,2h]$ according to \eqref{incrIC1}--\eqref{incrIC2}, and the procedure is iterated over the outer time intervals. At each iteration, the incremental minimizers have to stay away from the averaged invertibility constraint in $A_\eps$, so that they satisfy the Euler-Lagrange equations. For $T \leq T_0$, this is proved by induction on $l$, by means of estimates on the sequential solutions which are independent of $h$. Items $(i)$ and $(ii)$ follow, with a solution $\{y_\eps^l\}_{l=1}^N$ of the sequential problem on the whole interval $[0,T]$.
	
	Item $(iii)$ is then a direct consequence of $(ii)$: since the sequential equations \eqref{sequential} hold on each subinterval $[(l-1)h,lh]$ with the matching conditions \eqref{initialSequential}, the global interpolant $y_\eps^{(h)}(t):=y_\eps^l(t)$ for $t\in[(l-1)h,lh]$, $l=1,\dots,N$, solves the time-delayed problem \eqref{tDelay}--\eqref{initialVelocity} in the sense of Definition \ref{defSoltimedelayedT}.
	
	Finally, item $(iv)$ is obtained by letting $h\to0$. The coercivity estimates of Lemmas \ref{W2qnormEps}, \ref{PositiveDet}, and \ref{H1normEps}, applied uniformly in $h$, guarantee compactness for the family $\{y_\eps^{(h)}\}_h$. Passing to the limit in the weak formulation \eqref{timeDweakEw} of the time-delayed problem then shows that any limit point $y_\eps$ is a weak solution of the continuous-time problem \eqref{eq1S}--\eqref{eq11S} in the sense of Definition \ref{defSolT}, which completes the proof of Theorem \ref{Theorem1}.

	\subsection{Existence of solutions to the incremental minimization problem}
	\label{S41}
	Fix $k \in \{1,\dots,M\}$ and assume $\{ y_{\eps}^{1,j}\}_{j=0}^{k-1} \in (A_{\eps})^{k}$ to be given. We prove that the minimization problem
	\begin{align*}
		\min \biggl\{ &E_{\eps}(y) + \tau R_{\eps} \biggl( y_{\eps}^{1,k-1} , \frac{y - y_{\eps}^{1,k-1}}{\tau}  \biggr) - \tau \biggl( f_\eps^{k-1} , \frac{y-y_{\eps}^{1,k-1}}{\tau} \biggr) \nonumber\\&+ \frac{\tau \rho}{2h} \biggl\lVert \frac{y - y_{\eps}^{1,k-1}}{\tau} - v_{\eps}^{0,k-1} 	\biggr\rVert^2 	 \biggr\},
	\end{align*}
	admits a solution $y \in A_{\eps}$. The assertion follows by the Direct Method. Let $\{ y^n\}_n$ be an infimizing sequence, namely, $y^n \in A_{\eps}$ with $W_{\eps}^{1,k-1} (y^n) \to \inf_{A_{\eps}} W_{\eps}^{1,k-1}$. Without loss of generality, we assume $\sup_n W_{\eps}^{1,k-1} (y^n) < \infty$. Set $z^n := (y^n - y_\eps^{1,k-1})/\tau$. The lemmas of Subsection \ref{Aux} require a bound on the energy $E_\eps(y^n)$, whereas the bound on $W_\eps^{1,k-1}(y^n)$ involves also the load term, which has no sign. Writing $z^n = (z^n - v_\eps^{0,k-1}) + v_\eps^{0,k-1}$ in the load term and using Young's inequality, we obtain
		$$
		\tau \bigl\lvert ( f_\eps^{k-1} , z^n ) \bigr\rvert \leq \frac{\tau\rho}{4h} \lVert z^n - v_\eps^{0,k-1} \rVert^2 + \frac{\tau h}{\rho} \lVert f_\eps^{k-1} \rVert^2 + \tau \lVert f_\eps^{k-1} \rVert \, \lVert v_\eps^{0,k-1} \rVert.
		$$
		The first term on the right-hand side is absorbed by the inertial term, and the last two are independent of $n$. Since $R_\eps \geq 0$, it follows that $\sup_n E_\eps(y^n) < \infty$. By Lemma \ref{W2qnormEps}, $y^n$ is bounded in $W^{2,q}(\Omega)$, so that, up to a subsequence (not relabeled), $y^n \to y$ in $C^0(\bar{\Omega})$ for some $y$. Hence, Lemma \ref{ClosA}.$(i)$ gives $y \in A_\eps$, $y^n \to y$ in $C^{1,\alpha}(\bar{\Omega})$, the convergence of $K_\eps(y^n)$ and $J_\eps(y^n)$, and the lower semicontinuity of $T_\eps$. The remaining terms of $W_\eps^{1,k-1}$ converge by the convergence in $C^{1,\alpha}(\bar{\Omega})$. Therefore,
			$$
			W_{\eps}^{1,k-1}(y) \leq \liminf_{n \to \infty} W_{\eps}^{1,k-1}(y^n) = \inf_{A_{\eps}} W_{\eps}^{1,k-1},
			$$
			namely, $y$ is a minimizer.
	
	\subsection{Discrete Euler-Lagrange equations and corresponding estimates}
	\label{S42}
	For every $k = 1,\dots,M$, let $y_\eps^{1,k} \in A_\eps$ be a minimizer of $W_\eps^{1,k-1}$, provided by Subsection \ref{S41}. We first derive the estimates which follow from the minimality of $y_\eps^{1,k}$, and then we compute the first variation of $W_\eps^{1,k-1}$ at $y_\eps^{1,k}$.

	Since $y_\eps^{1,k}$ minimizes $W_\eps^{1,k-1}$, one gets
	$
	W_\eps^{1,k-1}(y_\eps^{1,k}) \le W_\eps^{1,k-1}(y_\eps^{1,k-1}),
	$
	giving, for all $k = 1,\dots,M$,
	\begin{align}
		E_\eps(y_\eps^{1,k}) +& \tau R_\eps \biggl( y_\eps^{1,k-1}, \frac{y_\eps^{1,k} - y_\eps^{1,k-1}}{\tau} \biggr) - \tau \biggl( f_\eps^{k-1}, \frac{y_\eps^{1,k} - y_\eps^{1,k-1}}{\tau} \biggr) \nonumber\\&+ \frac{\tau \rho}{2 h} \biggl\rVert \frac{y_\eps^{1,k} - y_\eps^{1,k-1}}{\tau} - v_\eps^{0,k-1} \biggr\rVert^2
		\leq E_\eps(y_\eps^{1,k-1}) + \frac{\tau \rho}{2 h} \lVert v_\eps^{0,k-1} \rVert^2. \label{zeroethEstimate}
	\end{align}
	Summing \eqref{zeroethEstimate} over $k=1, \dots, n$ and using Young's inequality for the load and inertial terms, for any $n=1, \dots, M$, we obtain the following estimate
	\begin{align}
		E_{\eps}(y_{\eps}^{1,n}) + \sum_{k=1}^n \tau \biggl[ R_{\eps} \biggl(y_{\eps}^{1,k-1}, &\frac{y_{\eps}^{1,k} - y_{\eps}^{1,k-1}}{\tau}\biggr) + \frac{\rho}{8h} \biggl\lVert \frac{y_{\eps}^{1,k} - y_{\eps}^{1,k-1}}{\tau} \biggr\rVert^2 \biggr] \nonumber \\
		&\leq E_{\eps}(y_{0\eps}) + \rho \lVert v_{0\eps} \rVert^2 + \frac{2h}{\rho} \int_0^{n\tau} \lVert f_{\eps}(t) \rVert^2 \; \d t.\label{firstEstimate}
	\end{align}
	Therefore, as the right-hand side of \eqref{firstEstimate} is uniformly bounded due to Assumptions (A1)--(A3), it follows that
	\begin{equation*}
		E_\eps(y_\eps^{1,k}) \leq c, \qquad \forall k = 0,\dots,M.
	\end{equation*}
	Thus, in view of Lemma \ref{H1normEps}, inequality \eqref{firstEstimate} implies
	\begin{equation}
		\label{firstEstimateC}
		\lVert y_\eps^{1,k} - y_\eps^{1,k-1} \rVert_{H^1(\Omega)} \leq c \sqrt{\tau}, \qquad \forall k = 1,\dots,M.
	\end{equation}

	Estimate \eqref{firstEstimate} and Lemma \ref{InvStab} show that the incremental minimizers remain close to the initial datum $y_{0\eps}$, in the sense of the averaged invertibility condition. Indeed, we have the following lemma.

	\begin{lemma}[Stability of the averaged invertibility condition on the first outer time interval]
			\label{LocNC1}
			The following statements hold.
			\begin{enumerate}
				\item[{\rm i)}] For every $\kappa>0$, there exists a constant $C_\Lambda(\kappa)>0$, depending only on $\kappa$, $\rho$, $\Omega$, $q$, and $\mathcal{C}$, and nondecreasing in $\kappa$, such that, if
				$$
				E_\eps(y_{0\eps}) + \rho \lVert v_{0\eps} \rVert^2 + \frac{2h}{\rho} \int_0^h \lVert f_\eps \rVert^2 \; \d t \leq \kappa,
				$$
				then $E_\eps(y_\eps^{1,k}) \leq \kappa$ and $\Lambda(y_\eps^{1,k}) \geq \Lambda(y_{0\eps}) - C_\Lambda(\kappa) h^{2\gamma_0}$ for every $k=0,\dots,M$.
				\item[{\rm ii)}] There exists $T_0>0$, depending only on $\rho$, $q$, $\Omega$, $\mathcal{C}$, $c_1 - c_0$, $c_2$, $v_0$, and $f_0$, such that, if $T \leq T_0$, then
				\begin{equation}
					\label{NC1}
					\Lambda(y_\eps^{1,k}) \geq c_3 := \frac{c_0 + c_1}{2} > c_0, \qquad \forall k = 0,\dots,M.
				\end{equation}
			\end{enumerate}
		\end{lemma}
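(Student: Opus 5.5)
Item (i) follows from the discrete energy estimate \eqref{firstEstimate} combined with Lemmas \ref{W2qnormEps} and \ref{InvStab}.$(ii)$.

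\emph{Energy and $L^2$ bounds.} Assume the hypothesis of (i) with constant $\kappa$. Since $R_\eps\geq0$, \eqref{firstEstimate} gives $E_\eps(y_\eps^{1,n})\leq\kappa$ for every $n=0,\dots,M$. The case $n=0$ holds by hypothesis. The same estimate gives
$$
\sum_{k=1}^n \tau \biggl\lVert \frac{y_\eps^{1,k}-y_\eps^{1,k-1}}{\tau}\biggr\rVert^2\leq \frac{8h\kappa}{\rho}.
$$
By Cauchy--Schwarz,
$$
\lVert y_\eps^{1,k}-y_{0\eps}\rVert\leq \sum_{j=1}^k\lVert y_\eps^{1,j}-y_\eps^{1,j-1}\rVert\leq (k\tau)^{1/2}\Bigl(\sum_{j=1}^{k}\tau^{-1}\lVert y_\eps^{1,j}-y_\eps^{1,j-1}\rVert^2\Bigr)^{1/2}\leq h^{1/2}\Bigl(\frac{8h\kappa}{\rho}\Bigr)^{1/2}=c\,\kappa^{1/2}\rho^{-1/2}h.
$$

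\emph{From the $L^2$ bound to $\Lambda$.} Lemma \ref{W2qnormEps} gives $\lVert y_\eps^{1,k}\rVert_{W^{2,q}}+\lVert y_{0\eps}\rVert_{W^{2,q}}\leq 2M_\kappa$. Lemma \ref{InvStab}.$(ii)$ with $M=2M_\kappa$ then yields
$$
\Lambda(y_\eps^{1,k})\geq \Lambda(y_{0\eps})-C_{2M_\kappa}\,(c\kappa/\rho)^{\gamma_0}h^{2\gamma_0}.
$$
Define $C_\Lambda(\kappa):=C_{2M_\kappa}(c\kappa/\rho)^{\gamma_0}$. It depends only on $\kappa,\rho,\Omega,q,\mathcal{C}$ and is nondecreasing in $\kappa$, because both $M_\kappa$ and $C_M$ are nondecreasing. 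The only dependence on $P$ enters through $M_\kappa$, and $P$ is determined by $\Omega$. This proves (i).

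\emph{Choice of $T_0$ for (ii).} Take
$$
\kappa_0:=c_2\lvert\omega\rvert+\rho(\lVert v_0\rVert+1)^2+1,
$$
using $E_\eps(y_{0\eps})\leq c_2\lvert\omega\rvert$ from Subsection \ref{SubIn} and $\lVert v_{0\eps}\rVert\leq\lVert v_0\rVert+1$. The load term satisfies
$$
\frac{2h}{\rho}\int_0^h\lVert f_\eps\rVert^2\,\d t\leq \frac{2h^2}{\rho}\,\lvert\omega\rvert\,(\lVert f_0\rVert_{C^0}+1)^2.
$$
Since $h\leq T$, it suffices to choose $T_0$ so that
$$
\frac{2T_0^2}{\rho}\,\lvert\omega\rvert\,(\lVert f_0\rVert_{C^0}+1)^2\leq 1 \qquad\text{and}\qquad C_\Lambda(\kappa_0)\,T_0^{2\gamma_0}\leq \frac{c_1-c_0}{2}.
$$
For $T\leq T_0$ the hypothesis of (i) then holds with $\kappa=\kappa_0$. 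Since $\Lambda(y_{0\eps})\geq c_1$ by (A1), we get $\Lambda(y_\eps^{1,k})\geq c_1-\frac{c_1-c_0}{2}=c_3$, which is \eqref{NC1}. The constant $T_0$ depends only on the stated quantities.

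\emph{Main obstacle.} The delicate point is that the estimate must not depend on $\eps$. This forbids using \eqref{firstEstimateC}, whose constant comes from Lemma \ref{H1normEps} and degenerates as $\eps\to0$. The plan avoids it by using only the $L^2$ increments, which are controlled by the inertial term, together with the $\eps$-uniform $W^{2,q}$ bound and the Gagliardo--Nirenberg interpolation already built into Lemma \ref{InvStab}.$(ii)$.
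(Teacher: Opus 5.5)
Your proposal is correct and follows the paper's proof essentially verbatim. For (i), you use \eqref{firstEstimate} with $R_\eps\geq0$, the Cauchy--Schwarz bound $\lVert y_\eps^{1,k}-y_{0\eps}\rVert\leq(8\kappa/\rho)^{1/2}h$, Lemma \ref{W2qnormEps}, and Lemma \ref{InvStab}.$(ii)$ with $M=2M_\kappa$. For (ii), the only difference is cosmetic: you fix a $T$-independent constant $\kappa_0$ and absorb the load contribution into the ``$+1$'' by shrinking $T_0$, whereas the paper uses the $T$-dependent $\kappa_1$ together with monotonicity of $T\mapsto C_\Lambda(\kappa_1)T^{2\gamma_0}$.
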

		\begin{proof}
			$(i)$ As $R_\eps \geq 0$, estimate \eqref{firstEstimate} gives
			$$
			E_\eps(y_\eps^{1,k}) + \frac{\rho}{8h} \sum_{j=1}^{k} \tau \biggl\lVert \frac{y_\eps^{1,j} - y_\eps^{1,j-1}}{\tau} \biggr\rVert^2 \leq \kappa, \qquad \forall k = 1,\dots,M.
			$$
			In particular, $E_\eps(y_\eps^{1,k}) \leq \kappa$ for every $k=0,\dots,M$, where, for $k=0$, this follows directly from the assumption, as $y_\eps^{1,0} = y_{0\eps}$. Moreover, as $k\tau \leq h$, the Cauchy-Schwarz inequality gives
			$$
			\lVert y_\eps^{1,k} - y_{0\eps} \rVert \leq \sum_{j=1}^{k} \tau \biggl\lVert \frac{y_\eps^{1,j} - y_\eps^{1,j-1}}{\tau} \biggr\rVert \leq (k\tau)^{1/2} \biggl( \frac{8 h \kappa}{\rho} \biggr)^{1/2} \leq \biggl( \frac{8\kappa}{\rho} \biggr)^{1/2} h.
			$$
			By Lemma \ref{W2qnormEps}, one has $\lVert y_\eps^{1,k} \rVert_{W^{2,q}(\Omega)} \leq M_\kappa$ for every $k=0,\dots,M$, and in particular $\lVert y_{0\eps} \rVert_{W^{2,q}(\Omega)} \leq M_\kappa$. Hence, Lemma \ref{InvStab}.$(ii)$, applied with $2M_\kappa$ in place of $M$, gives the claim with $C_\Lambda(\kappa) := C_{2M_\kappa} (8\kappa/\rho)^{\gamma_0}$, which is nondecreasing in $\kappa$.
			
			$(ii)$ By Subsection \ref{SubIn}, one has $E_\eps(y_{0\eps}) \leq c_2 \lvert \omega \rvert$, $\lVert v_{0\eps} \rVert \leq \lVert v_0 \rVert_{L^2(\omega)} + 1$, and $\lVert f_\eps(t) \rVert^2 \leq \lvert \omega \rvert \bigl( \lVert f_0 \rVert_{C^0([0,T]\times\bar{\omega})} + 1 \bigr)^2$ for every $t \in [0,T]$. Hence, as $h \leq T$, the assumption of $(i)$ holds with
			$$
			\kappa = \kappa_1 := c_2 \lvert \omega \rvert + \rho \bigl( \lVert v_0 \rVert_{L^2(\omega)} + 1 \bigr)^2 + \frac{2 T^2}{\rho} \lvert \omega \rvert \bigl( \lVert f_0 \rVert_{C^0([0,T]\times\bar{\omega})} + 1 \bigr)^2,
			$$
			which is nondecreasing in $T$. Since $\Lambda(y_{0\eps}) \geq c_1$ by Assumption (A1) and $h \leq T$, item $(i)$ delivers $\Lambda(y_\eps^{1,k}) \geq c_1 - C_\Lambda(\kappa_1) T^{2\gamma_0}$ for every $k$. As $\kappa_1$ is nondecreasing in $T$ and $C_\Lambda(\kappa)$ is nondecreasing in $\kappa$, the function $T \mapsto C_\Lambda(\kappa_1) T^{2\gamma_0}$ is nondecreasing and tends to zero as $T \to 0$. Hence, there exists $T_0 > 0$ such that $C_\Lambda(\kappa_1) T^{2\gamma_0} \leq (c_1 - c_0)/2$ for every $T \leq T_0$, which gives \eqref{NC1}.
		\end{proof}
		
		The value of $T_0$ is further reduced in Subsection \ref{S411}, in order to extend \eqref{NC1} to all outer time intervals.
	
	We now compute the first variation of $W_\eps^{1,k-1}$ at $y_\eps^{1,k}$. Fix $\phi \in V$. By \eqref{NC1} and Lemma \ref{InvStab}.$(i)$, one has $\Lambda(y_\eps^{1,k} + s \phi) \geq c_0$ whenever $C_\omega \lvert s \rvert \lVert \phi \rVert_{W^{1,\infty}(\Omega)} \leq c_3 - c_0$. Moreover, as $E_\eps(y_\eps^{1,k}) \leq c$, Lemma \ref{PositiveDet} guarantees that $\det \nabla_\eps y_\eps^{1,k}$ is bounded away from zero in $\bar{\Omega}$, so that $\det \nabla_\eps (y_\eps^{1,k} + s\phi) > 0$ in $\bar{\Omega}$ for $\lvert s \rvert$ small enough, whereas the boundary conditions in $A_\eps$ are preserved as $\phi \in V$. Hence, $y_\eps^{1,k} + s \phi \in A_\eps$ for $\lvert s \rvert$ small enough. Since the differentiable function $s \mapsto W_\eps^{1,k-1} (y_\eps^{1,k} + s \phi)$ attains its minimum at $s=0$, we obtain
	$$
	\frac{\d}{\d s} W_\eps^{1,k-1} (y_\eps^{1,k} + s \phi) \biggr\rvert_{s=0} = 0,
	$$
	namely,
	\begin{align}
		\label{N1}
		0=&\, \langle DE_\eps(y_\eps^{1,k}), \phi \rangle + \biggl\langle D_2 R_\eps\biggl( y_\eps^{1,k-1}, \frac{y_\eps^{1,k} - y_\eps^{1,k-1}}{\tau} \biggr), \phi \biggr\rangle
		- (f_\eps^{k-1}, \phi) \nonumber \\ &\,+ \frac{\rho}{h} \biggl( \frac{y_\eps^{1,k} - y_\eps^{1,k-1}}{\tau} - v_\eps^{0,k-1}, \phi \biggr).
	\end{align}
	
	\subsection{Interpolants and their estimates}
	\label{S43}
For any vector $(\eta^0,\dots,\eta^n)$ and a uniform partition of time step $\tau$, we denote by $\eta^{(\tau)}$, $\underline{\eta}^{(\tau)}$, and $\tilde{\eta}^{(\tau)}$ its backward constant, forward constant, and piecewise affine interpolants, namely,
\begin{align*}
	&\eta^{(\tau)}(t) = \eta^k, \qquad \underline{\eta}^{(\tau)}(t) = \eta^{k-1}, \qquad \tilde{\eta}^{(\tau)}(t) := \biggl( \frac{t}{\tau} - (k-1) \biggr) \eta^k + \biggl( k - \frac{t}{\tau} \biggr) \eta^{k-1}\\
	&\text{for } t \in ((k-1)\tau, k\tau], \; k = 1,\dots,n, \qquad \text{and } \eta^{(\tau)}(0) = \underline{\eta}^{(\tau)}(0) = \tilde{\eta}^{(\tau)}(0) = \eta^{0}.
\end{align*}
In particular, $\partial_t \tilde{\eta}^{(\tau)}(t) = (\eta^k - \eta^{k-1})/\tau$ for $t \in ((k-1)\tau, k\tau)$. Given the vector of incremental solutions $(y_\eps^{1,0},\dots,y_\eps^{1,M}) \in (A_\eps)^{M+1}$ to the problem \eqref{incrP}--\eqref{incrIC2}, we denote by $y_\eps^{1,(\tau)}$, $\underline{y_\eps}^{1,(\tau)}$, and $\tilde{y}_\eps^{1,(\tau)}$ its backward constant, forward constant, and piecewise affine interpolants. Note that, for all $t \in [0,h]$, the constant interpolants are admissible deformations, namely, $y_\eps^{1,(\tau)}(t),\underline{y_\eps}^{1,(\tau)}(t) \in A_\eps$. Similarly, we introduce the forward constant interpolants of the data, namely $\underline{f_\eps}^{(\tau)}(t) = f_\eps^{k-1}$ and $\underline{v_\eps}^{0,(\tau)}(t) = v_\eps^{0,k-1}$ for $t \in ((k-1)\tau,k\tau]$ with $k = 1, \dots , M$. Observe that, given the definition of $f_\eps$ and $v_\eps^0$, one has $\underline{f_\eps}^{(\tau)} \to f_\eps  $ in $L^{\infty}(0,h;C^0(\bar{\Omega}))$ and $\underline{v_\eps}^{0,(\tau)} = v_{0\eps}$.
	Then, owing to the Euler-Lagrange equation \eqref{N1}, the interpolants $y_\eps^{1,(\tau)}$, $\underline{y_\eps}^{1,(\tau)}$, and $\tilde{y}_\eps^{1,(\tau)}$ satisfy
	\begin{align}
		\label{PDE0hInt}
		\langle &DE_\eps(y_\eps^{1,(\tau)}(t)), \phi \rangle + \langle D_2 R_\eps( \underline{y_\eps}^{1,(\tau)}(t), \partial_t \tilde{y}_\eps^{1,(\tau)}(t)), \phi \rangle \nonumber\\
		&- (\underline{f_\eps}^{(\tau)}(t), \phi) + \frac{\rho}{h} ( \partial_t \tilde{y}_\eps^{1,(\tau)}(t)  - \underline{v_\eps}^{0,(\tau)}(t), \phi ) = 0, \quad \text{for a.e. }t \in (0,h).
	\end{align}
	
	We now provide a priori estimates on the interpolants.
	Estimate \eqref{firstEstimate} delivers
	\begin{align}
		&\sup_{t \in [0, h]} E_{\eps}(y_{\varepsilon}^{1,(\tau)}(t)) \leq E_{\eps}(y_{0\varepsilon}) + \rho \lVert v_{0\eps} \rVert^2 + \frac{2h}{\rho} \int_0^{h} \lVert f_{\eps}(s) \rVert^2 \; \d s,\label{secondEstimatea}
	\end{align}
	and
	\begin{align}
		&\int_0^h \biggl[ R_{\eps} \biggl( \underline{y_{\eps}}^{1,(\tau)}(t), \partial_t \tilde{y}_{\eps}^{1,(\tau)}(t) \biggr) + \frac{\rho}{8h} \biggl\lVert \partial_t \tilde{y}_{\eps}^{1,(\tau)}(t) \biggr\rVert^2 \biggr] \; \d t \nonumber\\
		&\qquad\leq E_{\eps}(y_{0\varepsilon}) + \rho \lVert v_{0\eps} \rVert^2 + \frac{2h}{\rho} \int_0^{h} \lVert f_{\eps}(s) \rVert^2 \; \d s.\label{secondEstimateb}
	\end{align}
	Observe that the right-hand side of \eqref{firstEstimate} is uniformly bounded due to Assumptions (A1)--(A3). Hence, in view of Lemma \ref{W2qnormEps}, we obtain
	\begin{equation}
		\label{thirdEstimate}
		\sup_{t \in [0, h]} \lVert y_{\eps}^{1,(\tau)}(t) \rVert_{W^{2,q}(\Omega)} \leq c,
	\end{equation}
	and, since $y_\eps^{1,0} = y_{0\eps}$ is independent of $\tau$, it follows that $\sup_{t \in [0,h]}E_\eps(\underline{y_\eps}^{1,(\tau)}(t))$ is uniformly bounded as well. Hence, by using again Lemma \ref{W2qnormEps}, we have
	\begin{equation}
		\label{fourthEstimate}
		\sup_{t\in [0,h]} \lVert \underline{y_{\eps}}^{1,(\tau)}(t) \rVert_{W^{2,q}(\Omega)} \leq c.
	\end{equation}
	Consequently, Lemma \ref{H1normEps} applies and from \eqref{secondEstimateb} one gets
	$$
	\int_0^h \lVert \nabla_\eps \partial_t \tilde{y}_{\eps}^{1,(\tau)} \rVert^2 \; \d t \leq c.
	$$
	Here, we also used the fact that $\partial_t \tilde{y}_{\eps}^{1,(\tau)}(t) \rvert_{P} = 0$. Indeed, $\partial_t \tilde{y}_\eps^{1,(\tau)}(t) = (y_\eps^{1,k} - y_\eps^{1,k-1})/\tau$, where $y_\eps^{1,k}$ and $y_\eps^{1,k-1}$ take the same values on the boundary $P$. The last estimate, combined with \eqref{secondEstimateb}, implies
	\begin{equation}
		\label{fifthEstimate}
		\lVert \nabla_\eps \tilde{y}_{\eps}^{1,(\tau)} \rVert_{H^{1}(0,h;L^{2}(\Omega))} +\lVert \tilde{y}_{\eps}^{1,(\tau)} \rVert_{H^{1}(0,h;H^{1}(\Omega))} \leq c.
	\end{equation}
	Observe that, as $\tilde{y}_\eps^{1,(\tau)}(0) = y_{0\eps}$ by definition, the bounds of $\tilde{y}_\eps^{1,(\tau)}$ in $L^2(0,h;H^1(\Omega))$ and of $\nabla_\eps \tilde{y}_\eps^{1,(\tau)}$ in $L^2(0,h;L^2(\Omega))$ are guaranteed by using the Poincar\'e inequality in time.
	Notice also that estimates \eqref{thirdEstimate} and \eqref{fourthEstimate} imply
	\begin{equation}
		\label{sixthEstimate}
		\sup_{t \in [0, h]}  \lVert \tilde{y}_{\eps}^{1,(\tau)}(t) \rVert_{W^{2,q}(\Omega)}  \leq c.
	\end{equation}
	Moreover, for $t \in ((k-1)\tau,k\tau]$ one has $y_\eps^{1,(\tau)}(t) - \tilde{y}_\eps^{1,(\tau)}(t) = \frac{k\tau - t}{\tau}(y_\eps^{1,k} - y_\eps^{1,k-1})$ and $y_\eps^{1,(\tau)}(t) - \underline{y_\eps}^{1,(\tau)}(t) = y_\eps^{1,k} - y_\eps^{1,k-1}$, so that estimate \eqref{firstEstimateC} gives
		\begin{equation}
			\label{seventhEstimate}
			\sup_{t \in [0,h]} \Bigl( \lVert y_\eps^{1,(\tau)}(t) - \tilde{y}_\eps^{1,(\tau)}(t) \rVert_{H^1(\Omega)} + \lVert y_\eps^{1,(\tau)}(t) - \underline{y_\eps}^{1,(\tau)}(t) \rVert_{H^1(\Omega)} \Bigr) \leq c \sqrt{\tau}.
	\end{equation}

	\subsection{Convergences of the interpolants}
	\label{S44}
Relying on estimates \eqref{thirdEstimate}--\eqref{seventhEstimate}, we prove, up to non-relabeled subsequences, the following convergences
\begin{alignat}{4}
	y_\eps^{1,(\tau)},\underline{y_\eps}^{1,(\tau)},\tilde{y}_\eps^{1,(\tau)} &\overset{*}{\rightharpoonup} y_\eps^1  \quad &&\text{in} \quad L^{\infty}(0,h;W^{2,q}(\Omega)),\label{c2}\\
	\tilde{y}_\eps^{1,(\tau)} &\rightharpoonup y_\eps^1  \quad &&\text{in} \quad H^1(0,h;H^1(\Omega)),\label{c3}\\
	\tilde{y}_\eps^{1,(\tau)} &\to y_\eps^1 \quad &&\text{in} \quad C^0([0,h];C^{1,\alpha}(\bar{\Omega})),\label{c4}\\
	y_\eps^{1,(\tau)},\underline{y_\eps}^{1,(\tau)} &\to y_\eps^1 \quad &&\text{in} \quad L^{\infty}(0,h;C^{1,\alpha}(\bar{\Omega})),\label{c5}
\end{alignat}
where $y_\eps^1 \in C^{0,1/2}([0,h];H^1(\Omega)) \cap C_w([0,h];W^{2,q}(\Omega)) \cap C^0([0,h];C^{1,\alpha}(\bar{\Omega}))$. Moreover, we show that the limit function $y_\eps^1(t) \in A_\eps$ for every $t \in [0,h]$, $y_\eps^1(0) = y_{0\eps}$, and $\partial_t y_\eps^1 \in L^2(0,h;H_0^1(\Omega))$.

$\bullet$ {\it Step 1: Identification of the limits and convergences \eqref{c2} and \eqref{c3}.}

Considering $\tau \to 0$ in \eqref{seventhEstimate},  we have that the three sequences $y_\eps^{1,(\tau)}$, $\underline{y_\eps}^{1,(\tau)}$, and $\tilde{y}_\eps^{1,(\tau)}$ have the same limit $y_\eps^1$. Convergences \eqref{c2} and \eqref{c3} follow then from estimates \eqref{thirdEstimate}--\eqref{sixthEstimate}.

$\bullet$ {\it Step 2: Convergence \eqref{c4} and regularity of $y_\eps^1$.}

By \eqref{fifthEstimate} and \eqref{sixthEstimate}, $\tilde{y}_\eps^{1,(\tau)}$ is bounded in $L^\infty(0,h;W^{2,q}(\Omega))$ and $\partial_t \tilde{y}_\eps^{1,(\tau)}$ is bounded in $L^2(0,h;H^1(\Omega))$. As $W^{2,q}(\Omega) \overset{c}{\hookrightarrow} C^{1,\alpha}(\bar{\Omega}) \hookrightarrow H^1(\Omega)$, Theorem \ref{cor4Simon} gives \eqref{c4}. Moreover, \eqref{c3} gives $y_\eps^1 \in H^1(0,h;H^1(\Omega)) \subset C^{0,1/2}([0,h];H^1(\Omega))$, and, as $y_\eps^1$ is bounded in $L^\infty(0,h;W^{2,q}(\Omega))$ by \eqref{c2} and continuous with values in $C^{1,\alpha}(\bar{\Omega})$, \cite[Lemma II.5.9, p.~95]{boyer} gives $y_\eps^1 \in C_w([0,h];W^{2,q}(\Omega))$.

	$\bullet$ {\it Step 3: Convergence \eqref{c5}.}

By \eqref{seventhEstimate} and \eqref{c4}, $y_\eps^{1,(\tau)}$ and $\underline{y_\eps}^{1,(\tau)}$ converge to $y_\eps^1$ in $L^\infty(0,h;H^1(\Omega))$. As they are bounded in $L^\infty(0,h;W^{2,q}(\Omega))$ by \eqref{thirdEstimate} and \eqref{fourthEstimate}, Simon's interpolation lemma (Theorem \ref{lemma9Simon}) gives \eqref{c5}.

$\bullet$ {\it Step 4: $y_\eps^1(t) \in A_\eps$ for every $t \in [0,h]$, $y_\eps^1(0) = y_{0\eps}$, and $\partial_t y_\eps^1 \in L^2(0,h;H_0^1(\Omega))$.}
		
	By \eqref{c5}, $y_\eps^{1,(\tau)}(t) \to y_\eps^1(t)$ in $C^{1,\alpha}(\bar{\Omega})$ for a.e. $t \in (0,h)$. For such $t$, since $y_\eps^{1,(\tau)}(t) \in A_\eps$ and its energy is bounded by \eqref{secondEstimatea}, Lemma \ref{ClosA}.$(i)$ gives $y_\eps^1(t) \in A_\eps$ and $E_\eps(y_\eps^1(t)) \leq c$. Any other $t \in [0,h]$ is the limit of a sequence $\{t_k\}_k$ of such times, and $y_\eps^1(t_k) \to y_\eps^1(t)$ in $C^{1,\alpha}(\bar{\Omega})$ by \eqref{c4}, so that Lemma \ref{ClosA}.$(i)$ applies again. Hence, $y_\eps^1(t) \in A_\eps$ for every $t \in [0,h]$,
		\begin{equation}
			\label{1b}
			\sup_{t \in [0,h]} E_\eps(y_\eps^1(t)) \leq c,
		\end{equation}
	and, by the same lemma,	$\det \nabla_\eps y_\eps^1 \geq \mu$ in $[0,h] \times \bar{\Omega}$.
	
	Concerning the initial condition, we know that $\tilde{y}_\eps^{1,(\tau)}(0) = y_{0\eps}$ and that, due to the strong convergence \eqref{c4}, one has $\tilde{y}_\eps^{1,(\tau)}(0) \to y_\eps^1(0)$ in $C^{1,\alpha}(\bar{\Omega})$, hence $y_\eps^1(0,x) = y_{0\eps}(x)$ for every $x \in \bar{\Omega}$.
	
	Finally, as $y_\eps^1(t) = {\rm id}_\eps$ on $P$ for every $t \in [0,h]$ and the trace operator is linear and continuous from $H^1(\Omega)$ to $L^2(P)$, the trace of $\partial_t y_\eps^1 \in L^2(0,h;H^1(\Omega))$ on $P$ vanishes, namely, $\partial_t y_\eps^1 \in L^2(0,h;H_0^1(\Omega))$.

	\subsection{Improved integrability}
	\label{S45}
	From equation \eqref{PDE0hInt},  it follows that, for every $\psi \in L^2(0,h;\mathbb{R})$ and every $\phi \in V$, the interpolants $y_\eps^{1,(\tau)}$, $\underline{y_\eps}^{1,(\tau)}$, and $\tilde{y}_\eps^{1,(\tau)}$ satisfy
	\begin{align}
		\label{eq0h1}
		\int_0^h \biggl( \langle DE_\eps(y_\eps^{1,(\tau)}(t)), &\phi \rangle + \langle D_2 R_\eps( \underline{y_\eps}^{1,(\tau)}(t), \partial_t \tilde{y}_\eps^{1,(\tau)}(t)), \phi \rangle \nonumber \\
		&- (\underline{f_\eps}^{(\tau)}(t), \phi) + \frac{\rho}{h} ( \partial_t \tilde{y}_\eps^{1,(\tau)}  - \underline{v_\eps}^{0,(\tau)}(t), \phi ) \biggr) \psi(t) \; \d t = 0.
	\end{align}
	As $L^2(0,h) \otimes V $ is dense in $L^2(0,h;V)$, \eqref{eq0h1} holds with $\phi\,\psi$ replaced by any $\phi \in L^2(0,h;V)$.
	In view of definition \eqref{defE} of the energy $E_\eps$, we define the auxiliary function
	$$
	\mathcal{D}_\eps^{1,(\tau)} := DJ_\eps(y_\eps^{1,(\tau)}) + DK_\eps(y_\eps^{1,(\tau)}) + D_2 R_\eps( \underline{y_\eps}^{1,(\tau)}, \partial_t \tilde{y}_\eps^{1,(\tau)}) - \underline{f_\eps}^{(\tau)} + \frac{\rho}{h} ( \partial_t \tilde{y}_\eps^{1,(\tau)}  - \underline{v_\eps}^{0,(\tau)}),
	$$
	which is bounded in $L^2(0,h;H^{-1}(\Omega))$ by \eqref{thirdEstimate}--\eqref{fifthEstimate}, uniformly in $\tau$. Hence, Lemma \ref{LemC1}, applied with $X = V$, $Z = H_0^1(\Omega)$, $\xi = DT_\eps(y_\eps^{1,(\tau)})$, and $\zeta = \mathcal{D}_\eps^{1,(\tau)}$, extends \eqref{eq0h1} to every $\phi \in L^2(0,h;H_0^1(\Omega))$, namely,
	\begin{align}
		\label{eq0h2}
		\int_0^h \biggl(& \langle DT_\eps(y_\eps^{1,(\tau)}), \phi \rangle_{H_0^1} + \langle DJ_\eps(y_\eps^{1,(\tau)}), \phi \rangle  + \langle DK_\eps(y_\eps^{1,(\tau)}), \phi \rangle \nonumber\\
		&+ \langle D_2 R_\eps( \underline{y_\eps}^{1,(\tau)}, \partial_t \tilde{y}_\eps^{1,(\tau)}), \phi \rangle 
		- (\underline{f_\eps}^{(\tau)}, \phi) + \frac{\rho}{h} ( \partial_t \tilde{y}_\eps^{1,(\tau)}  - \underline{v_\eps}^{0,(\tau)}, \phi ) \biggr) \; \d t = 0.
	\end{align}
	Here, the notation $\langle \cdot , \cdot \rangle_{H_0^1}$ is retained explicit only in the first term to emphasize the improved integrability established above.
	
	Moreover, Lemma \ref{LemC1} gives $DT_\eps(y_\eps^{1,(\tau)}) = -\mathcal{D}_\eps^{1,(\tau)}$ in $L^2(0,h;H^{-1}(\Omega))$, so that
	\begin{equation}
		\label{estDT0h}
		\int_0^h \lVert DT_\eps (y_\eps^{1,(\tau)}) \rVert_{H^{-1}}^2 \; \d t \leq c,
	\end{equation}
	with $c$ independent of $\tau$.
	
	\subsection{Passage to the limit as $\tau \to 0$}
	\label{S46b}
	We address the limit $\tau \to 0$ in equation \eqref{eq0h2} and we prove that the limit function $y_\eps^1$ established in Subsection \ref{S44} solves, for every $\phi \in L^2(0,h;H_0^1(\Omega))$, the following equation
	\begin{align}
		\int_0^h \biggl(& \langle DT_\eps(y_\eps^1), \phi \rangle_{H_0^1} + \langle DJ_\eps(y_\eps^1), \phi \rangle  + \langle DK_\eps(y_\eps^1), \phi \rangle \nonumber \\
		&+ \langle D_2 R_\eps( y_\eps^1, \partial_t y_\eps^1), \phi \rangle 
		- (f_\eps, \phi) + \frac{\rho}{h} ( \partial_t y_\eps^1  - v_\eps^0, \phi ) \biggr) \; \d t = 0. \label{PDE0h}
	\end{align}
	
	$\bullet$	 {\it Step 1: Convergence of the linear terms.}
	
	Given the definitions of $f_\eps$ and $v_{0\eps}$, one has $\underline{f_\eps}^{(\tau)} \to f_\eps$ in $L^{\infty}(0,h;C^0(\bar{\Omega}))$ and $\underline{v_\eps}^{0,(\tau)} = v_{0\eps}$. Hence, using also convergence \eqref{c3}, we get
	\begin{align*}
		\int_0^h \biggl( - (\underline{f_\eps}^{(\tau)}, \phi) + \frac{\rho}{h} ( \partial_t \tilde{y}_\eps^{1,(\tau)}  - \underline{v_\eps}^{0,(\tau)}, \phi ) \biggr) \; \d t\to 	\int_0^h \biggl( - (f_\eps, \phi) + \frac{\rho}{h} ( \partial_t y_\eps^1  - v_\eps^0, \phi ) \biggr) \; \d t.
	\end{align*}

	$\bullet$ {\it Step 2: Convergence of the dissipation term.}
	
	In view of convergences \eqref{c3} and \eqref{c5}, we deduce
	\begin{equation}
		\label{conv0h4}
		\int_0^h \langle D_2 R_\eps( \underline{y_\eps}^{1,(\tau)}, \partial_t \tilde{y}_\eps^{1,(\tau)}), \phi \rangle \; \d t \to \int_0^h \langle D_2 R_\eps( y_\eps^1, \partial_t y_\eps^1), \phi \rangle \; \d t.
	\end{equation}
	Indeed, by virtue of \eqref{defDR}, one has
	\begin{align*}
		&\int_0^h \langle D_2 R_\eps( \underline{y_\eps}^{1,(\tau)}, \partial_t \tilde{y}_\eps^{1,(\tau)}), \phi \rangle \; \d t \\&= \int_0^h \int_\Omega 2 \biggl( (\nabla_\eps \partial_t \tilde{y}_\eps^{1,(\tau)})^T \nabla_\eps \underline{y_\eps}^{1,(\tau)} + (\nabla_\eps \underline{y_\eps}^{1,(\tau)} )^T \nabla_\eps \partial_t \tilde{y}_\eps^{1,(\tau)}	\biggr)\\&\qquad\qquad	: \biggl( \nabla_\eps \phi^T \nabla_\eps \underline{y_\eps}^{1,(\tau)} + (\nabla_\eps \underline{y_\eps}^{1,(\tau)})^T \nabla_\eps \phi \biggr) \; \d x \; \d t.
	\end{align*}
	Then, convergence \eqref{conv0h4} follows by observing that the first factor of the integrand converges weakly in $L^2(0,h;L^2(\Omega))$, while the second one converges strongly in $L^{2}(0,h;L^2(\Omega))$.
	
	$\bullet$ {\it Step 3: Convergence of the elastic energy terms.}
	
	Owing to the strong convergence \eqref{c5} and recalling \eqref{defDE}, we deduce
	\begin{equation}
		\label{conv0h1}
		\int_0^h \langle DK_\eps(y_\eps^{1,(\tau)}), \phi \rangle \; \d t \to \int_0^h \langle DK_\eps(y_\eps^1), \phi \rangle \; \d t.
	\end{equation}
	
	Next, we consider the term involving $D J_\eps$. Thanks to estimate \eqref{secondEstimatea} and Lemma \ref{PositiveDet}, we get that $\det \nabla_\eps y_\eps^{1,(\tau)}(t,x) > \mu$ for every $(t,x) \in [0,h] \times \bar{\Omega}$ with $\mu$ independent of the deformation.
	Observe that the strong convergence \eqref{c5} implies the pointwise convergence $\nabla_\eps y_\eps^{1,(\tau)}(t,x) \to \nabla_\eps y_\eps^1(t,x)$ for almost every $t \in (0,h)$ and for all $x \in \bar{\Omega}$.
	Then, as the map $F \mapsto {\rm cof}\, F/(\det F)^{a+1}$ is locally Lipschitz continuous on the set of matrices with positive determinant, we get
	$$
	\frac{{\rm cof}\,\nabla_\eps y_\eps^{1,(\tau)}(t,x)}{(\det \nabla_\eps y_\eps^{1,(\tau)}(t,x))^{a+1}} \to \frac{{\rm cof}\,\nabla_\eps y_\eps^1(t,x)}{(\det \nabla_\eps y_\eps^1(t,x))^{a+1}}, \qquad \text{for a.e. } t \in (0,h), \qquad \forall x \in \bar{\Omega},
	$$
	as well as
	$$
	\biggl\lVert \frac{{\rm cof}\,\nabla_\eps y_\eps^{1,(\tau)}}{(\det \nabla_\eps y_\eps^{1,(\tau)})^{a+1}} \biggr\rVert_{L^{\infty}(0,h;C^{0,\alpha}(\bar{\Omega}))}\leq c.
	$$
	We are now in a position to apply the Vitali convergence theorem and obtain
	\begin{equation*}
		\frac{{\rm cof}\,\nabla_\eps y_\eps^{1,(\tau)}}{(\det \nabla_\eps y_\eps^{1,(\tau)})^{a+1}} \to \frac{{\rm cof}\,\nabla_\eps y_\eps^1}{(\det \nabla_\eps y_\eps^1)^{a+1}}\quad \text{in} \quad L^p((0,h)\times \Omega), \qquad \forall p < \infty.
	\end{equation*}
	This, in turn, guarantees
	\begin{equation}
		\label{conv0h2}
		\int_0^h \langle DJ_\eps (y_\eps^{1,(\tau)}) , \phi \rangle \; \d t \to \int_0^h \langle D J_\eps (y_\eps^1) , \phi \rangle \; \d t.
	\end{equation}
	
	Finally, we consider the term involving $D T_\eps$.
	First, estimate \eqref{estDT0h} implies that, up to a non-relabeled subsequence, the following convergence holds
	\begin{equation}
		\label{conv0h3}
		DT_\eps(y_\eps^{1,(\tau)}) \rightharpoonup \mathcal{P}_\eps^1 \quad \text{in} \quad L^2(0,h;H^{-1}(\Omega)).
	\end{equation}
	Also, from estimate \eqref{thirdEstimate} we infer, along a further subsequence,
	\begin{equation}
		\label{conv0h5}
		DT_\eps(y_\eps^{1,(\tau)}) \rightharpoonup \mathcal{P}_\eps^1 \quad \text{in} \quad L^{q'}(0,h;(W^{2,q}(\Omega))').
	\end{equation}
	Observe that, by the uniqueness of the limit, the two limits coincide in $(L^q(0,h;V))'$ and we denote both by $\mathcal{P}_\eps^1$.
	Then, we establish the identity $\mathcal{P}_\eps^1 = DT_\eps(y_\eps^1)$ by exploiting the monotonicity of $DT_\eps$ on $W^{2,q}(\Omega)$. In particular, for every $v\in L^q(0,h;W^{2,q}(\Omega))$, one has
	\begin{equation}
		\label{conv0h7}
		0 \leq \int_0^h \langle DT_\eps (y_\eps^{1,(\tau)}) - DT_\eps(v) , y_\eps^{1,(\tau)} - v \rangle \; \d t.
	\end{equation}
	We show that
	\begin{equation}
		\label{conv0h6}
		\int_0^h \langle DT_\eps(y_\eps^{1,(\tau)}) , y_\eps^{1,(\tau)} \rangle \; \d t \to 		\int_0^h \langle \mathcal{P}_\eps^1 , y_\eps^1 \rangle \; \d t.
	\end{equation}
	By adding and subtracting the boundary condition for $y_\eps^{1,(\tau)}$, namely ${\rm id}_\eps$, we get
	$$
	\int_0^h \langle DT_\eps(y_\eps^{1,(\tau)}) , y_\eps^{1,(\tau)} \rangle \; \d t = \int_0^h \langle DT_\eps(y_\eps^{1,(\tau)}) , y_\eps^{1,(\tau)} - {\rm id}_\eps \rangle \; \d t + \int_0^h \langle DT_\eps(y_\eps^{1,(\tau)}) , {\rm id}_\eps \rangle \; \d t.
	$$
	Thus, as $y_\eps^{1,(\tau)}(t) - {\rm id}_\eps \in V $ for every $t$, the identity $\int_0^h \langle DT_\eps(y_\eps^{1,(\tau)}) , y_\eps^{1,(\tau)} - {\rm id}_\eps \rangle \; \d t = \int_0^h \langle DT_\eps(y_\eps^{1,(\tau)}) , y_\eps^{1,(\tau)} - {\rm id}_\eps \rangle_{H_0^1} \; \d t$ holds, and convergences \eqref{c5}, \eqref{conv0h3}, and \eqref{conv0h5} allow us to pass to the limit and obtain
	\begin{align*}
		\int_0^h \langle DT_\eps(y_\eps^{1,(\tau)}) , y_\eps^{1,(\tau)} \rangle \; \d t \to \int_0^h \langle \mathcal{P}_\eps^1 , y_\eps^1 - {\rm id}_\eps \rangle_{H_0^1} \; \d t + \int_0^h \langle \mathcal{P}_\eps^1 , {\rm id}_\eps \rangle \; \d t.
	\end{align*}
	As $y_\eps^1 - {\rm id}_\eps \in L^q(0,h;V)$ by Step 4 of Subsection \ref{S44}, we recover \eqref{conv0h6}. Also, using convergences \eqref{c2} and \eqref{conv0h5} for the remaining terms in \eqref{conv0h7}, we get
	\begin{align}
		&\int_0^h \biggl( \langle DT_\eps (y_\eps^{1,(\tau)}), v \rangle + \langle DT_\eps(v) , y_\eps^{1,(\tau)} - v \rangle \biggr) \; \d t \nonumber\\
		&\qquad \to \int_0^h \biggl( \langle \mathcal{P}_\eps^1 , v \rangle + \langle DT_\eps (v) , y_\eps^1 - v \rangle \biggr) \; \d t.\label{N3}
	\end{align}
	Using \eqref{conv0h6} and \eqref{N3} in \eqref{conv0h7}, it follows that
	\begin{equation*}
		0 \leq \int_0^h \langle \mathcal{P}_\eps^1 - DT_\eps(v) , y_\eps^1 - v \rangle \; \d t, \qquad \forall v \in L^q(0,h;W^{2,q}(\Omega)).
	\end{equation*}
	By the continuity of the map $X \mapsto \lvert X \rvert^{q-2} X$, the bound $\bigl\lvert \lvert X \rvert^{q-2} X \bigr\rvert \leq \lvert X \rvert^{q-1}$, and the dominated convergence theorem, $DT_\eps$ is hemicontinuous on $L^q(0,h;W^{2,q}(\Omega))$. Hence, Lemma \ref{LemMinty} gives $\mathcal{P}_\eps^1 = DT_\eps (y_\eps^1)$ in $L^{q'}(0,h;(W^{2,q}(\Omega))')$, and $\mathcal{P}_\eps^1 \in L^2(0,h;H^{-1}(\Omega))$ by \eqref{conv0h3}. Therefore, Step 1 and \eqref{conv0h4}--\eqref{conv0h3} allow us to pass to the limit in \eqref{eq0h2}, which gives \eqref{PDE0h}.

	\subsection{Chain rule for $T_\eps$}
	\label{S46}
	Since the limit function $y_\eps^1$ belongs to $C_w([0,h];W^{2,q}(\Omega))$, the term $T_\eps \circ y_\eps^1$ is well-defined pointwise in every $t \in [0,h]$. Then, exploiting the convexity of $T_\eps$, one has
	$$
	T_\eps(y_\eps^1(t + \lambda)) - T_\eps (y_\eps^1(t) ) \geq \langle D T_\eps (y_\eps^1(t)) , y_\eps^1(t + \lambda) - y_\eps^1(t) \rangle , \qquad \forall t \in [0,h-\lambda].
	$$
	Integrating this inequality over $(t_1,t_2)$ and dividing by $\lambda$, we obtain for all $0 \leq t_1 < t_2 \leq h-\lambda$
	\begin{equation}
		\label{ineqCR}
		\int_{t_1}^{t_2} \frac{T_\eps(y_\eps^1(t + \lambda)) - T_\eps (y_\eps^1(t) )}{\lambda} \; \d t\geq \int_{t_1}^{t_2} \biggl\langle D T_\eps (y_\eps^1(t)) , \frac{y_\eps^1(t + \lambda) - y_\eps^1(t)}{\lambda} \biggr\rangle \; \d t.
	\end{equation}
	As $y_\eps^1(t) \in A_\eps$ for every $t$, the boundary conditions in $A_\eps$ give $y_\eps^1(\cdot + \lambda) - y_\eps^1 \in L^2(0,h-\lambda;V)$. Also, recalling that $D T_\eps (y_\eps^1) \in L^2(0,h;H^{-1}(\Omega))$, the right-hand side of inequality \eqref{ineqCR} can then be written by means of the $H^{-1},H_0^1$ duality pairing as
	$$
	\int_{t_1}^{t_2} \biggl\langle D T_\eps (y_\eps^1(t)) , \frac{y_\eps^1(t + \lambda) - y_\eps^1(t)}{\lambda} \biggr\rangle_{H_0^1}\; \d t.
	$$
	Owing to the strong convergence $(y_\eps^1(t + \lambda) - y_\eps^1(t))/\lambda \to \partial_t y_\eps^1(t)$ in $L^2(0,h;H_0^1(\Omega))$, this integral converges to
	$$
	\int_{t_1}^{t_2} \langle D T_\eps (y_\eps^1(t)) , \partial_t y_\eps^1(t) \rangle_{H_0^1}\; \d t.
	$$
	Concerning the left-hand side of \eqref{ineqCR}, the Lebesgue differentiation theorem guarantees the convergence to $T_\eps(y_\eps^1(t_2)) - T_\eps(y_\eps^1(t_1))$ whenever $t_1$ and $t_2$ are Lebesgue points of the bounded function $t \mapsto T_\eps(y_\eps^1(t))$. For such $t_1 < t_2$, passing to the limit as $\lambda \to 0$ in \eqref{ineqCR} gives us
	\begin{equation}
		\label{ineqCR2}
		T_\eps(y_\eps^1(t_2)) - T_\eps(y_\eps^1(t_1)) \geq \int_{t_1}^{t_2} \langle D T_\eps (y_\eps^1(t)) , \partial_t y_\eps^1(t) \rangle_{H_0^1} \; \d t.
	\end{equation}
	Similarly, by considering
	$$
	T_\eps(y_\eps^1(t - \lambda)) - T_\eps (y_\eps^1(t) ) \geq \langle D T_\eps (y_\eps^1(t)) , y_\eps^1(t - \lambda) - y_\eps^1(t) \rangle , \qquad \forall t \in [\lambda,h],
	$$ 
	an analogous argument leads to inequality \eqref{ineqCR2}, for the same $t_1 < t_2$, with opposite sign. Combining both bounds, $T_\eps \circ y_\eps^1$ coincides a.e. in $(0,h)$ with an absolutely continuous function $F$ such that $F(t_2) - F(t_1) = \int_{t_1}^{t_2} \langle D T_\eps (y_\eps^1) , \partial_t y_\eps^1 \rangle_{H_0^1} \, \d t$ for all $t_1 < t_2$. We show that $T_\eps(y_\eps^1(t_0)) = F(t_0)$ for every $t_0 \in [0,h]$. On the one hand, approaching $t_0$ along Lebesgue points, the weak continuity of $y_\eps^1$ in $W^{2,q}(\Omega)$ and the weak lower semicontinuity of $T_\eps$ give $T_\eps(y_\eps^1(t_0)) \leq F(t_0)$. On the other hand, as $y_\eps^1(t_0) - y_\eps^1(s) \in V$, the convexity of $T_\eps$ and Step 2 of Subsection \ref{S44} give, for a.e. $s \in (0,h)$,
		$$
		T_\eps(y_\eps^1(t_0)) \geq T_\eps(y_\eps^1(s)) - c \, \lVert DT_\eps(y_\eps^1(s)) \rVert_{H^{-1}} \lvert t_0 - s \rvert^{1/2}.
		$$
		Since $\lVert DT_\eps(y_\eps^1) \rVert_{H^{-1}}^2$ is integrable, there exist Lebesgue points $s_n \to t_0$ such that $\lVert DT_\eps(y_\eps^1(s_n)) \rVert_{H^{-1}}^2 \lvert t_0 - s_n \rvert \to 0$, and hence $T_\eps(y_\eps^1(t_0)) \geq F(t_0)$. Therefore, we establish the chain rule for $T_\eps \circ y_\eps^1$, namely,
	\begin{equation}
		\label{ChainRule}
		T_\eps(y_\eps^1(t_2)) - T_\eps (y_\eps^1(t_1)) = \int_{t_1}^{t_2} \langle D T_\eps (y_\eps^1(t)) , \partial_t y_\eps^1(t) \rangle_{H_0^1} \; \d t, \qquad \forall 0\leq t_1 < t_2 \leq h.
	\end{equation}

	The absolute continuity of $T_\eps \circ y_\eps^1$ established by means of the chain rule \eqref{ChainRule} allows us to show that $y_\eps^1 \in C^0([0,h];W^{2,q}(\Omega))$. In particular, as $y_\eps^1 \in C^0([0,h];C^{1,\alpha}(\bar{\Omega}))$ is already proved, it remains to prove $\nabla_\eps^2 y_\eps^1 \in C^0([0,h];L^q(\Omega))$. First, observe that $y_\eps^1 \in C_w([0,h];W^{2,q}(\Omega))$ ensures that $y_\eps^1(t) \rightharpoonup y_\eps^1(t_0)$ in $W^{2,q}(\Omega)$ whenever $t \to t_0$. Next, the absolute continuity of $T_\eps \circ y_\eps^1$ guarantees the pointwise convergence
	$
	T_\eps (y_\eps^1(t)) \to T_\eps(y_\eps^1(t_0)),
	$
	namely,
	$$
	\int_\Omega \lvert \nabla_\eps^2 y_\eps^1(t) \rvert^q \; \d x \to \int_\Omega \lvert \nabla_\eps^2 y_\eps^1(t_0) \rvert^q \; \d x.
	$$
	In view of \cite[Proposition 3.32, p.~78]{Brezis}, combining the convergence of the norm with the weak convergence implies the strong convergence $\nabla_\eps^2 y_\eps^1(t) \to \nabla_\eps^2 y_\eps^1(t_0)$ in $L^q(\Omega)$ as $t \to t_0$. Consequently, $\nabla_\eps^2 y_\eps^1 \in C^0([0,h];L^q(\Omega))$.

	\subsection{Iterating the procedure}
	\label{S49}
	The procedure developed so far for the first interval provides the basis for constructing the solution on each interval $[(l-1)h,lh]$, with the time-dependent initial velocity $v_\eps^{l-1}$ replacing the initial datum $v_{0\eps}$ and $y_\eps^{l-1}((l-1)h)$ replacing $y_{0\eps}$ as initial condition. The only point which requires a new argument is the lower bound $\Lambda(y_\eps^{l,k}) > c_0$ for every $k$, which guarantees the Euler-Lagrange equations at every incremental step.
	
	Consider the time interval $[h,2h]$. While the first iteration exhibits the initial condition $y_{0\eps}$ and the time-independent datum $v_{0\eps}$, the current time step involves $y_\eps^1(h)$ and the time-dependent term $v_\eps^1(t)$ (see \eqref{initialSequential}--\eqref{initialSequentialI}). The corresponding terms in the discrete formulation \eqref{incrP}--\eqref{incrIC2} are given by
	\begin{align*}
		y_\eps^{2,0} = y_\eps^1(h) \quad \text{and} \quad v_\eps^{1,k-1} = \frac{1}{\tau} \int_{(k-1)\tau + h}^{k\tau + h} v_\eps^1(t) \; \d t \qquad \text{for } k = 1, \dots, M.
	\end{align*}
	First, we observe that $y_\eps^{2,0} = y_\eps^1(h)$ and $v_\eps^1 = \partial_t y_\eps^1(\cdot-h)$ are admissible initial data, in the sense that $y_\eps^{2,0} \in A_\eps$ has bounded energy and $v_\eps^1$ is bounded in $L^2(h,2h;L^2(\Omega))$, uniformly in $\tau$ and $\eps$. In particular, for every $t \in [0,h]$, $y_\eps^1(t) \in A_\eps$, hence $y_\eps^1(h) = y_\eps^{2,0} \in A_\eps$. Next, by taking $t = h$ in \eqref{1b}, one has
	\begin{equation}
		\label{N4}
		E_\eps (y_\eps^{2,0}) \leq c,	
	\end{equation}
	whereas, by taking the liminf as $\tau \to 0$ in inequality \eqref{secondEstimateb}, in view of the weak convergence \eqref{c3}, we get
	\begin{equation}
		\label{iter1}
		\int_0^h \lVert \partial_t y_\eps^1 (t) \rVert^2 \; \d t \leq c,
	\end{equation}
	where $c$ is independent of $\tau$ and $\eps$. Therefore, by \eqref{iter1} and Jensen's inequality, one has $v_\eps^{1,k-1} \in L^2(\Omega)$ for each $k = 1,\dots, M$, and $\tau \sum_{k=1}^M \lVert v_\eps^{1,k-1} \rVert^2 \leq \int_h^{2h} \lVert v_\eps^1 \rVert^2 \, \d t \leq c$, uniformly in $\tau$ and $\eps$.
	Then, as the structure of the minimization problem remains the same, minimizers $y_\eps^{2,k} \in A_\eps$ of $W_\eps^{2,k-1}$ exist for each $k$, and we recover an energy inequality analogous to \eqref{firstEstimate}. In this case, the terms $E_\eps(y_{0\eps})$ and $\rho \lVert v_{0\eps} \rVert^2$ are replaced by
	\begin{equation}
		\label{iter2}
		E_\eps(y_\eps^1(h)) \quad \text{and} \quad \frac{\rho}{h} \int_h^{2h} \lVert v_\eps^1(t) \rVert^2 \; \d t.
	\end{equation}
	As, due to \eqref{N4} and \eqref{iter1}, these terms are uniformly bounded in $\tau$, the right-hand side of the energy inequality remains bounded independently of $\tau$. As on the first outer time interval, the Euler-Lagrange equations hold, provided that $\Lambda(y_\eps^{2,k}) > c_0$ for every $k$. This lower bound is proved in Proposition \ref{NoContact} below, together with the analogous bound on all outer time intervals.
	
	Under this bound, the construction of the interpolants $y_\eps^{2,(\tau)}$, $\underline{y_\eps}^{2,(\tau)}$, and $\tilde{y}_\eps^{2,(\tau)}$ follows the same logic as in the first iteration, allowing us to deduce
	\begin{align*}
		\langle DE_\eps(y_\eps^{2,(\tau)}(t)), &\phi \rangle + \langle D_2 R_\eps( \underline{y_\eps}^{2,(\tau)}(t), \partial_t \tilde{y}_\eps^{2,(\tau)}(t)), \phi \rangle\\
		&- (\underline{f_\eps}^{2,(\tau)}(t), \phi) + \frac{\rho}{h} ( \partial_t \tilde{y}_\eps^{2,(\tau)}(t)  - \underline{v_\eps}^{1,(\tau)}(t), \phi ) = 0, \quad \text{for a.e. }t \in (h,2h),
	\end{align*}
	where $\underline{f_\eps}^{2,(\tau)}(t) = f_\eps^{2,k-1}$ for $t \in ((k-1)\tau+h,k\tau+h]$, and the forward constant interpolant of the initial velocity $\underline{v_\eps}^{1,(\tau)}$ satisfies $\underline{v_\eps}^{1,(\tau)} \to v_\eps^{1}$ in $L^2(h,2h;H^1(\Omega))$. The resulting energy estimate, again with \eqref{iter2} replacing the terms $E_\eps(y_{0\eps})$ and $\rho \lVert v_{0\eps} \rVert^2$, is uniform in $\tau$. This allows the limit $\tau \to 0$ to be carried out on $[h,2h]$ exactly as for $[0,h]$. The resulting limit deformation $y_\eps^2$ thus solves the problem \eqref{sequential}--\eqref{initialSequential} in the sense of Definition \ref{defSolSeq} on $[h,2h]$ and provides the initial data for the next iteration.
	
	For $l \geq 3$, the same construction applies on $[(l-1)h,lh]$, with $y_\eps^{l-1}((l-1)h)$ and $v_\eps^{l-1}$ in place of $y_\eps^1(h)$ and $v_\eps^1$, as soon as $\Lambda(y_\eps^{l,k}) > c_0$ for every $k$. This lower bound cannot be obtained by estimating the decrease of $\Lambda$ interval by interval, as in the proof of Lemma \ref{LocNC1}.$(i)$. Indeed, this would produce a sum of $l-1$ terms of order $h^{2\gamma_0}$, which is not bounded uniformly in $h$, as $2\gamma_0 < 1$. Instead, in Subsection \ref{S410} we derive estimates independent of $h$ on the sequential solutions constructed so far, and in Subsection \ref{S411} we use them to prove the lower bound by induction on $l$.

		\subsection{Estimates independent of $h$}
	\label{S410}
	In this subsection, we derive estimates on the solutions of the sequential problem which are independent of $h$. Let $m \in \{1,\dots,N\}$, and assume that the solutions $y_\eps^1,\dots,y_\eps^m$ of the sequential problem \eqref{sequential}--\eqref{initialSequentialI} on the intervals $[(l-1)h,lh]$, $l = 1,\dots,m$, have been constructed as in Subsections \ref{S41}--\ref{S49}. We denote by $y_\eps^{(h)}$ their concatenation, namely $y_\eps^{(h)}(t) := y_\eps^l(t)$ for $t \in [(l-1)h,lh]$ and $l = 1,\dots,m$, and we set $\partial_t y_\eps^{(h)} := v_{0\eps}$ in $(-h,0)$. By \eqref{initialSequential}--\eqref{initialSequentialI}, $y_\eps^{(h)}$ is well defined at the nodes $lh$, and $v_\eps^{l-1} = \partial_t y_\eps^{(h)}(\cdot - h)$ in $((l-1)h,lh)$ for every $l = 1,\dots,m$.
	
	Fix $l \in \{1,\dots,m\}$ and $t \in [(l-1)h,lh]$, and consider $\partial_t y_\eps^l \chi_{[(l-1)h,t]}$ as a test function in equation \eqref{seqPweakEq}. In view of the chain rule \eqref{ChainRule} for $T_\eps \circ y_\eps^l$ (the chain rules for $K_\eps \circ y_\eps^l$ and $J_\eps \circ y_\eps^l$ are elementary, as $\nabla_\eps y_\eps^l$ is bounded, $\det \nabla_\eps y_\eps^l \geq \mu$, and $\partial_t \nabla_\eps y_\eps^l \in L^2((l-1)h,lh;L^2(\Omega))$) and using Young's inequality, we obtain
	\begin{align}
		E_\eps& (y_\eps^l(t)) + 2 \int_{(l-1)h}^t R_\eps (y_\eps^l , \partial_t y_\eps^l) \;  \d s + \frac{\rho}{2h} \int_{(l-1)h}^t \lVert \partial_t y_\eps^l \rVert^2 \; \d s \nonumber \\
		&\leq E_\eps (y_\eps^l((l-1)h)) + \int_{(l-1)h}^t ( f_\eps , \partial_t y_\eps^l) \; \d s + \frac{\rho}{2h} \int_{(l-1)h}^t \lVert  v_\eps^{l-1} \rVert^2 \; \d s. \label{L36}
	\end{align}
	As $v_\eps^{l-1} = \partial_t y_\eps^{(h)}(\cdot - h)$ in $((l-1)h,lh)$, this results in the following inequality for $y_\eps^{(h)}$, for every $t \in [(l-1)h,lh]$ and $l = 1,\dots,m$,
	\begin{align}
		\label{est0T1}
		E_\eps& (y_\eps^{(h)}(t)) + 2 \int_{(l-1)h}^{t} R_\eps (y_\eps^{(h)} , \partial_t y_\eps^{(h)}) \;  \d s + \frac{\rho}{2h} \int_{(l-1)h}^{t} \lVert \partial_t y_\eps^{(h)} \rVert^2 \; \d s \nonumber\\
		&\leq E_\eps (y_\eps^{(h)}((l-1)h)) + \int_{(l-1)h}^{t} ( f_\eps , \partial_t y_\eps^{(h)}) \; \d s + \frac{\rho}{2h} \int_{(l-2)h}^{t-h} \lVert  \partial_t y_\eps^{(h)} \rVert^2 \; \d s.
	\end{align}
	Taking $t=lh$ in \eqref{est0T1} and summing over $l = 1,\dots,j$, with $j \in \{1,\dots,m\}$, we deduce
	\begin{align}
		\label{est0T2}
		E_\eps (y_\eps^{(h)}(jh)) +& 2 \int_0^{jh} R_\eps (y_\eps^{(h)} , \partial_t y_\eps^{(h)}) \;  \d s + \frac{\rho}{2h} \int_{(j-1)h}^{jh} \lVert \partial_t y_\eps^{(h)} \rVert^2 \; \d s \nonumber \\
		&\leq E_\eps (y_{0\eps}) + \int_{0}^{jh} ( f_\eps , \partial_t y_\eps^{(h)}) \; \d s + \frac{\rho}{2} \lVert  v_{0\eps} \rVert^2.
	\end{align}
	Consider now inequality \eqref{est0T1} with $l = j$ and $t \in [(j-1)h,jh]$. Adding \eqref{est0T2} with $j-1$ in place of $j$ if $j \geq 2$, and using $\partial_t y_\eps^{(h)} = v_{0\eps}$ in $(-h,0)$ if $j = 1$, it follows that
		\begin{align}
			\label{EIeps}
			E_\eps& (y_\eps^{(h)}(t)) + 2 \int_0^t R_\eps (y_\eps^{(h)} , \partial_t y_\eps^{(h)}) \;  \d s + \frac{\rho}{2h} \int_{t-h}^t \lVert \partial_t y_\eps^{(h)} \rVert^2 \; \d s \nonumber\\
			&\leq E_\eps (y_{0\eps}) + \int_0^t ( f_\eps , \partial_t y_\eps^{(h)}) \; \d s + \frac{\rho}{2} \lVert  v_{0\eps} \rVert^2 ,\qquad \forall t \in [0,mh].
		\end{align}
		By Young's inequality, this gives
	\begin{align}
		\label{est0T3}
		E_\eps& (y_\eps^{(h)}(t)) + 2 \int_0^t R_\eps (y_\eps^{(h)} , \partial_t y_\eps^{(h)}) \;  \d s + \frac{\rho}{2h} \int_{t-h}^t \lVert \partial_t y_\eps^{(h)} \rVert^2 \; \d s \nonumber\\
		&\leq E_\eps (y_{0\eps}) + \frac{1}{2\delta} \int_0^t \lVert f_\eps  \rVert^2 \; \d s + \frac{\delta}{2} \int_0^t \lVert \partial_t y_\eps^{(h)} \rVert^2 \; \d s + \frac{\rho}{2} \lVert  v_{0\eps} \rVert^2 ,\qquad \forall t \in [0,mh],
	\end{align}
	where $\delta >0$ is arbitrary. Singling out the third term on the left-hand side, taking $t = lh$, multiplying by $h$, and summing over $l = 1,\dots, m$, we get, with the choice $\delta = \rho/(2T)$ and as $mh \leq T$,
	\begin{equation}
		\label{est0T4}
		\frac{\rho}{4} \int_0^{mh} \lVert \partial_t y_\eps^{(h)} \rVert^2 \; \d t \leq \biggl( E_\eps (y_{0\eps}) + \frac{\rho}{2} \lVert v_{0\eps} \rVert^2 \biggr) T + \biggl(\frac{1}{\rho} \int_0^T \lVert f_\eps \rVert^2  \; \d t \biggr) T^2.
	\end{equation}
	Using this in inequality \eqref{est0T3}, we obtain that its left-hand side is bounded by a constant $c$ for every $t \in [0,mh]$, where, by Assumptions (A1)--(A3), $c$ depends only on $\rho$, $\Omega$, $T$, $c_2$, $v_0$, and $f_0$, and is nondecreasing in $T$. In particular, $c$ is independent of $h$, $\eps$, and $m$, and
	\begin{equation}
		\label{est0T5}
		\sup_{t \in [0,mh]} E_\eps(y_\eps^{(h)}(t)) + \int_0^{mh} R_\eps (y_\eps^{(h)},\partial_t y_\eps^{(h)}) \; \d t \leq c.
	\end{equation}
	
		\subsection{Inactivity of the averaged invertibility constraint}
	\label{S411}
	In this subsection, we extend the lower bound of Lemma \ref{LocNC1}.$(ii)$, obtained on the first outer time interval, to all outer time intervals, to all outer time intervals, provided that $T$ is small enough. This guarantees the validity of the Euler-Lagrange equations of Subsection \ref{S42} at every incremental step. The proof is by induction on $l$, and it relies on the estimates of Subsection \ref{S410}.
	
	\begin{proposition}
		\label{NoContact}
		There exists $T_0>0$, depending only on $\rho$, $q$, $\Omega$, $\mathcal{C}$, $c_1 - c_0$, $c_2$, $v_0$, and $f_0$, such that, if $T \leq T_0$, then $\Lambda(y_\eps^{l,k}) \geq c_3$ for every $\eps$, $h$, and $\tau$, and for all $l=1,\dots,N$ and $k=0,\dots,M$. Moreover, $\Lambda(y_\eps^l(t)) \geq c_3$ for every $t \in [(l-1)h,lh]$ and $l = 1,\dots,N$.
	\end{proposition}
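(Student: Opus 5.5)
The plan is an induction on $l$ that compares every incremental minimizer and every sequential state directly with the initial datum $y_{0\eps}$. Comparing only with the previous interval would accumulate $l-1$ losses of order $h^{2\gamma_0}$, as observed in Subsection \ref{S49}. The base case $l=1$ is Lemma \ref{LocNC1}.$(ii)$. The bound $\Lambda(y_\eps^1(t))\geq c_3$ then follows by passing to the limit $\tau\to0$, using Lemma \ref{InvStab}.$(i)$ together with \eqref{c5} and \eqref{c4}. For the inductive step, assume $\Lambda(y_\eps^{j,k})\geq c_3$ for all $j<l$ and all $k$. Then the Euler--Lagrange equations hold on the first $l-1$ outer intervals, $y_\eps^1,\dots,y_\eps^{l-1}$ are sequential solutions, and Subsection \ref{S410} applies with $m=l-1$. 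This gives \eqref{est0T5} and \eqref{est0T4}: the energy is bounded by some $\kappa_T$, and $\int_0^{(l-1)h}\lVert\partial_t y_\eps^{(h)}\rVert^2\,\d t\leq c_T T$. Both constants are independent of $h$, $\eps$, $l$ and nondecreasing in $T$.

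Next I handle the $l$-th interval before knowing that the constraint is inactive there. Existence of minimizers $y_\eps^{l,k}\in A_\eps$ does not need the Euler--Lagrange equations. Minimality, exactly as in \eqref{zeroethEstimate}--\eqref{firstEstimate}, gives
\begin{equation*}
E_\eps(y_\eps^{l,k})+\frac{\rho}{8h}\sum_{j\le k}\tau\Bigl\lVert\frac{y_\eps^{l,j}-y_\eps^{l,j-1}}{\tau}\Bigr\rVert^2\leq E_\eps(y_\eps^{l-1}((l-1)h))+\frac{\rho}{h}\int_{(l-1)h}^{lh}\lVert v_\eps^{l-1}\rVert^2\,\d t+\frac{2h}{\rho}\int\lVert f_\eps\rVert^2,
\end{equation*}
where I use Jensen on the averages $v_\eps^{l-1,k-1}$. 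The middle term is the delicate one. Here I apply \eqref{est0T3} at the node $t=(l-1)h$, which gives $\frac{\rho}{2h}\int_{(l-2)h}^{(l-1)h}\lVert\partial_t y_\eps^{(h)}\rVert^2\leq c$ uniformly in $h$; this is exactly the needed bound, since $v_\eps^{l-1}=\partial_t y_\eps^{(h)}(\cdot-h)$. So the right-hand side is bounded by some $\kappa$ independent of $h,\eps,l$. Consequently $E_\eps(y_\eps^{l,k})\leq\kappa$, $\lVert y_\eps^{l,k}\rVert_{W^{2,q}}\leq M_\kappa$ by Lemma \ref{W2qnormEps}, and $\lVert y_\eps^{l,k}-y_\eps^{l,0}\rVert\leq(8\kappa/\rho)^{1/2}h$.

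The $L^2$ distance to $y_{0\eps}$ is controlled in two pieces. By Cauchy--Schwarz and \eqref{est0T4},
\begin{equation*}
\lVert y_\eps^{l,0}-y_{0\eps}\rVert\leq((l-1)h)^{1/2}\Bigl(\int_0^{(l-1)h}\lVert\partial_t y_\eps^{(h)}\rVert^2\Bigr)^{1/2}\leq cT.
\end{equation*}
The second piece is the within-interval bound above, which is at most $ch\leq cT$. Hence $\lVert y_\eps^{l,k}-y_{0\eps}\rVert\leq c'T$, and all states have $W^{2,q}$ norm at most $M_\kappa$. Lemma \ref{InvStab}.$(ii)$ then gives $\Lambda(y_\eps^{l,k})\geq c_1-C_{2M_\kappa}(c'T)^{2\gamma_0}$. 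I choose $T_0$ (possibly smaller than in Lemma \ref{LocNC1}) so that this loss is at most $(c_1-c_0)/2$; this closes the induction with $\Lambda(y_\eps^{l,k})\geq c_3$. Once the bound holds for all $k$, the Euler--Lagrange equations hold on the $l$-th interval, the construction of Subsections \ref{S43}--\ref{S46} yields $y_\eps^l$, and $\Lambda(y_\eps^l(t))\geq c_3$ follows from \eqref{c5}, \eqref{c4} and Lemma \ref{InvStab}.$(i)$.

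The main obstacle is the circularity: the constants $\kappa$, $c$ and $c'$ depend on $T$, and $T_0$ must be fixed before the induction. I resolve it as in Lemma \ref{LocNC1}.$(ii)$. All constants are explicit in the data, namely $c_2$, $v_0$, $f_0$, $\rho$ and $T$, and they are nondecreasing in $T$. Therefore the loss $T\mapsto C_{2M_{\kappa(T)}}(c'(T)T)^{2\gamma_0}$ is nondecreasing and tends to $0$ as $T\to0$, so a single $T_0$ works for every $h$, $\tau$, $\eps$ and $l$. One must also check that \eqref{est0T4} was derived only from the sequential equations on $[0,(l-1)h]$, which are available by the induction hypothesis.
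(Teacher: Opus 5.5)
Your proof is correct and follows the paper's argument: an induction on $l$ in which the $h$-independent bounds \eqref{est0T3}--\eqref{est0T5} of Subsection \ref{S410} control the drift from $y_{0\eps}$ up to the node $(l-1)h$, the minimality estimate of Lemma \ref{LocNC1}.$(i)$ controls the drift inside the $l$-th interval, and $T_0$ is fixed beforehand by monotonicity in $T$. The only difference is cosmetic: you add the two drifts in $L^2$ and apply Lemma \ref{InvStab}.$(ii)$ once, using that \eqref{est0T4} makes $\int_0^{(l-1)h}\lVert\partial_t y_\eps^{(h)}\rVert^2\,\d t$ of order $T$ so the loss is of order $T^{2\gamma_0}$, whereas the paper uses the time-H\"older form of Lemma \ref{InvStab}.$(ii)$ on $[0,(l-1)h]$ (loss $C_*T^{\gamma_0}$) and then Lemma \ref{LocNC1}.$(i)$ with $\kappa'=2\kappa+\kappa_1$.
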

	\begin{proof}
		By the bound on the left-hand side of \eqref{est0T3} and by \eqref{est0T4}, there exists $\kappa>0$, depending only on $\rho$, $\Omega$, $T$, $c_2$, $v_0$, and $f_0$, and nondecreasing in $T$, such that the following holds. If, for some $m \in \{1,\dots,N\}$, the sequential solutions $y_\eps^1,\dots,y_\eps^m$ have been constructed, then their concatenation $y_\eps^{(h)}$ satisfies
		\begin{equation}
			\label{NCbound}
			\sup_{t\in[0,mh]} \biggl( E_\eps(y_\eps^{(h)}(t)) + \frac{\rho}{2h}\int_{t-h}^{t} \lVert \partial_t y_\eps^{(h)} \rVert^2 \; \d s \biggr) + \int_0^{mh} \lVert \partial_t y_\eps^{(h)} \rVert^2 \; \d t \leq \kappa.
		\end{equation}
		Let $C_* := C_{2M_\kappa + \kappa} (2M_\kappa + \kappa)^{\gamma_0}$, with $M_\kappa$ and $C_M$ as in Lemmas \ref{W2qnormEps} and \ref{InvStab}.$(ii)$, respectively, and let $\kappa' := 2\kappa + \kappa_1$, with $\kappa_1$ as in the proof of Lemma \ref{LocNC1}.$(ii)$. As $\kappa$ and $\kappa_1$ are nondecreasing in $T$, $M_\kappa$ and $C_\Lambda(\kappa)$ are nondecreasing in $\kappa$, and $C_M$ is nondecreasing in $M$, the function $T \mapsto C_* T^{\gamma_0} + C_\Lambda(\kappa') T^{2\gamma_0}$ is nondecreasing and tends to zero as $T \to 0$. Hence, up to reducing the time $T_0$ of Lemma \ref{LocNC1}.$(ii)$, one has
		\begin{equation}
			\label{NCT0}
			C_* T^{\gamma_0} + C_\Lambda(\kappa') T^{2\gamma_0} \leq \frac{c_1 - c_0}{2}, \qquad \forall T \leq T_0.
		\end{equation}
		
		Let $T \leq T_0$. We argue by induction on $l$. For $l=1$, the claim on $y_\eps^{1,k}$ is given by Lemma \ref{LocNC1}.$(ii)$. Fix $l \in \{2,\dots,N\}$, and assume that $\Lambda(y_\eps^{j,k}) \geq c_3$ for all $j \leq l-1$ and all $k$. Then, the Euler-Lagrange equations of Subsection \ref{S42} hold at every incremental step on the intervals $[(j-1)h,jh]$ with $j \leq l-1$, so that the construction of Subsections \ref{S42}--\ref{S49} provides the sequential solutions $y_\eps^1,\dots,y_\eps^{l-1}$. Hence, \eqref{NCbound} holds with $m = l-1$.
		
		By \eqref{NCbound} and Lemma \ref{W2qnormEps}, one has $\lVert y_\eps^{(h)}(t) \rVert_{W^{2,q}(\Omega)} \leq M_\kappa$ for every $t \in [0,(l-1)h]$, and $\int_0^{(l-1)h} \lVert \partial_t y_\eps^{(h)} \rVert^2 \, \d t \leq \kappa$. Hence, Lemma \ref{InvStab}.$(ii)$, applied on $[0,(l-1)h]$ with $2M_\kappa + \kappa$ in place of $M$, and Assumption (A1) deliver
		$$
		\Lambda(y_\eps^{l,0}) = \Lambda(y_\eps^{(h)}((l-1)h)) \geq \Lambda(y_{0\eps}) - C_* T^{\gamma_0} \geq c_1 - C_* T^{\gamma_0}.
		$$
		Next, since $v_\eps^{l-1} = \partial_t y_\eps^{(h)}(\cdot-h)$ in $((l-1)h,lh)$, the bound \eqref{NCbound} with $t = (l-1)h$, together with the bound on $f_\eps$ in the proof of Lemma \ref{LocNC1}.$(ii)$, gives
		$$
		E_\eps(y_\eps^{l,0}) + \frac{\rho}{h} \int_{(l-1)h}^{lh} \lVert v_\eps^{l-1} \rVert^2 \; \d t + \frac{2h}{\rho} \int_{(l-1)h}^{lh} \lVert f_\eps \rVert^2 \; \d t \leq \kappa'.
		$$
		Since the derivation of \eqref{firstEstimate} relies only on the minimality of the incremental solutions, it applies on $[(l-1)h,lh]$, with $E_\eps(y_{0\eps})$ and $\rho \lVert v_{0\eps} \rVert^2$ replaced by $E_\eps(y_\eps^{l,0})$ and $(\rho/h) \int_{(l-1)h}^{lh} \lVert v_\eps^{l-1} \rVert^2 \, \d t$, by Jensen's inequality. Therefore, the proof of Lemma \ref{LocNC1}.$(i)$, with $y_\eps^{l,0}$ in place of $y_{0\eps}$ and $\kappa'$ in place of $\kappa$, and $h \leq T$ give
		$$
		\Lambda(y_\eps^{l,k}) \geq \Lambda(y_\eps^{l,0}) - C_\Lambda(\kappa') T^{2\gamma_0} \geq c_1 - C_* T^{\gamma_0} - C_\Lambda(\kappa') T^{2\gamma_0}, \qquad k = 0,\dots,M.
		$$
		By \eqref{NCT0}, we obtain $\Lambda(y_\eps^{l,k}) \geq c_3$ for every $k$, which completes the induction. Note that $T_0$ is chosen before the induction, and it does not depend on $l$, as $\kappa$ is independent of $h$ and $m$.
		
		Finally, for every $l$, the backward constant interpolant $y_\eps^{l,(\tau)}$ takes values in $\{y_\eps^{l,k}\}_k$ and converges to $y_\eps^l$ in $L^\infty((l-1)h,lh;C^{1,\alpha}(\bar{\Omega}))$, as in \eqref{c5}. Hence, by Lemma \ref{InvStab}.$(i)$, $\Lambda(y_\eps^l(t)) \geq c_3$ for a.e. $t$, and, by the continuity of $y_\eps^l$ with values in $C^{1,\alpha}(\bar{\Omega})$, for every $t \in [(l-1)h,lh]$.
	\end{proof}
	
	Consequently, if $T \leq T_0$, the construction of Subsections \ref{S42}--\ref{S49} applies on every outer time interval, and it provides the incremental minimizers $\{y_\eps^{l,k}\}_{l=1,\,k=0}^{N,M}$ and the solution $\{y_\eps^l\}_{l=1}^N$ of the sequential problem \eqref{sequential}--\eqref{initialSequentialI}. This proves Theorem \ref{Theorem1}.$(i)$--$(ii)$.

	\subsection{Global interpolant, corresponding estimates, and convergences}
	\label{SGlob}
	Theorem \ref{Theorem1}.$(iii)$ is readily obtained by introducing the global interpolant $y_\eps^{(h)}: [0,T] \times \Omega \to \mathbb{R}^3$, namely the concatenation of Subsection \ref{S410} with $m = N$. Thus, $y_\eps^{(h)}(t):=  y_\eps^l(t) $ for $t \in [(l-1)h,lh]$ with $l=1,\dots,N$, where $\{y_\eps^l\}_{l=1}^N$ is the solution to the sequential problem \eqref{sequential}--\eqref{initialSequentialI} provided by Subsections \ref{S41}--\ref{S411}. By construction, $y_\eps^{(h)}$ is a weak solution to the time-delayed problem \eqref{tDelay}--\eqref{initialVelocity} in the sense of Definition \ref{defSoltimedelayedT}. In particular, for every $\Psi \in L^2(0,T;H_0^{1}(\Omega))$, $y_\eps^{(h)}$ satisfies
	\begin{align}
		&0=\int_0^T \biggl( \langle D E_\eps (y_\eps^{(h)}) , \Psi \rangle_{H_0^1} + \langle D_2 R_\eps (y_\eps^{(h)},\partial_t y_\eps^{(h)}) , \Psi \rangle - (f_\eps, \Psi) \nonumber\\&\qquad\qquad+ \frac{\rho}{h} (\partial_t y_\eps^{(h)} - \partial_t y_\eps^{(h)} (\cdot - h) , \Psi) \biggr) \; \d t, \label{m1}\\
		&y_\eps^{(h)}(0)=y_{0\eps}, \qquad \partial_t y_\eps^{(h)}(t-h) = v_{0\eps} \; \text{a.e. in } (0,h) . \label{m2}
	\end{align}	
		To address the limit as $h \to 0$, we observe that estimates \eqref{est0T3}--\eqref{est0T5} hold with $m = N$, namely on the whole interval $[0,T]$, with constants independent of both $h$ and $\eps$. Thus, for fixed $\eps$, in view of Lemmas \ref{W2qnormEps}, \ref{PositiveDet}, and \ref{H1normEps}, estimate \eqref{est0T5} implies
		\begin{align}
			\sup_{t \in [0,T]} \lVert y_\eps^{(h)} (t) \rVert_{W^{2,q}(\Omega)} + \lVert y_\eps^{(h)} \rVert_{H^1(0,T;H^1(\Omega))}  \leq c \label{est0T6},
		\end{align}
		where the constant $c$ is independent of $h$ but, through Lemma \ref{H1normEps}, depends on $\eps$ (see Remark \ref{RemKorn}). The control of the first term in \eqref{est0T6} delivers
		\begin{equation}
			\label{est0T8}
			\lVert DT_\eps(y_\eps^{(h)}) \rVert_{L^{\infty}(0,T;(W^{2,q}(\Omega))')} \leq c.
		\end{equation}
		
		Thanks to estimates \eqref{est0T6} and \eqref{est0T8} we find $y_\eps$ and $\mathcal{P}_\eps$ such that, up to non-relabeled subsequences, the following convergences hold
		\begin{alignat}{4}
			y_\eps^{(h)} &\overset{*}{\rightharpoonup} y_\eps \quad &&\text{in} \quad L^{\infty}(0,T;W^{2,q}(\Omega)),\label{d1}\\
			y_\eps^{(h)} &\rightharpoonup y_\eps \quad &&\text{in} \quad H^1(0,T;H^1(\Omega)),\label{d2}\\
			y_\eps^{(h)} &\to y_\eps \quad &&\text{in} \quad C^0([0,T];C^{1,\alpha}(\bar{\Omega})),\label{d3}\\
			DT_\eps(y_\eps^{(h)}) &\overset{*}{\rightharpoonup} \mathcal{P}_\eps \quad &&\text{in} \quad L^{\infty}(0,T;(W^{2,q}(\Omega))'), \label{d4}
		\end{alignat}
		where \eqref{d3} is obtained via \eqref{d1}--\eqref{d2} by means of Simon's compactness argument (Theorem \ref{cor4Simon}). Next, for every $t \in [0,T]$, $y_\eps^{(h)}(t) \in A_\eps$ and its energy is bounded by \eqref{est0T5}. Hence, convergence \eqref{d3} and Lemma \ref{ClosA}.$(i)$ give $y_\eps(t) \in A_\eps$ and $\det \nabla_\eps y_\eps(t) \geq \mu$ in $\bar{\Omega}$, and, by Proposition \ref{NoContact}, $\Lambda(y_\eps(t)) \geq c_3$. Moreover, as in Step 2 of Subsection \ref{S44}, \cite[Lemma II.5.9, p.~95]{boyer} gives $y_\eps \in C_w([0,T];W^{2,q}(\Omega))$.
		Finally, convergence \eqref{d3} guarantees $y_\eps(0,x) = y_{0\eps}$ for every $x \in \bar{\Omega}$ and, as $\partial_t y_\eps \in L^2(0,T;H^1(\Omega))$ and $y_\eps^{(h)}\rvert_P = {\rm id}_\eps$, from the continuity of the trace operator, we deduce $\partial_t y_\eps \in L^2(0,T;H_0^1(\Omega))$.

	\subsection{Passage to the limit as $h \to 0$}
	\label{S412}
	As $y_\eps^{(h)}$ satisfies \eqref{m1}--\eqref{m2}, in particular,
	\begin{align}
		0=\int_0^T \biggl( &\langle D K_\eps (y_\eps^{(h)}) , \phi \rangle + \langle D J_\eps (y_\eps^{(h)}) , \phi \rangle + \langle D T_\eps (y_\eps^{(h)}) , \phi \rangle + \langle D_2 R_\eps (y_\eps^{(h)},\partial_t y_\eps^{(h)}) , \phi \rangle \nonumber\\ &- (f_\eps, \phi)+ \frac{\rho}{h} (\partial_t y_\eps^{(h)} - \partial_t y_\eps^{(h)} (\cdot - h) , \phi) \biggr) \psi \; \d t,\label{d5}
	\end{align}
		for every $\phi \in L^1(0,T;V) \cap H^1(0,T;H_0^1(\Omega))$ and every $\psi$ as in Lemma \ref{TOT}, with associated $h_0 \in (0,T)$. As $\phi$ takes values in $V$, the duality pairings in \eqref{d5} are given by the explicit expressions \eqref{defDE}--\eqref{defDR}. In view of convergences \eqref{d1}--\eqref{d4}, an adaptation of the argument carried out in Subsection \ref{S46b} results in the following convergence
	\begin{align*}
		\int_0^T &\biggl( \langle D K_\eps (y_\eps^{(h)}) , \phi \rangle + \langle D J_\eps (y_\eps^{(h)}) , \phi \rangle+ \langle D T_\eps (y_\eps^{(h)}) , \phi \rangle + \langle D_2 R_\eps (y_\eps^{(h)},\partial_t y_\eps^{(h)}) , \phi \rangle \biggr) \psi \; \d t\\ 
		&\to \int_0^T \biggl( \langle D K_\eps (y_\eps) , \phi \rangle + \langle D J_\eps (y_\eps) , \phi \rangle + \langle \mathcal{P}_\eps , \phi \rangle + \langle D_2 R_\eps (y_\eps,\partial_t y_\eps) , \phi \rangle \biggr) \psi \; \d t.
	\end{align*}
	As for the inertia term, we proceed as follows. For the sake of notational simplicity, we set $\Psi = \phi\, \psi$. By splitting the two terms while shifting the second one, one gets 
	\begin{align*}
		\int_0^T& \biggl(\frac{\partial_t y_\eps^{(h)}(t) - \partial_t y_\eps^{(h)} (t - h)}{h} , \Psi(t)\biggr) \; \d t\\
		&=\frac{1}{h}\int_0^T (\partial_t y_\eps^{(h)}(t) , \Psi(t)) \; \d t - \frac{1}{h}\int_0^T (\partial_t y_\eps^{(h)} (t - h) , \Psi(t)) \; \d t\\
		&=\frac{1}{h}\int_0^T (\partial_t y_\eps^{(h)}(t) , \Psi(t)) \; \d t -  \frac{1}{h}\int_{-h}^0 (\partial_t y_\eps^{(h)} (t) , \Psi(t+h)) \; \d t \\&\quad- \frac{1}{h}\int_0^{T-h} (\partial_t y_\eps^{(h)} (t) , \Psi(t+h))  \; \d t.
	\end{align*}
	Thus, by exploiting $\partial_t y_\eps^{(h)}(t) = v_{0\eps}$ almost everywhere in $(-h,0)$,
	\begin{align*}
		\int_0^T& \biggl(\frac{\partial_t y_\eps^{(h)}(t) - \partial_t y_\eps^{(h)} (t - h)}{h} , \Psi(t)\biggr) \; \d t\\
		&=-\int_0^{T-h} \biggl(\partial_t y_\eps^{(h)}(t) , \frac{\Psi(t+h) - \Psi(t)}{h}\biggr) \; \d t -  \frac{1}{h}\int_{-h}^0 (v_{0\eps} , \Psi(t+h)) \; \d t \\&\quad+ \frac{1}{h}\int_{T-h}^T (\partial_t y_\eps^{(h)} (t) , \Psi(t))  \; \d t.
	\end{align*}
	As $\psi$ vanishes for $t \geq T-h_0$, the first integral can be extended to the whole interval $(0,T)$, whereas the third term vanishes. By the fundamental theorem of calculus and the Lebesgue differentiation theorem, convergence \eqref{d2} guarantees
	\begin{align*}
		\int_0^T& \biggl(\frac{\partial_t y_\eps^{(h)}(t) - \partial_t y_\eps^{(h)} (t - h)}{h} , \Psi(t)\biggr) \; \d t\to - \int_0^T (\partial_t y_\eps (t),\partial_t \Psi(t)) \; \d t - (v_{0\eps},\Psi(0)). 
	\end{align*}
	Consequently, the limit deformation $y_\eps$ satisfies
	\begin{align}
		\int_0^T \biggl( &\langle D K_\eps (y_\eps) , \Psi \rangle + \langle D J_\eps (y_\eps) , \Psi \rangle + \langle \mathcal{P}_\eps , \Psi \rangle \nonumber\\ &+ \langle D_2 R_\eps (y_\eps,\partial_t y_\eps) , \Psi \rangle - (f_\eps, \Psi) - \rho (\partial_t y_\eps,\partial_t \Psi) \biggr) \; \d t - \rho (v_{0\eps},\Psi(0)) = 0,\label{d9}
	\end{align}
	for every $\Psi = \phi \, \psi$ with $\phi \in L^1(0,T;V) \cap H^1(0,T;H_0^1(\Omega))$ and $\psi$ as in Lemma \ref{TOT}.
	
	To identify $\mathcal{P}_\eps$ we pick the sequence $\phi^{(h)} = y_\eps^{(h)} - {\rm id}_\eps$ as test function in \eqref{d5}, which is allowed by the regularity of $y_\eps^{(h)}$, and we use again the simplified notation $\Psi^{(h)} = \psi\,\phi^{(h)}$. The convergence $\phi^{(h)} \to \phi = y_\eps - {\rm id}_\eps$ in $C^0([0,T];C^{1,\alpha}(\bar{\Omega}))$ allows us to pass to the limit in all the terms except the inertial one. Since the test function now depends on $h$, the latter requires the convergence of the difference quotients of $y_\eps^{(h)}$. Let
	$$
	\mathcal{Q}_\eps^{(h)}(t) := \frac{y_\eps^{(h)}(t) - y_\eps^{(h)}(t-h)}{h} = \frac{1}{h}\int_{t-h}^t \partial_t y_\eps^{(h)}(s) \; \d s, \qquad t \in [0,T],
	$$
	where $y_\eps^{(h)}(s) := y_{0\eps} + s\,v_{0\eps}$ for $s \in (-h,0)$, in accordance with \eqref{m2}. By Jensen's inequality and estimates \eqref{est0T3}--\eqref{est0T4}, one has $\lVert \mathcal{Q}_\eps^{(h)}(t) \rVert^2 \leq \frac{1}{h}\int_{t-h}^t \lVert \partial_t y_\eps^{(h)} \rVert^2 \, \d s \leq c$ for every $t \in [0,T]$, so that $\mathcal{Q}_\eps^{(h)}$ is bounded in $L^\infty(0,T;L^2(\Omega))$. Moreover, $\partial_t \mathcal{Q}_\eps^{(h)} = (\partial_t y_\eps^{(h)} - \partial_t y_\eps^{(h)}(\cdot-h))/h$, which is bounded in $L^2(0,T;V')$ by equation \eqref{m1}, tested with $V$-valued functions, and estimates \eqref{est0T6}--\eqref{est0T8}. Since $L^2(\Omega) \overset{c}{\hookrightarrow} H^{-1}(\Omega) \hookrightarrow V'$, Theorem \ref{cor4Simon} guarantees, up to a subsequence (not relabeled), $\mathcal{Q}_\eps^{(h)} \to \mathcal{Q}_\eps$ in $C^0([0,T];H^{-1}(\Omega))$ for some $\mathcal{Q}_\eps$. To identify $\mathcal{Q}_\eps$, let $\vartheta \in C_c^\infty((0,T)\times\Omega)$. For $h$ small enough, shifting the integration variable as above and using \eqref{d3}, we obtain
	\begin{align*}
		\int_0^T (\mathcal{Q}_\eps^{(h)},\vartheta) \; \d t &= - \int_0^T \Bigl( y_\eps^{(h)}(t) , \frac{\vartheta(t+h) - \vartheta(t)}{h} \Bigr) \; \d t \\
		&\to - \int_0^T (y_\eps,\partial_t \vartheta) \; \d t = \int_0^T (\partial_t y_\eps , \vartheta) \; \d t,
	\end{align*}
	whence $\mathcal{Q}_\eps = \partial_t y_\eps$. As $\mathcal{Q}_\eps$ is uniformly continuous with values in $H^{-1}(\Omega)$, also $\mathcal{Q}_\eps^{(h)}(\cdot + h) \to \partial_t y_\eps$ in $C^0([0,T-h_0];H^{-1}(\Omega))$.
	
	Now, shifting the second part of the inertial term as above, one has
	\begin{align*}
		\frac{1}{h}\int_0^T \bigl( \partial_t y_\eps^{(h)} - \partial_t y_\eps^{(h)}(\cdot-h) , \Psi^{(h)} \bigr) \; \d t = &- \int_0^T \Bigl( \partial_t y_\eps^{(h)}(t) , \frac{\Psi^{(h)}(t+h) - \Psi^{(h)}(t)}{h} \Bigr) \; \d t \\
		&- \frac{1}{h}\int_0^h (v_{0\eps},\Psi^{(h)}(t)) \; \d t,
	\end{align*}
	where
	$$
	\frac{\Psi^{(h)}(t+h) - \Psi^{(h)}(t)}{h} = \psi(t+h) \, \mathcal{Q}_\eps^{(h)}(t+h) + \frac{\psi(t+h) - \psi(t)}{h} \bigl( y_\eps^{(h)}(t) - {\rm id}_\eps \bigr).
	$$
	The contribution of the first summand is quadratic in the velocity. Since $\partial_t y_\eps^{(h)} \rightharpoonup \partial_t y_\eps$ in $L^2(0,T;H_0^1(\Omega))$ by \eqref{d2} and the boundary conditions, and $\mathcal{Q}_\eps^{(h)}(\cdot+h) \to \partial_t y_\eps$ in $C^0([0,T-h_0];H^{-1}(\Omega))$, the duality between $H_0^1(\Omega)$ and $H^{-1}(\Omega)$ gives
	\begin{equation}
		\label{quadterm}
		\int_0^T \psi(t+h) \, \bigl( \partial_t y_\eps^{(h)}(t) , \mathcal{Q}_\eps^{(h)}(t+h) \bigr) \; \d t \to \int_0^T \psi \, \lVert \partial_t y_\eps \rVert^2 \; \d t.
\end{equation}
	The contribution of the second summand converges by \eqref{d2} and \eqref{d3}, and so does the last term. Altogether, the inertial term converges to $- \rho \int_0^T (\partial_t y_\eps , \partial_t \Psi) \, \d t - \rho (v_{0\eps},\Psi(0))$, with $\Psi = \psi\,\phi$. Hence, we get
	\begin{align}
		&\int_0^T \biggl( \langle D K_\eps (y_\eps^{(h)}) , \Psi^{(h)} \rangle + \langle D J_\eps (y_\eps^{(h)}) , \Psi^{(h)} \rangle + \langle D_2 R_\eps (y_\eps^{(h)},\partial_t y_\eps^{(h)}) , \Psi^{(h)} \rangle - (f_\eps, \Psi^{(h)})\nonumber \\ &\qquad\;+ \frac{\rho}{h} (\partial_t y_\eps^{(h)} - \partial_t y_\eps^{(h)} (\cdot - h),\Psi^{(h)})\biggr) \; \d t\nonumber\\
		&\to \int_0^T \biggl( \langle D K_\eps (y_\eps) , \Psi \rangle + \langle D J_\eps (y_\eps) , \Psi \rangle  + \langle D_2 R_\eps (y_\eps,\partial_t y_\eps) , \Psi \rangle - (f_\eps , \Psi)\nonumber \\ &\qquad\; - \rho (\partial_t y_\eps , \partial_t \Psi) \biggr) \; \d t - \rho (v_{0\eps} , \Psi(0)), \label{d8}
	\end{align}
	for every $\psi$ as in Lemma \ref{TOT}.
	Next, we exploit the monotonicity of $DT_\eps$ on $W^{2,q}(\Omega)$. For all $v \in L^{q}(0,T;W^{2,q}(\Omega))$ and any $\psi$ as above, one has
	\begin{equation}
		\label{d6}
		0 \leq \int_0^T \langle DT_\eps (y_\eps^{(h)}) - DT_\eps(v) , y_\eps^{(h)} - v \rangle  \psi\; \d t.
	\end{equation}
	First, we establish the following convergence
	\begin{equation}
		\label{d7}
		\int_0^T \langle DT_\eps(y_\eps^{(h)}) , y_\eps^{(h)} \rangle \psi \; \d t \to 		\int_0^T \langle \mathcal{P}_\eps , y_\eps \rangle  \psi \; \d t.
	\end{equation}
	To this aim, we introduce the boundary datum ${\rm id}_\eps$ inside the duality pairing. Exploiting equation \eqref{d5}, we can expand the integral as follows
	\begin{align*}
		&\int_0^T \langle DT_\eps(y_\eps^{(h)}) , y_\eps^{(h)} \rangle \psi \; \d t = \int_0^T \langle DT_\eps(y_\eps^{(h)}) , y_\eps^{(h)} - {\rm id}_\eps \rangle \psi \; \d t + \int_0^T \langle DT_\eps(y_\eps^{(h)}) , {\rm id}_\eps \rangle \psi \; \d t \\
		&= - \int_0^T \biggl( \langle D K_\eps (y_\eps^{(h)}) , \Psi^{(h)}  \rangle  + \langle D J_\eps (y_\eps^{(h)}) , \Psi^{(h)}  \rangle+ \langle D_2 R_\eps (y_\eps^{(h)},\partial_t y_\eps^{(h)}) , \Psi^{(h)}  \rangle\\ &\qquad \qquad\quad- (f_\eps, \Psi^{(h)} )+ \frac{\rho}{h} (\partial_t y_\eps^{(h)} - \partial_t y_\eps^{(h)} (\cdot - h) , \Psi^{(h)} ) \biggr) \; \d t + \int_0^T \langle DT_\eps(y_\eps^{(h)}) , {\rm id}_\eps \rangle \psi \; \d t,
	\end{align*}
	where we have set $\Psi^{(h)} = \psi (y_\eps^{(h)} - {\rm id}_\eps )$. Passing to the limit as $h \to 0$, in view of convergences \eqref{d4} and \eqref{d8} and using the already established limit equation \eqref{d9}, we recover \eqref{d7}. We then consider the limit in \eqref{d6}. Convergences \eqref{d1}, \eqref{d4}, and \eqref{d7} result in
		\begin{equation*}
			0 \leq \int_0^T \langle \mathcal{P}_\eps - DT_\eps(v) , y_\eps - v \rangle \psi \; \d t.
		\end{equation*}
	As in Subsection \ref{S46b}, the map $v \mapsto \psi\, DT_\eps(v)$ is hemicontinuous on $L^q(0,T;W^{2,q}(\Omega))$, so that Lemma \ref{LemMinty}, applied with $\mathcal{P} = \psi\,\mathcal{P}_\eps$, gives $\psi\,\mathcal{P}_\eps = \psi\, DT_\eps(y_\eps)$. Since $\psi > 0$ in $(0,T-h_0)$ and $h_0$ is arbitrary, this delivers $\mathcal{P}_\eps = DT_\eps(y_\eps)$ in $L^{q'}(0,T;(W^{2,q}(\Omega))')$.
		
	Finally, Lemma \ref{TOT}, applied with $\mathcal{D} = \Omega$ and $X = V$, closes the proof.
	
		\section{Proof of Theorem \ref{Theorem2}}
		\label{sec:5}
		This section is devoted to the proof of Theorem \ref{Theorem2}.
		We show that any family of solutions $(\{y_\eps^{l,k}\}_{l=1 , \; k=0}^{N,M} , \{ y_\eps^l \}_{l=1}^{N-1})$ to the minimization problem \eqref{incrP}--\eqref{incrIC2} converges to a solution $(\{(y^{l,k},b^{l,k}) \}_{l=1, \; k = 0 }^{N,M},\{(y^l,b^l)\}_{l=1}^{N-1})$ to the membrane minimization problem \eqref{MincrP}--\eqref{MincrIC2}. Here, $\{y_\eps^l\}_{l=1}^{N-1}$ is the solution of the sequential problem provided by Theorem \ref{Theorem1}.$(ii)$. To this aim, in Subsection \ref{S51} we state and prove our $\Gamma$-convergence result, namely, Theorem \ref{Gamma0h}. For all $l = 1,\dots,N$ and $k = 1,\dots, M$, the functionals $W_\eps^{l,k-1}$ are proved to $\Gamma$-converge to the corresponding membrane functionals $W^{l,k-1}$, provided that the data converge strongly. This allows us to prove, in Subsection \ref{S52}, that the minimizers $\{ y_\eps^{1,k} \}_{k=1}^M$ of $W_\eps^{1,k-1}$ converge to the minimizers $\{(y^{1,k},b^{1,k}) \}_{k = 1 }^{M}$ of the corresponding membrane functionals $W^{1,k-1}$ as $\eps \to 0$. Finally, in Subsection \ref{S53} we close the proof by iterating the arguments above to all $l = 1, \dots, N$. 
		
		\subsection{$\Gamma$-convergence}
		\label{S51}
		We state and prove the following $\Gamma$-convergence theorem.
		\begin{theorem}
			\label{Gamma0h}
			Assume {\rm (A0)} and {\rm (A3)}. Let $l \in \{1,\dots,N\}$ and $k \in \{1,\dots,M\}$, and assume that the data of $W_\eps^{l,k-1}$ satisfy $y_\eps^{l,k-1} \in A_\eps$ and
			\begin{subequations}
				\label{AssDataGen}
				\begin{alignat}{4}
					&\nabla_\eps y_\eps^{l,k-1} &&\to (\nabla' y^{l,k-1}\vert b^{l,k-1}) \quad &&\text{in} \quad C^{0,\alpha}(\bar{\omega}),\label{AssDataGena}\\
					&v_\eps^{l-1,k-1} &&\to v^{l-1,k-1} \quad &&\text{in} \quad L^2(\omega),\label{AssDataGenb}
				\end{alignat}
			\end{subequations}
			for some $(y^{l,k-1},b^{l,k-1}) \in B$ and $v^{l-1,k-1} \in L^2(\omega)$. Let $\{y_\eps\}_\eps\subset A_\eps$ with $W_\eps^{l,k-1}(y_\eps) \leq c$. Then, there exist $(y,b) \in B$ and $d \in L^q(\Omega)$ such that, up to non-relabeled subsequences, the following convergences hold
			\begin{alignat}{4}
				&y_\eps &&\rightharpoonup y \quad &&\text{in} \quad W^{2,q}(\omega)\label{e1},\\
				&\nabla_\eps y_\eps &&\rightharpoonup (\nabla' y \vert b) \quad &&\text{in} \quad W^{1,q}(\omega),\label{e2}\\
				&\nabla_\eps^2 y_\eps &&\rightharpoonup \begin{pmatrix}
					(\nabla')^2 y & \nabla' b\\
					(\nabla'b)^T & d
				\end{pmatrix}\quad &&\text{in} \quad L^q(\Omega)\label{e3},
			\end{alignat}
			and
			\begin{equation}
				\label{e7}
				\liminf_{\eps \to 0} W_\eps^{l,k-1}(y_\eps) \geq W^{l,k-1}(y,b).
			\end{equation}
			
			Also, for every $(\bar{y},\bar{b}) \in B$ with $E(\bar{y},\bar{b}) < \infty$, there exists a recovery sequence $\{\bar{y}_{\eps}\}_{\eps} \subset A_\eps$ such that
			\begin{alignat}{4}
				&\bar{y}_\eps &&\rightarrow \bar{y} \quad &&\text{in} \quad W^{2,q}(\omega),\label{e4}\\
				&\nabla_\eps \bar{y}_\eps &&\rightarrow (\nabla' \bar{y} \vert \bar{b}) \quad &&\text{in} \quad W^{1,q}(\omega),\label{e5}\\
				&\nabla_\eps^2 \bar{y}_\eps &&\rightharpoonup \begin{pmatrix}
					(\nabla')^2 \bar{y} & \nabla' \bar{b}\\
					(\nabla'\bar{b})^T & 0
				\end{pmatrix}\quad &&\text{in} \quad L^q(\omega)\label{e6},
			\end{alignat}
			and
			\begin{equation}
				\label{e8}
				\limsup_{\eps \to 0} W_\eps^{l,k-1}(\bar{y}_\eps) \leq W^{l,k-1}(\bar{y},\bar{b}).
			\end{equation}
		\end{theorem}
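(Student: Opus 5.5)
The plan is the standard two-part argument: compactness and liminf for energy-bounded sequences, then an explicit recovery sequence. Throughout, $\tau$ and $h$ are fixed. Set $z_\eps:=(y_\eps-y_\eps^{l,k-1})/\tau$.

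\emph{Compactness and liminf.} The first step is to turn $W_\eps^{l,k-1}(y_\eps)\le c$ into a bound on $E_\eps(y_\eps)$. I would absorb the load term exactly as in Subsection \ref{S41}: split $z_\eps=(z_\eps-v_\eps^{l-1,k-1})+v_\eps^{l-1,k-1}$, use Young's inequality, and note that $\lVert f_\eps^{l,k-1}\rVert$ and $\lVert v_\eps^{l-1,k-1}\rVert$ are bounded by \eqref{f0conv} and \eqref{AssDataGenb}. Since $R_\eps\ge0$, this gives $E_\eps(y_\eps)\le c$ and $\lVert z_\eps\rVert\le c$. Lemma \ref{ClosAB} then yields $(y,b)\in B$ and $d$ with \eqref{e1}--\eqref{e3}, together with uniform convergence $\nabla_\eps y_\eps\to(\nabla'y\vert b)$ in $C^{0,\alpha}(\bar\omega)$. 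It also gives $K_\eps\to K$, $J_\eps\to J$ and $T\le\liminf T_\eps$.

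For the remaining terms of $W_\eps^{l,k-1}$ I would argue as follows.
\begin{itemize}
\item By \eqref{AssDataGena}, the scaled gradient of the increment, $\nabla_\eps z_\eps$, converges uniformly to $\bigl(\nabla'(y-y^{l,k-1})\vert b-b^{l,k-1}\bigr)/\tau$. Also $\nabla_\eps y_\eps^{l,k-1}$ converges uniformly. Hence $R_\eps(y_\eps^{l,k-1},z_\eps)$ converges to the membrane dissipation term; it is actually continuous here, not just lower semicontinuous.
\item The load term converges because $y_\eps\to y$ uniformly and $f_\eps^{l,k-1}\to f^{l,k-1}$ uniformly.
\item The inertial term converges because $z_\eps\to(y-y^{l,k-1})/\tau$ uniformly and $v_\eps^{l-1,k-1}\to v^{l-1,k-1}$ in $L^2$.
\end{itemize}
I also need the convergence of $y_\eps^{l,k-1}$ itself; it follows from \eqref{AssDataGena} and the boundary condition via Poincaré. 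Adding these limits gives \eqref{e7}.

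\emph{Recovery sequence.} Given $(\bar y,\bar b)\in B$ with $E(\bar y,\bar b)<\infty$, I would take
\[
\bar y_\eps(x):=\bar y(x')+\eps x_3\,\bar b(x').
\]
Then $\nabla_\eps\bar y_\eps=(\nabla'\bar y+\eps x_3\nabla'\bar b\,\vert\,\bar b)\to(\nabla'\bar y\vert\bar b)$ in $W^{1,q}$, and uniformly once $\bar b\in C^{0,\alpha}$. The second gradients satisfy $\eps^{-1}\nabla'\partial_3\bar y_\eps=\nabla'\bar b$ and $\eps^{-2}\partial_{33}\bar y_\eps=0$, while the planar block $(\nabla')^2\bar y+\eps x_3(\nabla')^2\bar b$ is problematic. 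The main obstacle is that $(\nabla')^2\bar b$ need not lie in $L^q$, since $\bar b$ is only $W^{1,q}$. To handle it, I would first approximate $\bar b$ by smooth $b_j$ with the same boundary trace, and run a diagonal argument. Along the approximation, $T$, $K$ and $J$ converge for the pair $(\bar y,b_j)$ by uniform convergence and the positivity of the determinant, using Lemma \ref{MPositiveDet}. This gives $T_\eps(\bar y_\eps)\to T(\bar y,\bar b)$ and the convergences \eqref{e4}--\eqref{e6}.

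\emph{Admissibility of $\bar y_\eps$.} The boundary conditions on $P$ hold, since $\bar y={\rm id}_0$, $\nabla'\bar y=\nabla'{\rm id}_0$ and $\bar b=e_3$ there. The determinant satisfies $\det\nabla_\eps\bar y_\eps\ge\mu/2$ for $\eps$ small. The averaged invertibility condition is immediate because $\overline{\bar y_\eps}=\bar y$, so $\Lambda(\bar y_\eps)\ge c_0$ is inherited from $B$. Finally, the other three terms of $W_\eps^{l,k-1}$ converge exactly as in the liminf step, which yields \eqref{e8}.
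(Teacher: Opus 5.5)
Your liminf part coincides with the paper's: you bound $E_\eps(y_\eps)$ as in Subsection \ref{S41}, apply Lemma \ref{ClosAB}, and pass to the limit in the dissipation, load and inertial terms via the uniform convergence of $\nabla_\eps y_\eps$, $\nabla_\eps y_\eps^{l,k-1}$ and $y_\eps^{l,k-1}$. Your recovery strategy, $\bar y+\eps x_3 b_j$ with smooth $b_j\to\bar b$ and a diagonal argument, is also the paper's. However, one step of your admissibility check fails as written: the clamped condition on $P$. With $b_j$ in place of $\bar b$,
\[
\nabla_\eps \bar y_\eps=\bigl(\nabla'\bar y+\eps x_3\nabla' b_j\,\vert\, b_j\bigr).
\]
Since $x_3$ ranges over $(-1/2,1/2)$, the condition $\nabla_\eps\bar y_\eps={\rm Id}$ on $P$ requires $\nabla' b_j=0$ on $\partial\omega$, in addition to $\nabla'\bar y=\nabla'{\rm id}_0$ and $b_j=(0,0,1)^T$ there. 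Prescribing only the boundary trace of $b_j$ kills the tangential derivative but not the normal derivative $\partial_\nu b_j$. Membership $(\bar y,\bar b)\in B$ gives no information on $\nabla'\bar b$ at $\partial\omega$. If $\partial_\nu b_j\neq0$ somewhere on $\partial\omega$, then $\bar y_\eps\notin A_\eps$, so $J_\eps(\bar y_\eps)=\infty$ and \eqref{e8} fails. Your list ``$\bar y={\rm id}_0$, $\nabla'\bar y=\nabla'{\rm id}_0$, $\bar b=e_3$'' omits exactly the term $\eps x_3\nabla' b_j$.

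The fix, which is what the paper does, is to approximate $w:=\bar b-(0,0,1)^T\in W_0^{1,q}(\omega)$ by $w_\lambda\in C_c^\infty(\omega)$ and set $\bar b_\lambda:=w_\lambda+(0,0,1)^T$. Then $\bar b_\lambda=(0,0,1)^T$ and $\nabla'\bar b_\lambda=0$ in a neighborhood of $\partial\omega$, so both boundary conditions in $A_\eps$ hold. Moreover $\bar b_\lambda\to\bar b$ in $W^{1,q}(\omega)$, hence uniformly since $q>2$. With this choice the rest of your argument goes through. The lower bound $\det(\nabla'\bar y\vert\bar b)\ge\bar\mu$ follows from Lemma \ref{MPositiveDet} applied to the constant sequence (the paper uses Theorem \ref{thHealeyKromer} in $\omega$ directly; both work). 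It transfers to $(\bar y,\bar b_\lambda)$ for small $\lambda$, and then to $\nabla_\eps\bar y_{\eps\lambda}$ for $\eps$ small depending on $\lambda$. The diagonal choice $\lambda(\eps)$ (Attouch's lemma in the paper) must go to zero slowly enough that $\eps x_3(\nabla')^2\bar b_{\lambda(\eps)}\to0$ in $L^q$ and $\eps\nabla'\bar b_{\lambda(\eps)}\to0$ uniformly.
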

		\begin{proof}
		Arguing as in Subsection \ref{S41}, with $z_\eps := (y_\eps - y_\eps^{l,k-1})/\tau$ in place of $z^n$, and using Assumption (A3) and \eqref{AssDataGen} to bound the data independently of $\eps$, we obtain $E_\eps(y_\eps) \leq c$. Hence, Lemma \ref{ClosAB} provides $(y,b) \in B$ and $d \in L^q(\Omega)$ such that, up to non-relabeled subsequences, the convergences \eqref{e1}--\eqref{e3} hold, together with
				\begin{alignat}{4}
					&y_\eps &&\to y \quad &&\text{in} \quad C^{1,\alpha}(\bar{\omega}),\label{e9}\\ 
					&\nabla_\eps y_\eps &&\to (\nabla' y \vert b) \quad &&\text{in} \quad C^{0,\alpha}(\bar{\omega}),\label{e10}
				\end{alignat}
				and $K_\eps(y_\eps) \to K(y,b)$, $J_\eps(y_\eps) \to J(y,b)$, and $T(y,b) \leq \liminf_{\eps\to0} T_\eps(y_\eps)$.
				
				Next, we prove \eqref{e7}. In view of the above, it remains to consider the dissipation, load, and inertial terms. By \eqref{AssDataGen} and the boundary conditions in $A_\eps$, one has $y_\eps^{l,k-1} \to y^{l,k-1}$ uniformly in $\bar{\Omega}$. Hence, convergences \eqref{e9}, \eqref{e10}, and \eqref{AssDataGen} guarantee that $\tau R_\eps(y_\eps^{l,k-1},z_\eps)$ converges to the corresponding membrane term. By Assumption (A3), namely convergence \eqref{f0conv}, the load term converges as well. As for the inertial term, \eqref{e9} and the uniform convergence of $y_\eps^{l,k-1}$ give $z_\eps \to (y - y^{l,k-1})/\tau$ uniformly in $\bar{\Omega}$. Hence, by \eqref{AssDataGen}, $z_\eps - v_\eps^{l-1,k-1}$ converges strongly in $L^2(\Omega)$, and the inertial term converges to the corresponding membrane term. This proves \eqref{e7}.
			
			As regards the recovery sequence condition \eqref{e8}, let $(\bar{y},\bar{b}) \in B$ with $E(\bar{y},\bar{b}) < \infty$. We first note that the expected choice $\bar{y}_\eps (x',x_3) = \bar{y}(x') + \eps x_3 \bar{b}(x')$ is not allowed as $\bar{b}$ is not twice differentiable. Thus, we construct an approximate recovery sequence (see \cite{Bhattacharya,Braides,DavoliFonseca}).
			Let $w := \bar{b} - (0,0,1)^T$, so that $w \in W_0^{1,q}(\omega)$. By density of $C_c^\infty(\omega)$ in $W_0^{1,q}(\omega)$, for every $\lambda > 0$ there exists $w_\lambda \in C_c^\infty(\omega)$ with $\lVert w_\lambda - w \rVert_{W^{1,q}(\omega)} \leq \lambda$. As $w_\lambda$ vanishes together with its gradient in a neighborhood of $\partial\omega$, the functions $\bar{b}_\lambda := w_\lambda + (0,0,1)^T \in C^\infty(\bar{\omega})$ satisfy $\bar{b}_\lambda\rvert_P = (0,0,1)^T$, $\nabla' \bar{b}_\lambda\rvert_P = 0$, and $\bar{b}_\lambda \to \bar{b}$ in $W^{1,q}(\omega)$ as $\lambda \to 0$.
			We introduce the sequence $\bar{y}_{\eps\lambda}(x',x_3) = \bar{y}(x') + \eps x_3 \bar{b}_\lambda(x')$, which belongs to $W^{2,q}(\Omega)$, as $\bar{b}_\lambda$ is smooth. We show that $\bar{y}_{\eps\lambda} \in A_\eps$ for $\lambda$ and $\eps$ small enough. First, since $\bar{b}_\lambda\rvert_{\partial\omega} = (0,0,1)^T$ and $\nabla' \bar{b}_\lambda \rvert_{\partial\omega} = \nabla' w_\lambda\rvert_{\partial\omega} = 0$, together with $\nabla' \bar{y}\rvert_{\partial\omega} = \nabla'{\rm id}_0$ from $(\bar{y},\bar{b}) \in B$, we get $\nabla_\eps \bar{y}_{\eps\lambda}(x)\rvert_P = (\nabla' \bar{y}(x') + \eps x_3 \nabla' \bar{b}_\lambda(x') \vert \bar{b}_\lambda(x'))\rvert_{x' \in \partial\omega} = {\rm Id}$, and similarly $\bar{y}_{\eps\lambda}\rvert_P = {\rm id}_\eps$. Next, as $\int_{-1/2}^{1/2} x_3 \, \d x_3 = 0$, one has $\int_{-1/2}^{1/2} \bar{y}_{\eps\lambda}(x',x_3) \, \d x_3 = \bar{y}(x')$ for every $x' \in \omega$. Hence, the averaged invertibility condition in $A_\eps$ follows from the condition $\lvert \bar{y}(x') - \bar{y}(z') \rvert \geq c_0 \lvert x'-z' \rvert$ in $B$. Finally, as $E(\bar{y},\bar{b}) < \infty$, one has $J(\bar{y},\bar{b}) < \infty$. Since $q>2$, the space $W^{1,q}(\omega)$ embeds in $C^{0,\gamma}(\bar{\omega})$ with $\gamma = 1 - 2/q$, so that $\det(\nabla'\bar{y} \vert \bar{b}) \in C^{0,\gamma}(\bar{\omega})$. Moreover, Assumption (A0) gives $a > 3q/(q-3) > 2q/(q-2) = 2/\gamma$. Hence, Theorem \ref{thHealeyKromer}, applied in $\omega$ with $d = 2$ and $s = a$, provides $\bar{\mu}>0$ such that $\det(\nabla'\bar{y} \vert \bar{b}) \geq \bar{\mu}$ in $\bar{\omega}$. As $\bar{b}_\lambda \to \bar{b}$ uniformly in $\bar{\omega}$, there exists $\lambda_0>0$ such that $\det(\nabla'\bar{y} \vert \bar{b}_\lambda) \geq \bar{\mu}/2$ in $\bar{\omega}$ for every $\lambda \leq \lambda_0$. For such $\lambda$, as $\nabla'\bar{b}_\lambda$ is bounded, $\det \nabla_\eps \bar{y}_{\eps\lambda} \to \det(\nabla'\bar{y} \vert \bar{b}_\lambda)$ uniformly in $\bar{\Omega}$ as $\eps \to 0$, so that $\det \nabla_\eps \bar{y}_{\eps\lambda} \geq \bar{\mu}/4$ in $\bar{\Omega}$ for $\eps$ small enough, depending on $\lambda$. Therefore, $\bar{y}_{\eps\lambda} \in A_\eps$ for every $\lambda \leq \lambda_0$ and $\eps$ small enough. Moreover, for every fixed $\lambda \leq \lambda_0$, one has
				\begin{equation*}
					\bar{y}_{\eps\lambda} \to \bar{y} \quad \text{in} \quad W^{2,q}(\omega) \quad \text{as} \quad \eps \to 0,
				\end{equation*}
				and the scaled second gradient $\nabla_\eps^2 \bar{y}_{\eps\lambda}$ has blocks $(\nabla')^2 \bar{y} + \eps x_3 (\nabla')^2 \bar{b}_\lambda$, $\nabla' \bar{b}_\lambda$, and $0$, so that $T_\eps(\bar{y}_{\eps\lambda}) \to T(\bar{y},\bar{b}_\lambda)$ as $\eps \to 0$, and $T(\bar{y},\bar{b}_\lambda) \to T(\bar{y},\bar{b})$ as $\lambda \to 0$. Arguing as in the proof of \eqref{e7} for the remaining terms, where the convergence of $J_\eps(\bar{y}_{\eps\lambda})$, first as $\eps \to 0$ and then as $\lambda \to 0$, follows from the lower bounds on the determinants above, we obtain $\lim_{\lambda \to 0} \lim_{\eps \to 0} a_{\eps\lambda} = 0$, where
					$$
					a_{\eps\lambda} := \bigl\lvert W_\eps^{l,k-1}(\bar{y}_{\eps\lambda}) - W^{l,k-1}(\bar{y},\bar{b}) \bigr\rvert + \lVert \bar{y}_{\eps\lambda} - \bar{y} \rVert_{W^{2,q}(\Omega)} + \lVert \nabla_\eps \bar{y}_{\eps\lambda} - (\nabla' \bar{y} \vert \bar{b}) \rVert_{W^{1,q}(\Omega)}.
					$$
					Thus, in view of Attouch's diagonalization argument \cite[Corollary 1.16, p.~83]{attouch}, there exists $\lambda(\eps) \to 0$ such that $a_{\eps\lambda(\eps)} \to 0$. In particular, $W_\eps^{l,k-1}(\bar{y}_{\eps\lambda(\eps)})$ is finite, and hence $\bar{y}_{\eps\lambda(\eps)} \in A_\eps$, for $\eps$ small enough. Setting $\bar{y}_\eps = \bar{y}_{\eps\lambda(\eps)}$, we obtain \eqref{e4}, \eqref{e5}, and \eqref{e8}, whereas \eqref{e6} follows from the structure of $\nabla_\eps^2 \bar{y}_{\eps\lambda}$ above.
			
		\end{proof}

		\subsection{Convergence of minimizers}
		\label{S52}
		Let $\{y_\eps^{1,k}\}_{k=0}^M$ be a solution of the incremental problem \eqref{incrP}--\eqref{incrIC2} on $[0,h]$ in the sense of Definition \ref{defSolIncr}. Consider the first incremental step $k=1$ and the corresponding family of functionals, namely, $\{W_\eps^{1,0}\}_\eps$. An application of Theorem \ref{Gamma0h} shows that $y_\eps^{1,1}$ converges to a solution of the corresponding membrane incremental problem \eqref{MincrP}--\eqref{MincrIC2} as $\eps \to 0$. First, observe that the assumption \eqref{AssDataGen} corresponds, for $l=1$ and $k=1$, to $\nabla_\eps y_{0\eps} \to (\nabla' y_0 \vert b_0)$ in $C^{0,\alpha}(\bar{\omega})$ and $v_{0\eps} \to v_0$ in $L^2(\omega)$, which are satisfied by Assumptions (A1)--(A2), namely convergences \eqref{b0conv} and \eqref{v0conv}. Also, since $y_\eps^{1,1}$ minimizes $W_\eps^{1,0}$, one has $W_\eps^{1,0}(y_\eps^{1,1}) \leq W_\eps^{1,0}(y_{0\eps}) = E_\eps(y_{0\eps}) + \frac{\tau\rho}{2h}\lVert v_{0\eps} \rVert^2 \leq c$. Therefore, inequality \eqref{e7} and convergence \eqref{e10} hold for $y_\eps^{1,1}$. In particular, 
		$$
		W^{1,0}(y^{1,1},b^{1,1}) \leq \liminf_{\eps \to 0} W_\eps^{1,0}(y_\eps^{1,1}),
		$$
		and
		\begin{equation}
			\label{e11}
			\nabla_\eps y_\eps^{1,1} \to (\nabla' y^{1,1} \vert b^{1,1}) \quad \text{in} \quad C^{0,\alpha}(\bar{\omega}).
		\end{equation}
		Moreover, for any $(\bar{y},\bar{b}) \in B$ with $E(\bar{y},\bar{b}) < \infty$, the existence of a recovery sequence $\bar{y}_\eps \in A_\eps$ such that
		$$
		\limsup_{\eps \to 0} W_\eps^{1,0}(\bar{y}_\eps) \leq W^{1,0}(\bar{y},\bar{b})
		$$
		follows from \eqref{e8}. Combining the last two inequalities with the minimality of $y_\eps^{1,1}$, we get that
		$$
		W^{1,0}(y^{1,1},b^{1,1}) \leq W^{1,0}(\bar{y},\bar{b}) 
		$$
		holds for every $(\bar{y},\bar{b}) \in B$, trivially if $E(\bar{y},\bar{b}) = \infty$. Thus, $(y^{1,1},b^{1,1})$ is a minimizer of $W^{1,0}$. By exploiting the strong convergence \eqref{e11}, we iterate the procedure for all $k$. In particular, we get that
		$$
		W^{1,k-1}(y^{1,k},b^{1,k}) \leq W^{1,k-1}(\bar{y},\bar{b})
		$$
		holds for every $(\bar{y},\bar{b}) \in B$ and all $k=1,\dots,M$. Thus, $\{(y^{1,k},b^{1,k})\}_{k=0}^M$ is a solution to the incremental problem \eqref{MincrP}--\eqref{MincrIC2} on $[0,h]$ in the sense of Definition \ref{MdefSolIncr}.

		\subsection{Inductive iteration}
		\label{S53}
		According to \eqref{incrIC1}--\eqref{incrIC2}, the initial data for the second iteration are provided by the solution $y_\eps^1$ of the sequential problem \eqref{sequential}--\eqref{initialSequentialI} on $[0,h]$. Theorem \ref{Theorem1} guarantees the existence of such a $y_\eps^1$ which, together with $\{y_\eps^{2,k}\}_{k=0}^M$, is a solution to the problem \eqref{incrP}--\eqref{incrIC2} for $l=2$ in the sense of Definition \ref{defSolIncr}. To apply Theorem \ref{Gamma0h} for $l=2$, we prove the convergences
		\begin{align}
			&\nabla_\eps y_\eps^1(h) \to (\nabla' y^1(h) \vert b^1(h)) \quad \text{in} \quad C^{0,\alpha}(\bar{\omega}),\label{f1}\\
			&\frac{1}{\tau}\int_{(k-1)\tau}^{k\tau} \partial_t y_\eps^1(t) \; \d t  \to \frac{1}{\tau}\int_{(k-1)\tau}^{k\tau} \partial_t y^1(t) \; \d t \quad \text{in} \quad L^2(\omega), \qquad \forall k=1,\dots,M.\label{f2}
		\end{align}
		Inequality \eqref{L36} for $l=1$, combined with Young's inequality for the load term and with Assumptions (A1)--(A3), implies the bound
		\begin{equation}
			\label{f3}
			\sup_{t \in [0,h]} E_\eps(y_\eps^1(t)) + \int_0^h \Bigl( R_\eps(y_\eps^1,\partial_t y_\eps^1) + \lVert \partial_t y_\eps^1 \rVert^2 \Bigr) \; \d t \leq c,
		\end{equation}
		with $c$ independent of $\eps$. Hence, Lemma \ref{TimeComp} applies to $y_\eps^1$ on $[0,h]$ and gives, up to a subsequence (not relabeled),
		\begin{equation}
			\label{f4}
			\nabla_\eps y_\eps^1 \rightarrow (\nabla' y^1 \vert b^1) \quad \text{in} \quad C^0([0,h];C^{0,\alpha}(\bar{\omega})),
		\end{equation}
		and $y_\eps^1 \to y^1$ in $C^0([0,h];C^{1,\alpha}(\bar{\omega}))$. In particular, \eqref{f4} gives \eqref{f1}. Since $\tau$ is fixed and the averaged velocities in \eqref{f2} are the difference quotients $(y_\eps^1(k\tau) - y_\eps^1((k-1)\tau))/\tau$, the uniform convergence of $y_\eps^1$ gives \eqref{f2} as well. Moreover, Lemma \ref{TimeComp} guarantees the regularity required by Definition \ref{MdefSolIncr}, namely, $y^1 \in C_w([0,h];W^{2,q}(\omega)) \cap H^1(0,h;L^2(\omega))$, $b^1 \in C_w([0,h];W^{1,q}(\omega))$, $(\nabla' y^1 \vert b^1)^T(\nabla' y^1 \vert b^1) \in H^1(0,h;L^2(\omega))$, and $(y^1(t),b^1(t)) \in B$ for every $t \in [0,h]$.

		By \eqref{f1} and \eqref{f2}, the data of $W_\eps^{2,0}$ satisfy \eqref{AssDataGen}. Therefore, the argument of Subsection \ref{S52} shows that the limits $\{(y^{2,k},b^{2,k})\}_{k=0}^M$ solve the membrane incremental problem for $l=2$. For $l \geq 3$, the bound \eqref{f3} for $y_\eps^{l-1}$ on $[(l-2)h,(l-1)h]$ follows from \eqref{est0T4}--\eqref{est0T5}, whose constants are independent of $\eps$. Hence, the same argument applies on every outer time interval, and Theorem \ref{Theorem2} follows. 
		
		Observe that, since the proof of Theorem \ref{Theorem3} in Section \ref{S6} also starts from Lemma \ref{TimeComp}, it can be carried out along the subsequences above. Hence, the limits $(y^{l-1},b^{l-1})$ coincide with the solutions of the membrane sequential problem provided by Theorem \ref{Theorem3}, as stated in Remark \ref{RemMain}.$(i)$.

	\section{Proof of Theorem \ref{Theorem3}}
	\label{S6}
	In this section, we consider the limit as $\eps \to 0$ in equation \eqref{seqPweakEq} for the family of solutions $\{y_\eps^l\}_{l=1}^N$, whose existence is guaranteed by Theorem \ref{Theorem1}.$(ii)$. Performing this limit is subject to the convergence of the initial data $y_\eps^{l-1}((l-1)h)$ and $v_\eps^{l-1}$. By exploiting Assumptions (A1) and (A2), namely convergences \eqref{y0conv}--\eqref{v0conv}, we first consider the limit $\eps \to 0$ in equation \eqref{seqPweakEq} for $l=1$. Next, establishing the convergences $\nabla_\eps y_\eps^1(h) \to (\nabla' y^1(h) \vert b^1(h))$ in $C^{0,\alpha}(\bar{\omega})$ and $v_\eps^{1} \rightharpoonup v^1$ in $L^2(h,2h;L^2(\omega))$ as $\eps \to 0$, the procedure is repeated for $l=2$ and hence iterated for all $l$. 
	
	\subsection{Estimates and convergences of the deformations}
	\label{S61}
	By Subsection \ref{S410}, $y_\eps^1$ satisfies inequality \eqref{L36} for $l=1$. By Young's inequality, the load term is bounded by $\frac{\rho}{4h}\int_0^t \lVert \partial_t y_\eps^1 \rVert^2 \; \d s + \frac{h}{\rho}\int_0^t \lVert f_\eps \rVert^2 \; \d s$, and the first contribution can be absorbed in the left-hand side. Since the right-hand side of \eqref{L36} is bounded independently of $\eps$ by Assumptions (A1)--(A3), we obtain
	\begin{equation}
		\label{g2}
		\sup_{t \in [0,h]} E_\eps(y_\eps^1(t)) + \int_0^h \Bigl( R_\eps(y_\eps^1,\partial_t y_\eps^1) + \lVert \partial_t y_\eps^1 \rVert^2 \Bigr) \; \d t \leq c,
	\end{equation}
	with $c$ independent of $\eps$. In view of Remark \ref{RemKorn}, the bound on the dissipation in \eqref{g2} does not provide a bound on $\partial_t \nabla_\eps y_\eps^1$ independent of $\eps$. It provides, however, a bound on the time derivative of the Cauchy-Green tensor $C_\eps^1 := (\nabla_\eps y_\eps^1)^T \nabla_\eps y_\eps^1$, since $R_\eps(y_\eps^1,\partial_t y_\eps^1) = \lVert \partial_t C_\eps^1 \rVert^2$. By Lemma \ref{W2qnormEps}, estimate \eqref{g2} implies
	\begin{equation}
		\label{g1}
		\sup_{t \in [0,h]} \biggl( \lVert y_\eps^1 (t) \rVert_{W^{2,q}(\Omega)} + \lVert \nabla_\eps y_\eps^1 (t) \rVert_{W^{1,q}(\Omega)} + \lVert \nabla_\eps^2 y_\eps^1 (t) \rVert_{L^q(\Omega)}  \biggr) \leq c.
	\end{equation}
  By Lemma \ref{TimeComp}, applied on $[0,h]$, there exist $y^1$, $b^1$, and $d^1$ such that, setting $C^1 := (\nabla' y^1 \vert b^1)^T(\nabla' y^1 \vert b^1)$, convergences \eqref{conv311a}--\eqref{conv311g} read, up to non-relabeled subsequences,
	\begin{alignat}{4}
		&y_\eps^1 &&\overset{*}{\rightharpoonup} y^1 \quad &&\text{in} \quad L^{\infty}(0,h;W^{2,q}(\omega))\label{g4},\\
		&\nabla_\eps y_\eps^1 &&\overset{*}{\rightharpoonup} (\nabla' y^1 \vert b^1) \quad &&\text{in} \quad L^{\infty}(0,h;W^{1,q}(\omega)),\label{g5}\\
		&\nabla_\eps^2 y_\eps^1 &&\overset{*}{\rightharpoonup} \begin{pmatrix}
			(\nabla')^2 y^1 & \nabla' b^1\\
			(\nabla'b^1)^T & d^1
		\end{pmatrix}\quad &&\text{in} \quad L^{\infty}(0,h;L^q(\Omega)),\label{g6}\\
		&\partial_t C_\eps^1 &&\rightharpoonup \partial_t C^1 \quad &&\text{in} \quad L^2(0,h;L^2(\omega)),\label{g9}\\
		&\partial_t y_\eps^1 &&\rightharpoonup \partial_t y^1 \quad &&\text{in} \quad L^2(0,h;L^2(\omega)), \label{g10}\\
		&y_\eps^1 &&\to  y^1 \quad &&\text{in} \quad C^0([0,h];C^{1,\alpha}(\bar{\omega})),\label{g12}\\
		&\nabla_\eps y_\eps^1 &&\to (\nabla' y^1 \vert b^1) \quad &&\text{in} \quad C^0([0,h];C^{0,\alpha}(\bar{\omega})).\label{g13}
	\end{alignat}
	Moreover, by the same lemma, $y^1 \in C_w([0,h];W^{2,q}(\omega)) \cap H^1(0,h;L^2(\omega))$, $b^1 \in C_w([0,h];W^{1,q}(\omega))$, $C^1 \in H^1(0,h;L^2(\omega))$, and $(y^1(t),b^1(t)) \in B$ for every $t \in [0,h]$. Finally, the strong convergences \eqref{g12} and \eqref{g13}, together with \eqref{y0conv} and \eqref{b0conv}, guarantee $y^1(0)=y_0$ and $b^1(0)=b_0$.

	\subsection{Convergence of the equation}
	\label{S63}
	Let $(\phi_1,\phi_2) \in L^2(0,h;V_0)$, where $V_0$ is the tangent space introduced in Subsection \ref{SVar}, and let $\phi_3 \in L^2(0,h;L^q(\Omega))$. We choose $\phi_2^\eps \in L^2(0,h;C_c^\infty(\omega))$ and $\phi_3^\eps \in L^2(0,h;C_c^\infty(\Omega))$ such that, as $\eps \to 0$, one has $\phi_2^\eps \to \phi_2$ in $L^2(0,h;W^{1,q}(\omega))$, $\phi_3^\eps \to \phi_3$ in $L^2(0,h;L^q(\Omega))$, and
		$$
		\eps \bigl( \lVert \phi_2^\eps \rVert_{L^2(0,h;W^{2,q}(\omega))} + \lVert \phi_3^\eps \rVert_{L^2(0,h;W^{2,q}(\Omega))} \bigr) \to 0,
		$$
		which is possible by letting the approximation parameter tend to zero slowly enough with respect to $\eps$. We set $\Phi_3^\eps(x',x_3) := \int_{-1/2}^{x_3} (x_3 - s)\, \phi_3^\eps(x',s) \, \d s$, so that $\partial_{33}\Phi_3^\eps = \phi_3^\eps$, and
		$$
		\phi^\eps (x',x_3) := \phi_1(x') + \eps x_3 \phi^\eps_2(x') + \eps^2 \Phi^\eps_3(x',x_3).
		$$
		Since $\nabla'\phi_1 = 0$ on $P$ and $\phi_2^\eps$ and $\Phi_3^\eps$ vanish in a neighborhood of $P$, one has $\phi^\eps \in L^2(0,h;V)$. Hence, as $y_\eps^1$ solves \eqref{seqPweakEq},
		\begin{equation}
			\label{g14}
			\int_0^h \biggl( \langle D E_\eps (y_\eps^1) , \phi^\eps \rangle + \langle D_2 R_\eps (y_\eps^1,\partial_t y_\eps^1) , \phi^\eps \rangle - (f_\eps, \phi^\eps) + \frac{\rho}{h} (\partial_t y_\eps^1 - v_{0\eps} , \phi^\eps) \biggr) \; \d t = 0,
		\end{equation}
		where, as $\phi^\eps \in L^2(0,h;V)$, the duality pairings are given by the explicit expressions \eqref{defDE}--\eqref{defDR}.
	Observe that, by construction of $\phi^\eps$, one has
	\begin{alignat}{4}
		&\phi^\eps &&\to \phi_1 \quad &&\text{in} \quad L^2(0,h;W^{2,q}(\omega)\cap H_0^1(\omega))\label{g15},\\
		&\nabla_\eps \phi^\eps &&\to (\nabla' \phi_1 \vert \phi_2) \quad &&\text{in} \quad L^2(0,h;W_0^{1,q}(\omega)),\label{g16}\\
		&\nabla_\eps^2 \phi^\eps &&\to \begin{pmatrix}
			(\nabla')^2 \phi_1 & \nabla' \phi_2\\
			(\nabla'\phi_2)^T & \phi_3
		\end{pmatrix}\quad&&\text{in} \quad L^2(0,h;L^q(\Omega))\label{g17}.
	\end{alignat}
	Also, estimate \eqref{g1} guarantees
	\begin{align*}
		&\lVert \lvert \nabla_\eps^2 y_\eps^1 \rvert^{q-2} (\nabla')^2 y_\eps^1 \rVert_{L^{\infty}(0,h;L^{q'}(\Omega))} + \lVert \lvert \nabla_\eps^2 y_\eps^1 \rvert^{q-2} \eps^{-1} \nabla' \partial_3 y_\eps^1 \rVert_{L^{\infty}(0,h;L^{q'}(\Omega))}\nonumber \\ &+ \lVert \lvert \nabla_\eps^2 y_\eps^1 \rvert^{q-2} \eps^{-2}\partial_{33} y_\eps^1 \rVert_{L^{\infty}(0,h;L^{q'}(\Omega))} \leq c,
	\end{align*}
	from which we find $\pi_1^1,\pi_2^1,\pi_3^1 \in L^{\infty}(0,h;L^{q'}(\Omega))$ such that, up to non-relabeled subsequences, the following convergences hold
	\begin{alignat}{4}
		&\lvert \nabla_\eps^2 y_\eps^1 \rvert^{q-2} (\nabla')^2 y_\eps^1 &&\overset{*}{\rightharpoonup} \pi^1_1 \quad &&\text{in} \quad L^{\infty}(0,h;L^{q'}(\Omega)) \label{g19},\\
		&\lvert \nabla_\eps^2 y_\eps^1 \rvert^{q-2} \eps^{-1} \nabla' \partial_3 y_\eps^1 &&\overset{*}{\rightharpoonup} \pi^1_2 \quad &&\text{in} \quad L^{\infty}(0,h;L^{q'}(\Omega)) \label{g20},\\
		&\lvert \nabla_\eps^2 y_\eps^1 \rvert^{q-2} \eps^{-2}\partial_{33} y_\eps^1 &&\overset{*}{\rightharpoonup} \pi^1_3 \quad &&\text{in} \quad L^{\infty}(0,h;L^{q'}(\Omega)) \label{g21}.
	\end{alignat}
	
	We consider the limit as $\eps \to 0$ of each term in \eqref{g14} separately. As for the first one, we write the duality pairing explicitly as
	\begin{align*}
		&\int_0^h \langle D K_\eps (y_\eps^1), \phi^\eps \rangle \; \d t \\&= \int_0^h \int_\Omega \frac{1}{4} \biggl\{ \nabla_\eps y_\eps^1 \biggl[ \mathcal{C}\bigl((\nabla_\eps y_\eps^1)^T\nabla_\eps y_\eps^1 - {\rm Id}\bigr)^T + \mathcal{C}\bigl((\nabla_\eps y_\eps^1)^T\nabla_\eps y_\eps^1 - {\rm Id}\bigr)  \biggr]	\biggr\} \\&\qquad\qquad\quad: \biggl\{ (\nabla' \phi_1 + \eps x_3 \nabla'\phi_2^\eps + \eps^2 \nabla'\Phi_3^\eps \vert \phi_2^\eps + \eps \partial_3 \Phi_3^\eps ) \biggr\} \; \d x\; \d t.
	\end{align*}
	Then, the strong convergences \eqref{g13} and \eqref{g16} give the convergence to
	\begin{align*}
		&\int_0^h \int_\omega \frac{1}{4} \biggl\{ (\nabla' y^1\vert b^1) \biggl[ \mathcal{C}\bigl((\nabla' y^1\vert b^1)^T(\nabla' y^1 \vert b^1) - {\rm Id}\bigr)^T + \mathcal{C}\bigl((\nabla' y^1\vert b^1)^T(\nabla' y^1 \vert b^1) - {\rm Id}\bigr)  \biggr]	\biggr\} \\&\qquad\qquad\quad: \biggl\{ (\nabla' \phi_1 \vert \phi_2 ) \biggr\} \; \d x' \; \d t,
	\end{align*}
	namely,
	\begin{equation}
		\label{g25}
		\int_0^h \langle D K_\eps (y_\eps^1), \phi^\eps \rangle \; \d t \to \int_0^h \langle D_1 K (y^1,b^1), \phi_1 \rangle \; \d t + \int_0^h ( D_2 K (y^1,b^1), \phi_2 ) \; \d t.
	\end{equation}
	Analogously, we write the second term as
	\begin{align*}
		\int_0^h \langle DJ_\eps (y_\eps^1) , \phi^\eps \rangle \; \d t = -\int_0^h \int_\Omega a \frac{{\rm cof}\, \nabla_\eps y_\eps^1}{(\det \nabla_\eps y_\eps^1)^{a+1}} : (\nabla' \phi^\eps \vert \eps^{-1} \partial_3 \phi^\eps)\; \d x \; \d t.
	\end{align*}
	Since, by \eqref{g2} and Lemma \ref{PositiveDet}, $\det \nabla_\eps y_\eps^1 > \mu$ in $[0,h]\times\bar{\Omega}$ with $\mu$ independent of $\eps$, convergences \eqref{g13} and \eqref{g16} imply
	\begin{align*}
		&\int_0^h \int_\Omega a \frac{{\rm cof}\, \nabla_\eps y_\eps^1}{(\det \nabla_\eps y_\eps^1)^{a+1}} : (\nabla' \phi^\eps \vert \eps^{-1} \partial_3 \phi^\eps) \; \d x\; \d t \\&\quad\quad\to \int_0^h \int_\omega a \frac{{\rm cof}\, (\nabla' y^1 \vert b^1)}{(\det (\nabla' y^1 \vert b^1))^{a+1}} : (\nabla' \phi_1 \vert \phi_2)\; \d x' \; \d t,
	\end{align*}
	that is
	\begin{equation}
		\label{g26}
		\int_0^h \langle DJ_\eps (y_\eps^1) , \phi^\eps \rangle \; \d t \to \int_0^h \langle D_1J (y^1,b^1) , \phi_1 \rangle \; \d t + \int_0^h ( D_2J (y^1,b^1) , \phi_2 ) \; \d t.
	\end{equation}
	As for the third term, we write
		$$
		\int_0^h \langle DT_\eps (y_\eps^1) , \phi^\eps \rangle \; \d t = \int_0^h \int_\Omega \lvert \nabla_\eps^2 y_\eps^1 \rvert^{q-2} \, \nabla_\eps^2 y_\eps^1 : \nabla_\eps^2 \phi^\eps \; \d x \; \d t.
		$$
		Hence, convergences \eqref{g17}--\eqref{g21} result in
		\begin{equation}
			\label{g26b}
			\int_0^h \langle DT_\eps (y_\eps^1) , \phi^\eps \rangle \; \d t \to \int_0^h \int_\Omega \bigl( \pi_1^1 : (\nabla')^2 \phi_1 + 2 \pi_2^1 : \nabla' \phi_2 + \pi_3^1 \cdot \phi_3 \bigr) \; \d x \; \d t.
	\end{equation}
	To address the limit in the dissipation term, we observe that $\partial_t C_\eps^1 = (\nabla_\eps \partial_t y_\eps^1)^T \nabla_\eps y_\eps^1 + (\nabla_\eps y_\eps^1)^T \nabla_\eps \partial_t y_\eps^1$ is symmetric. Consequently, one gets
	\begin{align*}
		\int_0^h \langle D_2 R_\eps (y_\eps^1, \partial_t y_\eps^1) , \phi^\eps \rangle \; \d t =  \int_0^h \int_\Omega 4 \, \nabla_\eps y_\eps^1 \, \partial_t C_\eps^1 : (\nabla' \phi^\eps \vert \eps^{-1} \partial_3 \phi^\eps) \; \d x \; \d t.
	\end{align*}
	Then, the weak convergence \eqref{g9}, combined with the strong convergences \eqref{g13} and \eqref{g16}, guarantees the convergence to
	\begin{align*}
		\int_0^h \int_\omega 4 \, (\nabla' y^1 \vert b^1) \, \partial_t C^1 : (\nabla' \phi_1 \vert \phi_2) \; \d x' \; \d t,
	\end{align*}
	namely, with the convention introduced after \eqref{MdefDR},
	\begin{align}
		\label{g27}
		&\int_0^h \langle D_2 R_\eps (y_\eps^1, \partial_t y_\eps^1) , \phi^\eps \rangle \; \d t\nonumber\\ &\to \int_0^h \langle D_2 R (y^1, \partial_t y^1,b^1,\partial_t b^1) , \phi_1 \rangle \; \d t + \int_0^h ( D_4 R (y^1, \partial_t y^1,b^1,\partial_t b^1) , \phi_2 ) \; \d t.
	\end{align}
	Finally, Assumptions (A2) and (A3), namely convergences \eqref{v0conv} and \eqref{f0conv}, together with \eqref{g10} and \eqref{g15}, give
	\begin{equation}
		\label{g28}
	  \int_0^h \biggl( -(f_\eps , \phi^\eps) + \frac{\rho}{h} (\partial_t y_\eps^1 - v_{0\eps} , \phi^\eps ) \biggr) \; \d t \to \int_0^h \biggl( -(f_0 , \phi_1) + \frac{\rho}{h} (\partial_t y^1 - v_0 , \phi_1 ) \biggr) \; \d t.
	\end{equation} 
	Using convergences \eqref{g25}--\eqref{g28} in equation \eqref{g14} delivers the following triple of equations
	\begin{align}
		&\int_0^h \biggl( \langle D_1 K (y^1,b^1) , \phi_1 \rangle + \langle D_1 J (y^1,b^1) , \phi_1 \rangle + \int_\Omega \pi^1_1 : (\nabla')^2 \phi_1 \;  \d x \nonumber\\ &\qquad\;+ \langle D_2 R (y^1,\partial_t y^1,b^1,\partial_t b^1) , \phi_1 \rangle - (f_0, \phi_1)+ \frac{\rho}{h} (\partial_t y^1 - v_0 , \phi_1) \biggr) \; \d t = 0,\label{g22}\\
		&\int_0^h \biggl( ( D_2 K(y^1,b^1) , \phi_2 ) + ( D_2 J(y^1,b^1) , \phi_2 ) + 2 \int_\Omega \pi^1_2 : \nabla' \phi_2 \; \d x \nonumber\\&\qquad\;+ ( D_4 R (y^1,\partial_t y^1,b^1,\partial_t b^1) , \phi_2 ) \biggr) \; \d t = 0,\label{g23}\\
		&\int_0^h  \int_\Omega \pi^1_3 \cdot \phi_3   \; \d x \; \d t = 0,\label{g24}
	\end{align}
	which hold for every $(\phi_1,\phi_2) \in L^2(0,h;V_0)$ and $\phi_3 \in L^2(0,h;L^q(\Omega))$.

	\subsection{Identification of the limit}
	\label{S64}
	First, observe that, from the arbitrariness of $\phi_3 \in L^2(0,h;L^{q}(\Omega))$, equation \eqref{g24} implies that
	$$
	\pi_3^1(t,x) = 0 \quad \text{almost everywhere in} \quad (0,h) \times \Omega.
	$$
	Next, we proceed by identifying the limits $\pi_1^1$ and $\pi_2^1$. Consider the product space $\mathcal{G}= \mathbb{M}^{3 \times 2 \times 2} \times \mathbb{M}^{3 \times 2} \times \mathbb{R}^3$ and introduce the auxiliary function $\kappa: \mathcal{G}  \to \mathbb{R}$ given by
		$$
		\kappa(\mathbb{G},F,w) = \frac{1}{q} \bigl( \lvert \mathbb{G} \rvert^2 + 2 \lvert F \rvert^2 + \lvert w \rvert^2 \bigr)^{q/2}.
		$$
		The function $\kappa$ is convex and continuously differentiable on $\mathcal{G}$, and its partial derivatives read
		\begin{align*}
			&D_1 \kappa(\mathbb{G},F,w) = \bigl( \lvert \mathbb{G} \rvert^2 + 2 \lvert F \rvert^2 + \lvert w \rvert^2 \bigr)^{(q-2)/2}\, \mathbb{G},\\
			&D_2 \kappa(\mathbb{G},F,w) = 2\bigl( \lvert \mathbb{G} \rvert^2 + 2 \lvert F \rvert^2 + \lvert w \rvert^2 \bigr)^{(q-2)/2}\, F,\\
			&D_3 \kappa(\mathbb{G},F,w) = \bigl( \lvert \mathbb{G} \rvert^2 + 2 \lvert F \rvert^2 + \lvert w \rvert^2 \bigr)^{(q-2)/2}\, w,
		\end{align*}
		so that, for every direction $(\tilde{\mathbb{G}},\tilde{F},\tilde{w}) \in \mathcal{G}$,
		$$
		D \kappa(\mathbb{G},F,w) \cdot (\tilde{\mathbb{G}},\tilde{F},\tilde{w})  = D_1 \kappa(\mathbb{G},F,w) : \tilde{\mathbb{G}}  +  D_2 \kappa (\mathbb{G},F,w) : \tilde{F}  + D_3 \kappa(\mathbb{G},F,w) \cdot \tilde{w}.
		$$
		For brevity, we set
		$$
		\Theta_\eps^1 = \bigl((\nabla')^2 y_\eps^1,\eps^{-1}\nabla' \partial_3 y_\eps^1, \eps^{-2}\partial_{33}y_\eps^1\bigr) \quad \text{and} \quad \Theta^1 = \bigl((\nabla')^2 y^1,\nabla' b^1, d^1\bigr).
		$$
		Since $\lvert \nabla_\eps^2 y_\eps^1 \rvert^2 = \lvert (\nabla')^2 y_\eps^1 \rvert^2 + 2 \lvert \eps^{-1}\nabla'\partial_3 y_\eps^1 \rvert^2 + \lvert \eps^{-2}\partial_{33}y_\eps^1 \rvert^2$, one has
		$$
		T_\eps(y_\eps^1)= \int_\Omega \kappa(\Theta_\eps^1) \; \d x
		$$
		and
		$$
		D\kappa(\Theta_\eps^1) = \lvert \nabla_\eps^2 y_\eps^1 \rvert^{q-2} \bigl( (\nabla')^2 y_\eps^1 ,\, 2 \eps^{-1}\nabla'\partial_3 y_\eps^1 ,\, \eps^{-2}\partial_{33}y_\eps^1 \bigr).
		$$
		Hence, convergences \eqref{g19}--\eqref{g21} can be written as $D\kappa(\Theta_\eps^1) \overset{*}{\rightharpoonup} (\pi_1^1, 2\pi_2^1, \pi_3^1)$ in $L^{\infty}(0,h;L^{q'}(\Omega;\mathcal{G}))$, whereas convergence \eqref{g6} gives $\Theta_\eps^1 \overset{*}{\rightharpoonup} \Theta^1$ in $L^{\infty}(0,h;L^{q}(\Omega;\mathcal{G}))$.
		By the convexity of $\kappa$, its gradient $D\kappa$ is monotone. Consequently, for every $(\mathbb{G},F,w) \in L^q(0,h;L^q(\Omega;\mathcal{G}))$, one has
		\begin{align}
				\label{g29}
				&\int_0^h \int_\Omega  D\kappa (\Theta_\eps^1) \cdot \bigl( \Theta_\eps^1 - (\mathbb{G},F,w) \bigr) \; \d x \; \d t \nonumber\\
				&\qquad \geq \int_0^h \int_\Omega D\kappa(\mathbb{G},F,w) \cdot \bigl( \Theta_\eps^1 - (\mathbb{G},F,w) \bigr) \; \d x \;  \d t.
		\end{align}
		
		We first consider the term
		\begin{equation}
			\label{tempor}
			\int_0^h \int_\Omega D \kappa (\Theta_\eps^1) \cdot \Theta_\eps^1 \; \d x \; \d t = \int_0^h \langle DT_\eps(y_\eps^1) , y_\eps^1 \rangle \; \d t.
		\end{equation}
		As ${\rm id}_\eps$ is affine, one has $\nabla_\eps^2 {\rm id}_\eps = 0$, hence $\langle DT_\eps(y_\eps^1) , {\rm id}_\eps \rangle = 0$, and \eqref{tempor} is equal to $\int_0^h \langle DT_\eps(y_\eps^1) , y_\eps^1 - {\rm id}_\eps \rangle \; \d t$. Since $y_\eps^1 - {\rm id}_\eps$ can be used as test function in equation \eqref{seqPweakEq}, one has
		\begin{align*}
			&\int_0^h \langle DT_\eps(y_\eps^1) , y_\eps^1  - {\rm id}_\eps\rangle \; \d t  \\
			&= - \int_0^h \bigl( \langle D K_\eps(y_\eps^1) , y_\eps^1 - {\rm id}_\eps \rangle + \langle DJ_\eps(y_\eps^1), y_\eps^1 - {\rm id}_\eps \rangle \\&\qquad\quad+  \langle D_2 R_\eps (y_\eps^1,\partial_t y_\eps^1 ) , y_\eps^1 - {\rm id}_\eps \rangle - (f_\eps , y_\eps^1 - {\rm id}_\eps)+ \frac{\rho}{h} (\partial_t y_\eps^1 - v_{0\eps} , y_\eps^1 - {\rm id}_\eps) \bigr) \; \d t.
		\end{align*}
		Then, due to the strong convergences \eqref{g12} and \eqref{g13}, arguing along the lines of Subsection \ref{S63}, it follows that
		\begin{align*}
			&\int_0^h \biggl( \langle D K_\eps(y_\eps^1) , y_\eps^1 - {\rm id}_\eps \rangle + \langle DJ_\eps(y_\eps^1), y_\eps^1 - {\rm id}_\eps \rangle \nonumber\\&\qquad\quad+  \langle D_2 R_\eps (y_\eps^1,\partial_t y_\eps^1 ) , y_\eps^1 - {\rm id}_\eps \rangle - (f_\eps , y_\eps^1 - {\rm id}_\eps)+ \frac{\rho}{h} (\partial_t y_\eps^1 - v_{0\eps} , y_\eps^1 - {\rm id}_\eps) \biggr) \; \d t\nonumber\\
			&\to \int_0^h \biggl( \langle D_1 K(y^1,b^1) , y^1 - {\rm id}_0 \rangle + \langle D_1 J (y^1,b^1), y^1 - {\rm id}_0 \rangle\nonumber \\&\qquad\quad+  \langle D_2 R (y^1,\partial_t y^1,b^1,\partial_t b^1 ) , y^1 - {\rm id}_0 \rangle - (f_0 , y^1 - {\rm id}_0)+ \frac{\rho}{h} (\partial_t y^1 - v_0 , y^1 - {\rm id}_0) \biggr) \; \d t\nonumber\\
			&\quad + \int_0^h \biggl( ( D_2 K(y^1,b^1) , b^1 - (0,0,1)^T ) + ( D_2 J (y^1,b^1), b^1 - (0,0,1)^T )\nonumber \\&\qquad\quad+  ( D_4 R (y^1,\partial_t y^1,b^1,\partial_t b^1 ) , b^1 - (0,0,1)^T ) \biggr) \; \d t.
		\end{align*}
		Therefore, using equations \eqref{g22} and \eqref{g23} with $\phi_1 = y^1 - {\rm id}_0$ and $\phi_2 = b^1 - (0,0,1)^T$, and exploiting the fact that $(\nabla')^2 {\rm id}_0$ and $\nabla' (0,0,1)^T$ are null, we obtain
		\begin{equation}
			\label{g33}
			\int_0^h \langle DT_\eps(y_\eps^1) , y_\eps^1 - {\rm id}_\eps \rangle \; \d t \to \int_0^h \int_\Omega \bigl( \pi_1^1 : (\nabla')^2  y^1 + 2\pi_2^1 : \nabla' b^1 \bigr) \;  \d x \; \d t.
		\end{equation}
		Using convergences \eqref{g6}, \eqref{g19}--\eqref{g21}, and \eqref{g33} in inequality \eqref{g29}, and recalling that $\pi_3^1 = 0$, we obtain
		\begin{align*}
			0 \leq \int_0^h \int_\Omega \Bigl( &\bigl[\pi_1^1 - D_1\kappa(\mathbb{G},F,w) \bigr]  : \bigl[ (\nabla')^2 y^1- \mathbb{G} \bigr]  \nonumber\\ &+ \bigl[2\pi_2^1 - D_2\kappa(\mathbb{G},F,w) \bigr] : \bigl[ \nabla' b^1- F \bigr] \nonumber\\ &+ \bigl[\pi_3^1 - D_3\kappa(\mathbb{G},F,w)\bigr] \cdot \bigl[ d^1- w \bigr]  \Bigr) \; \d x \; \d t.
		\end{align*}
		The integrand on the right-hand side equals $\bigl((\pi_1^1,2\pi_2^1,\pi_3^1) - D\kappa(\mathbb{G},F,w)\bigr) \cdot \bigl(\Theta^1 - (\mathbb{G},F,w)\bigr)$. As $D\kappa$ is continuous and satisfies $\lvert D\kappa(\lambda) \rvert \leq c \lvert \lambda \rvert^{q-1}$ for every $\lambda \in \mathcal{G}$, the dominated convergence theorem shows that $D\kappa$ is hemicontinuous from $L^q(0,h;L^q(\Omega;\mathcal{G}))$ into its dual. Hence, Lemma \ref{LemMinty} gives $(\pi_1^1,2\pi_2^1,\pi_3^1) = D\kappa(\Theta^1)$ almost everywhere in $(0,h) \times \Omega$.
		Since $\pi_3^1 = 0$, the expression of $D_3\kappa$ gives $d^1 = 0$ almost everywhere in $(0,h) \times \Omega$. Then, as $y^1$ and $b^1$ are independent of $x_3$, the expressions of $D_1\kappa$ and $D_2\kappa$ result in
		$$
		\pi_1^1 = \bigl( \lvert (\nabla')^2 y^1 \rvert^2 + 2 \lvert \nabla' b^1 \rvert^2 \bigr)^{(q-2)/2} \, (\nabla ')^2 y^1
		$$
		and
		$$
		\pi_2^1 = \bigl( \lvert (\nabla')^2 y^1 \rvert^2 + 2 \lvert \nabla' b^1 \rvert^2 \bigr)^{(q-2)/2} \, \nabla ' b^1
		$$
		almost everywhere in $(0,h)\times \omega$. Comparing with \eqref{MdefDE}, we conclude that $\int_\Omega \pi_1^1 : (\nabla')^2 \phi_1 \; \d x = \langle D_1 T(y^1,b^1) , \phi_1 \rangle$ and $2\int_\Omega \pi_2^1 : \nabla' \phi_2 \; \d x = \langle D_2 T(y^1,b^1) , \phi_2 \rangle$ for all $\phi_1$ and $\phi_2$ as in \eqref{g22}--\eqref{g23}.

	\subsection{Improved integrability}
	\label{S65}
	The membrane deformation $(y^1,b^1)$ satisfies, for every $(\phi_1,\phi_2) \in L^2(0,h;V_0)$, the following equations
	\begin{align}
		\label{g34}
		&\int_0^h \biggl( \langle D_1 K (y^1,b^1) , \phi_1 \rangle + \langle D_1 J (y^1,b^1) , \phi_1 \rangle + \langle D_1 T(y^1,b^1) , \phi_1 \rangle \nonumber\\ &\qquad\;+ \langle D_2 R (y^1,\partial_t y^1,b^1,\partial_t b^1) , \phi_1 \rangle - (f_0, \phi_1)+ \frac{\rho}{h} (\partial_t y^1 - v_0 , \phi_1) \biggr) \; \d t = 0,
	\end{align}
	and
	\begin{align}
		\label{g35}
		&\int_0^h \biggl( ( D_2 K(y^1,b^1) , \phi_2 ) + ( D_2 J(y^1,b^1) , \phi_2 ) + \langle D_2 T(y^1,b^1) , \phi_2 \rangle \nonumber\\&\qquad\;+ ( D_4 R (y^1,\partial_t y^1,b^1,\partial_t b^1) , \phi_2 ) \biggr) \; \d t = 0.
	\end{align}
	
	Set
	\begin{align*}
		&\mathcal{D}_1^1 := D_1 K(y^1,b^1) + D_1 J(y^1,b^1) + D_2 R(y^1,\partial_t y^1, b^1 , \partial_t b^1)-f_0 + \frac{\rho}{h} (\partial_t y^1 - v_0),\\
		&\mathcal{D}_2^1 := D_2 K(y^1,b^1) + D_2 J(y^1,b^1) + D_4 R(y^1,\partial_t y^1, b^1 , \partial_t b^1),
	\end{align*}
	so that $\mathcal{D}_1^1 \in L^2(0,h;H^{-1}(\omega))$ and $\mathcal{D}_2^1 \in L^2(0,h;L^2(\omega))$. As $V_0^1 \supset C_c^\infty(\omega)$ is dense in $H_0^1(\omega)$ and $W_0^{1,q}(\omega)$ is dense in $L^2(\omega)$, Lemma \ref{LemC1}, applied with $X = V_0^1$, $Z = H_0^1(\omega)$, $\xi = D_1 T(y^1,b^1)$, and $\zeta = \mathcal{D}_1^1$, and with $X = W_0^{1,q}(\omega)$, $Z = L^2(\omega)$, $\xi = D_2 T(y^1,b^1)$, and $\zeta = \mathcal{D}_2^1$, extends \eqref{g34} to every $\phi_1 \in L^2(0,h;H_0^1(\omega))$ and \eqref{g35} to every $\phi_2 \in L^2(0,h;L^2(\omega))$, namely,
	\begin{align}
		\label{g36}
		&\int_0^h \biggl( \langle D_1 K (y^1,b^1) , \phi_1 \rangle + \langle D_1 J (y^1,b^1) , \phi_1 \rangle + \langle D_1 T(y^1,b^1) , \phi_1 \rangle_{H_0^1} \nonumber\\ &\qquad\;+ \langle D_2 R (y^1,\partial_t y^1,b^1,\partial_t b^1) , \phi_1 \rangle - (f_0, \phi_1)+ \frac{\rho}{h} (\partial_t y^1 - v_0 , \phi_1) \biggr) \; \d t = 0,
	\end{align}
	and
	\begin{align}
		\label{g37}
		&\int_0^h \biggl( ( D_2 K(y^1,b^1) , \phi_2 ) + ( D_2 J(y^1,b^1) , \phi_2 ) + ( D_2 T(y^1,b^1) , \phi_2 ) \nonumber\\&\qquad\;+ ( D_4 R (y^1,\partial_t y^1,b^1,\partial_t b^1) , \phi_2 ) \biggr) \; \d t = 0,
	\end{align}
	which are \eqref{MseqPweakEq} and \eqref{MseqPweakEq2} for $l=1$.
		
	\subsection{Inductive iteration}
	\label{S66}
	
	Consider the sequential problem \eqref{sequential}--\eqref{initialSequentialI} for $l=2$. Theorem \ref{Theorem1}.$(ii)$ guarantees the existence of a weak solution $y_\eps^2$ in the sense of Definition \ref{defSolSeq}. In particular, $y_\eps^2$ satisfies, for any $\phi \in L^2(h,2h;H_0^1(\Omega))$,
	\begin{align}
		&\int_h^{2h} \biggl( \langle D E_\eps (y_\eps^2) , \phi \rangle_{H_0^1} + \langle D_2 R_\eps (y_\eps^2,\partial_t y_\eps^2) , \phi \rangle \nonumber \\&\qquad\qquad \qquad\qquad - (f_\eps, \phi) + \frac{\rho}{h} (\partial_t y_\eps^2 - v_\eps^1 , \phi) \biggr) \; \d t = 0,\label{g41c}\\
		&y_\eps^2(h)=y_\eps^1(h) \qquad \text{and} \qquad v_\eps^1(\cdot) = \partial_t y_\eps^1(\cdot - h) . \label{g41d}
	\end{align}
	Passing to the limit $\eps \to 0$ in \eqref{g41c}--\eqref{g41d} then requires the convergence of $y_\eps^1(h)$ and $\partial_t y_\eps^1(\cdot - h)$, which is guaranteed by \eqref{g13} and \eqref{g10}. Indeed, one has
	\begin{align}
		&\nabla_\eps y_\eps^1(h) \to (\nabla' y^1(h)\vert b^1(h)) \quad \text{in} \quad C^{0,\alpha}(\bar{\omega}),\label{g41b}\\
		&\partial_t y_\eps^1(\cdot - h) \rightharpoonup  \partial_t y^1(\cdot - h) \quad \text{in} \quad L^2(h,2h;L^2(\omega))\label{g41}.
	\end{align}
	
	Next, inequality \eqref{L36} for $l=2$, together with $E_\eps(y_\eps^2(h)) = E_\eps(y_\eps^1(h)) \leq c$ and $\int_h^{2h} \lVert v_\eps^1 \rVert^2 \; \d t \leq c$ by \eqref{g2}, delivers the analogue of \eqref{g2} on $[h,2h]$. Hence, Lemma \ref{TimeComp} applies to $y_\eps^2$ on $[h,2h]$ and gives the analogues of \eqref{g4}--\eqref{g13}, with limits $y^2$, $b^2$, and $d^2$. In particular, since $y_\eps^2(h) = y_\eps^1(h)$, the uniform convergences \eqref{g12}--\eqref{g13} and their analogues on $[h,2h]$ give $y^2(h) = y^1(h)$ and $b^2(h) = b^1(h)$. These convergences, combined with \eqref{g41b}--\eqref{g41}, allow us to repeat the arguments of Subsections \ref{S63}--\ref{S65} on $[h,2h]$. Observe that, for $l=1$, the initial velocity $v_{0\eps}$ converges strongly by \eqref{v0conv}, whereas, for $l=2$, it corresponds to the time-dependent term $v_\eps^1$, which converges only weakly in $L^2(h,2h;L^2(\omega))$. Since $v_\eps^1$ enters \eqref{g41c} linearly, the weak convergence is enough, also in the monotonicity argument of Subsection \ref{S64}, where it is tested with the strongly converging sequence $y_\eps^2 - {\rm id}_\eps$.
	
	Finally, Theorem \ref{Theorem3} follows by iterating this procedure for all $l$.
	
	\section{Proof of Theorem \ref{Theorem5}}
		\label{ProofT5}
		This section is devoted to the proof of Theorem \ref{Theorem5}. We consider the limit as $\eps \to 0$ in equation \eqref{m1}, satisfied together with \eqref{m2} by the family of solutions $\{ y_\eps^{(h)} \}_h$ provided by Theorem \ref{Theorem1}.$(iii)$. Since no iteration over $l$ is involved here, this amounts to repeating, over the whole interval $[0,T]$, the argument used in the proof of Theorem \ref{Theorem3} (Section \ref{S6}) for the first sequential step $y_\eps^1$ on $[0,h]$. We therefore keep the presentation brief, referring back to Section \ref{S6} for the details of each step.
		
		\subsection{Estimates and convergences of the deformations}
		\label{S81}
		By estimates \eqref{est0T4} and \eqref{est0T5}, whose right-hand sides are bounded independently of $\eps$ by Assumptions (A1)--(A3), the family $\{y_\eps^{(h)}\}_\eps$ satisfies \eqref{TCbound} on $[0,T]$, with a constant independent of $\eps$ and $h$. Hence, Lemma \ref{W2qnormEps} gives
		\begin{align*}
			& \sup_{t \in [0,T]} \biggl( \lVert y_\eps^{(h)} (t) \rVert_{W^{2,q}(\Omega)} + \lVert \nabla_\eps y_\eps^{(h)} (t) \rVert_{W^{1,q}(\Omega)} + \lVert \nabla_\eps^2 y_\eps^{(h)} (t) \rVert_{L^q(\Omega)}  \biggr) \leq c,
		\end{align*}
		and Lemma \ref{TimeComp}, applied on $[0,T]$, provides $y^{(h)}$, $b^{(h)}$, and $d^{(h)}$ such that, setting $C^{(h)} := (\nabla' y^{(h)} \vert b^{(h)})^T(\nabla' y^{(h)} \vert b^{(h)})$ and $C_\eps^{(h)} := (\nabla_\eps y_\eps^{(h)})^T\nabla_\eps y_\eps^{(h)}$, the following convergences hold up to non-relabeled subsequences
		\begin{alignat}{4}
			&y_\eps^{(h)} &&\overset{*}{\rightharpoonup} y^{(h)} \quad &&\text{in} \quad L^{\infty}(0,T;W^{2,q}(\omega))\label{i4},\\
			&\nabla_\eps y_\eps^{(h)} &&\overset{*}{\rightharpoonup} (\nabla' y^{(h)} \vert b^{(h)}) \quad &&\text{in} \quad L^{\infty}(0,T;W^{1,q}(\omega)),\label{i5}\\
			&\nabla_\eps^2 y_\eps^{(h)} &&\overset{*}{\rightharpoonup} \begin{pmatrix}
				(\nabla')^2 y^{(h)} & \nabla' b^{(h)}\\
				(\nabla'b^{(h)})^T & d^{(h)}
			\end{pmatrix}\quad&&\text{in} \quad L^{\infty}(0,T;L^q(\Omega))\label{i6},\\
			&\partial_t y_\eps^{(h)} &&\rightharpoonup \partial_t y^{(h)} \quad &&\text{in} \quad L^2(0,T;L^2(\omega)),\label{i7}\\
			&\partial_t C_\eps^{(h)} &&\rightharpoonup \partial_t C^{(h)} \quad &&\text{in} \quad L^2(0,T;L^2(\omega)),\label{i9}\\
			&y_\eps^{(h)} &&\to  y^{(h)} \quad &&\text{in} \quad C^0([0,T];C^{1,\alpha}(\bar{\omega})),\label{i12}\\
			&\nabla_\eps y_\eps^{(h)} &&\to (\nabla' y^{(h)} \vert b^{(h)}) \quad &&\text{in} \quad C^0([0,T];C^{0,\alpha}(\bar{\omega})).\label{i13}
		\end{alignat}
		Moreover, the bound on $\nabla_\eps^2 y_\eps^{(h)}$ provides $\pi_1^{(h)},\pi_2^{(h)},\pi_3^{(h)} \in L^{\infty}(0,T;L^{q'}(\Omega))$ such that, up to non-relabeled subsequences,
		  \begin{alignat}{4}
			&\lvert \nabla_\eps^2 y_\eps^{(h)} \rvert^{q-2} (\nabla')^2 y_\eps^{(h)} &&\overset{*}{\rightharpoonup} \pi^{(h)}_1 \quad &&\text{in} \quad L^{\infty}(0,T;L^{q'}(\Omega)),\label{i19}\\
			&\lvert \nabla_\eps^2 y_\eps^{(h)} \rvert^{q-2} \eps^{-1} \nabla' \partial_3 y_\eps^{(h)} &&\overset{*}{\rightharpoonup} \pi^{(h)}_2 \quad &&\text{in} \quad L^{\infty}(0,T;L^{q'}(\Omega)),\label{i20}\\
			&\lvert \nabla_\eps^2 y_\eps^{(h)} \rvert^{q-2} \eps^{-2}\partial_{33} y_\eps^{(h)} &&\overset{*}{\rightharpoonup} \pi^{(h)}_3 \quad &&\text{in} \quad L^{\infty}(0,T;L^{q'}(\Omega)).\label{i21}
		\end{alignat}
		Finally, Lemma \ref{TimeComp} guarantees $b^{(h)} \in C_w([0,T];W^{1,q}(\omega))$, $C^{(h)} \in H^1(0,T;L^2(\omega))$, $y^{(h)} \in C_w([0,T];W^{2,q}(\omega)) \cap H^1(0,T;L^2(\omega))$, and $(y^{(h)}(t),b^{(h)}(t)) \in B$ for every $t \in [0,T]$. The strong convergences \eqref{i12}--\eqref{i13}, together with \eqref{y0conv} and \eqref{b0conv}, guarantee $y^{(h)}(0)=y_0$ and $b^{(h)}(0)=b_0$.

		\subsection{Convergence of the equation}
		\label{S83}
		Fix test functions $(\phi_1,\phi_2) \in L^2(0,T;V_0)$ and $\phi_3\in L^2(0,T;L^q(\Omega))$. We consider in \eqref{m1} the functions $\phi^\eps$ constructed from $\phi_1$, $\phi_2$, and $\phi_3$ as in Subsection \ref{S63}, with $[0,h]$ replaced by $[0,T]$. They take values in $V$, so that the duality pairings in \eqref{m1} are given by the explicit expressions \eqref{defDE}--\eqref{defDR}, and they satisfy
		\begin{alignat}{4}
			&\phi^\eps &&\to \phi_1 \quad &&\text{in} \quad L^2(0,T;W^{2,q}(\omega)\cap H_0^1(\omega))\label{i15},\\
			&\nabla_\eps \phi^\eps &&\to (\nabla' \phi_1 \vert \phi_2) \quad &&\text{in} \quad L^2(0,T;W_0^{1,q}(\omega)),\label{i16}\\
			&\nabla_\eps^2 \phi^\eps &&\to \begin{pmatrix}
				(\nabla')^2 \phi_1 & \nabla' \phi_2\\
				(\nabla'\phi_2)^T & \phi_3
			\end{pmatrix}\quad&&\text{in} \quad L^2(0,T;L^q(\Omega)).\label{i17}
		\end{alignat}
		Passing to the limit term by term in \eqref{m1}, tested against $\phi^\eps$, is done exactly as in Subsection \ref{S63}, using \eqref{i4}--\eqref{i17} and Assumption (A3), namely convergence \eqref{f0conv}, in place of the corresponding convergences and assumptions for $y_\eps^1$. This delivers
		\begin{align}
			&\int_0^T \biggl( \langle D_1 K (y^{(h)},b^{(h)}) , \phi_1 \rangle + \langle D_1 J (y^{(h)},b^{(h)}) , \phi_1 \rangle   \nonumber\\ &\qquad\; + \int_\Omega \pi_1^{(h)} : (\nabla')^2 \phi_1 \; \d x + \langle D_2 R (y^{(h)},\partial_t y^{(h)},b^{(h)},\partial_t b^{(h)}) , \phi_1 \rangle  \nonumber\\&\qquad\;- (f_0, \phi_1) + \frac{\rho}{h} (\partial_t y^{(h)} - \partial_t y^{(h)}(\cdot - h) , \phi_1) \biggr) \; \d t = 0,\label{i22}\\
			&\int_0^T \biggl( ( D_2 K(y^{(h)},b^{(h)}) , \phi_2 ) + ( D_2 J(y^{(h)},b^{(h)}) , \phi_2 ) + 2\int_\Omega \pi^{(h)}_2 : \nabla' \phi_2 \; \d x \nonumber\\&\qquad\;+ ( D_4 R (y^{(h)},\partial_t y^{(h)},b^{(h)},\partial_t b^{(h)}) , \phi_2 ) \biggr) \; \d t = 0,\label{i23}\\
			&\int_0^T  \int_\Omega \pi^{(h)}_3 \cdot \phi_3  \; \d x \;  \d t = 0,\label{i24}
		\end{align}
		for every $\phi_1,\phi_2,\phi_3$ as above.
		
		Concerning the initial velocity, since $y_\eps^{(h)}$ solves the time-delayed problem \eqref{tDelay}--\eqref{initialVelocity} in the sense of Definition \ref{defSoltimedelayedT}, one has $\partial_t y_\eps^{(h)}(t-h) = v_{0\eps}$ for a.e. $t \in (0,h)$. Hence, Assumption (A2), namely convergence \eqref{v0conv}, gives $\partial_t y^{(h)}(t-h) = v_0$ for a.e. $t \in (0,h)$, whereas, for $t \in (h,T)$, the shifted velocities converge weakly by \eqref{i7}.
		
		\subsection{Identification of the limit}
		\label{S84}
		The identification of $\pi_1^{(h)}, \pi_2^{(h)}, \pi_3^{(h)}$ follows the monotonicity argument of Subsection \ref{S64}, with $y_\eps^1$, $b^1$, and $[0,h]$ replaced by $y_\eps^{(h)}$, $b^{(h)}$, and $[0,T]$. Testing \eqref{i24} with an arbitrary $\phi_3$ gives
		$$
		\pi_3^{(h)}(t,x) = 0 \quad \text{for a.e. } (t,x) \in (0,T)\times\Omega.
		$$
		Using $y_\eps^{(h)} - {\rm id}_\eps$ as a test function in \eqref{m1}, and $y^{(h)} - {\rm id}_0$ and $b^{(h)} - (0,0,1)^T$ as test functions in \eqref{i22}--\eqref{i23}, the monotonicity of $D\kappa$, with $\kappa$ as in Subsection \ref{S64}, gives $d^{(h)} = 0$ almost everywhere in $(0,T)\times\Omega$,
		$$
		\pi_1^{(h)} = \bigl( \lvert (\nabla')^2 y^{(h)} \rvert^2 + 2 \lvert \nabla' b^{(h)} \rvert^2 \bigr)^{(q-2)/2} \, (\nabla')^2 y^{(h)},
		$$
		and
		$$
		\pi_2^{(h)} = \bigl( \lvert (\nabla')^2 y^{(h)} \rvert^2 + 2 \lvert \nabla' b^{(h)} \rvert^2 \bigr)^{(q-2)/2} \, \nabla' b^{(h)}
		$$
		almost everywhere in $(0,T)\times\omega$.

		\subsection{Improved integrability}
		\label{S85}
		Substituting these identifications into \eqref{i22}--\eqref{i23} gives
		\begin{align}
			&\int_0^T \biggl( \langle D_1 K (y^{(h)},b^{(h)}) , \phi_1 \rangle + \langle D_1 J (y^{(h)},b^{(h)}) , \phi_1 \rangle + \langle D_1 T(y^{(h)},b^{(h)}) , \phi_1 \rangle \nonumber\\ &\qquad\;+ \langle D_2 R (y^{(h)},\partial_t y^{(h)},b^{(h)},\partial_t b^{(h)}) , \phi_1 \rangle - (f_0, \phi_1) \nonumber\\ &\qquad\;+ \frac{\rho}{h} (\partial_t y^{(h)} - \partial_t y^{(h)}(\cdot-h) , \phi_1) \biggr) \; \d t = 0,\label{i34}\\
			&\int_0^T \biggl( ( D_2 K(y^{(h)},b^{(h)}) , \phi_2 ) + ( D_2 J(y^{(h)},b^{(h)}) , \phi_2 ) + \langle D_2 T(y^{(h)},b^{(h)}) , \phi_2 \rangle \nonumber\\&\qquad\;+ ( D_4 R (y^{(h)},\partial_t y^{(h)},b^{(h)},\partial_t b^{(h)}) , \phi_2 ) \biggr) \; \d t = 0,\label{i35}
		\end{align}
		for every $(\phi_1,\phi_2) \in L^2(0,T;V_0)$.
		Arguing as in Subsection \ref{S65}, Lemma \ref{LemC1} extends \eqref{i34} to every $\phi_1 \in L^2(0,T;H_0^1(\omega))$ and \eqref{i35} to every $\phi_2 \in L^2(0,T;L^2(\omega))$, as required by Definition \ref{MdefSoltimedelayedT}.
		
		This concludes the proof of Theorem \ref{Theorem5}.


	\subsection{Proof of Corollary \ref{CorConcat}}
	\label{S86}
	We first apply Theorem \ref{Theorem3} to $\{y_\eps^l\}_{l=1}^N$, and then the argument above to $y_\eps^{(h)}$ along the resulting subsequence, extracting a further subsequence (not relabeled). Along it, for every $l=1,\dots,N$, the gradients $\nabla_\eps y_\eps^{(h)}$ and $\nabla_\eps y_\eps^l$ coincide in $[(l-1)h,lh]\times\Omega$ and converge uniformly there to $(\nabla' y^{(h)} \vert b^{(h)})$ and $(\nabla' y^l \vert b^l)$, respectively. Hence, $(\nabla' y^{(h)} \vert b^{(h)}) = (\nabla' y^l \vert b^l)$ in $[(l-1)h,lh]\times\bar{\omega}$. Similarly, the uniform convergence \eqref{i12} and the analogue of \eqref{g12} on $[(l-1)h,lh]$ give $y^{(h)} = y^l$ in $[(l-1)h,lh]\times\bar{\omega}$. This concludes the proof of Corollary \ref{CorConcat}.
	
	\section{Proof of Theorem \ref{Theorem6}}
	\label{ProofT6}
	This section is devoted to the proof of Theorem \ref{Theorem6}. We perform the limit as $h \to 0$ for the family $\{(y^{(h)},b^{(h)})\}_h$ provided by Theorem \ref{Theorem5}. The estimates independent of $h$ are inherited from the bulk ones by lower semicontinuity. In contrast with Subsection \ref{S412}, the limit of the second-order stress is not identified by a monotonicity argument. Indeed, that argument relies on the convergence \eqref{quadterm} of a term which is quadratic in the velocity. This convergence follows from the weak convergence of the velocities $\partial_t y_\eps^{(h)}$ in $L^2(0,T;H_0^1(\Omega))$, given by Lemma \ref{H1normEps}, and the strong convergence of the difference quotients $\mathcal{Q}_\eps^{(h)}$ in $C^0([0,T];H^{-1}(\Omega))$. At the membrane level, the velocities $\partial_t y^{(h)}$ are bounded only in $L^2(0,T;L^2(\omega))$, since the dissipation controls only the time derivative of the Cauchy-Green tensor (see Remark \ref{RemKorn}), so that the analogue of \eqref{quadterm} is not available. For this reason, the limit of the second-order stress is described by the Young measure generated by the second gradients.
	
	\subsection{Estimates independent of $h$}
	\label{S91}
		For every $h$, let $y_\eps^{(h)}$ be the global interpolant provided by Theorem \ref{Theorem1}.$(iii)$. As observed in Subsection \ref{SGlob}, the estimates of Subsection \ref{S410} hold with $m = N$. In particular, inequality \eqref{EIeps} holds for every $t \in [0,T]$.
		
		Fix $h$ and let $\eps \to 0$ along the subsequence of Section \ref{ProofT5}. For every $t \in [0,T]$, Lemma \ref{ClosAB} applies to $\{y_\eps^{(h)}(t)\}_\eps$, and, by \eqref{i12} and \eqref{i13}, the corresponding limit is $(y^{(h)}(t),b^{(h)}(t))$ along every subsequence. Hence,
			$$
			E(y^{(h)}(t),b^{(h)}(t)) \leq \liminf_{\eps\to0} E_\eps(y_\eps^{(h)}(t)), \qquad \forall t \in [0,T].
			$$
			The weak convergences \eqref{i7} and \eqref{i9}, with $R_\eps(y_\eps^{(h)},\partial_t y_\eps^{(h)}) = \lVert \partial_t C_\eps^{(h)} \rVert^2$, the weak lower semicontinuity of the norm, Assumptions (A2)--(A3), namely convergences \eqref{v0conv} and \eqref{f0conv}, and $\limsup_{\eps\to0} E_\eps(y_{0\eps}) \leq \mathfrak{E}_0$ then allow us to pass to the limit in \eqref{EIeps}. We obtain, for every $t \in [0,T]$,
			\begin{align}
				\label{EIh}
				E (y^{(h)}(t),b^{(h)}(t)) &+ 2 \int_0^t \lVert \partial_t C^{(h)} \rVert^2 \; \d s + \frac{\rho}{2h}\int_{t-h}^t \lVert \partial_t y^{(h)} \rVert^2 \; \d s \nonumber\\
				&\leq \mathfrak{E}_0 + \int_0^t (f_0 , \partial_t y^{(h)}) \; \d s + \frac{\rho}{2}\lVert v_0 \rVert^2,
			\end{align}
			where $\partial_t y^{(h)} := v_0$ in $(-h,0)$, in accordance with Definition \ref{MdefSoltimedelayedT}. In the same way, estimates \eqref{est0T4} and \eqref{est0T5}, whose constants are independent of $\eps$ and $h$, deliver
			\begin{equation}
				\label{unifh}
				\sup_{t \in [0,T]} E(y^{(h)}(t),b^{(h)}(t)) + \int_0^T \Bigl( \lVert \partial_t C^{(h)} \rVert^2 + \lVert \partial_t y^{(h)} \rVert^2 \Bigr) \; \d t \leq c,
			\end{equation}
			with $c$ independent of $h$.
	
	\subsection{Compactness and Young measure}
	\label{S92}
	By Theorem \ref{Theorem5} and \eqref{unifh}, Remark \ref{RemTimeComp} applies to the family $\{(y^{(h)},b^{(h)})\}_h$ on $[0,T]$. Hence, there exist $y \in C_w([0,T];W^{2,q}(\omega)) \cap H^1(0,T;L^2(\omega))$ and $b \in C_w([0,T];W^{1,q}(\omega))$ such that $(y(t),b(t)) \in B$ for every $t \in [0,T]$, $\det(\nabla' y \vert b) \geq \mu$ in $[0,T]\times\bar{\omega}$, the Cauchy-Green tensor $C := (\nabla' y \vert b)^T(\nabla' y \vert b)$ belongs to $H^1(0,T;L^2(\omega))$, and, up to a subsequence (not relabeled),
	\begin{alignat}{4}
		&y^{(h)} &&\overset{*}{\rightharpoonup} y \quad &&\text{in} \quad L^\infty(0,T;W^{2,q}(\omega)),\label{j1}\\
		&b^{(h)} &&\overset{*}{\rightharpoonup} b \quad &&\text{in} \quad L^\infty(0,T;W^{1,q}(\omega)),\label{j2}\\
		&\partial_t y^{(h)} &&\rightharpoonup \partial_t y \quad &&\text{in} \quad L^2(0,T;L^2(\omega)),\label{j3}\\
		&\partial_t C^{(h)} &&\rightharpoonup \partial_t C \quad &&\text{in} \quad L^2(0,T;L^2(\omega)),\label{j4}\\
		&y^{(h)} &&\to y \quad &&\text{in} \quad C^0([0,T];C^{1,\alpha}(\bar{\omega})),\label{j5}\\
		&b^{(h)} &&\to b \quad &&\text{in} \quad C^0([0,T];C^{0,\alpha}(\bar{\omega})).\label{j6}
	\end{alignat}
	Since $y^{(h)}(0) = y_0$ and $b^{(h)}(0) = b_0$ for every $h$, the uniform convergences \eqref{j5} and \eqref{j6} guarantee $y(0) = y_0$ and $b(0) = b_0$.
	
		Set $\Theta^{(h)} := \bigl((\nabla')^2 y^{(h)},\nabla' b^{(h)}\bigr)$ and $\Theta := \bigl((\nabla')^2 y,\nabla' b\bigr)$. By \eqref{unifh}, $\Theta^{(h)}$ is bounded in $L^\infty(0,T;L^q(\omega;\mathcal{G}_0))$. Thus, by the classical theory of Young measures (see Subsection \ref{NotSol}), a further subsequence (not relabeled) of $\{\Theta^{(h)}\}_h$ generates a Young measure $\nu$ such that
	\begin{gather*}
		\esssup_{t\in(0,T)} \int_\omega \langle \nu_{t,x'} , \lvert \lambda \rvert^q \rangle \; \d x' < \infty,\\
		\Theta^{(h)} \overset{*}{\rightharpoonup} \langle \nu_{t,x'} , \lambda \rangle \quad \text{in} \quad L^\infty(0,T;L^q(\omega;\mathcal{G}_0)).
	\end{gather*}
	In view of \eqref{j1} and \eqref{j2}, we infer $\langle \nu_{t,x'} , \lambda \rangle = \Theta(t,x')$ for a.e. $(t,x') \in \omega_T$. Moreover, $\lvert D\kappa_0(\lambda) \rvert \leq c \lvert \lambda \rvert^{q-1}$, so that $D\kappa_0(\Theta^{(h)})$ is bounded in $L^\infty(0,T;L^{q'}(\omega;\mathcal{G}_0))$, and hence equiintegrable. Similarly to the convergence of the barycenter, we obtain
	\begin{equation}
		\label{j7}
		D\kappa_0(\Theta^{(h)}) \overset{*}{\rightharpoonup} \langle \nu_{t,x'} , D\kappa_0 \rangle \quad \text{in} \quad L^\infty(0,T;L^{q'}(\omega;\mathcal{G}_0)),
	\end{equation}
	namely, the weak-$*$ limits of $D_1\kappa_0(\Theta^{(h)})$ and $D_2\kappa_0(\Theta^{(h)})$ are $\langle \nu_{t,x'} , D_1\kappa_0 \rangle$ and $\langle \nu_{t,x'} , D_2\kappa_0 \rangle$, respectively.

	\subsection{Passage to the limit in the equations}
	\label{S93}
	Let $\Psi = \psi \, \phi$ with $\psi$ and $\phi$ as in Lemma \ref{TOT}, with $\mathcal{D} = \omega$ and $X = V_0^1$. Since $(y^{(h)},b^{(h)})$ is a weak solution of the membrane time-delayed problem, \eqref{MtimeDweakEw} holds for $\Psi$. As $\Psi$ takes values in $V_0^1$, by \eqref{MdefDE} the term involving $D_1 T$ reads
	$$
	\int_0^T \langle D_1 T(y^{(h)},b^{(h)}) , \Psi \rangle \; \d t = \int_0^T \int_\omega D_1\kappa_0(\Theta^{(h)}) : (\nabla')^2 \Psi \; \d x' \, \d t,
	$$
	which converges to $\int_0^T \int_\omega \langle \nu_{t,x'} , D_1\kappa_0 \rangle : (\nabla')^2 \Psi \; \d x' \, \d t$ by \eqref{j7}. The uniform convergences \eqref{j5} and \eqref{j6}, together with $\det(\nabla' y^{(h)} \vert b^{(h)}) \geq \mu$, guarantee the convergence of the terms involving $D_1 K$ and $D_1 J$, as in Subsection \ref{S63}. The dissipation term reads
	$$
	\int_0^T \langle D_2 R(y^{(h)},\partial_t y^{(h)},b^{(h)},\partial_t b^{(h)}) , \Psi \rangle \; \d t = \int_0^T \int_\omega 4 \, (\nabla' y^{(h)} \vert b^{(h)}) \, \partial_t C^{(h)} : (\nabla' \Psi \vert 0) \; \d x' \, \d t,
	$$
	and it converges by \eqref{j4}--\eqref{j6}. As for the inertial term, the argument of Subsection \ref{S412} relies only on the weak convergence of the velocities in $L^2(0,T;L^2(\omega))$ and on the initial condition on the velocity. Hence, \eqref{j3} and $\partial_t y^{(h)}(t-h) = v_0$ for a.e. $t \in (0,h)$ give
	$$
	\frac{\rho}{h} \int_0^T \bigl(\partial_t y^{(h)} - \partial_t y^{(h)}(\cdot - h) , \Psi\bigr) \; \d t \to - \rho \int_0^T (\partial_t y , \partial_t \Psi) \; \d t - \rho (v_0,\Psi(0)).
	$$
	Consequently, \eqref{MmvEq1} holds for $\Psi = \psi\,\phi$. Since each term in \eqref{MmvEq1} is continuous with respect to the topology of $L^1(0,T;W^{2,q}(\omega)) \cap H^1(0,T;H_0^1(\omega))$, Lemma \ref{TOT} extends \eqref{MmvEq1} to every $\Psi \in L^1(0,T;V_0^1) \cap H^1(0,T;H_0^1(\omega))$ such that $\Psi(T) = 0$.
	
	Next, let $\Psi \in L^1(0,T;W_0^{1,q}(\omega)) \cap L^2(0,T;L^2(\omega))$. By \eqref{MtimeDweakEw2} and \eqref{MdefDE}, one has
	\begin{align*}
		\int_0^T \biggl( ( D_2 K(y^{(h)},b^{(h)}) , \Psi ) &+ ( D_2 J(y^{(h)},b^{(h)}) , \Psi ) + \int_\omega D_2\kappa_0(\Theta^{(h)}) : \nabla' \Psi \; \d x' \\
		&+ ( D_4 R(y^{(h)},\partial_t y^{(h)},b^{(h)},\partial_t b^{(h)}) , \Psi ) \biggr) \; \d t = 0.
	\end{align*}
	Arguing as above, the convergences \eqref{j4}--\eqref{j6} and \eqref{j7} allow us to pass to the limit in each term, which proves \eqref{MmvEq2}.
	
	\subsection{Energy inequality}
	\label{S94}
		Let $\varphi \in C_c^0((0,T))$ be nonnegative. We multiply \eqref{EIh} by $\varphi(t)$, integrate over $(0,T)$, and let $h \to 0$. The uniform convergences \eqref{j5} and \eqref{j6} and the bound $\det(\nabla' y^{(h)} \vert b^{(h)}) \geq \mu$ give $K(y^{(h)}(t),b^{(h)}(t)) \to K(y(t),b(t))$ and $J(y^{(h)}(t),b^{(h)}(t)) \to J(y(t),b(t))$ uniformly in $t \in [0,T]$. Also, \eqref{YMlsc} with $f = \kappa_0$ gives
			$$
			\liminf_{h\to0} \int_0^T \varphi \, T(y^{(h)},b^{(h)}) \; \d t \geq \int_0^T \varphi \int_\omega \langle \nu_{t,x'} , \kappa_0 \rangle \; \d x' \, \d t.
			$$
		For every $t \in [0,T]$, \eqref{j4} and the weak lower semicontinuity of the norm give $\liminf_{h\to0} \int_0^t \lVert \partial_t C^{(h)} \rVert^2 \, \d s \geq \int_0^t R(y,\partial_t y,b,\partial_t b) \, \d s$, and Fatou's lemma applies to the dissipation term. As for the kinetic term, whose averaging interval $(t-h,t)$ shrinks as $h \to 0$, we set $\varphi_h(s) := \frac{1}{h}\int_s^{s+h} \varphi(t) \, \d t$, with $\varphi$ extended by zero outside $(0,T)$, and we observe that $\varphi_h = 0$ in $(-h,0)$ for $h$ small enough. By Fubini's theorem,
		$$
		\int_0^T \varphi(t) \, \frac{\rho}{2h}\int_{t-h}^t \lVert \partial_t y^{(h)} \rVert^2 \; \d s \, \d t = \frac{\rho}{2}\int_0^T \varphi_h \, \lVert \partial_t y^{(h)} \rVert^2 \; \d s.
		$$
		Since $\varphi_h \to \varphi$ uniformly and $\partial_t y^{(h)}$ is bounded in $L^2(0,T;L^2(\omega))$, the weak convergence \eqref{j3} and the weak lower semicontinuity of the norm deliver
		$$
		\liminf_{h\to0} \frac{\rho}{2}\int_0^T \varphi_h \, \lVert \partial_t y^{(h)} \rVert^2 \; \d s \geq \frac{\rho}{2}\int_0^T \varphi \, \lVert \partial_t y \rVert^2 \; \d s.
		$$
		Finally, \eqref{j3} gives the convergence of the load term for every $t \in [0,T]$, with a bound independent of $h$ and $t$, so that the dominated convergence theorem applies to it. By the arbitrariness of $\varphi$, we obtain \eqref{MmvEI} for a.e. $t \in (0,T)$.
	
	Since $\mathfrak{E}_0 \geq E(y_0,b_0)$ by Subsection \ref{Membrane}, the triple $(y,b,\nu)$ is a measure-valued solution of \eqref{Meq1S}--\eqref{Meq11S} with initial energy $\mathfrak{E}_0$ in the sense of Definition \ref{MdefSolMV}. This concludes the proof of Theorem \ref{Theorem6}.
	
	\appendix
	
	\section{Compactness results}
	\begin{theorem}{\cite[Corollary 4, p.~85]{Simon}}
		\label{cor4Simon}
		Let $X$, $B$, and $Y$ be Banach spaces such that $X \overset{c}{\hookrightarrow} B \hookrightarrow Y$. Let $\mathcal{F}$ be bounded in $L^\infty(0,T;X)$ and let $\partial_t \mathcal{F} = \{\partial_t f : f \in \mathcal{F}\}$ be bounded in $L^r(0,T;Y)$ for some $r>1$. Then, $\mathcal{F}$ is relatively compact in $C^0([0,T];B)$.
	\end{theorem}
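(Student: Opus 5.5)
This is the Aubin--Lions--Simon compactness lemma in its $C^0$ form. I plan to combine an Arzel\`a--Ascoli argument in $C^0([0,T];B)$ with the Ehrling-type interpolation inequality furnished by the compact embedding.

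\textbf{Step 1: equicontinuity in $Y$.} Since $\partial_t f$ is bounded in $L^r(0,T;Y)$ with $r>1$, every $f\in\mathcal{F}$ has a representative in $W^{1,1}(0,T;Y)\subset C^0([0,T];Y)$. Hölder's inequality gives
\[
\lVert f(t)-f(s)\rVert_Y\le \int_s^t\lVert\partial_t f\rVert_Y\,\d \sigma \le C\lvert t-s\rvert^{1-1/r},
\]
uniformly over $\mathcal{F}$. Moreover, $f(t)$ is bounded in $X$ for every $t$, not just for a.e. $t$. To see this, choose $t_n\to t$ with $\lVert f(t_n)\rVert_X\le C$. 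Since $X\overset{c}{\hookrightarrow}B$, a subsequence of $f(t_n)$ converges in $B$. Its limit is $f(t)$ by the $Y$-continuity and the injectivity of $B\hookrightarrow Y$. If $X$ is not reflexive, I would simply work with the closure of the bounded set $\{f(t)\}$ in $B$, which is compact. Either way, $K:=\overline{\{f(t):f\in\mathcal{F},\,t\in[0,T]\}}^B$ is compact in $B$.

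\textbf{Step 2: Ehrling's lemma.} For every $\eta>0$ there is $C_\eta$ such that
\[
\lVert u\rVert_B\le\eta\lVert u\rVert_X+C_\eta\lVert u\rVert_Y \qquad \text{for all } u\in X.
\]
I would prove this by contradiction: take $u_n$ with $\lVert u_n\rVert_X=1$ and $\lVert u_n\rVert_B>\eta+n\lVert u_n\rVert_Y$. By compactness, $u_n\to u$ in $B$, so $\lVert u\rVert_B\ge\eta$. On the other hand $\lVert u_n\rVert_Y\to0$, hence $u=0$, a contradiction. Apply the inequality to $u=f(t)-f(s)$, using the uniform $X$-bound $2C$ from Step 1. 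This gives
\[
\lVert f(t)-f(s)\rVert_B\le 2C\eta+C_\eta C\lvert t-s\rvert^{1-1/r},
\]
so $\mathcal{F}$ is equicontinuous in $C^0([0,T];B)$.

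\textbf{Step 3: conclusion.} Pointwise relative compactness holds by Step 1, and equicontinuity holds by Step 2. Arzel\`a--Ascoli for functions with values in a Banach space then yields relative compactness in $C^0([0,T];B)$.

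The main obstacle is the passage from the a.e.\ bound in $L^\infty(0,T;X)$ to a bound valid for every $t$. This is where the chosen continuous representative and the injectivity of the embeddings enter. The rest is routine.
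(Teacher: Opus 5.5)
The paper does not prove this statement; it quotes it from Simon. Your argument is correct and is the standard one. It uses the same two ingredients as Simon's proof: Ehrling's inequality turns the uniform $Y$-modulus $\lvert t-s\rvert^{1-1/r}\lVert \partial_t f\rVert_{L^r(0,T;Y)}$ into a $B$-modulus, and an Arzel\`a--Ascoli criterion concludes. The main difference is that Simon phrases his compactness criterion through time averages $\int_{t_1}^{t_2} f\,\d t$. These are bounded in $X$ directly from the $L^\infty(0,T;X)$ bound. You work with point values instead, so you have to settle the every-$t$ issue by hand, which your Step 1 does correctly.

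One detail in Step 2 needs an extra line. If $X$ is not reflexive, $f(t)$ need not lie in $X$, since the $B$-closure of an $X$-ball can leave $X$. So there is no ``uniform $X$-bound $2C$'' available for $f(t)-f(s)$. The fix is to apply Ehrling's inequality to $f(t_n)-f(s_n)$, whose $X$-norm is at most $2C$, and then let $n\to\infty$ along the subsequences of Step 1. Since $B\hookrightarrow Y$, both sides pass to the limit under $B$-convergence, which gives $\lVert f(t)-f(s)\rVert_B\le 2C\eta+C_\eta\lVert f(t)-f(s)\rVert_Y$ for all $t,s$. Alternatively, restrict $t,s$ to the full-measure set where $\lVert f\rVert_X\le C$ and extend by uniform continuity, using completeness of $B$ and injectivity of $B\hookrightarrow Y$.
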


	\begin{theorem}{\cite[Lemma 9, p.~85]{Simon}}
		\label{lemma9Simon}
		Let $X$, $B$, and $Y$ be Banach spaces such that $X \overset{c}{\hookrightarrow} B \hookrightarrow Y$. Assume $p \in [1,\infty]$, $\mathcal{F}$ bounded in $L^p(0,T;X)$ and relatively compact in $L^p(0,T;Y)$. Then $\mathcal{F}$ is relatively compact in $L^p(0,T;B)$.
	\end{theorem}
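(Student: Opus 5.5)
This is Simon's interpolation result, so I plan to prove it by contradiction, combining an Ehrling-type inequality with the assumed compactness in the weaker space $Y$. The first step is the Ehrling lemma for the triple $X \overset{c}{\hookrightarrow} B \hookrightarrow Y$: for every $\eta>0$ there exists $C_\eta>0$ such that
$$
\lVert u \rVert_B \leq \eta \lVert u \rVert_X + C_\eta \lVert u \rVert_Y \qquad \text{for all } u \in X.
$$
I would argue this by contradiction. If it failed for some $\eta$, there would be $u_n$ with $\lVert u_n \rVert_X = 1$ and $\lVert u_n \rVert_B > \eta + n \lVert u_n \rVert_Y$. By the compact embedding, a subsequence converges in $B$ to some $u$. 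Since $\lVert u_n \rVert_B$ is bounded, $\lVert u_n \rVert_Y \to 0$, so $u=0$ by the continuous embedding $B \hookrightarrow Y$. This contradicts $\lVert u_n \rVert_B > \eta$.

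Next I integrate the inequality in time. For $f,g \in \mathcal{F}$ and $p<\infty$, applying it pointwise to $f(t)-g(t)$ and taking $L^p$ norms gives
$$
\lVert f-g \rVert_{L^p(0,T;B)} \leq 2\eta \sup_{\mathcal{F}} \lVert \cdot \rVert_{L^p(0,T;X)} + C_\eta \lVert f-g \rVert_{L^p(0,T;Y)}.
$$
The case $p=\infty$ is handled the same way with essential suprema.

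Now take any sequence $\{f_n\} \subset \mathcal{F}$. By assumption it has a subsequence that is Cauchy in $L^p(0,T;Y)$. Fix $\eta$ small enough that the first term is at most $\delta/2$, where $M$ denotes the $L^p(0,T;X)$ bound on $\mathcal{F}$. Then choose $n,m$ large so that $C_\eta \lVert f_n-f_m \rVert_{L^p(Y)} < \delta/2$. This shows the subsequence is Cauchy in $L^p(0,T;B)$, hence convergent because that space is complete. Relative compactness follows.

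There is one technical point to check: $f(t) \in X$ must hold for a.e. $t$, which is true since $f \in L^p(0,T;X)$. Measurability with values in $B$ is inherited from the continuous embedding $X \hookrightarrow B$. The main obstacle is only the Ehrling step; the rest is routine.
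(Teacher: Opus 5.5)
Your proof is correct and is essentially Simon's own argument, which the paper only cites without proof. Both use the Ehrling-type inequality $\lVert u \rVert_B \leq \eta \lVert u \rVert_X + C_\eta \lVert u \rVert_Y$ (Simon's Lemma 8), integrate it in time, and combine it with the relative compactness in $L^p(0,T;Y)$. Simon phrases the final step as a finite covering of $\mathcal{F}$ by small balls (total boundedness), whereas you extract a subsequence that is Cauchy in $L^p(0,T;B)$ and use completeness; the two routes are equivalent here.
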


	\section{Functional inequalities}
	
	\begin{theorem}[Generalized Korn's inequality]{\cite[Theorem 3.3, p.~18]{MielkeRoub2}}
		\label{KornMR}
		Let $D \subset \mathbb{R}^3$ be a bounded Lipschitz domain and let $\Gamma_D \subset \partial D$ have positive surface measure. For $\lambda\in(0,1)$ and $K>1$, define
		\begin{equation}
			\label{FK}
			\mathcal{F}_K := \Bigl\{ F \in C^{0,\lambda}(\bar{D};\mathbb{M}^{3\times3}) \, : \, \lVert F \rVert_{C^{0,\lambda}(\bar{D})} \leq K, \; \min_{x\in\bar{D}} \det F(x) \geq 1/K \Bigr\}.
		\end{equation}
		Then, for every $K>1$ there exists a constant $c_K>0$ such that
		$$
		\int_D \lvert F^T \nabla v + (\nabla v)^T F \rvert^2 \; \d x \geq c_K \lVert v \rVert^2_{H^1(D)}
		$$
		for all $F\in\mathcal{F}_K$ and all $v\in H^1(D;\mathbb{R}^3)$ with $v = 0$ on $\Gamma_D$.
	\end{theorem}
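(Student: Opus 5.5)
The plan is to split the statement into a pointwise (fixed $F$) Korn inequality and a compactness argument that makes the constant uniform on $\mathcal{F}_K$. The uniformity step is elementary once the fixed-$F$ inequality is known, so essentially all the difficulty sits in the fixed-coefficient case. That case is a version of Pompe's Korn inequality with continuous, nondegenerate coefficients.

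\emph{Step 1 (Gårding-type inequality for fixed $F$).} Fix a continuous $F:\bar{D}\to\mathbb{M}^{3\times3}$ with $\min_{\bar D}\det F>0$. For a constant invertible matrix $A$ one has $A^T\nabla v+(\nabla v)^TA = 2\,{\rm sym}\,\nabla(A^Tv)$. Hence the classical Korn inequality, applied to $A^Tv$, gives
$$\lVert \nabla v\rVert_{L^2} \leq c_A\bigl(\lVert A^T\nabla v+(\nabla v)^TA\rVert_{L^2}+\lVert v\rVert_{L^2}\bigr),$$
first on balls and then, with a Lipschitz flattening, on boundary patches.

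Cover $\bar D$ by finitely many small sets on which ${\rm osc}\,F$ is small compared with $1/c_{F(x_0)}$, which is possible by uniform continuity. On each set, freeze $F$ at the centre and absorb the error $\lVert (F-F(x_0))\nabla v\rVert$ into the left-hand side. Then glue the pieces with a partition of unity $\{\chi_i\}$, putting the commutator terms $\nabla\chi_i\otimes v$ into the $L^2$ term. This yields
$$\lVert v\rVert_{H^1(D)}^2\leq C_F\Bigl(\int_D \lvert F^T\nabla v+(\nabla v)^TF\rvert^2\,\d x+\lVert v\rVert_{L^2(D)}^2\Bigr).$$

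\emph{Step 2 (removing the $L^2$ term for fixed $F$).} Argue by contradiction and Rellich: take $v_n$ with $v_n=0$ on $\Gamma_D$, $\lVert v_n\rVert_{H^1}=1$, and $\int|F^T\nabla v_n+(\nabla v_n)^TF|^2\to0$. The estimate of Step 1 shows that $v_n$ converges strongly in $H^1$, up to a subsequence. Its limit $v$ has norm one, vanishes on $\Gamma_D$, and lies in the kernel, namely $F^T\nabla v$ is skew. This is the step I expect to be the main obstacle: proving the kernel is trivial, an infinitesimal rigidity statement with variable coefficients.

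I would first try to show that kernel elements form a finite-dimensional space locally. On a small ball, the frozen estimate of Step 1 applied modulo infinitesimal rigid motions of $F(x_0)^Tv$ gives $v\in H^1$ with $\nabla(F(x_0)^Tv)$ within ${\rm osc}\,F\cdot\lVert\nabla v\rVert$ of a constant skew matrix. Iterating on shrinking balls, and using the Hölder modulus $C^{0,\lambda}$ where plain continuity is not enough, should give a unique-continuation property: a kernel element vanishing on an open set vanishes on the connected set $D$. The boundary condition on $\Gamma_D$, which has positive measure, then propagates through a boundary patch to give $v\equiv0$. This contradicts $\lVert v\rVert_{H^1}=1$ and yields a constant $c(F)>0$.

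\emph{Step 3 (uniformity on $\mathcal{F}_K$).} Suppose no $c_K$ works. Then there are $F_n\in\mathcal{F}_K$ and $v_n$ with $v_n=0$ on $\Gamma_D$, $\lVert v_n\rVert_{H^1}=1$, and $\int|F_n^T\nabla v_n+(\nabla v_n)^TF_n|^2\to0$. Since $\mathcal{F}_K$ is bounded in $C^{0,\lambda}$, Arzelà–Ascoli gives $F_n\to F$ uniformly up to a subsequence. The limit satisfies $\lVert F\rVert_{C^{0,\lambda}}\leq K$ by lower semicontinuity of the Hölder norm and $\det F\geq1/K$ by uniform convergence, so $F\in\mathcal{F}_K$. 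Moreover
$$\int_D|F^T\nabla v_n+(\nabla v_n)^TF|^2\leq 2\int_D|F_n^T\nabla v_n+(\nabla v_n)^TF_n|^2+8\lVert F_n-F\rVert_\infty^2\lVert\nabla v_n\rVert^2\to0.$$
This contradicts Step 2 applied to $F$, since $\lVert v_n\rVert_{H^1}=1$.
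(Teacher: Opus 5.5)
Your skeleton matches the one the paper indicates. The paper takes coercivity for a single continuous $F$ with $\det F>0$ from Pompe's Theorem~2.4, and then gets uniformity from compactness of $\mathcal{F}_K$ in $C^0$. Your Step~3 carries out the compactness part correctly, and your frozen-coefficient estimate and the Rellich reduction to the kernel are fine.

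The gap is the decisive point of Step~2. You only assert (``should give'') that the kernel $\{v : F^T\nabla v+(\nabla v)^TF=0,\ v=0 \text{ on } \Gamma_D\}$ is trivial, and that is exactly the nontrivial content of the fixed-$F$ theorem. The interior part can be made rigorous using only uniform continuity of $F$; the H\"older modulus is needed only for Arzel\`a--Ascoli. Suppose $v$ lies in the kernel and vanishes on $E\subset B_r\subset D$ with $\lvert E\rvert\geq\theta\lvert B_r\rvert$.
\begin{itemize}
\item The scale-invariant Korn inequality for $F(x_0)^Tv$ gives a skew $W$ with $\lVert F(x_0)^T\nabla v-W\rVert_{L^2(B_r)}\leq C\,\mathrm{osc}_{B_r}F\,\lVert\nabla v\rVert_{L^2(B_r)}$.
\item Since $\nabla v=0$ a.e.\ on $E$, one has $\lVert W\rVert_{L^2(B_r)}\leq\theta^{-1/2}\lVert F(x_0)^T\nabla v-W\rVert_{L^2(E)}$.
\item Hence $\lVert\nabla v\rVert_{L^2(B_r)}\leq C\lvert F(x_0)^{-1}\rvert(1+\theta^{-1/2})\,\mathrm{osc}_{B_r}F\,\lVert\nabla v\rVert_{L^2(B_r)}$, which forces $v=0$ on $B_r$ for $r\leq r_0(F)$.
\end{itemize}
Doubling balls ($\theta=1/8$) and the connectedness of $D$ then spread the zero set.

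Your sketch does not say how the continuation starts. Kernel elements vanish only on $\Gamma_D$, which has positive surface measure but need not be relatively open. So you cannot extend $v$ by zero across $\Gamma_D$ and reduce to the interior case, and ``propagates through a boundary patch'' is not an argument. One way to close this:
\begin{itemize}
\item Take a surface density point $x_0$ of $\Gamma_D$ and a coordinate cylinder $C_r(x_0)$. Run the frozen estimate on $D\cap C_r(x_0)$, where the Korn constant is uniform in $r$.
\item Add a scaled Poincar\'e--trace inequality. For suitable $b$ it gives $\lVert Wx+b\rVert_{L^2(\Gamma_D\cap C_r)}\leq Cr^{1/2}\lVert F(x_0)^T\nabla v-W\rVert_{L^2(D\cap C_r)}$.
\item Use that $\lvert Wx+b\rvert\geq\lvert W\rvert_{\rm op}\,\mathrm{dist}(x,\ell)$ for some line $\ell$, while the $s$-neighbourhood of a line meets a Lipschitz graph patch in area at most $Csr$.
\end{itemize}
Together these give $\nabla v=0$, hence $v=0$, on $D\cap C_r$ for small $r$, and the interior step concludes. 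Alternatively, cite Pompe for fixed $F$, as the paper does; Step~3 then completes the proof on its own.

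A minor correction to Step~1: drop the Lipschitz flattening. A bi-Lipschitz change of variables produces merely bounded measurable coefficients and defeats the freezing. Instead, apply the constant-coefficient second Korn inequality on $D$ itself to $F(x_i)^T(\chi_i v)$.
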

	The constant $c_K$ depends also on $D$, $\Gamma_D$, and $\lambda$. For each fixed $F$, the inequality follows from \cite[Theorem 2.4]{Pompe}, since the linear map $G \mapsto F^T G + G^T F$ is elliptic in the sense of \cite[Definition 2.1]{Pompe} whenever $\det F \neq 0$. The uniformity of $c_K$ on $\mathcal{F}_K$ is then obtained in \cite{MielkeRoub2} from the compactness of $\mathcal{F}_K$ in $C^0(\bar{D};\mathbb{M}^{3\times3})$.
	
	\begin{theorem}[Gagliardo-Nirenberg]{\cite[Theorem, p.~125]{Nirenberg}}
		\label{GN}
		Let $\Omega \subset \mathbb{R}^d$ be an open bounded Lipschitz domain. Let $v \in L^{q}(\Omega)$ and $D^m v \in L^r(\Omega)$ for $1 \leq q,r \leq \infty$. Then, for any $0 \leq j \leq m$ and any $\tilde{q}>0$, there exists a constant $C$ depending on $\Omega,d,m,j,q,r,\tilde{q},\lambda$ such that the following inequality holds
		$$
		\lVert D^j v \rVert_{L^p(\Omega)} \leq C \biggl( \lVert D^m v \rVert_{L^r(\Omega)}^{\lambda} \lVert v \rVert_{L^q(\Omega)}^{1-\lambda} + \lVert v \rVert_{L^{\tilde{q}}(\Omega)} \biggr),
		$$
		where
		$$
		\frac{1}{p} = \frac{j}{d} + \lambda \biggl( \frac{1}{r} - \frac{m}{d}  \biggr) + \frac{1 - \lambda}{q}, \qquad \frac{j}{m} \leq \lambda \leq 1,
		$$
		with $\lambda < 1$ if $1 < r < \infty$ and $m - j - d/r$ is a nonnegative integer.
	\end{theorem}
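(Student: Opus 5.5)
This is the classical interpolation inequality of \cite{Nirenberg}; the plan is to reduce it to the whole-space case by splitting off polynomials and extending, and then to interpolate between the two endpoints $\lambda = j/m$ and $\lambda = 1$. We only sketch the structure and which estimates enter.

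\emph{Step 1 (splitting off polynomials).} We reduce to the case in which $v$ has controlled full Sobolev norm. Since $\Omega$ is bounded and Lipschitz, the Poincar\'e--Wirtinger inequality modulo polynomials gives a polynomial $P$ of degree $<m$ such that
$$
\lVert v - P \rVert_{W^{m,r}(\Omega)} \leq C \lVert D^m v \rVert_{L^r(\Omega)} .
$$
The space of such polynomials is finite dimensional, so all (quasi-)norms on it are equivalent, including $L^{\tilde q}$ with $\tilde q < 1$. Hence, by the quasi-triangle inequality in $L^{\tilde q}$, $\lVert P \rVert_{W^{m,\infty}} \leq C \lVert P \rVert_{L^{\tilde q}} \leq C\bigl( \lVert v \rVert_{L^{\tilde q}} + \lVert v - P \rVert_{L^{\tilde q}} \bigr)$, and the last term is again bounded by $C\lVert D^m v \rVert_{L^r}$. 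The contribution of $P$ to $\lVert D^j v \rVert_{L^p}$ is then absorbed into the right-hand side. For this we use $\lVert P \rVert_{L^{\tilde q}} \leq \lVert v \rVert_{L^{\tilde q}} + C \lVert D^m v \rVert_{L^r}$ together with Young's inequality, which splits $\lVert D^m v \rVert^{\lambda}\lVert v\rVert^{1-\lambda}$ against the additive lower-order term.

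\emph{Step 2 (extension).} Next we apply a Stein extension operator $\mathcal{E}$, bounded simultaneously in $W^{k,s}$ for all $k \leq m$ and $s \in [1,\infty]$, to $w := v - P$ times a fixed cutoff. This transfers the problem to a compactly supported function on $\mathbb{R}^d$, with
$$
\lVert D^m \mathcal{E}w \rVert_{L^r(\mathbb{R}^d)} + \lVert \mathcal{E}w \rVert_{L^q(\mathbb{R}^d)} \leq C\bigl( \lVert D^m w \rVert_{L^r(\Omega)} + \lVert w \rVert_{L^q(\Omega)} \bigr).
$$
Any additional lower-order norms generated by the extension are controlled by Step 1.

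\emph{Step 3 (whole-space inequality).} On $\mathbb{R}^d$ the homogeneous inequality $\lVert D^j u \rVert_{L^p} \leq C \lVert D^m u \rVert_{L^r}^{\lambda} \lVert u \rVert_{L^q}^{1-\lambda}$ holds, with exponents fixed by scaling.
\begin{enumerate}
\item[{\rm i)}] At the endpoint $\lambda = 1$ it is the Sobolev embedding. This requires the exclusion of $1<r<\infty$ with $m-j-d/r$ a nonnegative integer, which is the failing $L^\infty$ and BMO borderline.
\item[{\rm ii)}] At the endpoint $\lambda = j/m$ it follows from the one-dimensional inequality $\lVert u' \rVert_{L^{p}}^2 \leq C \lVert u \rVert_{L^{q}} \lVert u'' \rVert_{L^{r}}$ with $2/p = 1/q + 1/r$. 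This is proved by integration by parts on intervals and applied line by line, then iterated in $j$ and $m$ via H\"older's inequality.
\item[{\rm iii)}] Intermediate values of $\lambda$ are obtained by combining the two endpoints, through H\"older's inequality and a convexity argument in $(1/p,\lambda)$.
\end{enumerate}

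The main obstacle is this last interpolation step, and especially the borderline exponent cases in which one endpoint fails. There I would first try the standard device of interpolating through an auxiliary exponent slightly away from the bad one. This is exactly why $\lambda<1$ is required in the exceptional case.
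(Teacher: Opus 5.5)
The paper gives no proof of this statement; it only cites \cite{Nirenberg}. Your outline broadly follows Nirenberg's strategy, but two steps fail as written.

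\emph{First gap: absorbing the polynomial part in Step~1.} Your only bound on $P$ is $\lVert P\rVert_{W^{m,\infty}}\le C(\lVert v\rVert_{L^{\tilde q}}+\lVert D^m v\rVert_{L^r})$. So $\lVert D^jP\rVert_{L^p}$ contributes an additive term $C\lVert D^m v\rVert_{L^r}$. The same term would reappear in Step~2 through $\lVert v-P\rVert_{L^q}\le \lVert v\rVert_{L^q}+\lVert P\rVert_{L^q}$, because $\lVert D^m v\rVert_{L^r}^{\lambda}\lVert D^m v\rVert_{L^r}^{1-\lambda}=\lVert D^m v\rVert_{L^r}$.

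For $\lambda<1$ this term cannot be bounded by the right-hand side. Young's inequality bounds $a^\lambda b^{1-\lambda}$ from above by $\lambda a+(1-\lambda)b$, which is the wrong direction. Concretely, for $v_k(x)=\sin(kx_1)$ one has $\lVert D^m v_k\rVert_{L^r}\sim k^m$, whereas $\lVert D^m v_k\rVert_{L^r}^\lambda\lVert v_k\rVert_{L^q}^{1-\lambda}+\lVert v_k\rVert_{L^{\tilde q}}\sim k^{m\lambda}+1$.

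The missing ingredient is a bound on the polynomial part by $\lVert v\rVert_{L^q}$ itself, combined with a case distinction. Take $P=\Pi v$, where $\Pi$ is a fixed projection onto polynomials of degree $<m$ that is bounded from $L^1(\Omega)$ to $W^{m,\infty}(\Omega)$, for instance the $L^2(\Omega)$-orthogonal one. Then $\lVert v-\Pi v\rVert_{W^{m,r}(\Omega)}\le C\lVert D^m v\rVert_{L^r}$ and $\lVert \Pi v\rVert_{W^{m,\infty}}+\lVert v-\Pi v\rVert_{L^q}\le C\lVert v\rVert_{L^q}$. Now use $\min\{a,b\}\le a^\lambda b^{1-\lambda}$:
\begin{itemize}
\item If $\lVert D^m v\rVert_{L^r}\ge\lVert v\rVert_{L^q}$, every term bounded by $\lVert v\rVert_{L^q}$ is dominated by $\lVert D^m v\rVert_{L^r}^\lambda\lVert v\rVert_{L^q}^{1-\lambda}$.
\item If $\lVert D^m v\rVert_{L^r}<\lVert v\rVert_{L^q}$, the additive $\lVert D^m v\rVert_{L^r}$ in your bound of $\lVert P\rVert_{L^{\tilde q}}$ is dominated by the same product, and only in this case does $\lVert v\rVert_{L^{\tilde q}}$ enter.
\end{itemize}
For the estimate $\lVert v-P\rVert_{L^{\tilde q}}\le C\lVert D^m v\rVert_{L^r}$ you should also first reduce to $\tilde q\le r$. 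This is harmless, since the claim weakens as $\tilde q$ grows.

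\emph{Second gap: the regime $(m-j)r>d$ in Step~3(iii).} You combine the two endpoints by H\"older's inequality. This presupposes that the endpoint $\lambda=1$ is an estimate in a Lebesgue space, i.e., $1/p_1=1/r-(m-j)/d\ge 0$. If $(m-j)r>d$, then $1/p_1<0$, and the admissible range of $\lambda$ ends at some $\lambda^*<1$ with $p=\infty$. The $L^\infty$ bound at $\lambda^*$ is in general given neither by the endpoints nor by H\"older interpolation between them.

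This is exactly the regime in which the paper uses the theorem. In Lemma~\ref{InvStab}.$(ii)$ it is applied with $d=3$, $m=2$, $p=\infty$, $j\in\{0,1\}$, $L^2$ in place of $L^q$, and $r$ equal to the paper's exponent $q>3$, so $(m-j)r>d$.

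Here you need an additional argument. One option is the rescaled local estimate
\[
\lvert D^j u(x)\rvert\le C\bigl(\rho^{m-j-d/r}\lVert D^m u\rVert_{L^r(B_\rho(x))}+\rho^{-j-d/q}\lVert u\rVert_{L^q(B_\rho(x))}\bigr),
\]
which follows from $W^{m,r}(B_1)\hookrightarrow C^j(\bar{B}_1)$ for $m-j>d/r$, optimized over $\rho>0$. This yields $\lambda^*=(j+d/q)/(m-d/r+d/q)$, and the exponents $5q/(7q-6)$ and $3q/(7q-6)$ used in the paper are exactly this value. Intermediate $\lambda$ then follow by H\"older between $j/m$ and $\lambda^*$. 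Alternatively, follow Nirenberg and read negative $1/p$ as H\"older seminorms, interpolating in that extended scale.

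Your auxiliary-exponent device addresses the borderline $1/p_1=0$, not this case.
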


	\begin{theorem}[Generalized Poincar\'e inequality]{\cite[Theorem B.3.15, p.~519]{martin}}
		\label{PoincareGen}
		Let $\Omega \subset \mathbb{R}^d$ be an open bounded Lipschitz domain with boundary $\Gamma$. Let $\Gamma_D \subset \Gamma$ be such that $\text{meas}_{d-1}(\Gamma_D) > 0$. 
		
		Then, for any $1 \le p < \infty$, there exists a constant $K > 0$ such that for all functions $v \in W^{1,p}(\Omega)$, the following inequality holds
		\begin{equation*}
			\int_{\Omega} |v(x)|^p \; \d x \le K \left( \int_{\Omega} |\nabla v(x)|^p \; \d x + \left| \int_{\Gamma_D} v(x) \; \d S \right|^p \right).
		\end{equation*}
	\end{theorem}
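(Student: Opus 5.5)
This is the Generalized Poincar\'e inequality, Theorem \ref{PoincareGen}, and I will prove it by the standard compactness-contradiction argument. Fix $p \in [1,\infty)$. The trace operator from $W^{1,p}(\Omega)$ to $L^1(\Gamma_D)$ is continuous for Lipschitz $\Omega$. Hence the functional $v \mapsto \lvert \int_{\Gamma_D} v \, \d S \rvert$ is continuous on $W^{1,p}(\Omega)$.

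Suppose the inequality fails for every $K$. Then there are $v_n \in W^{1,p}(\Omega)$ with $\lVert v_n \rVert_{L^p} = 1$ and
\[
\lVert \nabla v_n \rVert_{L^p}^p + \Bigl\lvert \int_{\Gamma_D} v_n \, \d S \Bigr\rvert^p < \frac{1}{n}.
\]
This sequence is bounded in $W^{1,p}(\Omega)$. By the Rellich--Kondrachov theorem, which holds on bounded Lipschitz domains for every $p \in [1,\infty)$, a subsequence converges strongly in $L^p(\Omega)$ to some $v$ with $\lVert v \rVert_{L^p} = 1$. Moreover $\nabla v_n \to 0$ in $L^p$. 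The distributional limit therefore gives $\nabla v = 0$, so $v_n \to v$ strongly in $W^{1,p}(\Omega)$, and this holds even for $p = 1$ because the gradients converge strongly.

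Since $\nabla v = 0$, the function $v$ is constant on each connected component of $\Omega$. If $\Omega$ is connected, $v \equiv c$, and continuity of the trace functional gives $c\,\mathrm{meas}_{d-1}(\Gamma_D) = \lim_n \int_{\Gamma_D} v_n \, \d S = 0$. Since $\mathrm{meas}_{d-1}(\Gamma_D) > 0$, this forces $c = 0$, contradicting $\lVert v \rVert_{L^p} = 1$.

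The main obstacle is connectedness. The statement says only ``domain'', which I read as open and connected, and the result is false without connectedness: a component not touching $\Gamma_D$ carries a nonzero constant with $\nabla v = 0$ and zero boundary term. For vector-valued $v$, as used in the paper with $\R^3$-valued maps, the same argument applies componentwise.
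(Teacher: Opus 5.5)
Your proof is correct. It is the standard compactness--contradiction argument, and you handle the details properly: normalization by homogeneity, Rellich--Kondrachov for all $1\le p<\infty$, strong convergence in $W^{1,p}$ so that the trace term passes to the limit even when $p=1$, and then the boundary functional annihilating nonzero constants.

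The paper itself gives no proof of this statement; it only quotes it from \cite{martin}, and your argument is the usual proof of this classical result. Your remark that connectedness is required is correct. It matches the paper's usage: there the inequality is applied on $\Omega=\omega\times(-1/2,1/2)$ and on $\omega$, with $\omega$ a connected Lipschitz domain and $\Gamma_D=P$.
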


	\begin{theorem}[Lower bound on the determinant]{\cite[Lemma 4.1, p.~868]{healeyKromer}}
		\label{thHealeyKromer}
		Let $\Omega \subset \mathbb{R}^{d}$ be a bounded Lipschitz domain, $J \in C^{0,\gamma}(\bar{\Omega})$, and $\gamma \in (0,1)$. Suppose that $J(x) > 0$ almost everywhere in $\Omega$ and that there exist two positive constants $L,M$ such that
		\begin{equation*}
			\int_{\Omega} (J(z))^{-s} \; {\rm d}z \leq L  , \qquad \lVert J \rVert_{C^{0,\gamma}(\bar{\Omega})} \leq M,
		\end{equation*} 
		for some $s\geq d/\gamma$. Then, there exists a constant $\widetilde{c} = \widetilde{c}(\Omega,M,L,s,d)$ such that
		\begin{equation*}
			J(x) \geq \widetilde{c}, \qquad \forall x\in \bar{\Omega}.
		\end{equation*}
	\end{theorem}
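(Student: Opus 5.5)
This is the Healey--Kr\"omer lower bound, which the paper quotes from \cite{healeyKromer}; I would prove it by contradiction, combining the H\"older modulus of $J$ with the integrability of $J^{-s}$.

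Let $x_0 \in \bar{\Omega}$ and set $\delta := J(x_0) \geq 0$; the value is defined pointwise since $J$ is H\"older continuous. The H\"older bound gives
$$
J(z) \leq \delta + M\lvert z - x_0\rvert^\gamma \qquad \text{for all } z \in \bar{\Omega}.
$$
Hence $J^{-s}(z) \geq (\delta + M r^\gamma)^{-s}$ on $B_r(x_0)\cap\Omega$.

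Next I use the geometry of the domain. Since $\Omega$ is bounded and Lipschitz, it satisfies a uniform interior cone condition. So there are $r_0>0$ and $c_\Omega>0$, depending only on $\Omega$, with
$$
\lvert B_r(x_0)\cap\Omega\rvert \geq c_\Omega r^d \qquad \text{for all } r\leq r_0 \text{ and all } x_0\in\bar{\Omega}.
$$
This uniform density estimate is the main geometric input, and it is where the Lipschitz assumption is used, including for boundary points.

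Integrating the pointwise bound over $B_r(x_0)\cap\Omega$ gives
$$
L \geq c_\Omega\, r^d\, (\delta + M r^\gamma)^{-s}.
$$
Choose $r = \min\{r_0, (\delta/M)^{1/\gamma}\}$.

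In the case $r = (\delta/M)^{1/\gamma}$, we have $\delta + Mr^\gamma = 2\delta$, so
$$
L \geq c_\Omega (\delta/M)^{d/\gamma} (2\delta)^{-s} = c\,\delta^{d/\gamma - s}.
$$
Because $s \geq d/\gamma$, the exponent $d/\gamma - s$ is nonpositive. If $s > d/\gamma$, this yields $\delta \geq (c/L)^{1/(s-d/\gamma)}$.

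In the case $r = r_0$, we have $\delta \geq M r_0^\gamma$, which is already a lower bound depending only on $M$ and $\Omega$. Either way, $\delta$ is bounded below by $\widetilde{c}(\Omega,M,L,s,d)$. This also rules out $\delta = 0$: for $\delta=0$, the bound $J^{-s}\geq M^{-s}\lvert z-x_0\rvert^{-\gamma s}$ is not integrable near $x_0$, because $\gamma s\geq d$ and $\Omega$ has positive density at $x_0$.

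The main obstacle is the borderline case $s = d/\gamma$. There the power estimate degenerates to $L \geq c_\Omega M^{-d/\gamma} 2^{-s}$, which contains no information about $\delta$. I would instead integrate over the full range of radii, i.e.\ use
$$
\int_{B_{r_0}(x_0)\cap\Omega} (\delta + M\lvert z-x_0\rvert^\gamma)^{-s}\,\d z .
$$
By the cone condition, this integral is bounded below by
$$
c\int_0^{r_0} \rho^{d-1}(\delta + M\rho^\gamma)^{-d/\gamma}\,\d\rho .
$$
The substitution $\rho = (\delta/M)^{1/\gamma}\sigma$ shows that this behaves like $c\log(1 + M r_0^\gamma/\delta)$. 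Bounding it by $L$ then gives $\delta \geq M r_0^\gamma/(e^{L/c}-1)$, a positive constant with the stated dependence. The only routine checks left are computing this log-type integral on a cone sector and confirming that the cone constants are uniform up to the boundary.
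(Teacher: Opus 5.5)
Your proof is correct. The paper contains no argument for this statement; it imports it from \cite{healeyKromer}, so there is no in-paper proof to match step by step. You give a self-contained quantitative proof built on the two ingredients any proof of this lemma needs. The first is the H\"older bound $J(z)\le J(x_0)+M\lvert z-x_0\rvert^\gamma$. The second is the uniform cone condition, which makes $\lvert z-x_0\rvert^{-d}$ non-integrable over $\Omega$ near every $x_0\in\bar\Omega$, uniformly in $x_0$.

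The usual alternative argues by contradiction. Suppose admissible $J_n$ had $\min J_n\to0$. Arzel\`a--Ascoli gives a uniform limit $J$ with $\lVert J\rVert_{C^{0,\gamma}}\le M$ that vanishes at some $x_0$, and Fatou's lemma gives $\int_\Omega J^{-s}\le L$. But $J\le M\lvert\cdot-x_0\rvert^\gamma$ and $\gamma s\ge d$ force $\int_\Omega J^{-s}=\infty$. That route treats $s=d/\gamma$ and $s>d/\gamma$ together, with no case split and no logarithmic computation, but it only asserts that $\widetilde c$ exists. Yours yields explicit constants, e.g.\ $\delta\ge\min\{Mr_0^\gamma,(c_\Omega M^{-d/\gamma}2^{-s}/L)^{1/(s-d/\gamma)}\}$ for $s>d/\gamma$.

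Three minor remarks follow.

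\begin{enumerate}
\item In the borderline case, the bound $c\log(1+Mr_0^\gamma/\delta)$ cannot hold uniformly as $\sigma_0=r_0(M/\delta)^{1/\gamma}\to0$. There the integral is of order $\sigma_0^d$, not $\sigma_0^\gamma$. Your bound still works because $\delta\le\lVert J\rVert_{C^0}\le M$ forces $\sigma_0\ge r_0$. Alternatively, when $\delta\le Mr_0^\gamma$ use $(1+\sigma^\gamma)^{-d/\gamma}\ge2^{-d/\gamma}\sigma^{-d}$ for $\sigma\ge1$, and use the trivial bound otherwise.
\item In every use of the lemma in the paper, $s=a>d/\gamma$ holds strictly by (A0). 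These uses are Lemmas \ref{PositiveDet} and \ref{MPositiveDet} and the recovery sequence in Theorem \ref{Gamma0h}. So your simple power estimate already covers all applications, and the borderline case is never needed.
\item Your constants depend on $\gamma$, which the statement's list of dependencies leaves implicit.
\end{enumerate}
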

	
\section{Two elementary lemmas}

\begin{lemma}[Extension by density]
	\label{LemC1}
	Let $I \subset \R$ be a bounded interval, let $Z$ be a Hilbert space, and let $X$ be a Banach space continuously and densely embedded in $Z$. Let $\xi \in L^2(I;X')$ and $\zeta \in L^2(I;Z')$ be such that
	$$
	\int_I \bigl( \langle \xi , \phi \rangle_X + \langle \zeta , \phi \rangle_Z \bigr) \; \d t = 0 \qquad \text{for every } \phi \in L^2(I;X).
	$$
	Then, $\xi$ extends to an element of $L^2(I;Z')$, namely, $\xi = -\zeta$, and the identity above holds for every $\phi \in L^2(I;Z)$.
\end{lemma}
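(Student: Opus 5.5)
The plan is to identify $\xi$ with $-\zeta$ by density: $-\zeta$ restricted to $X$ will agree with $\xi$ pointwise in time, and then the identity extends by continuity. Let $\iota : X \hookrightarrow Z$ denote the embedding and $\iota' : Z' \to X'$ its adjoint, $\iota' z' = z'\circ\iota$. Because $\iota$ has dense range, $\iota'$ is injective. Also, $\iota' \zeta \in L^2(I;X')$ with $\lVert \iota'\zeta(t)\rVert_{X'} \leq \lVert\iota\rVert\,\lVert \zeta(t)\rVert_{Z'}$, since composition with a bounded operator preserves Bochner measurability.

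First I show that $\xi = -\iota'\zeta$ in $L^2(I;X')$. I test the hypothesis with $\phi = \psi\, x$, where $\psi \in L^2(I)$ is scalar and $x \in X$ is fixed. This gives
$$
\int_I \psi\,\bigl(\langle \xi(t),x\rangle_X + \langle \zeta(t), x\rangle_Z\bigr)\,\d t = 0 .
$$
Since $\psi$ is arbitrary, $\langle \xi(t) + \iota'\zeta(t), x\rangle_X = 0$ for a.e. $t$, with the null set depending on $x$.

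The main (mild) obstacle is to make this null set independent of $x$, since $X$ need not be separable. I would avoid separability altogether. The function $g := \xi + \iota'\zeta$ lies in $L^2(I;X')$, and $\int_I \langle g,\phi\rangle_X\,\d t = 0$ for every $\phi \in L^2(I;X)$. The space $L^2(I;X')$ embeds isometrically into $(L^2(I;X))'$ (the norm of $g$ is recovered by testing against simple functions, since $g$ is strongly measurable). Hence $g = 0$ in $L^2(I;X')$, that is, $\xi(t) = -\iota'\zeta(t)$ for a.e. $t$. This is what it means for $\xi$ to extend to the element $-\zeta \in L^2(I;Z')$; the extension is unique because $\iota'$ is injective, by the density of $X$ in $Z$.

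Finally, I read the identity with the extended $\xi$ as $\int_I(\langle -\zeta,\phi\rangle_Z + \langle\zeta,\phi\rangle_Z)\,\d t$. This vanishes trivially for every $\phi \in L^2(I;Z)$. Alternatively, one can argue by density: $L^2(I;X)$ is dense in $L^2(I;Z)$, since simple functions with values in $X$ approximate simple functions with values in $Z$ by the density of $X$ in $Z$. Both terms are continuous on $L^2(I;Z)$ because $\xi,\zeta \in L^2(I;Z')$, so the identity passes to the limit.
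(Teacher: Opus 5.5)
Your proof is correct, but it takes a different route from the paper's. The paper never works pointwise in time. It observes that $\bigl\lvert \int_I \langle \xi,\phi\rangle_X \,\d t\bigr\rvert \leq \lVert\zeta\rVert_{L^2(I;Z')}\lVert\phi\rVert_{L^2(I;Z)}$ for $\phi\in L^2(I;X)$. It then extends this time-integrated functional by continuity, using the density of $L^2(I;X)$ in $L^2(I;Z)$, and identifies the extension with $\phi\mapsto-\int_I\langle\zeta,\phi\rangle_Z\,\d t$. That is shorter, but it identifies $\xi$ and $-\zeta$ only as functionals on $L^2(I;Z)$. The lemma is used later in the pointwise form $\xi(t)=-\zeta(t)$ in $Z'$ for a.e. $t$, for instance to bound $\int_0^h\lVert DT_\eps(y_\eps^{1,(\tau)})\rVert_{H^{-1}}^2\,\d t$ and to use $\lVert DT_\eps(y_\eps^1(s))\rVert_{H^{-1}}$ for a.e. $s$ in the chain rule. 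That pointwise form tacitly relies on an element of $L^2(I;X')$ being determined by its action on $L^2(I;X)$, which is exactly the isometric embedding $L^2(I;X')\hookrightarrow(L^2(I;X))'$ you make explicit. Your argument gives $\xi(t)=\iota'(-\zeta(t))$ a.e. directly. It also separates the roles of the hypotheses: the existence of the extension needs no density at all, and the density of $X$ in $Z$ enters only through the injectivity of $\iota'$, that is, through uniqueness. The price is some extra bookkeeping: strong measurability of $\iota'\zeta$ and norming by simple functions. Your first step, testing with $\psi\,x$, becomes superfluous once you invoke the embedding. Neither argument uses the Hilbert structure of $Z$.
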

\begin{proof}
	For every $\phi \in L^2(I;X)$, one has $\bigl\lvert \int_I \langle \xi , \phi \rangle_X \, \d t \bigr\rvert \leq \lVert \zeta \rVert_{L^2(I;Z')} \lVert \phi \rVert_{L^2(I;Z)}$. As $L^2(I;X)$ is dense in $L^2(I;Z)$, the functional $\phi \mapsto \int_I \langle \xi , \phi \rangle_X \, \d t$ extends uniquely by continuity to $L^2(I;Z)$, and the extension coincides with $\phi \mapsto -\int_I \langle \zeta , \phi \rangle_Z \, \d t$.
\end{proof}

\begin{lemma}[Minty's trick]
	\label{LemMinty}
	Let $Y$ be a Banach space and let $\mathcal{N} : Y \to Y'$ be hemicontinuous, namely, $s \mapsto \langle \mathcal{N}(u + s w) , z \rangle_Y$ is continuous on $[-1,1]$ for all $u,w,z \in Y$. If $u \in Y$ and $\mathcal{P} \in Y'$ satisfy $\langle \mathcal{P} - \mathcal{N}(v) , u - v \rangle_Y \geq 0$ for every $v \in Y$, then $\mathcal{P} = \mathcal{N}(u)$.
\end{lemma}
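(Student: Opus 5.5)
We prove Lemma \ref{LemMinty} by the classical Minty argument: we test the assumed monotonicity inequality along segments through $u$ and then let the segment shrink to the point $u$. No further structure of $Y$ is used beyond its being a Banach space with the duality pairing $\langle\cdot,\cdot\rangle_Y$.

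First, fix an arbitrary direction $w \in Y$ and $s \in (0,1]$. We choose $v := u - s w$ in the hypothesis. Then $u - v = s w$, and the inequality becomes $s\,\langle \mathcal{P} - \mathcal{N}(u - s w), w \rangle_Y \geq 0$. Dividing by $s>0$ gives
\begin{equation*}
	\langle \mathcal{P} - \mathcal{N}(u - s w), w \rangle_Y \geq 0 \qquad \text{for every } s \in (0,1].
\end{equation*}

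Second, we let $s \to 0^+$. We apply hemicontinuity with the triple $(u, -w, w)$: the map $s \mapsto \langle \mathcal{N}(u + s(-w)), w\rangle_Y$ is continuous on $[-1,1]$, hence at $s=0$. This yields $\langle \mathcal{P} - \mathcal{N}(u), w \rangle_Y \geq 0$ for every $w \in Y$.

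Third, we replace $w$ by $-w$, which gives the reverse inequality. Hence $\langle \mathcal{P} - \mathcal{N}(u), w \rangle_Y = 0$ for all $w \in Y$, and therefore $\mathcal{P} = \mathcal{N}(u)$ in $Y'$.

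The only point needing care is the limiting step. We must make sure that the hemicontinuity, as formulated in the statement (continuity in $s$ for fixed $u,w,z$), is applied with a fixed test vector $z = w$. Weak-$*$ convergence of $\mathcal{N}(u - sw)$ tested only against this one vector suffices. We do not need any norm convergence or boundedness of $\mathcal{N}$.
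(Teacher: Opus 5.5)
Your proposal is correct and follows the paper's proof exactly: you test with $v = u - sw$, divide by $s>0$, pass to the limit $s \to 0^+$ using hemicontinuity with fixed test vector $w$, and then replace $w$ by $-w$. Your explicit choice of the triple $(u,-w,w)$ in the hemicontinuity assumption is a correct spelling-out of the step the paper leaves implicit.
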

\begin{proof}
	For $w \in Y$ and $s \in (0,1]$, the choice $v = u - sw$ gives $\langle \mathcal{P} - \mathcal{N}(u - sw) , w \rangle_Y \geq 0$. Letting $s \to 0$, we obtain $\langle \mathcal{P} - \mathcal{N}(u) , w \rangle_Y \geq 0$ for every $w \in Y$, and replacing $w$ with $-w$ gives the claim.
\end{proof}
	
		\section*{Acknowledgment}
        This work was supported by the CSF under the project 23-04766S and  by the M\v{S}MT \v{C}R under the project   Roboprox [reg. no. CZ.02.01.01/00/22 008/0004590], within the Operational Programme Johannes Amos Comenius co-funded by the European Union.

	\end{document}